
\documentclass[12pt]{amsart}
\usepackage[letterpaper,margin=1.1in]{geometry}
\usepackage{amsmath}
\usepackage{amssymb}
\usepackage{curves}
\usepackage[french,english]{babel}
\usepackage{epic}
\usepackage{rotating}
\usepackage{epsf}
\usepackage[scanall]{psfrag}
\usepackage{graphicx}
\usepackage{esint} 
\usepackage{enumerate}
\usepackage[nobysame,alphabetic]{amsrefs}

\usepackage{hyperref} 

\includeonly{gen-refs}

\numberwithin{equation}{section}

\newtheorem{theorem}{Theorem}[section]
\newtheorem{proposition}[theorem]{Proposition}
\newtheorem{corollary}{Corollary}[section]
\newtheorem{lemma}{Lemma}[section]
\newtheorem{Claim}{Claim}
\newtheorem{defn}{Definition}[section]

\theoremstyle{remark}
\newtheorem{remarks}{Remarks}[section]
\newtheorem{remark}{Remark}[section]

\renewcommand{\d}{\partial} 
 
\newcommand{\1}{{\bf 1}} 
\newcommand{\R}{\mathbb{R}}
\newcommand{\NN}{\mathbb{N}}
\newcommand{\ZZ}{\mathbb{Z}}

\renewcommand{\SS}{\mathbb {S}}

\newcommand{\s}{\sigma}
\renewcommand{\H}{\mathcal H}
\newcommand\res{\hbox{ {\vrule height.22cm}{\leaders\hrule\hskip.2cm} } }

\newcommand{\bs}{\backslash}

\renewcommand{\O}{\Omega}
\newcommand{\p}{\partial}
\newcommand{\loc}{\mathrm{loc}}

\newcommand{\dist}{\mathrm{dist}\,}
\newcommand{\diam}{\mathrm{diam}\,}
\newcommand{\vecn}{\overrightarrow{n}\,}
\newcommand{\Div}{\mathrm{div}}

\renewcommand{\div}{\,\mathrm{div}\,}

\newcommand{\sm}{\setminus}

\renewcommand{\i}{\subset}
\newcommand{\wt}{\widetilde}

\newcommand{\ms}{\medskip}

\usepackage{xcolor}

\begin{document}

\title{Free boundary regularity for almost-minimizers}
\author{Guy David}
\author{Max Engelstein}
\author{Tatiana Toro}
  \thanks{G.\ David was partially supported by the Institut Universitaire de France, the ANR, programme blanc GEOMETRYA, ANR-12-BS01-0014 and the Simons Collaborations in MPS Grant 601941 GD. M.\ Engelstein was partially supported by an NSF Graduate Research Fellowship, NSF DGE 1144082, the University of Chicago RTG grant DMS 1246999, a NSF postdoctoral fellowship, NSF DMS 1703306 and by David Jerison?s grant NSF DMS 1500771. T.\ Toro was partially supported by a Guggenheim fellowship, NSF grant DMS-1361823, by the Robert R. \& Elaine F. Phelps Professorship in Mathematics and by the Craig McKibben \& Sarah Merner Professorship in Mathematics.   This material is based upon work supported by the National Science Foundation under Grant No. DMS-1440140 while the authors were in residence at the Mathematical Sciences Research Institute in Berkeley, California, during the Spring 2017 semester.}
\subjclass[2010]{Primary 35R35.}
\keywords{almost-minimizer, free boundary problem, uniform rectifiability}
\address{Equipe d'Analyse Harmonique\\
Universit\'e Paris-Sud\\
Batiment 425 \\
91405 Orsay Cedex, France}
\email{Guy.David@math.u-psud.fr}
\address{Department of Mathematics\\Massachusetts Institute of Technology\\Cambridge, MA, 02139-4307 }
\email{maxe@mit.edu}
\address{Department of Mathematics\\ University of Washington\\ Box 354350\\ Seattle, WA 98195-4350}
\email{toro@uw.edu}
\date{\today}

\maketitle

\vspace{-.7cm}
\begin{abstract}
In this paper we study the free boundary regularity for almost-minimizers of the functional 
\begin{equation*}
J(u)=\int_\O |\nabla u(x)|^2
+q^2_+(x)\chi_{\{u>0\}}(x) +q^2_-(x)\chi_{\{u<0\}}(x)\ dx
\end{equation*}
where $q_\pm \in L^\infty(\O)$.
Almost-minimizers satisfy a variational inequality but not a PDE or a monotonicity formula 
the way minimizers do (see \cite{AC}, \cite{ACF}, \cite{CJK}, \cite{W}). 
Nevertheless, using a novel argument which brings together
tools from potential theory and geometric measure theory, we succeed in proving that, 
under a non-degeneracy assumption on $q_\pm$, the free boundary is uniformly rectifiable.  Furthermore, when $q_-\equiv 0$, and $q_+$ is H\"older continuous we show that the free boundary is almost-everywhere given as the graph of a $C^{1,\alpha}$ function (thus extending the results of \cite{AC} to almost-minimizers). 
\end{abstract}

\selectlanguage{french}
\begin{abstract}
On \'etudie la r\'egularit\'e des fronti\`ere libres des
presque-minimiseurs de la fonctionnelle
\begin{equation*}
J(u)=\int_\O |\nabla u(x)|^2
+q^2_+(x)\chi_{\{u>0\}}(x) +q^2_-(x)\chi_{\{u<0\}}(x)\ dx,
\end{equation*}
o\`u $q_\pm \in L^\infty(\O)$.
Les presque-minimiseurs v\'erifient une in\'egalit\'e variationnelle, mais pas une EDP
ni une formule de monotonie comme le font les minimiseurs
(voir \cite{AC}, \cite{ACF}, \cite{CJK}, \cite{W}).
N\'eanmoins, gr\^ace \`a un argument nouveau qui utilise des outils de th\'eorie du potentiel et de th\'eorie g\'eom\'etrique de la mesure, on arrive \`a d\'emontrer que, sous une hypoth\`ese de non
d\'eg\'en\'erescence sur $q_\pm$,
leur fronti\`ere libre est uniform\'ement rectifiable. De plus, quand
$q_-\equiv 0$ et $q_+$ est
H\"old\'erienne, on montre que la fronti\`ere libre coincide dans un
voisinage de presque tout point
avec un graphe de fonction $C^{1,\alpha}$, ce qui \'etend les r\'esultats
de \cite{AC} aux presque-minimiseurs.
\end{abstract}
\selectlanguage{english}

\tableofcontents
\section{Introduction}\label{S1}

In \cite{DT} the first and third authors studied almost-minimizers with free boundary. 
They proved that almost-minimizers for the type of functionals considered by Alt and Caffarelli \cite{AC} and
Alt, Caffarelli and Friedman \cite{ACF} are Lipschitz. 
The almost-minimizing property can be used to describe minimizers of variants of the functionals
above, which include additional terms or perturbations that have a smaller contribution at small scales.
We think either of perturbations whose explicit form is not so important, or perturbations coming from noise.
The flexibility of the set up allows one to deal with a broader spectrum of questions, or incorporate small errors and randomness. 

The methods used in \cite{DT} do not provide any information about either the size or the structure of the free boundaries for almost-minimizers.
We address this question in this paper, and, in particular, we show that the free boundary is uniformly rectifiable. This requires a novel argument which brings together
tools from potential theory and geometric measure theory. It provides a new approach to estimating the size and proving the rectifiability of a free boundary.
In the one phase case, that is when $q_- \equiv 0$, $q_+$ is H\"older continuous and the almost-minimizer is non-negative, we also prove that, at most points, the free boundary is given by the graph of a $C^1$ function.
Almost-minimizers were first considered in a geometric context, when Almgren \cite{Alm} studied almost-area minimizing surfaces. More recently, almost-minimizers for the functionals we consider here were introduced in \cite{DT} and further studied by de Queiroz and Tavares \cite{dQT}(who focused on the regularity of almost-minimizers for semi-linear and variable coefficient analogues of the Alt-Caffarelli and Alt-Caffarelli-Friedman functionals).

 The theory of almost-minimal surfaces has found applications to the existence and regularity of isoperimetric partitions \cite{AlmPartition}. The idea of looking at small perturbations of 
 minimizers is inherent in the study of stability questions in shape optimization and quantitative inequalities (see \cite{CMM}, \cite{KM} and \cite{CFM} for example for some of the most recent developments in this area).
 
 It was observed in \cite{ACS} that the functionals studied in this paper can be used to prove regularity for minimization problems involving the Dirichlet energy and a volume constraint. Therefore, in analogy with the almost-area minimizers, almost-minimizers to the functional in \eqref{eqn1.1} (and related functionals) have appeared in the study shape-optimization for functions of the Dirichlet eigenvalues of the Laplacian (see, e.g. \cite{MTV}), eigenvalue partition problems (see, e.g. \cite{STV}) and the stability for the Faber-Krahn inequality (see, e.g. \cite{FKstable}).  
 
 Let us also point out that the Alt-Caffarelli-type functionals considered here are the prototypical example of a free boundary problem in which the energy is non-convex. One interesting aspect of studying almost-minimizers is that they allow us to disentangle the behavior of minimizers from that of weak solutions (which can be thought of as critical points defined by the Euler-Lagrange equation). For functionals with convex energies (e.g. obstacle type problems), every critical point is a minimizer. By considering a non-convex functional this distinction becomes more salient (and interesting).

\ms
We consider a bounded domain $\O\subset\R^n$, $n \geq 2$, 
and study the the functional 
\begin{equation}\label{eqn1.1}
J(u)=\int_\O |\nabla u(x)|^2
+q^2_+(x)\chi_{\{u>0\}}(x) +q^2_-(x)\chi_{\{u<0\}}(x)\ dx,
\end{equation}
where $q_\pm \in L^\infty(\O)$ are two bounded real valued functions. 
We are especially interested in the  properties of the two sets
\begin{equation}\label{eqn1.2}
\Gamma^\pm(u)= \Omega \cap \p\{x\in \Omega \, ; \, \pm u(x)>0\},
\end{equation}
when $u$ is an almost-minimizer for $J$.

In \cite{AC}, Alt and Caffarelli proved free boundary regularity results
for minimizers in the following context. 
Let $\O \i \R^n$ be a bounded  Lipschitz domain and $q_+ \in L^\infty(\O)$ 
be given, set 
\begin{equation} \label{eqn1.3}
K_+(\O) =\left\{u\in L^1_{\loc}(\O) \, ; \, u(x) \geq 0 \mbox{ almost everywhere on } \O
\mbox{ and } \nabla u\in L^2(\O) \right\}
\end{equation} 
and 
\begin{equation} \label{eqn1.4}
J^+(u)=\int_\O|\nabla u|^2+q^2_+(x)\chi_{\{u>0\}}\ dx
\end{equation}
for $u\in K_+(\O)$, and let $u_{0} \in K_{+}(\O)$ be given, 
with $J^+(u_0)<\infty$.
They proved the existence of a function
$u\in K_+(\O)$ that minimizes $J^+$ among functions of $K_+(\O)$ such that 
\begin{equation} \label{eqn1.5}
u = u_{0} \mbox{ on } \p \O.
\end{equation}

Alt and Caffarelli also showed that the minimizers are Lipschitz-continuous up to the free boundary 
$\Gamma^+(u)$, and that if $q_+$ is H\"older-continuous and bounded away from zero, then 
\begin{equation} \label{eqn1.6}
\Gamma^+(u) =\p_\ast\{u>0\}\cup E, 
\end{equation}
where $\mathcal{H}^{n-1}(E)=0$ and $\p_\ast\{u>0\}$ 
is the reduced boundary of $\{x\in \Omega \, ; \, u(x)>0\}$ in $\Omega$.
They proved that $\p_\ast\{u>0\}$ locally coincides with a $C^{1,\alpha}$ submanifold of dimension $n-1$.

Later on, Alt, Caffarelli, and Friedman \cite{ACF} showed  
that if $\O$ is a bounded Lipschitz domain, $q_\pm\in L^\infty(\O)$, 
\begin{equation} \label{eqn1.7A}
K(\O) = \left\{ u\in L^1_\loc(\O) \, ; \, \nabla u\in L^2(\O) \right\}
\end{equation}
and $u_0\in K(\O)$, then there exists $u\in K(\O)$ that minimizes $J(u)$ under the constraint (\ref{eqn1.5}).
(See the proof of Theorem 1.1 in \cite{ACF}).
In fact, in \cite{ACF} they consider a slightly different functional,
for which they show that the minimizers are Lipschitz. 
They also prove optimal regularity results for the free boundary when $n=2$, 
and make important strides towards the higher dimensional cases.  
Later papers by \cite{CJK}, \cite{DeJ} and \cite{W}
present a more complete picture of the structure of the free boundary in higher dimensions.

\medskip
In this paper we study the regularity properties of the free boundary of
almost-minimizers for $J^+$ and $J$. We consider a domain $\O \i \R^n$, with $n\ge 2$,
and two functions $q_\pm \in L^\infty(\O)$. 
In the case of $J^+$ we assume that $q_-$ is identically equal to zero.
Set
\begin{equation} \label{eqn1.7}
K_{\loc}(\O) = \left\{u\in L^1_{\loc}(\O) \, ; \nabla u\in L^2(B(x,r))
\mbox{ for every open ball } B(x,r) \i \O \right\},
\end{equation}
\begin{equation} \label{eqn1.8}
K_{\loc}^+(\O) = \left\{u\in K_{\loc}(\O) \, ; u(x) \geq 0 
\mbox{ almost everywhere on } \O\right\}, 
\end{equation}
and let constants $\kappa \in (0,+\infty)$ and $\alpha \in (0,1]$
be given.

We say that $u$ is an {\it almost-minimizer} for $J^+$ in $\O$ 
(with constant $\kappa$ and exponent $\alpha$) if
$u\in K_{\loc}^+(\O)$ and
\begin{equation}\label{eqn1.9}
J^+_{x,r}(u)\le (1+\kappa r^\alpha)J^+_{x,r}(v)
\end{equation}
for every ball $B(x,r)$ such that $\overline B(x,r) \i \O$ and every  
$v\in L^1(B(x,r))$ such that  $\nabla v\in L^2(B(x,r))$ 
and $v=u$ on $\p B(x,r)$, where
\begin{equation}\label{eqn1.10}
J^+_{x,r}(v)=\int_{B(x,r)}|\nabla v|^2+q^2_+ \, \chi_{\{v>0\}}.
\end{equation}
Here, when we say that $v=u$ on $\p B(x,r)$, we mean
that they have the same trace on $\p B(x,r)$. 
Notice that if we set $v^+ = \max(v,0)$, then $v^+=u$ on $\p B(x,r)$
and $J^+(v^+) \leq J^+(v)$, so we can restrict ourselves to competitors $v\in K^+_{\loc}(\O)$. 
In this case we only care about
\begin{equation}\label{eqn1.10A}
\Gamma^+(u)= \Omega \cap \partial\{u>0\}. 
\end{equation}

Similarly, we say that $u$ is an almost-minimizer for $J$ in $\O$ 
if $u\in K_{\loc}(\O)$ and
\begin{equation}\label{eqn1.11}
J_{x,r}(u)\le (1+\kappa r^\alpha)J_{x,r}(v)
\end{equation}
for every ball $B(x,r)$ with $\overline B(x,r) \i \O$ and every  $v\in L^1(B(x,r))$ such that  
$\nabla v\in L^2(B(x,r))$ and $v=u$ on $\p B(x,r)$, where
\begin{equation}\label{eqn1.12}
J_{x,r}(v)=\int_{B(x,r)}|\nabla v|^2
+q^2_+ \, \chi_{\{v>0\}}
+ q^2_- \, \chi_{\{v<0\}} .
\end{equation}
 In this case we are interested in both sets $\Gamma^\pm(u)$ of \eqref{eqn1.2}.

\ms
In both cases we restrict our attention to 
$U = \big\{ x\in \Omega \, ; \, u(x) > 0 \big\}$ and $\Gamma^+(u) = \d U \cap \Omega$.
We assume that $q_+$ and $q_-$ are bounded and continuous on $\Omega$, that $q_+ \geq c_0 > 0$
on $\Omega$, and that either $q_- \geq c_0 > 0$ or $0 \leq q_- \leq q_+$ on $\Omega$, 
and we prove that $U$ is locally NTA (Non-Tangentially Accessible) in $\Omega$ (see Definition \ref{d2.3}
and Theorem \ref{t2.3}), and $\Gamma^+(u)$ is locally Ahlfors-regular 
and uniformly rectifiable; see Theorems \ref{t4.2} and \ref{t4.3}. 
The most challenging part of the argument is the construction of an Ahlfors-regular measure supported on $\Gamma^+(u)$.
It should be mentioned that, a priori, it was not even clear
that $\Gamma^+(u)$ should be $(n-1)$-dimensional.

\ms

For almost-minimizers of $J^+$, we can continue the study a little bit further, 
and generalize regularity results from \cite{AC}. 
We identify an open set ${\mathcal R} \subset \Gamma^+ (u)$ of regular points (see Definition \ref{defn:flatpoints}).
$\mathcal R$ has full measure in $\Gamma^+(u)$, and it is locally a $C^{1+\beta}$ sub-manifold provided $q_+ > c_0$ is H\"older-continuous (see Theorem \ref{main-reg-theo}). 

\begin{remark}\label{r:DeSilvaSavin}
While this paper was being reviewed, D. De Silva and O. Savin reprove in \cite{DeS} many of the results in \cite{DT} and in this paper using different methods. More precisely, the paper \cite{DeS} is the continuation of a program, began in \cite{quasi}, in which a viscosity approach is applied to almost-minimizers of several variational problems. The idea is that while almost-minimizers may not satisfy any pointwise equation, they exhibit what De Silva-Savin call ``two scale behavior". This allows them to prove a Harnack inequality and apply viscosity methods to prove Lipschitz continuity of almost-minimizers (as in \cite{DT}) and $\mathcal H^{n-1}$-almost everywhere $C^{1,\alpha}$-regularity of the free boundary (as in this paper). This approach is very interesting and we hope to investigate it further in the future. 
\end{remark}

\ms
We briefly outline the structure of the paper. In Section \ref{global} we prove that the positivity set $U$ of an almost-minimizer, $u$, is a locally NTA domain (see Theorem \ref{t2.3}). This is done via a compactness argument. Along the way we use the Alt-Caffarelli-Friedman monotonicity formula to show that the set where a Lipschitz global minimizer 
is positive is a connected set. We note that the recent preprints \cite{CSY} and \cite{MTV} prove that the positivity set of a \emph{minimizer} to the functional, \eqref{eqn1.4}, is an NTA domain (both papers cover the vectorial case) (see also \cite{KL}). Let us remark that these results, published while this paper was in preparation, are proven by different methods and neither imply nor are implied by our 
Theorem~\ref{t2.3}. 

\ms

In Section \ref{harmonic-functions}, we construct local subharmonic competitors, $h_{x_0, r}$. They will be the main tool in the subsequent arguments. Essentially, at every point $x_0 \in \Gamma^+(u)$ and every scale $r > 0$, we construct a function, $h_{x_0, r}$, which is subharmonic in $B(x_0,r)$, satisfies $h_{x_0,r}=0$ when $u=0$,
is harmonic in $B(x_0,r)\cap \{u >0\}$ and has the same trace as $u$ on $\partial (B(x_0,r)\cap \{u > 0\})$. In particular, we use the NTA properties of $\{u > 0\}$ to show that $h_{x_0,r}$ and $u$ are comparable up to $\Gamma^+(u)$ (Theorem \ref{t3.1}) with an error which is a power of $r$. This allows us to use $h_{x_0,r}$ to study the free boundary $\Gamma^+(u)$. 

\ms

In Section \ref{unif-rect} we use $h_{x_0,r}$ to show that the harmonic measure on $\Gamma^+(u)$ is Ahlfors-regular (Theorem \ref{t4.1}). A consequence of this is that $\Gamma^+(u)$ is uniformly rectifiable 
(and even,  
contains big pieces of Lipschitz graphs at every point and every scale), see Theorem \ref{t4.2}. In Section \ref{WeissMF}, we study a monotonicity formula due to Weiss \cite{W} and show that it is ``almost-monotone" for almost-minimizers (Theorem \ref{thm:monotonicityone}).

In Section \ref{Consequences}, we list several consequences of the monotonicity formula. Most significantly, we are able to measure how ``close" an almost-minimizer is to a half-plane solution by the value of the monotone quantity at small scales (Proposition \ref{flatpointsregularpoints}). We end the section by showing that at most points in the free boundary, $\Gamma^+(u)$, 
there is a well defined notion of normal derivative (and full gradient) for both the almost-minimizer, $u$, and the competitors, $h_{x_0, r}$. Finally, at small enough scales, these derivatives are comparable to one another, with an error that gets small with the scale (see Corollary \ref{cor6.1A}). 

In Section \ref{bdryreg} we finish the argument, 
modulo some computations on harmonic functions that we leave for Section \ref{appendix}.
We show that if $u$ is close to a half-plane solution (see Definition \ref{defn:flatpoint} for what ``close" means) in a ball, then an appropriately chosen $h_{x_0, r}$ is also close (Lemma \ref{uflat-hflat}). A quantified version of the ``improved flatness" argument of Alt-Caffarelli \cite{AC}, tells us that $h_{x_0, r}$ is even closer to a half-plane solution on a slightly smaller ball (see Corollary \ref{main-cor} and the rest of Section \ref{appendix}  for this quantified ``improved flatness" argument). We are then able to transfer the improved closeness of $h_{x_0,r}$ to $u$ on this smaller ball and iterate to conclude regularity of the free boundary, Theorem \ref{main-reg-theo}

In Section \ref{dimensionofSS}, we use the results of Section \ref{bdryreg} to prove bounds on the 
Hausdorff dimension of the singular set $\Gamma^+ \sm \mathcal R$ of the free boundary, for 
almost-minimizers to the one-phase problem. See Theorem \ref{thm:dimsingularset}.

\ms
\noindent {\bf Acknowledgements:} All the authors would like an anonymous referee whose careful reading and comments greatly improved this manuscript. The first two authors would like to express their gratitude to the Mathematics Department at the University of Washington where part of this work was carried forward.
This project was finished while the authors were visiting MSRI in Spring 2017, the authors would like to thank MSRI for its hospitality. The third author would also like to thank the Mathematics Department at UC Berkeley.

\section{Global minimizers and quantified connectedness}\label{global}

 As in \cite{CJK}, one of the key steps for the regularity of the free boundary for almost-minimizers is to get some control on global minimizers. 
 
In this section we show that if $u$ is a global minimizer, then $\{ u > 0 \}$ is connected, 
and use this to prove quantitative connectedness properties for almost-minimizers. While the methods are different the results concerning the connectivity of $\{ u > 0 \}$
are similar to those obtained 
in \cite{ACS} and \cite{DeJ1}.

 We first define global minimizers.
 Let $\lambda_\pm$ be constants such that 
 $0 \leq \lambda_- \leq \lambda_+<\infty$. 
 We think about the functionals $J$ and $J^+$ as defined by
\begin{equation}\label{eqn2.1}
J(v)=\int |\nabla v|^2
+\lambda_+^2 \, \chi_{\{v>0\}}
+ \lambda_- ^2\, \chi_{\{v<0\}}
\end{equation}
and (for nonnegative functions $v$)
\begin{equation}\label{eqn2.2a} 
J^+(v)=\int |\nabla v|^2
+\lambda_+^2 \, \chi_{\{v>0\}},
\end{equation}
but since both integrals on $\R^n$ are probably infinite, we only define the local
versions
 \begin{equation}\label{eqn2.3}
J_{x,r}(v)=\int_{B(x,r)} |\nabla v|^2
+\lambda_+^2 \, \chi_{\{v>0\}}
+ \lambda_-^2 \, \chi_{\{v<0\}}
\end{equation}
and 
\begin{equation}\label{eqn2.4} 
J_{x,r}^+(v)=\int_{B(x,r)} |\nabla v|^2
+\lambda_+^2 \, \chi_{\{v>0\}},
\end{equation}
where $B(x,r)$ is a ball in $\R^n$ and $v$ is any function of $L^1(B(x,r))$
such that $\nabla v\in L^2(B(x,r))$.
For $J^+$, we may also restrict our attention to nonnegative functions $v$, but this will not matter.

\begin{defn}\label{d2.1}
 We say that $u\in K_{\loc}(\R^n)$ is a global minimizer for $J$ if
\begin{equation}\label{eqn2.2}
J_{x,r}(u)\le J_{x,r}(v)
\end{equation}
for every ball $B(x,r)$ and every  $v\in L^1(B(x,r))$ such that
$\nabla v\in L^2(B(x,r))$ and $v=u$ on $\p B(x,r)$.
\end{defn}

\begin{defn}\label{d2.2}
 We say that $u\in K^+_{\loc}(\R^n)$ is a global minimizer for  $J^+$ if 
\begin{equation}\label{eqn2.5}
J^+_{x,r}(u)\le J^+_{x,r}(v)
\end{equation}
for every ball $B(x,r)$ and every nonnegative function $v\in L^1(B(x,r))$ such that  
$\nabla v\in L^2(B(x,r))$ and $v=u$ on $\p B(x,r)$.
\end{defn}

If we did not restrict to nonnegative $v\in L^1(B(x,r))$, we would get the same definition,
because the positive part $v^+$ of $v$ has the same trace as $u$ and is at least 
as good as $v$. The main result of this section is the following.

 \begin{theorem}\label{t2.1}
Let $v$ be a Lipschitz global minimizer for $J$ or $J^+$.
Then the sets $\{x\in \R^n \, ; \, v(x)>0\}$ and $\{x\in \R^n \, ; \, v(x)<0\}$) are 
(empty or) connected. 
\end{theorem}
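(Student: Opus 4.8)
The plan is to argue by contradiction: suppose $\{v>0\}$ (the argument for $\{v<0\}$ is symmetric, and only occurs for $J$) is nonempty but disconnected, so it has (at least) two connected components $V_1$ and $V_2$. The strategy is to use the Alt--Caffarelli--Friedman (ACF) monotonicity formula at a suitable boundary point to force a contradiction. Pick a point $x_0$ lying in $\partial V_1 \cap \partial V_2$ if such a point exists; more robustly, since $V_1$ and $V_2$ are both open and disjoint, one can find a point $x_0$ and a scale $r_0$ such that $B(x_0,r_0)$ meets $V_1$ and the boundary of $V_1$, yet on a definite portion of $B(x_0,r_0)$ the function $v$ vanishes (because we are outside $V_1$), or meets a second component. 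The cleanest setup: translate so that $x_0 = 0 \in \partial\{v>0\}$ is a point where two components come together, and consider the two functions $w_1 = v \cdot \chi_{V_1}$ and $w_2 = v\cdot\chi_{V_2}$ (or, in the two-phase case, $v^+$ and $v^-$). Since $V_1, V_2$ are distinct components of $\{v>0\}$, these are nonnegative, have disjoint supports, each vanishes continuously at $0$ (using that $v$ is Lipschitz, hence continuous), and each is subharmonic on $\R^n$ — this last point needs the almost/global minimizer property: inside $V_1$, $v$ is harmonic (it minimizes Dirichlet energy there since perturbations don't change $\chi_{\{v>0\}}$), so $w_1$ is harmonic in its support and subharmonic across $\partial V_1$ in the usual way.

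The key step is then to invoke the ACF monotonicity formula for the two disjointly-supported nonnegative subharmonic functions $w_1, w_2$: the quantity
\begin{equation}\label{acfquant}
\Phi(r) = \frac{1}{r^4}\left(\int_{B(0,r)} \frac{|\nabla w_1|^2}{|x|^{n-2}}\,dx\right)\left(\int_{B(0,r)} \frac{|\nabla w_2|^2}{|x|^{n-2}}\,dx\right)
\end{equation}
is monotone nondecreasing in $r$. Since $v$ is globally Lipschitz, each factor is $O(r^{n})$, so $\Phi(r) = O(r^{2n-4}/r^4) = O(r^{2n-8})$... more carefully, each integral is $\lesssim r^2$ by the Lipschitz bound, so $\Phi(r) \lesssim 1$ is bounded, hence $\Phi(0^+) =: \Phi_0$ exists and is finite. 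If $\Phi_0 > 0$, then $\Phi(r) \geq \Phi_0 > 0$ for all $r$; but one can let $r\to\infty$ and use a Liouville/blow-down argument — a global Lipschitz minimizer has controlled growth, and combined with $\Phi$ being bounded and monotone, $\Phi$ must in fact be constant, which by the equality case of the ACF formula forces $w_1, w_2$ to be homogeneous degree-one harmonic functions on complementary half-spaces; this contradicts the minimality (the one-phase or two-phase functional strictly prefers the genuine minimizer configuration — a standard competitor argument, or one invokes that $\{v>0\}$ would then be a half-space, hence connected). If instead $\Phi_0 = 0$, then one of the factors vanishes in the limit, which combined with monotonicity and a scaling argument forces $\nabla w_1 \equiv 0$ or $\nabla w_2 \equiv 0$ near $0$, i.e. $0$ is not actually in $\partial V_i$ for one of the $i$ — contradicting our choice of $x_0$.

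The main obstacle I expect is twofold. First, justifying that such a ``contact point'' $x_0 \in \partial V_1 \cap \partial V_2$ exists at all: two disjoint open connected components of an open set need not share boundary points, so one should instead argue that if $\{v > 0\}$ is disconnected then \emph{some} component $V_1$ has the property that $v$ vanishes on a set of positive density near some boundary point of $V_1$, and run ACF with $w_1 = v\chi_{V_1}$ against $w_2$ = (the harmonic-competitor error, or $v$ restricted to another component, or simply a positive multiple comparison). Actually the honest route here is: if $\{v>0\}$ is disconnected, there is a bounded component $V_1$ (or one can reduce to this), and then near $\partial V_1$ one plays off $w_1$ against the \emph{negative} part or against another component; in the one-phase setting one instead directly contradicts non-degeneracy/minimality by filling in $V_1$ with a competitor that lowers $J^+$. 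This is precisely the subtlety the authors flag by saying ``the methods are different'' from \cite{ACS}, \cite{DeJ1}. Second, extracting the contradiction from the equality case of ACF requires knowing the blow-up/blow-down limits of a global minimizer are themselves global minimizers (a compactness + lower-semicontinuity argument, likely established in \cite{DT} and available here), and that a global minimizer which is $1$-homogeneous and supported on a half-space is ``rigid'' — I would cite or reprove the classification of such cones. Handling the $J$ (two-phase) case and the $J^+$ (one-phase) case may require slightly different endgames: for $J^+$ the cleanest contradiction is a direct competitor that deletes a bounded positivity component, saving $\lambda_+^2|V_1|$ in area while not increasing the Dirichlet energy (harmonic replacement), contradicting minimality outright without ACF.
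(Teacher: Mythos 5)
You have the right central tool (the ACF monotonicity formula applied to the two truncations $\1_{V_1}v$ and $\1_{V_2}v$, which is exactly what the paper does), but the way you deploy it has genuine gaps. First, your setup hinges on a contact point $x_0\in\partial V_1\cap\partial V_2$, and as you yourself note such a point need not exist; the paper's key idea, which is missing from your sketch, is that no contact point is needed at all: one centers the ACF quantity at an arbitrary point (the origin) and works only at \emph{large} scales. Positivity of $\ell=\lim_{R\to\infty}\Phi(R)$ is then trivial, since a segment joining a point of $V_1$ to a point of $V_2$ forces $\nabla(\1_{V_1}v)$ and $\nabla(\1_{V_2}v)$ to be nonzero somewhere, and boundedness follows from the Lipschitz bound. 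Your small-scale branch is both unnecessary and incorrect: $\Phi(0^+)=0$ at a boundary point does not force $\nabla w_i\equiv 0$ near $0$ (degenerate points do this all the time), so that case analysis would not close the argument even at a genuine contact point.

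Second, the assertion that ``$\Phi$ bounded and monotone must in fact be constant'' is false; what is true, and what the paper proves, is that every blow-down limit $(f_\infty,g_\infty)$ has \emph{constant} ACF product equal to $\ell$, because $\phi_{f}(\lambda_i R)\to\phi_{f_\infty}(R)$ for each fixed $R$. That convergence is the technical heart of the proof: it requires showing there is no loss of Dirichlet energy along the blow-down, i.e.\ strong $L^2_{\mathrm{loc}}$ convergence of $\nabla f_{\lambda_i}$ to $\nabla f_\infty$, which the paper obtains from the minimality of $v$ via explicit competitors (truncating by $\varepsilon_i\varphi$) together with Theorem~9.1 of \cite{DT}; lower semicontinuity alone, which is all your ``compactness'' remark provides, gives only one inequality. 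With that in hand, the equality case of ACF (\cite{BKP}) yields $f_\infty=\alpha[\langle x,e\rangle-c]_+$, $g_\infty=\beta[\langle x,e\rangle-c]_-$ with $\alpha\beta>0$, so $v_\infty$ is a two-plane function which is not a global minimizer, contradicting the fact that blow-downs of global minimizers are global minimizers. Your proposed endgames do not substitute for this: ``$\{v>0\}$ would be a half-space, hence connected'' confuses the blow-down with $v$ itself, and the one-phase fallback of deleting a bounded positivity component fails because no component need be bounded, and an unbounded component cannot be removed by a competitor that agrees with $v$ on the boundary of a ball.
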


In general, global minimizers are merely locally Lipschitz (see \cite{AC} and  \cite{ACF}) 
thus the hypothesis that $v$ is Lipschitz in all of $\R^n$ is not redundant. However, the uniform limit of almost-minimizers,
which are the objects to which we will apply this result, are global minimizers which are Lipschitz in all of $\R^n$ (see Theorem 9.1 in \cite{DT}).  General uniform limits of almost-minimizers, as opposed to blowups, are not necessarily one-homogenous, which complicates the proof. However, by the maximum principle, each non-empty component of $\{\pm v > 0\}$ is unbounded. This combined with control at infinity given to us by the monotonicity formula of \cite{ACF} will allow us to rule out multiple components.

\begin{proof} 
Denote by $M$ the Lipschitz constant for $v$.
Suppose for instance that $\{x\in \R^n \, ; \, v>0\}$ is not connected, and let $U$ and $V$ be different connected components of this set. We consider the functions $f = \1_U v$ and $g = \1_V v$, 
which are both nonnegative and $M$-Lipschitz (because $v=0$ on $\d U$ and $\d V$). 
The product $fg$ is identically $0$, and $\Delta f, \Delta g \geq 0$ because $\Delta v = 0$ on
$\{ v>0\}$. This is enough to apply the monotonicity theorem of \cite{ACF}  that says that
$F(R) = \phi_f(R) \phi_g(R)$ is a nondecreasing function of $R$, where
\begin{equation}\label{a7}
\phi_f(R) = R^{-2} \int_{B(0,R)} {|\nabla f(x)|^2 \over |x|^{n-2}} dx
\ \text{ and } \ 
\phi_g(R) = R^{-2} \int_{B(0,R)} {|\nabla g(x)|^2 \over |x|^{n-2}} dx.
\end{equation}
Since $|\nabla f(x)| \leq M$, it is easy to see that $\phi_f(R) \leq C M$, and similarly
$\phi_g(R) \leq C M$; set
\begin{equation}\label{a8}
\ell = \lim_{R \to +\infty} F(R) = \lim_{R \to +\infty} \phi_f(R) \phi_g(R).
\end{equation}
Thus $\ell < +\infty$; let us check that $\ell > 0$, or equivalently that $F(R) > 0$ for some $R > 0$.
Pick $x \in U$ and $y\in V$; then $f(x) > 0$, $g(y) > 0$, and $f(y)=g(x)=0$. Thus $\nabla f \neq 0$
somewhere on $[x,y]$, and similarly for $\nabla g$. If $R$ is so large that $[x,y] \subset B(0,R)$,
then $F(R) > 0$. Thus $0 < \ell < +\infty$.

Next we will consider any blow-down limit of $v$, and at the same time $f$ and $g$.
For any $\lambda > 0$, define new functions $v_\lambda$, $f_\lambda$, and $g_\lambda$ by
\begin{equation}\label{a9}
v_\lambda(x)=\frac{v(\lambda x)}{\lambda}, f_\lambda(x)=\frac{f(\lambda x)}{\lambda},
g_\lambda(x)=\frac{g(\lambda x)}{\lambda},
\end{equation}
and notice that all these functions are $M$-Lipschitz too. By Arzela-Ascoli, we can find 
sequences $\{\lambda_i\}_i$ such that $\lim_{i \to +\infty} \lambda_i = +\infty$,
and the three sequences, $\{ v_{\lambda_i} \}$, $\{ f_{\lambda_i} \}$, and 
$\{ g_{\lambda_i} \}$ converge, uniformly on compact sets, to limits that we denote by
$v_\infty$, $f_\infty$, and $g_\infty$. We shall need to know that
\begin{equation} \label{a10}
v_\infty(x_0) = f_\infty(x_0) \ \ \text{ when } f_\infty(x_0) > 0. 
\end{equation}
And indeed, $f_\infty(x_0)$ is the limit of $f_{\lambda_i}(x_0)$, so
$f_{\lambda_i}(x_0) > 0$ for $i$ large, which means $f_{\lambda_i}(x_0)= v_{\lambda_i}(x_0)$
by definition, and, therefore, $v_\infty(x_0) = \lim_{i \to +\infty} v_{\lambda_i}(x_0) = f_\infty(x_0)$.

Next we want to check that for $R > 0$,
\begin{equation}\label{a11}
\lim_{i \to +\infty} \phi_f(\lambda_i R) 
= R^{-2} \int_{B(0,R)} {|\nabla f_\infty(x)|^2 \over |x|^{n-2}} dx
= \phi_{f_\infty}(R)
\end{equation}
(with the notation of \eqref{a7} for $f_\infty$).

To prove \eqref{a11} it suffices to show that $|\nabla f_{\lambda_i}|^2 \stackrel{*}{\rightharpoonup} |\nabla f_\infty|^2$ in $L^\infty$. This requires an elementary argument using integration by parts and the uniform boundedness of the $f_{\lambda_i}$ in $W^{1,\infty}$. However, this convergence actually happens strongly in $W^{1,2}_{\mathrm{loc}}$. See Remark \ref{r:thm2.1obs} for more details. 

The proof of \eqref{a11} also shows that
$\lim_{i \to +\infty} \phi_g(\lambda_i R) = \phi_{g_\infty}(R)$. 
We take the product and get that for $R>0$,
\begin{equation} \label{a20}
\phi_{f_\infty}(R) \phi_{g_\infty}(R) 
= \lim_{i \to +\infty} \phi_f(\lambda_i R)\phi_g(\lambda_i R)
= \ell,
\end{equation}
by \eqref{a8}. That is, the analogue of $F$ for the functions $f_\infty$ and $g_\infty$ is constant.
Notice that $f_\infty$ and $g_\infty$ satisfy the assumptions of the monotonicity formula
in \cite{ACF}, because they are Lipschitz and $f_\infty g_\infty=0$. A careful study of the equality case, done in \cite{BKP}, then shows that $f$ and $g$ have the very special form below.
Alternatively, this is also done with some detail (but roughly the same ideas) in 
Lemma 19.3 of \cite{DFJM}
(a paper that was started after this one, but was finished faster). The special form is the following.
There is a unit vector $e \in \R^n$, two positive constants $\alpha$ and $\beta$, and a constant 
$c \in \R$, such that
\begin{equation} \label{a21}
f_\infty (x) = \alpha [\langle x,e \rangle -c]_+ 
\ \text{ and } \ 
g_\infty (x) = \beta [\langle x,e \rangle -c]_-.
\end{equation}
The product $\alpha\beta$ is positive, because it is simply related to $\ell$ and $\ell >0$.
Then $(f_\infty+g_\infty)(x) > 0$ for all $x\in \R^n$ such that $\langle x,e \rangle \neq c$.
We know from \eqref{a10} that $f_\infty(x) = v_\infty(x)$ when $f_\infty(x) > 0$. Similarly, 
$g_\infty(x) = v_\infty(x)$ when $g_\infty(x) > 0$. We are left with $v_\infty(x) = (f_\infty+g_\infty)(x)$
almost everywhere, \eqref{a21} determines $v_\infty$, and it is easy to see
that $v_\infty$ is not a global minimizer. This contradicts Theorem 9.1 in \cite{DT}
(because $v_\infty$ is the limit of the minimisers $v_{\lambda_i}$).

So $\big\{ v(x) > 0 \big\}$ is connected; the fact that $\big\{ v(x) < 0 \big\}$ is connected too 
(when we work with $J$) is proved the same way. 
\end{proof}

\begin{remarks}\label{r:thm2.1obs}
We want to show that the convergence $f_{\lambda_i} \rightarrow f_{\infty}$ happens in the strong $W^{1,2}_{\mathrm{loc}}$ sense. In the first version of this manuscript we had a long argument for this fact, we would like to thank an anonymous referee for the considerably simplified version presented below. Note, perhaps surprisingly, that we do not need $\lambda_+$ or $\lambda_-$ to be positive here (and thus do not need such a condition anywhere in the proof of Theorem \ref{t2.1}). 

Recall that by Arzela-Ascoli we have that $f_{\lambda_i}$ converges uniformly (up to a subsequence which we relabel) to $f_\infty$ on compact sets and by integration by parts that $\nabla f_{\lambda_i}$ converges weak-star in $L^\infty_{\mathrm{loc}}$ to $\nabla f_\infty$. Again by weak star compactness we have that (a further relabeled subsequence of) $|\nabla f_{\lambda_i}|^2$ converges weak star in $L^\infty$ to something. Integrating by parts for a $\varphi$ supported compactly in $\{v_\infty > 0\}$ we get that  $$\int \varphi |\nabla f_{\lambda_i}|^2 = -\int f_{\lambda_i} \nabla \varphi \cdot \nabla f_{\lambda_i} = \frac{1}{2} \int f_{\lambda_i}^2 \Delta \varphi \stackrel{i\rightarrow \infty}{\rightarrow} \frac{1}{2}\int f_\infty^2 \Delta \varphi = \int |\nabla f_{\infty}|^2 \varphi,$$ where we have used that $\Delta f_{\lambda_i} = 0$ on the support of $\varphi$ for $i$ large enough. 

This shows that the weak star limit of $|\nabla f_{\lambda_i}|^2$ is almost everywhere equal to $|\nabla f_{\infty}|^2$ (as $\partial \{v_\infty > 0\}$ is a set of measure 0) which is enough for the convergence of the ACF monotonicity above. Then since $|x|^{2-n}$ is in $L^1_{\mathrm{loc}}$ we can write $$\begin{aligned} \|\nabla f_{\lambda_i} - \nabla f_\infty\|_{L^2(B(0,R))}^2 \leq& R^{n-2}\int_{B(0,R)}\frac{|\nabla f_{\lambda_i}- \nabla f_\infty|^2}{|x|^{n-2}}dx\\
=& R^{n}\left(\phi_{f}(\lambda_i R) + \phi_{f_\infty}(R) - \frac{2}{R^2} \int_{B(0,R)} \frac{\nabla f_{\lambda_i} \cdot \nabla f_{\lambda_\infty}}{|x|^{n-2}}dx\right).
\end{aligned}$$
Letting $i\rightarrow \infty$ and using the $L^\infty$ weak star convergence of $\nabla f_{\lambda_i}$, the local integrability of $\nabla f_{\lambda_\infty}|x|^{2-n}$ and the fact that $\phi_{f}(\lambda_i R) \rightarrow \phi_{f_\infty}(R)$ we get that the $\nabla f_{\lambda_i} \rightarrow \nabla f_{\infty}$ in $L^2_{\mathrm{loc}}$.
\end{remarks}

We now use Theorem \ref{t2.1} to find paths inside $\{v>0\}$ that connect two given 
points and don't get too close to the free boundary. In order to simplify notation, we will use $\ell(\gamma)$ to signify the length of a curve, $\gamma: [0,1]\rightarrow \R^n$. Eventually the existence of these paths will allow us to establish NTA conditions, but we start with a simpler result.

\begin{theorem}\label{t2.2}
Let $\Omega \subset \R^n$ be bounded 
and $q_\pm\in L^\infty(\Omega)\cap C(\overline\Omega)$ be given, and assume that $q_+\ge c_0>0$. 
Then, given $M>0$, and $\theta\in(0,1)$, if $u$ is an almost-minimizer for $J$ or $J^+$ in $\Omega$ with $\|\nabla u\|_{L^\infty(\Omega)}\le M$, there exists $C_0=C_0(M, \theta) > 0$ and $r_0 = r_0(M, \theta) > 0$ such that for $r\in (0, r_0)$ and $x,\ y \ \in \{u>0\}\cap \Omega$ with 
\begin{eqnarray}\label{eqn2.22}
\min\{\dist(x,\Omega^c),\dist(y,\Omega^c)\}&\ge& C_0 r\nonumber\\
|x-y|&\le& r\\
\min\{\delta(x),\delta(y)\}&\ge& \theta r,\nonumber
\end{eqnarray}
(where $\delta(\cdot)=\dist(\cdot, \Gamma^+(u))$ and $\Gamma^+(u)$ is as in
\eqref{eqn1.2}), there exists a curve, $\gamma:[0,1]\to \{u>0\}\cap \Omega$, satisfying
\begin{eqnarray}\label{eqn2.23}
\gamma(0)=x& \text{ and } & \gamma(1)=y\nonumber\\ 
\dist(\gamma([0,1]), \Gamma^+(u))&\ge& C_0^{-1} r \\ 
\ell(\gamma) &\le& C_0r\nonumber.
\end{eqnarray}
\end{theorem}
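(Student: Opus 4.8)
The plan is to argue by contradiction and compactness, using Theorem~\ref{t2.1} (connectedness of the positivity set of a globally Lipschitz global minimizer) together with the limiting theorem of \cite{DT} (Theorem 9.1 there). Suppose the statement fails for some $M$ and $\theta$. Then for each integer $k$ there are an almost-minimizer $u_k$ in $\O$ (with the fixed constants $\kappa,\alpha$) satisfying $\|\nabla u_k\|_{L^\infty}\le M$, a radius $r_k<1/k$, and points $x_k,y_k\in\{u_k>0\}\cap\O$ satisfying \eqref{eqn2.22} with $C_0=k$, but such that no curve satisfying \eqref{eqn2.23} with $C_0=k$ exists. I would rescale, setting $\tilde u_k(z)=r_k^{-1}u_k(x_k+r_k z)$: this is an almost-minimizer of the functional with coefficients $\tilde q_\pm(z)=q_\pm(x_k+r_kz)$, with almost-minimizing constant $\kappa r_k^\alpha\to 0$, on a domain containing $B(0,k)$, and it is still $M$-Lipschitz. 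After passing to a subsequence, $x_k\to x_\ast\in\overline\O$, the $\tilde q_\pm$ converge locally uniformly to the constants $\lambda_\pm=q_\pm(x_\ast)$ (with $\lambda_+\ge c_0>0$ and $0\le\lambda_-\le\lambda_+$, in the admissible range of Theorem~\ref{t2.1}), and by Arzel\`a--Ascoli $\tilde u_k\to u_\infty$ locally uniformly, where by Theorem 9.1 of \cite{DT} $u_\infty$ is an $M$-Lipschitz (on all of $\R^n$) global minimizer for the constant-coefficient functional with constants $\lambda_\pm$; also $\tilde y_k\to y_\infty$ with $|y_\infty|\le 1$. Note \eqref{eqn2.22} translates into $B(0,\theta)\subset\{\tilde u_k>0\}$ and $B(\tilde y_k,\theta)\subset\{\tilde u_k>0\}$.

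The first key point is a non-degeneracy lower bound: there is $c_1=c_1(n,c_0,M,\theta)>0$ with $\sup_{B(0,\theta/2)}\tilde u_k\ge c_1$ and $\sup_{B(\tilde y_k,\theta/2)}\tilde u_k\ge c_1$ for $k$ large. This is the usual Alt--Caffarelli competitor argument adapted to almost-minimizers: replacing $\tilde u_k$ by $\eta\,\tilde u_k$ on $B(0,\theta/2)$, with $\eta$ a cutoff vanishing on $B(0,\theta/4)$, lowers the $\tilde q_+^2\chi_{\{\cdot>0\}}$ term by at least $c_0^2\,\omega_n(\theta/4)^n$ (since $\tilde q_+\ge c_0$ and the positivity set loses $B(0,\theta/4)$) while raising the Dirichlet energy by at most $C(Mm\theta^{n-1}+m^2\theta^{n-2})$, where $m=\sup_{B(0,\theta/2)}\tilde u_k$; since the almost-minimality error $\kappa r_k^\alpha(\theta/2)^\alpha D$ is negligible for $k$ large ($D\le CM^2\theta^n$), almost-minimality forces $c_0^2\theta^2\le C''(Mm\theta+m^2)$, hence $m\ge c_1>0$. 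Passing to the limit, there are $p_0\in\overline{B(0,\theta/2)}$ and $p_1\in\overline{B(y_\infty,\theta/2)}$ with $u_\infty(p_0)\ge c_1$ and $u_\infty(p_1)\ge c_1$. In particular $\{u_\infty>0\}$ is a nonempty open set, so by Theorem~\ref{t2.1} it is connected, hence path-connected; fix a polygonal (finite-length) path $\gamma_\infty\subset\{u_\infty>0\}$ from $p_0$ to $p_1$.

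Finally I would transfer $\gamma_\infty$ back to a curve for $\tilde u_k$ and unscale. Since $\gamma_\infty$ is compact in the open set $\{u_\infty>0\}$, a closed $\rho$-neighborhood of it still lies in $\{u_\infty>0\}$ and there $u_\infty\ge 2\rho'>0$; by local uniform convergence, for $k$ large $\tilde u_k\ge\rho'>0$ on that neighborhood, so $\gamma_\infty\subset\{\tilde u_k>0\}$ and $\dist(\gamma_\infty,\Gamma^+(\tilde u_k))\ge\rho$ (because $\Gamma^+(\tilde u_k)=\partial\{\tilde u_k>0\}\subset\{\tilde u_k=0\}$). Prepending the segment $[0,p_0]$ (contained in $\overline{B(0,\theta/2)}\subset B(0,\theta)$, hence in $\{\tilde u_k>0\}$, and at distance $\ge\theta/2$ from $\Gamma^+(\tilde u_k)$) and appending $[p_1,y_\infty]$ and $[y_\infty,\tilde y_k]$ (contained in $\overline{B(y_\infty,\theta/2)}\subset B(\tilde y_k,\theta)$ for $k$ large, and at distance $\ge\theta/4$ from $\Gamma^+(\tilde u_k)$) yields a curve $\tilde\gamma_k$ from $0$ to $\tilde y_k$ inside $\{\tilde u_k>0\}$, staying at distance $\ge\rho_0:=\min(\rho,\theta/4)$ from $\Gamma^+(\tilde u_k)$, with length $\le L_0:=\ell(\gamma_\infty)+\theta$, where $\rho_0,L_0$ do not depend on $k$. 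Unscaling, $\gamma_k(t):=x_k+r_k\tilde\gamma_k(t)$ connects $x_k$ to $y_k$ in $\{u_k>0\}\cap\O$ (it lies in $B(x_k,L_0r_k)\subset B(x_k,kr_k)\subset\O$ once $k>L_0$), with $\dist(\gamma_k,\Gamma^+(u_k))\ge\rho_0 r_k$ and $\ell(\gamma_k)\le L_0 r_k$; for $k\ge\max(L_0,\rho_0^{-1})$ this contradicts the choice of $u_k,x_k,y_k$. (When $\dist(x_k,\Gamma^+(u_k))\ge kr_k$ — in particular when $\Gamma^+(u_k)=\emptyset$ — one has $B(0,k)\subset\{\tilde u_k>0\}$ and the straight segment from $x_k$ to $y_k$ already works, so this case can be dealt with first.)

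The step I expect to be the main obstacle is the non-degeneracy input: one must rule out that the $\tilde u_k$, while positive on the two fixed-size balls, collapse to $0$ there in the limit, which would let $u_\infty$ vanish on all of $B(0,\theta)$ and break the connectedness argument. The competitor estimate above handles this, and it is precisely where the structure of the functional — the assumption $q_+\ge c_0>0$ — enters; everything else is soft (compactness, the convergence theorem of \cite{DT}, and Theorem~\ref{t2.1}). A secondary point is that the limiting path $\gamma_\infty$ has length depending on $u_\infty$, with no a priori uniform control; but this is harmless in the contradiction scheme, since the putative bad constant $C_0=k$ is sent to infinity and eventually exceeds both $L_0$ and $\rho_0^{-1}$.
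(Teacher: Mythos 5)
Your overall scheme is the same as the paper's: argue by contradiction, rescale by $r_k$, pass to an $M$-Lipschitz global minimizer via Theorem 9.1 of \cite{DT}, invoke Theorem~\ref{t2.1} to get connectedness of the limit positivity set, and transfer a fixed path back at scale $r_k$. Two of your implementation choices differ from the paper and are legitimate: your cut-off competitor argument for non-degeneracy (the ball $B(0,\theta/4)$ lies entirely in $\{\tilde u_k>0\}$, so the saved volume term $c_0^2\omega_n(\theta/4)^n$ beats the $O(Mm\theta^{n-1}+m^2\theta^{n-2})$ energy cost plus the $o(1)$ almost-minimality error) replaces the paper's appeal to Theorem 10.2 of \cite{DT}, and your ``$u_\infty\ge 2\rho'$ on a closed $\rho$-neighborhood of $\gamma_\infty$, hence $\tilde u_k\ge\rho'$ there'' step replaces, more simply, the paper's Hausdorff-convergence argument for the sets $\Lambda_k$.

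There is, however, a genuine gap at the compactness step. You blow up at $x_k$ itself, so $\tilde u_k(0)=u_k(x_k)/r_k$, and Arzel\`a--Ascoli needs this to be bounded; the $M$-Lipschitz bound alone does not provide it, and your preliminary reduction only removes the case $\delta(x_k)\ge k r_k$. Nothing excludes, say, $\delta(x_k)\sim\sqrt{k}\,r_k$ with $u_k(x_k)\sim M\sqrt{k}\,r_k$, in which case $\tilde u_k(0)\to\infty$ and no finite locally uniform limit $u_\infty$ exists, so the step as written fails. The repair is your own segment argument with the right threshold: if $\delta(x_k)\ge 2r_k$, every point of $[x_k,y_k]$ lies in $\Omega$, at distance $\ge\delta(x_k)-r_k\ge r_k\ge r_k/k$ from $\Gamma^+(u_k)$, and the segment has length $\le r_k\le kr_k$, so it already satisfies \eqref{eqn2.23} with $C_0=k$ and contradicts the choice of $(u_k,x_k,y_k)$. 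Hence one may assume $\delta(x_k)\le 2r_k$ (this is exactly Remark~\ref{rmk2.1}(2), and it is why the paper blows up at a free boundary point $\overline x_k$ with $|x_k-\overline x_k|\le 2r_k$). With that reduction the nearest point of $\Gamma^+(u_k)$ lies in $\overline B(x_k,2r_k)\subset\Omega$, where $u_k$ vanishes, so $u_k(x_k)\le M\delta(x_k)\le 2Mr_k$, i.e. $\tilde u_k(0)\le 2M$, and the rest of your argument goes through. Two harmless slips: the assertion $0\le\lambda_-\le\lambda_+$ is not implied by the hypotheses of the theorem, but Theorem~\ref{t2.1} does not require it; and your length bookkeeping gives $\ell(\gamma_\infty)+2\theta$ rather than $+\theta$, which changes nothing.
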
 

\begin{remarks}\label{rmk2.1}
\begin{enumerate}
\item The reader is possibly surprised that we require a full Lipschitz control of $u$ on $\Omega$
(and maybe to a lesser extent, that $q_\pm$ is continuous on the whole $\d\Omega$), 
but this is no more than a way to assert that we do not look for a control near $\d\Omega$.
Indeed, if $\|\nabla u\|_{L^2(\Omega)}< \infty$, for instance, then $u$ is Lipschitz on every compact subset
of $\Omega$ (see Theorems~5.1 and 8.1 in \cite{DT}); so we can apply Theorem \ref{t2.2} to any relatively
compact subdomain of $\Omega$.

\item The statement is more difficult to prove (and hence we expect a larger $C_0$) when 
$\theta \in (0,1)$ is small. Also, if $x,\ y$ are as in Theorem \ref{t2.2} and 
$\min\{\delta(x),\delta(y)\}\ge 2r$, then since $|x-y|\le r$ the segment 
joining $x$ to $y$ satisfies \eqref{eqn2.23}.
Thus in the proof of Theorem \ref{t2.2} we will assume $\min\{\delta(x),\delta(y)\}\le 2r$.

\item If $\gamma$ is as in Theorem \ref{t2.2}, since $\ell(\gamma)\le C_0 r$ 
then $\diam\gamma\le C_0 r$ and we get that for $z\in\gamma([0,1])$,
\begin{equation}\label{eqn2.24}
\delta(z)\ge\frac{r}{C_0}\ge \frac{\diam\gamma}{C_0^2}\ge  \frac{|z-x|}{C_0^2}
\end{equation}
\item In our statement $C_0$ and $r_0$ depend on our choice of $\Omega$, $q_+$, and $q_-$,
but what really matters is to have the lower bound, $c_0$, on $q_+$ and 
a (uniform) modulus of continuity for $q_+$, and $q_-$ on $\Omega$; 
the proof would be almost be the same as below, except that we would also let $\Omega$, 
$q_+$, and $q_-$, vary along our contradiction sequence. 
We will not need this remark, and in fact we only need Theorem \ref{t2.3} below.
\end{enumerate}
\end{remarks}

\begin{proof}
We proceed by contradiction, 
using a limiting argument as well as the information we have about global minimizers. 
Let $\Omega$, $q_+$, $q_-$, $M$, and $\theta\in (0,1)$ be given, and 
suppose that for all $k\in\NN$ there exist an almost-minimizer, $u_k$, 
 for $J$ (resp. $J^+$) in $\Omega$ such that $\|\nabla u_k\|_{L^\infty(\Omega)}\le M$, 
 a sequence $\{r_k\}$ with $\lim_{k\to\infty} r_k=0$, 
 and points $x_k,\ y_k\in\{u_k>0\}\cap \Omega$ such that 
\begin{eqnarray}\label{eqn2.27}
\min\{\dist(x_k,\Omega^c),\dist(y_k,\Omega^c)\}&\ge& 2^k r_k\nonumber\\
|x_k-y_k|&\le& r_k\\
\theta r_k&\le& \min\{\delta(x_k),\delta(y_k)\} \nonumber 
\end{eqnarray} 
and for any curve $\gamma_k:[0,1]\to \{u_k>0\}$ with $\gamma_k(0)=x_k$ and $\gamma_k(1)=y_k$, either 
\begin{equation}\label{eqn2.28}
\dist(\gamma_k(t),\Gamma^+(u_k))<\frac{r_k}{2^k}\quad\hbox{   for some   }t\in[0,1]
\end{equation}
 or
 \begin{equation}\label{eqn2.29}
 \ell(\gamma_k)>2^kr_k.
 \end{equation}
We may assume that $\delta(x_k)\le \delta(y_k)$. 
Pick $\overline{x}_k\in\Gamma^+(u_k)$ such that $|x_k-\overline{x}_k|=\delta(x_k)$. 
As mentioned in the Remark \ref{rmk2.1}, $|x_k-\overline{x}_k| \leq 2 r_k$, because otherwise the
segment $[x_k,y_k]$ would yield a curve $\gamma_k$ for which \eqref{eqn2.28} and \eqref{eqn2.29}
fail. Thus $B(\overline{x}_k, 2^{k-1}r_k)\subset\Omega$ for $k \geq 2$, by \eqref{eqn2.27}.
Let us restrict to $k\geq 2$ and set 
\begin{equation}\label{eqn2.30}
v_k(x)=\frac{u(r_k x+\overline{x}_k)}{r_k} \quad\hbox{ for } x\in B(0,2^{k-1}).
\end{equation}
By assumption $||\nabla u_k||_\infty \leq M$, so $v_k$ is $M$-Lipschitz on $B(0,2^{k-1})$. 
Also, $u_k$ vanishes at $\overline x_k \in \Gamma^+(u_k)$, hence $v_k(0)=0$.
Modulo passing to a subsequence (which we immediately relabel) we may assume that 
$\{ v_k \}$ converges, uniformly on compact subsets of $\R^n$, to an $M$-Lipschitz function $v_\infty$.

Since $\Omega$ is bounded, we may also assume that 
$\lim_{k\to\infty}\overline{x}_k =\overline x_\infty\in\overline\Omega$. 
Set $q_\pm^k(x):= q_\pm(r_kx +\overline x_k)$; we have the same $L^\infty$ bounds on 
the $q_\pm^k$ as on $q_\pm$, and since $q_\pm$ is continuous on $\overline\Omega$,
$\{ q_\pm^k \}$ converges to the constant $q_\pm(\overline x_\infty)$, uniformly on compact
subsets of $\R^n$. This is where, if we wanted to prove that $C_0$ does not depend on $\Omega$
or the $q_{\pm}$, we would use a uniform modulus of continuity and get that $\{ q_\pm^k \}$ 
converges to a constant.

Each $v_k$ is an almost-minimizer for $J_k$ (resp. for $J^+_k$) in $B(0,2^{k-1})$,
corresponding to the functions $q_\pm^k$ (and the constant $r_k^{\alpha}\kappa$). 
Theorem 9.1 and (the proof of) Theorem 9.2 in \cite{DT} 
ensure that $v_\infty$ is a global minimizer of $J_\infty$ (resp. $J^+_\infty$) in $\R^n$,
associated to the constants $\lambda_\pm = q_\pm(\overline x_\infty)$, as in 
Definition~\ref{d2.1} or \ref{d2.2}. It is also $M$-Lipschitz, so we may apply Theorem~\ref{t2.1}
to it. We get that $\{ v_\infty > 0 \}$ is connected.

We now compare $\{ v_\infty > 0 \}$ to the sets $\{ u_k > 0 \}$.
This is the place in the argument where we will use our assumption that $q_+ \geq c_0 >0$,
through the non-degeneracy of $u_k$ and $v_\infty$.
We may assume, at the price of an additional almost-minimizers extraction, that
the sets
\begin{equation}\label{eqn2.32}
\Lambda_k=\overline{\{v_k>0\}}=\overline{\frac{1}{r_k}\big(\{u_k>0\} -\overline{x_k}\big)}
\end{equation}
converge, in the Hausdorff distance on every compact subset of $\R^n$, to some
(closed) set $\Lambda_\infty$. Let us check that 
\begin{equation}\label{a2.33}
{\rm{int}}\, \Lambda_\infty=\{v_\infty>0\}.
\end{equation}
If $p\in \rm{int}\, \Lambda_\infty$ there is $s\in(0,1)$ such that $B(p,s)\subset \Lambda_\infty$. 
Thus for $k$ large enough 
$B(p,s/2)\subset \Lambda_k=\frac{1}{r_k}\left(\{u_k>0\} -\overline{x_k}\right)$.
That is, $B_k := B(r_kp+\overline{x_k}, sr_k/2)\subset \{u_k>0\}$.
Recall that $B(\overline{x}_k, 2^{k-1}r_k)\subset\Omega$; thus for $k$ large, $B_k$ 
lies well inside $\Omega$, where we also know that $u_k$ is $M$-Lipschitz; 
then Theorem 10.2 in \cite{DT} ensures that there is $\eta > 0$ such that for $k$ large,
$u_k(r_kp+\overline{x_k})\ge \eta sr_k/2$. 
Thus $v_k(p)\ge \eta s/2$ for all $k$ large, which implies that $v_\infty(p)\ge \eta s/2$
and $p \in \{v_\infty>0\}$.

Conversely, let $p\in  \{v_\infty>0\}$ be given. Then for $k$ large enough $v_k(p)\ge v_\infty(p)/2$.
Set $B = B(p, v_\infty(p)/(4M))$; since $v_k$ is $M$-Lipschitz, we also get that 
$v_k(q)\ge v_\infty(p)/4$ for $q\in B$. 
That is, $u_k(r_kq+\overline{x_k})\ge v_\infty(p)r_k/4$. 
Hence $r_kq+\overline{x_k}\subset \{u_k>0\}$ and 
$q\in \Lambda_k=\frac{1}{r_k}\big(\{u_k>0\} -\overline{x_k}\big)$.  
Thus $B \subset \Lambda_k$ for $k$ large, and it follows that $p \in {\rm{int}}\, \Lambda_\infty$;
 \eqref{a2.33} follows.
 
 \ms
 
Next consider the points $x'_k = r_k^{-1}(x_k-\overline{x_k})$ and $y'_k = r_k^{-1}(y_k-\overline x_k)$.
Notice that $|x'_k-y'_k| \leq 1$ by \eqref{eqn2.27}, and
$|x'_k| = r_k^{-1} |x_k-\overline x_k| \leq 2$ (see below \eqref{eqn2.29}). 
Thus we can assume, modulo extracting a new subsequence, that $\{ x'_k \}$ converges to some point
$x'\in \overline B(0,2)$ and $\{ y'_k \}$ converges to $y'\in \overline B(0,3)$.
Moreover, by \eqref{eqn2.27}
\begin{equation}\label{a2.34}
\theta r_k \leq \delta(x_k) = \dist(x_k, \Gamma^+(u_k))
= \dist(x_k, \{ u_k \leq 0 \})
= r_k \dist(x'_k, \R^n \sm \Lambda_k)
\end{equation}
because $\dist(x_k,\R^n \sm \Omega) \geq 2^k r_k$ is much larger than $\delta(x_k)$,
and by \eqref{eqn2.32}. Thus for $z\in B(x',\theta/2)$, we get that for $k$ large
$$
\dist(z, \R^n \sm \Lambda_k) \geq \dist(x'_k, \R^n \sm \Lambda_k) - |z-x'|
\geq \dist(x'_k, \R^n \sm \Lambda_k) - {2 \theta \over 3} \geq {\theta \over 3},$$
hence by \eqref{a2.33} $B(x',\theta/2) \subset {\rm{int}}\, \Lambda_\infty=\{v_\infty>0\}$.
By the same proof, $B(y',\theta/2) \subset \{v_\infty>0\}$.

By Theorem \ref{t2.1}, $\{v_\infty>0\}$ is connected, hence there is a path
$\wt\gamma :[{1 \over 3},{2\over 3}]\to \{v_\infty>0\}$, with $\wt\gamma(0)=x'$, and $\wt\gamma(1)=y'$. 
We may even assume (since $\{v_\infty>0\}$ is open) that $\wt\gamma$ is smooth, and in particular
it is $L$-Lipschitz for some $L > 0$.
Also, set $\tau = \dist(\wt\gamma([{1 \over 3},{2\over 3}]), \{v_\infty \leq 0\})$; then $\tau > 0$ because
$\wt\gamma([0,1])$ is compact and $\{v_\infty \leq 0\}$ is closed.

For $k$ large, we can complete $\wt\gamma$ by adding a small segment from $x'_k$ to $x'$ at one end,
and another one from $y'$  to $y'_k$ at the other end; we get a new path 
$\wt \gamma_k : [0,1] \to \{v_\infty>0\}$, 
whose length is $\ell(\wt\gamma_k) \leq L+1$ (for $k$ large),
and such that $\dist(\wt\gamma_k([0,1]), \{v_\infty \leq 0\}) \geq \tau/2$.
Finally set $\gamma_k(t) = \overline x_k + r_k \wt\gamma_k(t)$ for $t \in [0,1]$; we want to show that,
for $k$ large, the existence of $\gamma_k$ violates our initial definitions.

First of all, $\gamma_k(0) = \overline x_k + r_k x'_k = x_k$, and 
$\gamma_k(1) = \overline x_k + r_k y'_k = y_k$. Next let us check that for $k$ large,
\begin{equation}\label{a2.35}
\dist(\gamma_k(t), \Gamma^+(u_k)) \geq \tau/4 \ \text{ for } t\in [0,1],
\end{equation}
and hence \eqref{eqn2.28} fails. Notice that
\begin{eqnarray}\label{a2.36}
\dist(\gamma_k(t), \{ u_k \geq 0 \}) 
&=& r_k \dist(\wt\gamma_k(t), \{ v_k \geq 0 \})
= r_k \dist(\wt\gamma_k(t), \R^n \sm \Lambda_k)
\nonumber \\
&\geq& r_k [\dist(\wt\gamma_k(t), \R^n \sm \Lambda_\infty)-\tau/4]
\end{eqnarray}
by \eqref{eqn2.30} and \eqref{eqn2.32}, and because $\Lambda_\infty$ is the limit of the $\Lambda_k$.
Now $\R^n \sm \Lambda_\infty \subset \{v_\infty \leq 0\}$ by \eqref{a2.33}, so
$\dist(\wt\gamma_k(t), \R^n \sm \Lambda_\infty) \geq 
\dist(\wt\gamma_k(t), \{v_\infty \leq 0\}) \geq \dist(\wt\gamma_k([0,1]), \{v_\infty \leq 0\})
\geq \tau/2$ and so $\dist(\gamma_k(t), \{ u_k \geq 0 \}) \geq \tau/4$. So it is enough to check 
that $\dist(\gamma_k(t), \Gamma^+(u_k)) = \dist(\gamma_k(t), \{ u_k \geq 0 \})$,
or equivalently that  $\dist(\gamma_k(t), \R^n \sm \Omega) > \dist(\gamma_k(t), \{ u_k \geq 0 \})$
(recall the definition \eqref{eqn1.2} and that $u_k(\gamma_k(t)) > 0$).
But $|\wt\gamma_k(t)-x'_k| \leq \ell(\wt\gamma_k)\leq L+1$ because $x'_k= \wt\gamma_k(0)$,
hence $|\gamma_k(t)- x_k| \leq (L+1) r_k$ (because $x_k = \overline x_k + r_k x'_k$), while on the 
other hand $\dist(x_k, \R^n \sm \Omega) \geq 2^k r_k$ by \eqref{eqn2.27}; this proves \eqref{a2.35}
and the failure of \eqref{eqn2.28}.

But \eqref{eqn2.29} also fails for $k$ large, because 
$\ell(\gamma_k) = r_k \ell(\wt\gamma_k) \leq (L+1) r_k$; this contradiction completes our proof 
of Theorem \ref{t2.2}. 
\end{proof}  

We now use Theorem \ref{t2.2} to  prove that, under suitable  assumptions, the open set,
$\{ u > 0 \}$, is a {\bf locally-NTA open set} in $\Omega$. We need some definitions, which are just local versions
of the standard definitions for the Non-Tangentially Accessible (NTA) domains of \cite{JK}. Here $U$ will be a bounded open set,
and since we are thinking of $U = \Omega \cap \{ u > 0 \}$ for some almost-minimizer $u$,
let us not require $U$ to be connected.

Let us first define {\bf corkscrew points} for $U$. Let $z\in \d U$ and $r > 0$. We say that $x$ is a corkscrew point for
$B(z,r)$ (relative to $U$), with constant $C_1 \geq 1$, when $x\in U \cap B(z,r/2)$ and
$\dist(x,\d U) \geq C_1^{-1} r$. We say that $y$ is a corkscrew point for
$B(z,r)$, relative to $\R^n \sm U$ and with constant $C_1 \geq 1$, when $y\in B(z,r/2) \sm U$ and
$\dist(y, \partial U) \geq C_1^{-1} r$.

Finally, given $x,y \in U$, a {\bf Harnack chain} from $x$ to  $y$, of length $N \geq 1$ and constant $C_2 > 1$,
is a collection, $B_1, \ldots, B_N$, of balls, such that $x \in B_1$, $y\in B_N$, 
$B_{j+1} \cap B_j \neq \emptyset$ for $1 \leq j \leq N-1$, and 
\begin{equation}\label{a237}
C_2^{-1} \diam(B_j) \leq \dist(B_j,\d U) \leq C_2 \diam B_j
\ \text{ for } 1 \leq j \leq N.
\end{equation}

\begin{defn}\label{d2.3}
Let $\Omega \subset \R^n$ and $U \subset \Omega$ be open sets.
We say that $U$ is locally NTA in $\Omega$ when for each compact set $K \subset \Omega$,
we can find $r_1 > 0$, and $C_1, C_2$, and $C_3 \geq 1$, such that 
\begin{enumerate}
\item For $x\in K \cap \d U$ and $0 < r \leq r_1$, there is a corkscrew point for $B(x,r)$, 
relative to $U$ and with constant $C_1$;
\item For $x\in K \cap \d U$ and $0 < r \leq r_1$, there is a corkscrew point for
$B(x,r)$, relative to $\R^n \sm U$ and with constant $C_1$;
\item For $x,y\in K \cap U$, with $|x-y| \leq r_1$, and $\ell \in \NN$ such that
$\min( \dist(x,\d U), \dist(y, \d U) \geq 2^{-\ell} |x-y|$, there is a Harnack chain from $x$ to  $y$, 
of length $N \leq C_3 \ell + 1$ and with constant $C_2$.
\end{enumerate}
\end{defn}

Notice that nothing prevents $U$ from having more than one connected component, but if this happens, 
the components must be distance greater than $r_1$ from each other inside any compact set, $K$. We are ready to state the local NTA property
of $U = \{ x > 0 \}$ for almost-minimizers for $J$ and $J_+$.

\begin{theorem}\label{t2.3}
Let $q_\pm\in L^\infty(\Omega)\cap C(\Omega)$ with $q_+\ge c_0>0$, and  let 
$u$ be an almost-minimizer for $J$ or $J^+$ in $\Omega$.
If $u$ is an almost-minimizer for $J$, assume in addition that $0 \leq q_- \leq q_+$ on $\Omega$,
or that $q_- \geq c_0 > 0$ on $\Omega$.
Then $U = \{ x\in \Omega \, ; \, u(x) > 0 \}$ is locally NTA in $\Omega$.
\end{theorem}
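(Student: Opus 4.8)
The plan is to verify the three defining properties of a locally NTA domain (Definition \ref{d2.3}) for $U = \Omega \cap \{u > 0\}$, fixing once and for all a compact set $K \subset \Omega$ and working at scales $r \le r_1$, where $r_1$ is chosen small enough that $B(x, C_0 r) \subset \Omega$ for all $x \in K$ (here $C_0$ is the constant from Theorem \ref{t2.2}), and using throughout that $u$ is Lipschitz on a neighborhood of $K$ with some constant $M$ (by Theorems 5.1 and 8.1 of \cite{DT}), so that Theorem \ref{t2.2} applies. The two corkscrew conditions are essentially soft consequences of the non-degeneracy of almost-minimizers and their complements, while the Harnack chain condition is where Theorem \ref{t2.2} does the real work.

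First I would establish the interior corkscrew condition (1). Fix $x \in K \cap \d U$ and $0 < r \le r_1$. Since $x \in \d\{u>0\}$, there are points of $\{u>0\}$ arbitrarily close to $x$, so pick $z \in B(x, r/4) \cap \{u > 0\}$. By the non-degeneracy of almost-minimizers from below (Theorem 10.2 in \cite{DT}), together with the Lipschitz bound, $u$ restricted to any ball $B(w,\rho) \subset \Omega$ with $u(w) > 0$ has $\sup_{B(w,\rho)} u \ge \eta \rho$ for a dimensional-type constant $\eta$; applied along a chain of balls this forces $\delta(x') \ge c r$ for some point $x' \in B(x,r/2)$, i.e. a corkscrew point relative to $U$ with constant $C_1 = c^{-1}$. (Alternatively, and more simply: if no such corkscrew point existed, then $\{u>0\} \cap B(x, r/2)$ would be contained in a thin neighborhood of $\d U$, contradicting the measure-theoretic density lower bound for $\{u>0\}$ at free boundary points, which also follows from non-degeneracy plus Lipschitz continuity.) For the exterior corkscrew condition (2), I would use the analogous density lower bound for the complement $\{u \le 0\}$ (or $\{u<0\} \cup \{u = 0\}$) at free boundary points — this is standard for Alt--Caffarelli-type functionals and for almost-minimizers follows from the fact that if $\{u>0\}$ occupied too large a fraction of $B(x,r)$, one could compare $u$ with its harmonic replacement and lower the energy by more than $\kappa r^\alpha J_{x,r}$, violating almost-minimality; this is exactly the type of estimate proved in \cite{DT}. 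Both corkscrew constants depend only on $n$, $M$, $c_0$, $\kappa$, $\alpha$, and a modulus for $q_\pm$.

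Next comes the Harnack chain condition (3), which is the heart of the matter. Given $x, y \in K \cap U$ with $|x-y| \le r_1$ and $\ell \in \NN$ with $\min(\delta(x), \delta(y)) \ge 2^{-\ell}|x-y|$, set $r = |x-y|$ (if $x=y$ there is nothing to do), and let $\theta = 2^{-\ell}$, so that the hypotheses \eqref{eqn2.22} of Theorem \ref{t2.2} hold (with $r_1$ chosen below $r_0(M,\theta)$ and small enough relative to $\dist(K, \Omega^c)$ that $\min\{\dist(x,\Omega^c),\dist(y,\Omega^c)\} \ge C_0 r$). However, the subtlety is that the constants $C_0, r_0$ in Theorem \ref{t2.2} depend on $\theta$, hence on $\ell$, which is not allowed in an NTA Harnack chain estimate (we need the chain length to be $\le C_3 \ell + 1$ with $C_3$ uniform). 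I would resolve this by a standard dyadic subdivision: it suffices to find the chain when $\delta(x), \delta(y) \ge 2r$ — in which case the segment $[x,y]$ itself stays $r$-far from $\d U$ and gives a chain of bounded length with a fixed constant — and to reduce the general case to this by moving $x$ and $y$ toward $\d U$ along a dyadic sequence of points $x = x_0, x_1, \dots, x_{\ell}$ with $\delta(x_j) \approx 2^j \delta(x)$ and $|x_j - x_{j+1}| \lesssim \delta(x_j)$, applying Theorem \ref{t2.2} with a \emph{fixed} $\theta$ (say $\theta = 1/4$) at each step to connect consecutive $x_j$'s by short bounded-length curves, and similarly for $y$; concatenating yields a curve from $x$ to $y$ of the required type, and covering that curve by balls of radius comparable to their distance to $\d U$ (possible because the curve's distance to $\d U$ is controlled by its distance to the endpoints via \eqref{eqn2.24}) produces a Harnack chain of length $O(\ell) + O(1)$ with uniform constant $C_2$. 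I expect this bookkeeping — propagating Theorem \ref{t2.2} from a single scale to a telescoping dyadic family and converting curves into Harnack chains with the correct logarithmic length bound — to be the main obstacle, though it is purely a matter of careful chaining rather than new ideas.

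Finally, I would check the clause in Definition \ref{d2.3} about different connected components of $U$ being separated: if two points of $U \cap K$ in distinct components were within $r_1$ of each other with $\min(\delta(x),\delta(y))$ not too small, the curve produced by Theorem \ref{t2.2} would join them inside $\{u>0\}$, a contradiction; and if $\min(\delta(x),\delta(y))$ is small one reduces to nearby points with larger $\delta$ as above. This completes the verification of all three properties, so $U$ is locally NTA in $\Omega$. Throughout, the only inputs beyond Theorem \ref{t2.2} are the interior/exterior non-degeneracy and density estimates for almost-minimizers, all of which are available from \cite{DT}.
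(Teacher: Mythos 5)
Your proposal follows essentially the same route as the paper: corkscrew points from the non-degeneracy estimates of \cite{DT}, and Harnack chains built by running Theorem \ref{t2.2} with a \emph{fixed} $\theta$ along a dyadic family of intermediate (corkscrew) points interpolating between the scale $\delta(x)$ and the scale $|x-y|$, then covering the concatenated curve by balls; this is exactly how the paper handles the fact that $C_0(\theta)$, $r_0(\theta)$ blow up as $\theta\to0$, and your chain-length bookkeeping ($O(\ell)+O(1)$ curves, each covered by a bounded chain) matches the paper's.

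The one substantive weak spot is the exterior corkscrew step for almost-minimizers of $J$. Your sketched justification (if $\{u>0\}$ fills too much of $B(x,r)$, compare with the harmonic replacement and beat almost-minimality) is the one-phase argument; in the two-phase case the replacement can convert negative phase into positive phase, and the volume term only behaves if $0\le q_-\le q_+$, or else one must use non-degeneracy of the negative phase, which needs $q_-\ge c_0$. This is precisely where the extra hypothesis on $q_-$ in the statement is consumed — the paper gets the exterior corkscrew from Proposition 10.3 of \cite{DT}, whose applicability rests on conditions (10.52)/(10.53) of Lemma 10.5 there, and it notes this is the \emph{only} place the hypothesis is used (without it one still gets the interior properties). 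Your write-up never locates the use of this hypothesis and even asserts the corkscrew constants depend only on $n$, $M$, $c_0$, $\kappa$, $\alpha$ and a modulus for $q_\pm$; to make the argument correct as stated you should route this step through Proposition 10.3 of \cite{DT} (or redo its proof), invoking the assumption $0\le q_-\le q_+$ or $q_-\ge c_0$ explicitly. With that repair, the rest of your verification (interior corkscrews from Theorem 10.2 of \cite{DT} plus the Lipschitz bound, the fixed-$\theta$ dyadic chaining, and the component-separation remark) is sound and coincides with the paper's proof.
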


The main ingredient will be Theorem \ref{t2.2}, together with non-degeneracy estimates for $u$ and some geometry.
We could ask for more precise estimates, in particular concerning the way that $r_1$ and the NTA constants for $K$
depend on $c_0$, $\dist(K, \d\Omega)$, a bound for $\int_{\Omega} |\nabla u|^2$, and a modulus of continuity
for $q_\pm$ near $K$. Nevertheless since Theorem \ref{t2.2} was obtained via a compactness argument these bounds will not be explicit.

The trickiest part of the proof is to make sure that we do not get too close to $\partial \Omega$ in our constructions. Without worrying about this (important) detail, the argument works roughly as follows: interior/exterior corkscrew points are given by the non-degeneracy of almost-minimizers. To construct a Harnack chain between points $x,y \in U$, we first use the existence of corkscrew points to create a sequence of intermediate points between $x$ and $y$. Then we use Theorem \ref{t2.2} to connect these intermediate points by curves which are not too long or too close to $\Gamma^+(u)$. A collection of balls centered around points on these curves will then satisfy the Harnack chain condition. With this outline in mind, we now present the details. 

\begin{proof}
Let $\Omega$, $q_\pm$, $u$, be given as in the statement, and (for the verification of Definition \ref{d2.3}),
let a compact set $K \subset \Omega$ be given. We need a little room for our construction.
Pick a relatively compact open set $\Omega_1$ in $\Omega$, such that $K \subset \Omega_1 \subset\subset \Omega$.

By Theorems~5.1 and 8.1 in \cite{DT}, $u$ is locally Lipschitz, so we can find $M \geq 0$ such that 
$|\nabla u| \leq M$ on $\Omega_1$. Since $q_+$ and $q_-$ are continuous on $\d\Omega_1$, we can
apply Theorem \ref{t2.2} to the restriction of $u$ to $\Omega_1$, with a constant $\theta$ that will be chosen
soon; this gives constants $C_0(\theta)$ and $r_0(\theta)$ so that the conclusion of the theorem holds.

We start our verification with corkscrew points. 
Set $r_2 = 10^{-1} \dist(K, \d\Omega_1)$ and 
$K_1 = \big\{ z\in \Omega_1 \, ; \, \dist(z,K) \leq r_2\big\}$. 
We even want to find corkscrew points for balls centered on $K_1 \cap \d U$.

For $U$ itself, we get them from Theorem 10.2 in \cite{DT} (i.e. the non-degeneracy of almost-minimizers), and we do not need our extra assumption on $q_-$.
For $\R^n \sm U$, we get the corkscrew points from Proposition 10.3 in \cite{DT}, and our extra assumption
that $q_- \leq q_+$ or $q_- \geq c_0$ is used there, to get one of the sufficient conditions of (10.52) or (10.53) of 
Lemma 10.5 there. This is actually the only place in the proof where we need these extra assumptions, 
so without them we still have local interior NTA properties. More precisely, we get a radius $r_3 > 0$
and a constant $C_1$, that depend on $u$ and $K$ (through $c_0$, $M$ and $\dist(K,\d\Omega_1)$),
such that for $x\in K_1 \cap \d U$ and $0 < r \leq r_3$, there is a corkscrew point $A_+(x,r)$ for $U$, and a 
corkscrew points $A_-(x,r)$ for $\R^n \sm U$, both with the constant $C_1$. Of course we can
take $r_3 < {1 \over 3} \dist(K,\d\Omega_1)$, so $A_\pm(x,r)$ still lies well inside $\Omega_1$.

We are left with the existence of Harnack chains in $U$. Let $x, y \in U$ be given, and set $d = |x-y|$.
Thus we assume that $d \leq r_1$, and we will choose $r_1 < \dist(K,\d\Omega_1)/10$, so 
$d < \dist(K,\d\Omega_1)/10$.
Set $\delta(z) = \dist(x,\d U)$ for $z\in U$. If $\delta(x) \geq 2d$, the single ball $B(x,3d/2)$ makes a perfect
Harnack chain from $x$ to $y$, so we may assume that $\delta(x) \leq 2d\leq\dist(K,\d\Omega_1)/5$, 
and similarly $\delta(y) \leq 2d \leq \dist(K,\d\Omega_1)/5$.

Let us first find a nice chain of points from $x$ to $y$. 
Let $\overline x \in \d\Omega$ be such that $|x-\overline x| = \delta(x)$. We will choose
$r_1 < r_2/10$, so $|x-\overline x| = \delta(x) \leq 2d \leq 2r_1 \leq r_2/5$ and $\overline x \in K_1$
since $x\in K$. Set $x_k = A_+(\overline x, 2^{-k} d)$ for $k \geq 0$; those are well defined (if $r_1 \leq r_3/10$).
We stop the construction as soon as $2^{-k} d < \delta(x)/2$, say, because after this we get too close to $\d U$ for our purpose.

Similarly pick $\overline y \in \d\Omega$ such that $|y-\overline y| = \delta(y)$, and define
$y_k = A_+(\overline y, 2^{-k} d)$ for $k \geq 0$ such that $2^{-k} d \geq \delta(y)/2$. Notice that 
in both case, we keep at least one point ($x_0$ or $y_0$). 

Our string of points is the collection of points $x_k$ and $y_k$. We now say how to define a curve that 
connects all these points, and later use that curve to find a Harnack chain. First consider two consecutive points
$x_k$ and $x_{k+1}$ in our chain that goes to $x$. We want to use Theorem \ref{t2.2} to find a curve
$\gamma_k$ in $U$, that goes from $x_k$ to $x_{k+1}$. 
Set $r = 2^{-k+1} d$ and observe that $|x_k-x_{k+1}| \leq 2^{-k+1} d$
because $x_k \in B(\overline x, 2^{-k} d)$ and similarly for $x_{k+1}$; thus the 
middle constraint in \eqref{eqn2.22} is satisfied. Also, $r \leq 2d \leq 2r_1 \leq r_0(\theta)$
if $r_1$ is small enough. We add that $\dist(x_k, \d\Omega) \geq r_2$ because $x_k \in K_1$,
and similarly for $x_{k+1}$, so the fact that $r \leq 2r_1$ takes care of the first condition in 
\eqref{eqn2.22} if $2C_0(\theta) r_1 \leq r_2$. So we just need to make sure to choose $r_1$ after
$\theta$. 

For our final constraint of \eqref{eqn2.22}, notice that by definition of a corkscrew point, 
$\delta(x_k) \geq C_1^{-1} 2^{-k} d = C_1^{-1} r/2$ and 
$\delta(x_{k+1}) \geq C_1^{-1} 2^{-k-1} d = C_1^{-1} r/4$.
So taking $\theta \leq C_1^{-1} r/4$ is enough to get \eqref{eqn2.22} here.
We apply Theorem \ref{t2.2} and find a path $\gamma_k$ in $U$, from $x_k$ to $x_{k+1}$,
with length at most $C_0(\theta) r = C_0(\theta) 2^{-k+1} d$ and that stays at distance at least
$C_0(\theta)^{-1} r$ from $\Gamma^+(u)$ (or equivalently from $\d U$, because $\d \Omega$
is much further from $\gamma_k$ than $\Gamma^+(u)$ is).

We also find a path $\wt\gamma_k$ from $y_k$ to $y_{k+1}$, when 
$2^{-k-1} d \geq \delta(y)/2$, with similar properties. And three additional paths,
a path $\gamma_{00}$ from $x_0$ to $y_0$, a path $\gamma_f$ from $x$ to the last $x_k$, 
and a path $\wt \gamma_f$ from $y$ to the last $y_k$.
The constraints are similar, but the reader will be happy that we don't check the details, and 
if we pick $\theta$ small enough compared to $C_1^{-1}$
(which depends on $u$ and $K_1$, but not on $r_1$), and then $r_1$ small, 
we can construct all these curves. Let us put all these curves together, to get a long curve $\Gamma$ 
from $x$ to $y$.

It is easy to see that each of the curves above can be covered by a Harnack chain of length at most $C$
that connects its endpoints, and with a same constant $C_2 = 100 C_1$, say. 
If $\ell \in \NN$ is, as in Definition \ref{d2.3}, such that 
$\min(\delta(x),\delta(y)) \geq 2^{-\ell} |x-y| = 2^{-\ell} d$, we see that 
we needed at most $2 \ell + 10$ curves in our construction. Thus we get a Harnack chain from $x$ to $y$,
with length less than $C \ell +1$, as needed.

This completes our verification of the Harnack chain condition: Theorem \ref{t2.3} follows.
\end{proof}
\vspace{-.25 cm}
For the convenience of the reader, we mention an obvious corollary of Theorem \ref{t2.3} in the 
two-phase case. Note, by convention, if one coefficient dominates the other we always relabel them so that 
$q_+ \geq q_-$; thus in Corollary \ref{t2.4} below, we merely assume that both $q_{\pm}$ are 
non-degenerate. 

\begin{corollary}\label{t2.4}
Let $q_\pm\in L^\infty(\Omega)\cap C(\Omega)$ be such that $\min(q_-(x),q_+(x)) \geq c_0 > 0$ on $\Omega$, and 
let $u$ be an almost-minimizer for $J$ in $\Omega$.
Then $U_{\pm} = \{ x\in \Omega \, ; \, \pm u(x) > 0 \}$ is locally NTA in $\Omega$.
\end{corollary}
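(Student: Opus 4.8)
=== PROOF PROPOSAL FOR COROLLARY 2.4 ===

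The plan is to deduce this immediately from Theorem~\ref{t2.3} by applying it twice, once to $u$ and once to $-u$. The only thing to check is that both applications are legitimate, i.e.\ that the hypotheses of Theorem~\ref{t2.3} are met in each case.

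First I would apply Theorem~\ref{t2.3} directly to $u$. The standing hypotheses are that $q_\pm \in L^\infty(\Omega) \cap C(\Omega)$, that $q_+ \geq c_0 > 0$, and (since $u$ is an almost-minimizer for $J$, the two-phase functional) that in addition either $0 \leq q_- \leq q_+$ or $q_- \geq c_0 > 0$ on $\Omega$. Here we are assuming $\min(q_-,q_+) \geq c_0 > 0$, so in particular $q_+ \geq c_0 > 0$ and $q_- \geq c_0 > 0$; the extra hypothesis is satisfied. Thus Theorem~\ref{t2.3} gives that $U_+ = \{x \in \Omega \,;\, u(x) > 0\}$ is locally NTA in $\Omega$.

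Next I would apply Theorem~\ref{t2.3} to the function $\wt u = -u$. The point is that $\wt u$ is again an almost-minimizer for $J$ in $\Omega$, but with the roles of $q_+$ and $q_-$ interchanged: indeed $\chi_{\{\wt u > 0\}} = \chi_{\{u < 0\}}$ and $\chi_{\{\wt u < 0\}} = \chi_{\{u > 0\}}$ and $|\nabla \wt u|^2 = |\nabla u|^2$, so $J_{x,r}(\wt u)$ computed with the pair $(q_+, q_-)$ equals $J_{x,r}(u)$ computed with the pair $(q_-, q_+)$; the same identity holds for every competitor $v$ (replace $v$ by $-v$), so the almost-minimizing inequality \eqref{eqn1.11} transfers verbatim. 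Now apply Theorem~\ref{t2.3} to $\wt u$ with the coefficient pair $(q_-, q_+)$: the role of ``$q_+$'' is played by $q_-$, which satisfies $q_- \geq c_0 > 0$, and the role of ``$q_-$'' is played by $q_+$, which satisfies $q_+ \geq c_0 > 0$, so again the hypotheses hold. We conclude that $\{x \in \Omega \,;\, \wt u(x) > 0\} = \{x \in \Omega \,;\, u(x) < 0\} = U_-$ is locally NTA in $\Omega$.

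There is essentially no obstacle here; the only mild point to be careful about is the bookkeeping in the second application — making sure that the symmetry $u \mapsto -u$ really does swap $q_+$ and $q_-$ at the level of the almost-minimizing inequality (not just the functional) and that the non-degeneracy hypothesis $\min(q_-, q_+) \geq c_0$ is exactly what is needed to feed both applications. Since both $U_+$ and $U_-$ are locally NTA, the corollary follows.
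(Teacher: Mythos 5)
Your proof is correct and is exactly the argument the paper has in mind: the paper states Corollary \ref{t2.4} as an ``obvious'' consequence of Theorem \ref{t2.3}, the point being precisely the symmetry $u \mapsto -u$, which exchanges $q_+$ and $q_-$ in the almost-minimizing inequality, combined with the observation that $\min(q_-,q_+) \geq c_0$ lets both applications of the theorem go through (via the alternative hypothesis $q_- \geq c_0$). Your careful check that the swap works at the level of competitors, not just of the functional, is the right bookkeeping and matches the paper's intent.
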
 

\section{Harmonic functions and almost-minimizers}\label{harmonic-functions}

In this section we prove that, under the same non-degeneracy assumption as in Theorem~\ref{t2.3},
if $u$ is an almost-minimizer for $J$ or $J^+$ in $\Omega$, then non-negative harmonic functions 
on $\Omega\cap\{u>0\}$ which vanish continuously on $\Gamma^+(u)$ inherit the behavior of $u$ at the free boundary. Thus, in particular, they vanish linearly at the free boundary.
This will be helpful later, as harmonic functions are very useful as competitors.

The assumptions on $\Omega$, the $q_\pm$, and $u$ will be the same for all this section, so
we state them now. These are also the assumptions of Theorem~\ref{t2.3}, which will be quite helpful.

Let $\Omega\subset \R^n$ be an open, connected, and bounded open set, and let 
$q_-$ and $q_+$ be bounded continuous functions on $\Omega$.
We assume that for some $c_0 > 0$,
\begin{equation}\label{a3.1}
q_+(x) \geq c_0 \ \text{ for } x\in \Omega
\end{equation}
and (for the later results)
\begin{equation}\label{a3.2}
0 \leq q_- \leq q_+ \text{ on } \Omega,
\ \text{ or } \  q_- \geq c_0 > 0 \text{ on } \Omega.
\end{equation}
Of course, if these assumptions are not satisfied on the whole $\Omega$, we can always try to localize,
since the restriction to $\Omega_1 \subseteq \Omega$ of an almost-minimizer in $\Omega$ is an almost-minimizer
in $\Omega_1$. Finally we give ourselves a function $u$ on $\Omega$, and assume that
\begin{equation}\label{a3.3}
u \text{ is an almost-minimizer for $J$ or $J^+$ in $\Omega$.}
\end{equation}

Set $U = \big\{ x\in \Omega \, ; \, u(x) > 0 \}$; thus Theorem~\ref{t2.3} says that $U$ is 
locally NTA. Also set $\Gamma^+ = \Gamma^+(u) = \Omega \cap \d U$, as in \eqref{eqn1.2};
for $x_0$ and $0 < r < \dist(x_0,\d\Omega)$, we define a function $h_{x_0,r}$ by the facts that
$h_{x_0,r} \in W^{1,2}_{loc}(\Omega)$, 
\begin{equation}\label{a3.4}
 h_{x_0,r} =u  \ \text{ on }\ \Omega\sm [B(x_0,r)\cap U],
\end{equation}
and $\int_{B(x_0,r)} |\nabla h_{x_0,r}|^2$ is minimal under these constraints.
Here \eqref{a3.4} is our fairly clean way to state the Dirichlet condition $h_{x_0,r} = u$
on $[U \cap \d B(x_0,r)] \cup [\d U \cap B(x_0,r)]$.
The existence is fairly easy, by convexity and because $u$ itself is a candidate, 
and it follows from the definitions that $h_{x_0,r}$ lies in the 
class $K(\Omega)$ of acceptable competitors. Finally, since $h_{x_0,r}$ minimizes
$\int_{B(x_0,r)} |\nabla h_{x_0,r}|^2$ locally in $B(x_0,r)\cap U$,
\begin{equation}\label{a3.5}
\Delta h_{x_0,r}=0  \text{ in } \ B(x_0,r)\cap U.
\end{equation}
We are interested in the properties of $h_{x_0,r}$ near $\d U$, which we shall obtain
by comparing with $u$ and using the local NTA property of $U$.
We keep the notation
\begin{equation}\label{a3.6}
\delta(z) = \dist(z,\Gamma^+) = \dist(z,\Omega \cap \d U) \ \text{ for } z\in U.
\end{equation}

Recall that we want to get information on $\d U$; for this a good control on harmonic
functions like the $h_{x_0,r}$ will be useful, but for the moment we control $u$ better, because
of its almost-minimizing property; thus we want to compare the two. 
We start with an estimate where we show that $u-h_{x_0,r}$ is small in the part of $U \cap B(x_0, r)$
which does not lie to close to $\d U$. For this we will just need to know that $u$ almost-minimizes
the functional, and $h_{x_0,r}$ minimizes a similar energy; in particular, we will not use 
\eqref{a3.2} or the NTA property yet.

\begin{lemma}\label{l3.1}
Let $\Omega$, $q_\pm$, and $u$ be as above.
For each $r_0 >0$ we can find $\rho_0 \in (0,r_0)$
such that if $\Omega$, $q_\pm$, $u$, and $U$ are as above, and if
$x_0\in \Gamma^+(u)$ is such that $B(x_0, 2r_0)\subset \Omega$, 
then for $r\in (0,\rho_0]$ the harmonic competitor, $h_{x_0,r}$, defined above, satisfies
\begin{equation}\label{eqn3.2}
(1-r^{\alpha/8n})u(x)\le h_{x_0,r}(x)\le (1+r^{\alpha/8n})u(x),
\end{equation}
for all $x\in U\cap B(x_0,r)$ with $\delta(x) \geq r^{1+\alpha/8n}$. 
\end{lemma}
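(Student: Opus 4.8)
The statement is a quantitative comparison between the almost-minimizer $u$ and its harmonic replacement $h = h_{x_0,r}$ on the part of $U \cap B(x_0,r)$ that stays at distance $\geq r^{1+\alpha/8n}$ from the free boundary. The natural strategy is to first establish an $L^2$-type closeness of $\nabla u$ and $\nabla h$ using only the almost-minimality of $u$ and the energy-minimality of $h$, and then upgrade this interior closeness to a pointwise multiplicative comparison using Harnack's inequality and the linear (non-degenerate) growth of $u$ away from $\Gamma^+$.

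\textbf{Step 1: Energy estimate.} Since $h = u$ on $\Omega \setminus [B(x_0,r) \cap U]$, the function $h$ is an admissible competitor for $u$ in the ball $B(x_0,r)$. Applying the almost-minimizing inequality \eqref{eqn1.9} (or \eqref{eqn1.11}) with $v = h$, and noting that $\chi_{\{h>0\}} \leq \chi_{\{u>0\}}$ pointwise (because $h$ vanishes wherever $u$ does, by \eqref{a3.4}, and $h$ is harmonic hence can only vanish on a smaller set inside $U$ — more carefully, $\{h>0\} \cap B(x_0,r) \subseteq U$), one gets
\begin{equation*}
\int_{B(x_0,r)} |\nabla u|^2 \leq (1+\kappa r^\alpha) \int_{B(x_0,r)} |\nabla h|^2 + \kappa r^\alpha \|q_+^2+q_-^2\|_\infty |B(x_0,r)|.
\end{equation*}
Combined with the harmonicity of $h$ (which makes $\int |\nabla h|^2 \leq \int |\nabla u|^2$ since $u$ is a competitor for the energy-minimization defining $h$) and the parallelogram-type identity $\int_{B(x_0,r)} |\nabla(u-h)|^2 = \int |\nabla u|^2 - \int |\nabla h|^2$ (valid because $\nabla h \cdot \nabla(u-h)$ integrates to zero by harmonicity of $h$ in $B(x_0,r)\cap U$ and $u - h = 0$ outside), this yields
\begin{equation*}
\int_{B(x_0,r)} |\nabla(u-h)|^2 \leq C r^{\alpha} \big( \|\nabla u\|_{L^2(B(x_0,r))}^2 + r^n \big) \leq C r^{n+\alpha},
\end{equation*}
using the local Lipschitz bound $\|\nabla u\|_\infty \leq M$ on a fixed neighborhood (valid for $r_0$ small, by Theorems 5.1 and 8.1 of \cite{DT}). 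Since $u - h$ vanishes on $\partial(B(x_0,r) \cap U)$ and is harmonic in $B(x_0,r) \cap U$... actually $u-h$ need not be harmonic; instead apply Poincaré on $B(x_0,r)$ to control $\|u-h\|_{L^2(B(x_0,r))}^2 \leq C r^2 \int |\nabla(u-h)|^2 \leq C r^{n+2+\alpha}$, i.e. a normalized $L^2$ bound $\fint_{B(x_0,r)} |u-h|^2 \leq C r^{2+\alpha}$.

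\textbf{Step 2: From $L^2$ to pointwise.} Fix $x \in U \cap B(x_0,r)$ with $\delta(x) \geq s := r^{1+\alpha/8n}$. Both $u$ and $h$ are harmonic in $B(x,s/2) \subset U$ (for $u$, by the Euler-Lagrange equation $\Delta u = 0$ in $U$ when $q$-terms don't see it — more precisely $u$ is an almost-minimizer, but in the interior of $U$ away from $\Gamma^+$ it satisfies $\Delta u = 0$; one can instead use that $h$ is harmonic and $u-h$ has controlled energy). The cleanest route: $w := u - h$ satisfies, in $B(x,s/2)$, $\Delta w = \Delta u$; but actually in $B(x,s/2) \cap U = B(x,s/2)$ we have $\Delta h = 0$, and $u$ is a genuine minimizer-type object so $\Delta u = 0$ there too (the volume terms are locally constant since $\{u>0\}$ is open and contains the ball). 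Hence $w$ is harmonic in $B(x,s/2)$, and interior estimates for harmonic functions give $|w(x)| \leq C s^{-n/2} \|w\|_{L^2(B(x,s/2))} \leq C s^{-n/2} \|u-h\|_{L^2(B(x_0,r))} \leq C s^{-n/2} r^{(n+2+\alpha)/2}$. Meanwhile, by non-degeneracy of almost-minimizers (Theorem 10.2 in \cite{DT}), $u(x) \geq c\, \delta(x) \geq c\, s$. Therefore
\begin{equation*}
\frac{|u(x)-h(x)|}{u(x)} \leq \frac{C s^{-n/2} r^{(n+2+\alpha)/2}}{c\, s} = C c^{-1} s^{-n/2-1} r^{(n+2+\alpha)/2}.
\end{equation*}
Substituting $s = r^{1+\alpha/8n}$, the exponent of $r$ is $\frac{n+2+\alpha}{2} - (1+\tfrac{\alpha}{8n})(\tfrac{n}{2}+1) = \frac{\alpha}{2} - \frac{\alpha}{8n}\cdot\frac{n+2}{2} = \frac{\alpha}{2}\big(1 - \frac{n+2}{8n}\big) \geq \frac{\alpha}{2}\cdot\frac{6n-2}{8n} \geq \frac{\alpha}{4}$ for $n \geq 2$, which is certainly $\geq \alpha/(8n)$. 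Hence $|u(x)-h(x)| \leq C r^{\alpha/8n} u(x)$ once $r \leq \rho_0$ is small enough to absorb the constant $C$ (here is where $\rho_0$ depends on $r_0$, on $M$, on $c_0$, and on $\|q_\pm\|_\infty$). This gives \eqref{eqn3.2}.

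\textbf{Main obstacle.} The delicate point is Step 2's claim that one may treat $u$ as harmonic on $B(x,s/2)$: in the interior of the positivity set $U$, an almost-minimizer is \emph{not} exactly harmonic, only almost-harmonic, so strictly one should either invoke the interior regularity for almost-minimizers from \cite{DT} to get a quantitative $\|u - (\text{its harmonic extension})\|$ bound on $B(x,s/2)$, or — more robustly — avoid differentiating $u$ altogether and instead compare $h$ to the \emph{harmonic} function agreeing with $u$ on $\partial B(x,s/2)$, controlling the discrepancy through the global $L^2$ estimate of Step 1 together with the maximum principle. I expect the authors to route around this exactly by using $h$'s harmonicity as the anchor, pushing the $L^2$ gradient bound through a Caccioppoli/Poincaré argument, and only using non-degeneracy of $u$ at the very end; keeping careful track of how the exponents $\alpha/8n$ and $1+\alpha/8n$ interact so that the loss in going from $L^2$ to pointwise is dominated by the gain in the energy estimate is the real bookkeeping challenge.
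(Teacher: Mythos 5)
Your Step 1 is essentially the paper's argument: the orthogonality relation coming from the minimality of $h_{x_0,r}$, the almost-minimizing inequality with the observation that the volume terms do not increase (for the two-phase functional this needs the maximum principle to get $h_{x_0,r}\ge 0$ on $U\cap B(x_0,r)$, not just ``$h$ vanishes where $u$ does''), the Lipschitz bound on $u$, and Poincar\'e, yielding $\fint_{B_r}|u-h_r|^2\le Cr^{2+\alpha}$ exactly as in \eqref{eqn3.4}.

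The genuine gap is in Step 2, and it is the one you flag yourself: you pass from the $L^2$ bound to a pointwise bound at $x$ by treating $w=u-h_{x_0,r}$ as harmonic on $B(x,s/2)$, on the grounds that $u$ satisfies $\Delta u=0$ in the interior of $U$. That is false for almost-minimizers: they satisfy no Euler--Lagrange equation (this is the point of the whole paper), so the interior estimate $|w(x)|\le Cs^{-n/2}\|w\|_{L^2(B(x,s/2))}$ is unjustified and your exponent computation rests on it. The repairs you sketch (quantifying the almost-harmonicity of $u$ via \cite{DT}, or comparing $h$ with the harmonic replacement of $u$ on $B(x,s/2)$) are plausible but incomplete: converting the $L^2$ closeness of $u$ to its harmonic replacement into a pointwise bound at the center needs a further averaging or iteration argument with its own exponent bookkeeping, which you have not carried out. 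The paper's proof avoids the issue entirely and never uses any harmonicity (or almost-harmonicity) of $u$: by Chebyshev, the bad set $A_r=\{|u-h_r|>r^{1+\alpha/4}\}$ of \eqref{eqn3.5} has measure at most $Cr^{\alpha/2}|B_r|$ as in \eqref{eqn3.6}, hence contains no ball of radius $Cr^{1+\alpha/2n}$; so near any $x$ with $\delta(x)\ge r^{1+\alpha/8n}$ there is a point $y\notin A_r$ with $|x-y|\le Cr^{1+\alpha/2n}<\delta(x)/2$, and the pointwise bound is transferred from $y$ to $x$ using only the Lipschitz bound for $u$ and the interior gradient estimate \eqref{eqn3.7} for the nonnegative harmonic function $h_r$; the resulting additive error is then absorbed via the nondegeneracy $u(x)\ge C^{-1}\delta(x)$ from \eqref{a3.14} together with the short self-improvement \eqref{a3.23} showing $h_r(x)\gtrsim r^{1+\alpha/8n}$. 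To complete your write-up, either import that Chebyshev argument or execute one of your two suggested repairs in full; as it stands the proof is not complete.
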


\begin{remark}\label{rmk3.1}
\begin{enumerate}
\item
The reader should not be surprised by the various powers of $r$ that show up in this section.
Using powers of $r$ is just our way of grading the size of errors in a simple way; in particular we don't claim that the powers are optimal.
\item
We could easily improve our control on $\rho_0$. The way we stated things, it would seem that $\rho_0$
depends also on $\Omega$, $q_\pm$, and even $u$. In fact $\rho_0$ depends only on 
$n$, $c_0$ (from \eqref{a3.1}), $\|q_\pm\|_{L^\infty}$, $\kappa$, $\alpha$, $r_0$, 
and a bound on $\int_\Omega |\nabla u|^2$. 
\end{enumerate}
\end{remark}
 
\begin{proof}
Let $x_0$ and $r$ be as in the statement, and set $B_r=B(x_0,r)$ and $h_r = h_{x_0,r}$ for convenience.
We first use the minimizing property of $u$ and the definition of $h_r$ to prove that 
\begin{equation}\label{a3.8} 
\int_{B_r}|\nabla u-\nabla h_r|^2 \leq C r^{n+\alpha},
\end{equation}
with a constant $C$ that depends on $n$, $\|q_\pm\|_{L^\infty}$, $\kappa$, 
$\alpha$, $r_0$, and a bound on $\int_\Omega |\nabla u|^2$ (we don't need $c_0$
but this does not matter).

Notice that for $t \in \R$, the function $w_t =h_r + t(u-h_r)$ also lies in $W^{1,2}_{loc}(\Omega)$ and
satisfies the constraint \eqref{a3.4}, so the minimizing property of $h_r$ implies that 
$\int_{B_r\cap U} |\nabla h_r|^2 \leq \int_{B_r\cap U} |\nabla w_t|^2$ for all $t$ and hence 
\begin{equation}\label{a3.9}
\int_{B_r\cap U} \ \langle\nabla (u-h_r) , \nabla h_r\rangle = 0.
\end{equation}
Then
\begin{eqnarray}\label{a3.10} 
\int_{B_r}|\nabla u-\nabla h_r|^2  
&=&\int_{B_r\cap U }|\nabla u|^2 + \int_{B_r\cap U} |\nabla h_r|^2
-2 \int_{B_r\cap U} \langle\nabla u, \nabla h_r\rangle
\nonumber \\
&=&\int_{B_r\cap U} |\nabla u|^2 + \int_{B_r\cap U }|\nabla h_r|^2 
 -2\int_{B_r\cap U} |\nabla h_r|^2
\nonumber\\
&=& \int_{B_r\cap U} |\nabla u|^2 - \int_{B_r\cap U} |\nabla h_r|^2 
\\
&=& \int_{B_r} |\nabla u|^2 - \int_{B_r} |\nabla h_r|^2.
\nonumber
\end{eqnarray}
But $u$ is an almost-minimizer for $J$ or $J^+$, so by \eqref{eqn1.9} or \eqref{eqn1.11}, 
$$
\int_{B_r} |\nabla u|^2 + \int_{B_r} q_+^2\chi_{\{ u > 0 \}} +q_-^2\chi_{\{u<0\}}
\leq (1+\kappa r^\alpha) \Big\{ \int_{B_r} |\nabla h_r|^2 
+ \int_{B_r} q_+^2\chi_{\{h_r>0\}} +q_-^2\chi_{\{h_r<0\}} \Big\}.
$$
A maximum principle argument with $h_r$ in $U \cap B_r$
shows that $h_r \geq 0$ in $U \cap B_r$, hence 
$$
\int_{B_r} q_+^2\chi_{\{h_r>0\}} +q_-^2\chi_{\{h_r<0\}} 
\leq \int_{B_r} q_+^2\chi_{\{u>0\}} +q_-^2\chi_{\{u<0\}}
$$
and we are left with
\begin{eqnarray}\label{a3.11}
\int_{B_r} |\nabla u|^2 - \int_{B_r} |\nabla h_r|^2
&\leq& \kappa r^\alpha \int_{B_r} |\nabla h_r|^2 
+ \kappa r^\alpha \int_{B_r} q_+^2\chi_{\{u>0\}} +q_-^2\chi_{\{u<0\}}
\nonumber\\
&\leq& \kappa r^\alpha \int_{B_r} |\nabla u|^2 + C \kappa r^{n+\alpha},
\end{eqnarray}
where the constant $C$ depends on the $||q_\pm||_\infty$.
We now use a bound on $\int_\Omega |\nabla u|^2$
(and actually a bound on $\int_{B(x_0,2r_0)} |\nabla u|^2$ would have been enough)
to get a Lipschitz bound on the restriction of $u$ to $B(x_0,r_0)$,
from which we deduce that $\int_{B_r} |\nabla u|^2 \leq C r^n$, with a constant $C$ that
depends on the various quantities mentioned in the statement of Lemma \ref{l3.1}, 
(including $r_0$, but not $c_0$). Now \eqref{a3.8} follows from \eqref{a3.10} and \eqref{a3.11}.  

It follows from \eqref{a3.8} and Poincar\'e's inequality that
\begin{equation}\label{eqn3.4}
\fint_{B_r}|u-h_r|^2\le C r^2\fint_{B_r}|\nabla u-\nabla h_r|^2\le C r^{2+\alpha}.
\end{equation}

Next we want to use \eqref{eqn3.4} to control $u-h_r$ and prove \eqref{eqn3.2}.
But let us first check that
\begin{equation}\label{a3.13}
\delta(x) = \dist(x,\d U) \leq r \ \text{ for } x\in U \cap B_r.
\end{equation}
(That is to say, that $\partial \Omega$ is further from $x$ than $\Gamma^+(u)$).
Recall that $\delta(x) = \dist(x,\Omega \cap \d U)$. If $x\in B_r$, then 
$\delta(x) \leq r$ because $x\in B_r = B(x_0,r)$ and $x_0 \in \Gamma^+(u)$. 
But $\dist(x,\d \Omega) \geq r_0$ because $B_r = B(x_0,r) \subset B(x_0, r_0)$ 
and $B(x_0,2r_0) \subset \Omega$; hence $\delta(x) = \dist(x,\d U)$, as needed for \eqref{a3.13}.

We shall use the fact that 
\begin{equation}\label{a3.14}
C^{-1} \delta(x) \leq u(x) \leq C \delta(x) \ \text{ for } x\in U \cap B_r,
\end{equation}
with a constant $C$ that depends on the various quantities mentioned in Remark \ref{rmk3.1}.1. 
The upper bound comes from our Lipschitz bounds on $u$
(Theorems~5.1 and 8.1 in \cite{DT}), and the lower bound, which also uses the fact that
$q_+ \geq c_0 > 0$ on $\Omega$, comes from Theorem 10.2 in \cite{DT}.
Set
\begin{equation}\label{a3.15}
Z = \left\{x\in U \cap B_r \, ; \,  \delta(x) > r^{1+\alpha/8n}   \right\};
\end{equation}
this is the set where we want to show that \eqref{eqn3.2} holds. Also set
\begin{equation}\label{eqn3.5}
A_r=\left\{x\in \Omega \, ; \,  |u(x)-h_r(x) | > r^{1+\alpha/4}   \right\}.
\end{equation}
Notice that $A_r \subset U \cap B_r$ by \eqref{a3.4}. If 
$x\in Z \sm A_r$, then by \eqref{a3.14}
\begin{equation}\label{a3.17}
|u(x)-h_r(x) | \leq  r^{1+\alpha/4} \leq C r^{1+\alpha/4} \delta(x)^{-1} u(x)
\leq C r^{\alpha/4} r^{-\alpha/8n} u(x) < r^{\alpha/8n} u(x)
\end{equation}
(if $r$ is small enough and because $n>1$), and so \eqref{eqn3.2} is satisfied. 
So we just need to show \eqref{eqn3.2} for $x\in Z \cap A_r$.

Let $x\in Z \cap A_r$ be given. By \eqref{a3.13}, $B(x,\delta(x)) \subset U$ and,
since $h_r$ is nonnegative and harmonic in $B_r\cap U$,
\begin{equation}\label{eqn3.7}
\sup_{B(x, {\delta(x) \over 2})}|\nabla h_r| 
\leq C \delta(x)^{-1} h_r(x).
\end{equation}
Besides, Chebyshev's inequality, combined with \eqref{eqn3.4}, yields
\begin{equation}\label{eqn3.6}
\mathcal H^n(A_r)\le Cr^{\alpha/2}\mathcal H^n(B_r),
\end{equation}
so $A_r$ does not contain any ball of radius larger than $C r^{1+\alpha/2n}$, and 
we can find $y\in \R^n \sm A_r$ such that 
\begin{equation}\label{a3.20}
|x-y|\le Cr^{1+\alpha/2n} < {r^{1+\alpha/8n} \over 3} < {\delta(x) \over 2}
\end{equation}
if $r$ is small enough and because $x\in Z$. Thus $y\in B(x,\delta(x)/2)$, 
we may apply \eqref{eqn3.7}, and we get that
\begin{equation}\label{a3.21}
\begin{aligned}
|h_r(x) - h_r(y)| &\leq |x-y| \sup_{B(x, {\delta(x) \over 2})} |\nabla h_r| 
\leq C r^{1+\alpha/2n} {h_r(x) \over \delta(x)}
\\
&\leq  C r^{1+\alpha/2n} {h_r(x) \over r^{1+\alpha/8n}}
\leq C r^{\alpha/4n} h_r(x)
\end{aligned}
\end{equation}
because $x\in Z$. We also know that $u$ is Lipschitz near $B_r$, so
$|u(x)-u(y)| \leq C |x-y| \leq Cr^{1+\alpha/2n}$. Finally, $y\in \R^n \sm A_r$, so
$|h_r(y)-u(y)| \leq r^{1+\alpha/4}$ (even if $y\notin U \cap B_r$).
Altogether,
\begin{eqnarray}\label{eqn3.8}
|h_r(x)-u(x)|&\le & |h_r(x)-h_r(y)| + |h_r(y)-u(y)| +|u(y)-u(x)|
\nonumber \\ 
&\leq& C r^{\alpha/4n} h_r(x) + r^{1+\alpha/4} + Cr^{1+\alpha/2n}
\leq C r^{\alpha/4n} h_r(x) + Cr^{1+\alpha/2n}.
\end{eqnarray} 
Recall from \eqref{a3.14} and \eqref{a3.15} that 
$u(x) \geq C^{-1} \delta(x) \geq C^{-1} r^{1+\alpha/8n}$, so \eqref{eqn3.8} implies that
\begin{equation}\label{a3.23}
h_r(x) \geq u(x) - C r^{\alpha/4n} h_r(x) - Cr^{1+\alpha/2n}
\geq C^{-1} r^{1+\alpha/8n}  - C r^{\alpha/4n} h_r(x)
\end{equation}
hence also $h_r(x) \geq C^{-1} r^{1+\alpha/8n}$. Therefore, the second term on the right hand side in \eqref{eqn3.8} satisfies
$$
Cr^{1+\alpha/2n} \leq C \, {r^{1+\alpha/2n} \over r^{1+\alpha/8n}} \, h_r(x)
\leq C r^{\alpha/4n} h_r(x)
$$
and \eqref{eqn3.8} implies that 
\begin{equation}\label{a3.24}
|h_r(x)-u(x)| \leq C r^{\alpha/4n} h_r(x)
\end{equation}
or equivalently ${h_r(x) \over u(x)} \in (1-Cr^{\alpha/4n}, 1+Cr^{\alpha/4n})$.
Of course \eqref{eqn3.2} follows, and this completes our proof of Lemma \ref{l3.1}.

\end{proof}

\ms
Our next task is to control the ratio $u/h_{x_0,r}$ on a larger set that gets closer to $x_0$,
and for this we shall use non-tangential cones and the local NTA property of $U$.
For $x_0\in \Gamma^+(u)$ and $A > 1$, define a non-tangential cone, $\Gamma_A(x_0)$, by
\begin{equation}\label{eqn3.13A}
\Gamma_A(x_0)=\big\{x\in U:\ |x-x_0|\le A\delta(x)\big\},
\end{equation} 
where we still denote $U = \{ x > 0 \}$ and $\delta(x)=\dist(x,\Omega\cap \d U)$.
We claim that we can find $A > 1$, and a radius $\rho_1 \in(0,\rho_0)$ 
such that if $B(x_0,2r_0) \subset \Omega$,
there is a curve $\gamma_{x_0}$ such that
\begin{equation}\label{a3.26}
\text{$\gamma_{x_0} \subset\Gamma_A(x_0)$ starts from $x_0$
and ends on $\Gamma_A(x_0) \cap \d B(x_0,\rho_1)$.}
\end{equation}
This is a fairly standard fact that follows from the fact that $U$ is locally NTA in $\Omega$, 
but let us say a few words about the proof. 
First observe that we can restrict our attention to the compact set
$K = \big\{ x \in \Omega \, ; \, \dist(x,\d\Omega) \geq r_0/2\big\}$,
because we assume that $B(x_0,2r_0) \subset \Omega$. Then we can apply Theorem \ref{t2.2}
and the proof of Theorem \ref{t2.3} to find corkscrew points for $U$ and curves that connect them.
We proceed roughly as in the final step of Theorem \ref{t2.3}.
For $k\in \ZZ$ such that $2^k \leq C_1 \rho_1$, we select a corkscrew point $z_k$ for $U$ in
$B(x_0,2^k)$. Such a points exist by Theorem \ref{t2.3} if, say, $\rho_1 \leq C_1^{-1}\rho_0$. 
In addition, we can connect each $z_k$ to $z_{k-1}$ with a nice curve
$\gamma_k \subset U$, as in Theorem \ref{t2.2}. We take for $\gamma(x_0)$ the concatenation of all the 
$\gamma_k$, all the way up to the first point when we reach $\d B(x_0,\rho_0)$ for the first time.
The verification that $\gamma \subset \Gamma_A(x_0)$ for $A$ large is easy:
the points of $\gamma_k$ all lie within $C 2^k$ from $z_k$ (hence, also from $x_0$), 
and at the same time at distance larger than $C^{-1} 2^k$ from $\d U$.

\begin{remark}\label{rmk3.2}
Let us again comment on the constants. 
Here we found $\rho_1$ and $A$ that depend on $\Omega$, $q_\pm$, $u$, and of course $r_0$.
But in fact, we claim that we can choose $A$ and $\rho_1$ depending only on 
$n$, $c_0$, $\|q_\pm\|_{L^\infty}$, $\kappa$, $\alpha$, $r_0$, 
a bound on $\int_\Omega |\nabla u|^2$, and also a modulus of continuity for
$q_+$ and $q_-$ on $B(x_0,9r_0/10)$.
Compared to our similar statement in Remark \ref{rmk3.1}.2, we also added the module of continuity
of the $q_\pm$, because it may play a role in the local NTA constant for $\d U$ at the scale $r_0$,
as mentioned in Remark \ref{rmk2.1}.4.
This observation will apply to most of the results below, and we shall refer to the list of quantities above as 
``the usual constants of Remark \ref{rmk3.2}''.
\end{remark}

We shall naturally restrict to constants, $A$, for which the curves $\gamma_{x_0}$ of \eqref{a3.26}
exist, and as usual taking $A$ even larger will only make other constants larger.
We shall estimate $|h_{x_0,r} - u|$ near $x_0$ by comparing $h_{x_0,r}$ to $h_{x_0,s}$,
for judiciously chosen numbers $s\in (0,r)$, and for this we intend to use the local NTA property of $U$. 
We claim that there exist constants $\eta \in (0,1)$ and $C_3 > 1$, that depends only on the usual constants 
of Lemma \ref{l3.1} (through the local NTA constants for $U$), such that if $0 < s < r < \rho_1$, then
\begin{equation}\label{eqn3.16}
\left|\frac{h_{x_0,r}(x)}{h_{x_0,s}(x)}-\frac{h_{x_0,r}(y)}{h_{x_0,s}(y)}\right|
\le C_3\frac{h_{x_0,r}(x)}{h_{x_0,s}(x)}\left(\frac{|x-y|}{s}\right)^{\eta}
\end{equation}
for $x,y\in U \cap B(x_0,s/2)$. The point is that both $h_{x_0,r}$ and $h_{x_0,s}$ 
are nonnegative harmonic functions on $U \cap B(x_0,s)$ (by \eqref{a3.5}) that vanish on 
$\d U \cap B(x_0,s)$; 
then \eqref{eqn3.16} follows from the results in \cite{JK}
(with a simple adaptation to locally NTA domains), which use a boundary Harnack inequality to prove 
the H\"older regularity of $\frac{h_{x_0,r}}{h_{x_0,s}}$ up to the boundary.

We shall now improve on the previous lemma, and approximate $u$ by $h_{x_0,r}$
in the non-tangential cone, $\Gamma_A(x_0)$.

\begin{lemma}\label{l3.2}
Let  $\Omega\subset \R^n$, $q_\pm\in L^\infty(\Omega)\cap C(\Omega)$, and
$u$ (an almost-minimizer for $J$ or $J^+$) satisfy the assumptions \eqref{a3.1}-\eqref{a3.3}
of the beginning of this section. For each choice of $r_0 > 0$ and $A > 1$, there exist constants
$\rho_2 \in (0,r_0)$ and $\beta \in (0,\alpha/16n)$, 
with the following properties.
Let $x_0\in \Gamma^+(u)$ be such that $B( x_0, 2r_0)\subset \Omega$; then for 
$0 < r \leq \rho_2$, the function $h_{x_0,r}$, defined near \eqref{a3.4}, satisfies
 \begin{equation}\label{eqn3.11}
(1-r^\beta)u(x)\le h_{x_0,r}(x)\le (1+r^\beta)u(x)
\end{equation}
for $x\in B( x_0, 10 r^{1+\alpha/17n})\cap\Gamma_A(x_0)$.
\end{lemma}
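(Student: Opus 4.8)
The plan is to improve Lemma~\ref{l3.1} by propagating the estimate \eqref{eqn3.2} from the region $Z = \{x \in U \cap B_r : \delta(x) > r^{1+\alpha/8n}\}$ inward along the non-tangential cone $\Gamma_A(x_0)$, all the way down to scales comparable to $r^{1+\alpha/17n}$. The point of departure is that Lemma~\ref{l3.1} already controls $h_{x_0,r}/u$ at points $x$ with $\delta(x) \gtrsim r^{1+\alpha/8n}$; the only points in $B(x_0, 10 r^{1+\alpha/17n}) \cap \Gamma_A(x_0)$ not covered are those lying closer to $\Gamma^+(u)$. To reach them, I would compare $h_{x_0,r}$ not directly with $u$, but with an intermediate harmonic competitor $h_{x_0,s}$ for a well-chosen smaller radius $s$, using the boundary Harnack / H\"older comparison estimate \eqref{eqn3.16} together with the curve $\gamma_{x_0} \subset \Gamma_A(x_0)$ of \eqref{a3.26} as an ``anchor'' along which ratios of harmonic functions are nearly constant.

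Here is the step-by-step scheme. \textbf{Step 1: calibrate the scale.} Given $x \in B(x_0, 10 r^{1+\alpha/17n}) \cap \Gamma_A(x_0)$, note $|x - x_0| \le A \delta(x)$, so if $\delta(x)$ is already $\ge r^{1+\alpha/8n}$ we are done by Lemma~\ref{l3.1}; otherwise set $s$ to be a radius of order $\max(|x-x_0|, r^{1+\alpha/8n})$, chosen so that $s < r < \rho_1$ and $x \in U \cap B(x_0, s/2)$, and $s$ is still a small power of $r$ above the threshold so that Lemma~\ref{l3.1} applies on $B(x_0,s)$ as well — i.e. $h_{x_0,s}$ is close to $u$ on $\{y \in U \cap B(x_0,s): \delta(y) \ge s^{1+\alpha/8n}\}$. \textbf{Step 2: use the anchor curve.} Both $h_{x_0,r}$ and $h_{x_0,s}$ are nonnegative and harmonic on $U \cap B(x_0,s)$ and vanish on $\d U \cap B(x_0,s)$, so \eqref{eqn3.16} gives that $h_{x_0,r}/h_{x_0,s}$ is H\"older-continuous on $U \cap B(x_0,s/2)$; along the curve $\gamma_{x_0} \subset \Gamma_A(x_0)$ there is a point $x'$ at distance $\sim s$ from $x_0$ (hence with $\delta(x') \gtrsim s$) where both $h_{x_0,r} \approx u$ and $h_{x_0,s} \approx u$ by Lemma~\ref{l3.1}, so $h_{x_0,r}(x')/h_{x_0,s}(x') = 1 + O(r^\beta)$. \textbf{Step 3: transfer to $x$.} Since $x$ and $x'$ both lie in $\Gamma_A(x_0) \cap B(x_0, Cs)$, the interior Harnack chain inside $\Gamma_A(x_0)$ (of bounded length, by the NTA property) plus \eqref{eqn3.16} yields $h_{x_0,r}(x)/h_{x_0,s}(x) = h_{x_0,r}(x')/h_{x_0,s}(x') + O(r^\beta) = 1 + O(r^\beta)$. \textbf{Step 4: close the loop.} It remains to compare $h_{x_0,s}(x)$ with $u(x)$. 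This is the same type of argument as the proof of Lemma~\ref{l3.1}, applied at scale $s$: the energy estimate \eqref{a3.8} at scale $s$ gives $\fint_{B_s}|u - h_{x_0,s}|^2 \lesssim s^{2+\alpha}$, Chebyshev produces a nearby good point $y$ with $|x - y| \ll \delta(x)$ where $|u(y) - h_{x_0,s}(y)|$ is a high power of $s$, interior gradient bounds for the harmonic function $h_{x_0,s}$ on $B(x,\delta(x)/2)$ transfer this to $x$, and the non-degeneracy bound $u(x) \ge C^{-1}\delta(x)$ (which on $\Gamma_A(x_0)$ is $\gtrsim C^{-1} A^{-1}|x-x_0| \sim s$) makes the errors a fraction of $u(x)$. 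Multiplying, $h_{x_0,r}(x)/u(x) = (h_{x_0,r}(x)/h_{x_0,s}(x)) \cdot (h_{x_0,s}(x)/u(x)) = 1 + O(r^\beta)$, which is \eqref{eqn3.11}.

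The choice of exponents must be done carefully: one needs $\beta$ small enough (below $\alpha/16n$, as stated) that all the powers of $s \le r$ produced at Steps~2--4 — the H\"older gain $(|x-y|/s)^\eta$, the energy error $s^{\alpha/2n}$-type terms, and the ratio $r^{1+\alpha/2n}/s$ in the gradient estimate — are each dominated by $r^\beta$ once $r$ (hence $s$) is below $\rho_2$. Since $s$ is a power of $r$ strictly between $r^{1+\alpha/8n}$ and $r$, say of order $r^{1+\alpha/17n}$ on the hardest points, these bookkeeping constraints determine $\beta$ and $\rho_2$; none of it is deep, but it is where the ``$17n$'' and similar constants get pinned down.

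The main obstacle, and the reason this is a genuine improvement over Lemma~\ref{l3.1} rather than a cosmetic restatement, is Step~3: one really does need the \emph{local NTA} structure of $U$ to guarantee that every point of the thin cone $\Gamma_A(x_0) \cap B(x_0,Cs)$ can be joined to the anchor point $x'$ by a Harnack chain of length bounded independently of the scale, so that the H\"older comparison \eqref{eqn3.16} does not degrade. Without the Harnack chain condition the ratio $h_{x_0,r}/h_{x_0,s}$ could a priori drift by an amount not controlled by a power of $r$ as one moves toward the boundary inside the cone. Everything else is a scale-$s$ rerun of Lemma~\ref{l3.1}'s Chebyshev-plus-gradient-bound argument, combined with careful exponent bookkeeping.
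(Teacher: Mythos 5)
Your overall toolkit is the right one (intermediate competitors $h_{x_0,s}$, the anchor curve \eqref{a3.26}, the boundary Harnack estimate \eqref{eqn3.16}), but the single-intermediate-scale implementation has a genuine gap, concentrated in Steps 3 and 4. In Step 3, a Harnack chain of bounded length only gives comparability of $h_{x_0,r}/h_{x_0,s}$ up to a multiplicative constant, not $1+O(r^\beta)$; the quantitative transfer must come from \eqref{eqn3.16}, whose error is $C_3\,(|x-x'|/s)^\eta$ times the ratio at the anchor. When $x$ and the anchor $x'$ both lie at distance comparable to $s$ from $x_0$ (which is exactly the situation your choice $s\sim\max(|x-x_0|,r^{1+\alpha/8n})$ produces for the problematic points), this error is $O(1)$, not a power of $r$. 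To make it small you need $|x-x'|\ll s$ by a fixed power of $r$, but you cannot push the anchor closer to $x_0$: its height along $\gamma_{x_0}$ is $\sim\dist(x',x_0)/A$, and Lemma \ref{l3.1} at scale $r$ requires that height to be at least $r^{1+\alpha/8n}$, so the anchor cannot sit below distance $\approx A\,r^{1+\alpha/8n}$ from $x_0$. In Step 4 the situation is symmetric: since $x_0\in\Gamma^+(u)$ forces $\delta(x)\le|x-x_0|$, any $x$ in the cone with $|x-x_0|<s^{1+\alpha/8n}$ lies below the threshold of Lemma \ref{l3.1} at scale $s$, so the ``scale-$s$ rerun'' of the Chebyshev-plus-gradient argument does not apply to it; you have merely reproduced the original difficulty one scale down. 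For $x$ close enough to the vertex $x_0$ these three requirements (anchor high enough for scale $r$, transfer distance small relative to $s$, and $\delta(x)\gtrsim s^{1+\alpha/8n}$) are mutually incompatible for every choice of a single $s$.

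The missing idea is the iteration and, crucially, the summability of the errors it generates. The paper's proof inserts an unbounded chain of scales $r_k=s_k(r)$ with $s_{k+1}=s_k^\gamma$, $\gamma$ slightly above $1+\alpha/18n$ and $\gamma^2<1+\alpha/8n$, and compares only \emph{consecutive} competitors $h_k$, $h_{k+1}$, evaluating their ratio at the anchor point $x_{k+2}$ whose distance to $x_0$ is the \emph{next} smaller scale; then the Harnack/H\"older error in \eqref{eqn3.16} is $(2r_{k+2}/r_{k+1})^\eta=(2r_{k+1}^{\gamma-1})^\eta$, which really is small, while Lemma \ref{l3.1} applies at each scale $r_k$ at the points $x_{k+1},x_{k+2}$ because $\gamma^2<1+\alpha/8n$. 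Taking logarithms and telescoping, the total error $\sum_k r_k^{\eta(\gamma-1)}\le C r^{\eta(\gamma-1)}$ is controlled because the scales decrease super-geometrically, and a generic $x\in\Gamma_A(x_0)$ is then handled by stopping the chain at the index $k$ with $r_{k+2}\le|x-x_0|<r_{k+1}$ (so that $\delta(x)\ge A^{-1}r_{k+2}>r_k^{1+\alpha/8n}$ and Lemma \ref{l3.1} applies to $h_k$ at $x$ itself), plus one last application of \eqref{eqn3.16} between $x$ and $x_{k+1}$, for which $|x-x_{k+1}|\le 2r_{k+1}\ll r_k$. Without this chain and the geometric-series bookkeeping, the drift of $h_{x_0,r}/h_{x_0,s}$ toward the vertex is not controlled by any power of $r$, which is exactly the obstruction your own last paragraph gestures at but does not resolve.
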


\ms
In fact, the proof below and Remark \ref{rmk3.2} will show that
$\rho_2$ and $\beta$ depend only on $A$ and the usual constants of Remark \ref{rmk3.2}.

\begin{proof}
Let $A > 1$ be given. We can safely assume that $A$ is large enough for \eqref{a3.26}.
Then, let $x_0\in \Gamma^+(u)$ and $r_0>0$ be such that $B( x_0, 2r_0)\subset \Omega$.

For $0 < s \leq \rho_1$, we can use the path $\gamma_{x_0}$ of \eqref{a3.26} to find
a point $z(s) \in \gamma_{x_0} \cap \d B(x_0, s)$; thus
\begin{equation}\label{a3.29}
|z(s)-x_0| = s \ \text{ and } \delta(z(s)) \geq s/A
\end{equation}
because $z(s) \in \Gamma_A(x_0)$.
We may now apply Lemma \ref{l3.1}; if $0 < r < \rho_0$, we get that \eqref{eqn3.2}
holds for $x\in U\cap B_r$ such that  $\delta(x) \geq r^{1+\alpha/8n}$. In particular,
taking $x =  z(s)$, we see that 
\begin{equation}\label{a3.30}
\frac{h_{x_0,r}(z(s))}{u(z(s))} \in [1-r^{\alpha/8n}, 1+r^{\alpha/8n}]
\end{equation}
for 
\begin{equation}\label{a3.31}
A r^{1+\alpha/8n} \leq s < r.
\end{equation}
Let $\gamma > 1$ be such that $\gamma^2 < 1+\alpha/8n$. For instance, $\gamma$
just a bit larger than $1 + \alpha/18n$ will do.
Set $s(r) = r^\gamma$, and then define $s_k(r)$ by induction, by $s_0(r) = r$ and $s_{k+1}(r) = s(s_k(r))$ 
for $k \geq 0$. 
Notice that $s(r) < r$ (if $r < 1$), and 
$s_2(r) = r^{\gamma^2} > A r^{1+\alpha/8n}$ for $0 < r \leq \rho_2$, 
by definition of $\gamma$ and if $\rho_2$ is chosen is small enough. 
Thus \eqref{a3.30} holds for $s_2(r) \leq s \leq s(r)$.

Fix $r \leq \rho_2$ (with $\rho_2$ small enough), and now set $r_k = s_k(r)$, 
$x_k = z(s_k(r))$ and $h_k = h_{x_0,s_k(r)}$. We just observed that
\begin{equation}\label{a3.32}
\frac{h_{0}(x_\ell)}{u(x_\ell)} \in [1-r^{\alpha/8n}, 1+r^{\alpha/8n}] \ \text{ for } \ell = 1, 2
\end{equation}
but we may  also apply this  to the radius $r_k = s_k(r)$ 
and the corresponding function $h_k$, and we get that
\begin{equation}\label{a3.33}
\frac{h_{k}(x_{k+\ell})}{u(x_{k+\ell})} \in [1-r_k^{\alpha/8n}, 1+r_k^{\alpha/8n}] 
\ \text{ for } \ell = 1, 2.
\end{equation}
We apply this with $k$ and $\ell = 2$, then $k+1$ and $\ell = 1$, then divide and and get that
\begin{equation}\label{a3.34}
\frac{h_{k}(x_{k+2})}{h_{k+1}(x_{k+2})} = 
\frac{h_{k}(x_{k+2})}{u(x_{k+2})}\frac{u(x_{k+2})}{h_{k+1}(x_{k+2})}
\in [1-3r_k^{\alpha/8n}, 1+3r_k^{\alpha/8n}].
\end{equation}
Next we use the fact that for $j > k+2$, 
$|x_j - x_{k+2}| \leq |x_j - x_0| + |x_0 - x_{k+2}| 
\leq s_j(r) + s_{k+2}(r) = r_j + r_{k+2} \leq 2 r_{k+2}$ (because $s(\rho) < \rho$);
then by \eqref{eqn3.16}
\begin{equation}\label{a3.35}
\left|\frac{h_{k}(x_{k+2})}{h_{k+1}(x_{k+2})}-\frac{h_{k}(x_{j})}{h_{k+1}(x_{j})}\right|
\le C_3\frac{h_{k}(x_{k+2})}{h_{k+1}(x_{k+2})}\left(\frac{2 r_{k+2}}{r_{k+1}}\right)^{\eta}
\leq 4C_3 r_{k+1}^{\eta (\gamma-1)}
\end{equation}
because $r_{k+2} = s(r_{k+1}) = r_{k+1}^\gamma$. Thus for $j \geq k+2$,
\begin{equation}\label{a3.36}
\frac{h_{k}(x_{j})}{h_{k+1}(x_{j})} 
\in [1-3r_k^{\alpha/8n}-4C_3 r_{k+1}^{\eta (\gamma-1)}, 1+3r_k^{\alpha/8n}
+4C_3 r_{k+1}^{\eta (\gamma-1)}].
\end{equation}
We take logarithms, notice that $C_3 \geq 1$ and $\gamma -1 \leq \alpha/8n$, restrict to $r$ small, and get that
 \begin{equation}\label{a3.37}
\big|\ln(h_{k}(x_{j})) - \ln(h_{k+1}(x_{j}))\big| \leq 8C_3 r_{k}^{\eta (\gamma-1)}.
\end{equation}
Then we fix $j$, sum over $k \leq j-2$, and get that
\begin{equation}\label{a3.38}
\big|\ln(h_{0}(x_{j})) - \ln(h_{j-1}(x_{j}))\big| \leq 8C_3 \sum_k r_{k}^{\eta (\gamma-1)}
\leq C(\eta) r^{\eta (\gamma-1)}.
\end{equation}
We add a last term that comes from \eqref{a3.33} (with $k = j-1$ and $\ell = 1$), and get that 
\begin{equation}\label{a3.39}
\big|\ln(h_{0}(x_{j})) - \ln(u(x_{j}))\big| \leq C(\eta) r^{\eta (\gamma-1)}.
\end{equation}
This looks a lot like \eqref{eqn3.11}, but along the specific points $\{x_j\}$, whereas we need an inequality at
 generic points in $\Gamma_A(x_0)$. Yet we are ready to prove \eqref{eqn3.11}, with
$\beta = \eta\alpha/18n > 0$. 

Let $x \in \Gamma_A(x_0) \cap B(x_0, r_1)$ be given. 
Let $k$ be such that $r_{k+2} \leq |x-x_0| < r_{k+1}$, and notice that $k \geq 0$.
Also observe that $|x-x_{k+1}| \leq |x-x_0| + |x_0 - x_{k+1}| < 2 r_{k+1}$.

Let us copy the proof of \eqref{a3.32}.
Since $r_{k+2} \leq |x-x_0|$ and $x \in \Gamma_A(x_0)$, the definition yields 
$\delta(x) \geq A^{-1}|x-x_0| \geq A^{-1} r_{k+2} = A^{-1} r_k^{\gamma^2} > r_k^{1+\alpha/8n}$
because $\gamma^2 < 1+\alpha/8n$ and $r_k \leq \rho_2$ is small. Then \eqref{eqn3.2} holds for 
$h_k = h_{x_0,r_k}$, i.e., 
\begin{equation}\label{a3.40}
\frac{h_{k}(x)}{u(x)} \in [1-r_k^{\alpha/8n}, 1+r_k^{\alpha/8n}]. 
\end{equation}
Then we apply \eqref{eqn3.16} to $h_0$ and $h_k = h_{x_0,r_k}$, and the points
$x_{k+1}$ and $x$. We get that
\begin{equation}\label{a3.41}
\left|\frac{h_{0}(x_{k+1})}{h_{k}(x_{k+1})}-\frac{h_{0}(x)}{h_{k}(x)}\right| =
\left|\frac{h_{x_0,r}(x_{k+1})}{h_{x_0,r_k}(x_{k+1})}-\frac{h_{x_0,r}(x)}{h_{x_0,r_k}(x)}\right|
\le C_3\frac{h_{x_0,r}(x_{k+1})}{h_{x_0,r_k}(x_{k+1})}\left(\frac{|x_{k+1}-x|}{r_k}\right)^{\eta}.
\end{equation}
But we said that $|x_{k+1}-x| \leq 2r_{k+1} = 2 r_k^\gamma$, and 
$h_{x_0,r}(x_{k+1}) \leq 2 h_{x_0,r_k}(x_{k+1})$ by \eqref{a3.38}, so
\begin{equation}\label{a3.42}
\left|\frac{h_{0}(x_{k+1})}{h_{k}(x_{k+1})}-\frac{h_{0}(x)}{h_{k}(x)}\right| 
\leq 2 C_3 \left(\frac{|x_{k+1}-x|}{r_k}\right)^{\eta}
\leq 4 C_3 r_k^{\eta(\gamma-1)}.
\end{equation}
So we know that $h_k(x)/u(x)$ is close to $1$ by \eqref{a3.40},
that $h_0(x)/h_k(x)$ is close to $h_0(x_k)/h_k(x_k)$ by \eqref{a3.42}
and that $h_0(x_k)/h_k(x_k)$ is close to $1$ by \eqref{a3.38}; this proves that 
$h_0(x)/u(x)$ is close to $1$, and more precisely that
\begin{equation}\label{a3.43}
\left|\frac{h_{0}(x)}{u(x)}-1\right| 
\leq 2 r_k^{\alpha/8n} + 2C(\eta) r^{\eta(\gamma-1)}+8C_3 r_k^{\eta(\gamma-1)}
\leq C'(\eta) r^{\eta(\gamma-1)}.
\end{equation}
We picked $\gamma$ just a bit larger than $1 + \alpha/18n$, as announced, and this way 
$\gamma-1 > \alpha/18n$, and \eqref{eqn3.11}, with $\beta = \eta\alpha/18n > 0$, follows from \eqref{a3.43}.
Finally, our proof holds for all $x\in B(x_0, r_1)\cap \Gamma_A(x_0) \equiv B(x_0, r^\gamma)\cap\Gamma_A(x_0) \subset B(x_0, 10r^{1+\alpha/17n})\cap \Gamma_A(x_0)$, as long as we make sure to take $\gamma < 1+\alpha/17n$ (because then $10r^{1+\alpha/17n} < r^\gamma$). This completes our proof of Lemma \ref{l3.2}.

\end{proof}

\ms
Finally we show that under the assumptions of the two previous lemmas, $h_{x_0,r}(x)$ approximates
$u(x)$ well near $x_0$, even for $x$ outside of the cone $\Gamma_A(x_0)$ (this will not be too difficult as $x$ will lie
in some other non-tangential cone, depending on $x$). 
We can also replace functions $h_{x_0,r}(x)$ with $h_{z,r}$, for $z \in \Gamma^+(u)$ near $x_0$.

\begin{theorem} \label{t3.1}
Let  $\Omega\subset \R^n$, $q_\pm\in L^\infty(\Omega)\cap C(\Omega)$, and
$u$ (an almost-minimizer for $J$ or $J^+$) satisfy the assumptions \eqref{a3.1}-\eqref{a3.3}
of the beginning of this section. For each choice of $r_0 > 0$, there exist constants
$\rho_3 \in (0,r_0)$, and $\beta \in (0,\alpha/16n)$,
with the following properties;
given $x_0\in \Gamma^+(u)$, such that $B( x_0, 4r_0)\subset \Omega$,
\begin{equation}\label{a3.44}
(1-5r^{\beta})u(x)\le h_{z,r}(x)\le (1+5r^{\beta})u(x)
\end{equation}
for all $0 < r \leq \rho_3$, $z \in \Gamma^+(u)$, and $x\in U$ such that
$|z-x_0|+ |x-x_0| < 5 r^{1+\alpha/17n}$.
Here the function $h_{z,r}$ is defined just as $h_{x_0,r}$ near \eqref{a3.4}, 
but with $x_0$ replaced by $z$.
\end{theorem}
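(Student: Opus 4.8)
The plan is to reduce Theorem \ref{t3.1} to Lemma \ref{l3.2}, using the fact that every point $x \in U$ near $x_0$ lies in \emph{some} non-tangential cone $\Gamma_A(z')$ with vertex on $\Gamma^+(u)$, so that Lemma \ref{l3.2} (applied with that vertex) controls $h_{z',r}(x)/u(x)$; the remaining work is to pass from $h_{z',r}$ to $h_{z,r}$ for the specific $z$ in the statement. First I would fix $r_0$, choose $A > 1$ large enough that the curves $\gamma_{x_0}$ of \eqref{a3.26} exist (so Lemma \ref{l3.2} applies with this $A$), and set $\rho_3$ small — certainly $\rho_3 \le \rho_2$ and small enough that all the powers of $r$ below are as tiny as needed. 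Given $z, x$ as in the statement, let $\bar x \in \Gamma^+(u)$ realize $\delta(x) = \dist(x, \Gamma^+(u)) = |x - \bar x|$; then trivially $x \in \Gamma_{A}(\bar x)$ for any $A \ge 1$, and since $|z - x_0| + |x - x_0| < 5 r^{1+\alpha/17n}$ we also have $|\bar x - x_0| \le |x - x_0| + \delta(x) \le 2|x-x_0| < 10 r^{1+\alpha/17n}$ (using $\delta(x)\le|x-x_0|$ because $x_0\in\Gamma^+(u)$), so $\bar x$ is an admissible vertex and $x \in B(\bar x, 10 r^{1+\alpha/17n}) \cap \Gamma_A(\bar x)$. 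We must also check $B(\bar x, 2 r_0) \subset \Omega$, which follows from $B(x_0, 4r_0) \subset \Omega$ and $|\bar x - x_0|$ being tiny. Hence Lemma \ref{l3.2} gives $(1 - r^\beta) u(x) \le h_{\bar x, r}(x) \le (1 + r^\beta) u(x)$.

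Next I would compare $h_{z,r}$ with $h_{\bar x, r}$. Both are nonnegative and harmonic in $U \cap B(z,r) \cap B(\bar x, r)$, vanish on $\partial U$ there, and — this is the key point — on a large portion of $U$ near $x_0$ (say on $B(x_0, 10 r^{1+\alpha/17n}) \cap \Gamma_A(\cdot)$, applying Lemma \ref{l3.2} to \emph{both} vertices $z$ and $\bar x$) each of them is squeezed between $(1 \mp r^\beta) u$. So on the common non-tangential cone through $x_0$ — e.g. along the curve $\gamma_{x_0}$, at a point $w$ with $|w - x_0| \sim r^{1+\alpha/17n}$ and $\delta(w) \gtrsim |w-x_0|$ — we get $h_{z,r}(w)/h_{\bar x, r}(w) \in [1 - Cr^\beta, 1 + Cr^\beta]$. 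Then the boundary Harnack / Hölder-at-the-boundary estimate \eqref{eqn3.16} (or directly the boundary Harnack principle of \cite{JK} adapted to locally NTA domains) propagates this ratio control from $w$ to our point $x$: since both $x, w \in U \cap B(x_0, C r^{1+\alpha/17n})$ and the relevant scale is $s \sim r^{1+\alpha/17n}$, the oscillation of $h_{z,r}/h_{\bar x,r}$ between $x$ and $w$ is bounded by $C (|x-w|/s)^\eta \cdot (h_{z,r}(w)/h_{\bar x,r}(w))$, which is at most a small constant times the value at $w$ once $r$ is small. Combining, $h_{z,r}(x)/h_{\bar x, r}(x) \in [1 - C' r^{\beta'}, 1 + C' r^{\beta'}]$ for a possibly smaller exponent $\beta'$.

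Finally I would chain the two comparisons: $h_{z,r}(x)/u(x) = \big(h_{z,r}(x)/h_{\bar x,r}(x)\big)\big(h_{\bar x, r}(x)/u(x)\big)$, and each factor lies in $[1 - C r^{\beta'}, 1 + C r^{\beta'}]$, so after relabeling the exponent (still $\beta \in (0, \alpha/16n)$) and absorbing constants by taking $\rho_3$ small, the product lies in $[1 - 5 r^\beta, 1 + 5 r^\beta]$, which is \eqref{a3.44}. The main obstacle is the middle step: one needs the boundary Harnack estimate to hold at the \emph{very small scale} $s \sim r^{1+\alpha/17n}$ rather than at scale $r$, and one must verify that the locally NTA constants of $U$ (from Theorem \ref{t2.3}) are genuinely scale-invariant down to these scales, so that \eqref{eqn3.16} applies with $s$ that small; this is where one must be careful that the ball $B(x_0, s) \subset \Omega$ still sees the NTA structure (it does, since $s \ll r_0$ and $B(x_0, 4r_0) \subset \Omega$), and that the various exponents $\beta$, $\eta$, $\alpha/17n$, $\gamma - 1$ combine to leave a strictly positive power of $r$ after all the comparisons. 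A secondary bookkeeping point is ensuring the radius of the ball on which $h_{z,r}$ and $h_{\bar x,r}$ are simultaneously comparable to $u$ contains both $x$ and the reference point $w$ on $\gamma_{x_0}$; this is why the statement uses the generous $4r_0$ in the hypothesis and the $5 r^{1+\alpha/17n}$ radius, giving the room to fit $z$, $\bar x$, $x$, and $w$ all inside the good region.
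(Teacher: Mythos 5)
Your overall architecture coincides with the paper's: apply Lemma \ref{l3.2} at the nearest boundary point $\bar x$ to $x$ (so that $x$ lies trivially in a non-tangential cone), compare $h_{z,r}$ with $h_{\bar x,r}$ at a common non-tangential reference point near $x_0$, propagate the ratio control with the boundary Harnack/H\"older estimate \eqref{eqn3.16}, and chain. However, the middle step as you wrote it does not deliver what you need. If you invoke the ratio-H\"older estimate at scale $s \sim r^{1+\alpha/17n}$ while the two points $x$ and $w$ are at mutual distance comparable to $s$, then the factor $(|x-w|/s)^{\eta}$ is of order one, so \eqref{eqn3.16} only gives that $h_{z,r}(x)/h_{\bar x,r}(x)$ is comparable to $h_{z,r}(w)/h_{\bar x,r}(w)$ up to absolute multiplicative constants, not within $1\pm C r^{\beta'}$; bounded comparability is not enough to reach \eqref{a3.44}, where the error must decay like a power of $r$. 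Your sentence claiming the oscillation is ``a small constant times the value at $w$'' is exactly the point that fails.

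The fix (and what the paper does) is to apply the estimate at the large scale $r$ rather than the small one; your concern about whether the NTA structure persists down to scale $r^{1+\alpha/17n}$ is beside the point. Since $|z-\bar x| \lesssim r^{1+\alpha/17n} \ll r$, both $h_{z,r}$ and $h_{\bar x,r}$ are nonnegative harmonic on $U \cap B(x_0, r/2)$ and vanish on $\partial U$ there, so the ratio estimate applies at scale comparable to $r$, and for points $x, w \in U\cap B(x_0, C r^{1+\alpha/17n})$ the H\"older factor becomes $(|x-w|/r)^{\eta} \lesssim r^{\eta\alpha/17n}$; this is the source of the smallness, and since $\beta = \eta\alpha/18n < \eta\alpha/17n$ the error is absorbed into $r^{\beta}$ once $\rho_3$ is small. (In the paper the common reference point is a corkscrew point $\xi$ of $B(x_0, r^{1+\alpha/17n})$, which lies in both cones once $A$ is taken large; your point $w$ on $\gamma_{x_0}$ plays the same role, provided you enlarge $A$ so that $w$ belongs to the cones with vertices $z$ and $\bar x$, not merely $x_0$.) With this correction, your chaining argument yields \eqref{a3.44}.
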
 

\ms
As in the previous remarks, the proof will show that $\rho_3$ and $\beta$
depend only on the usual constants of Remark \ref{rmk3.2}.

\begin{proof} 
As we shall see, most of the information comes from Lemma \ref{l3.1}.
Let $x_0$ and $r$ be as in the statement, and set $\rho = r^{1+\alpha/17n}$
to simplify the notation.
We start with any $z \in \Gamma^+(u) \cap B( x_0, 10 \rho)$. 
Thus $B(z,2r_0) \subset \Omega$, and we can apply Lemma~\ref{l3.1}
(with a large constant $A$ that will be chosen soon), both to $x_0$ and to $z$. 
So, if we make sure to take $\rho_3 \leq \rho_2$, \eqref{eqn3.11} says that
\begin{equation}\label{a3.45}
(1-r^\beta) \le \frac{h_{x_0,r}(x)}{u(x)}\le (1+r^\beta)
\end{equation}
for $x\in \Gamma_A(x_0) \cap B(x_0, 10 \rho)$ but also
 \begin{equation}\label{a3.46}
(1-r^\beta) \le \frac{h_{z,r}(x)}{u(x)}\le (1+r^\beta)
\end{equation}
for $x\in \Gamma_A(z) \cap B(x_0, 10 \rho)$.
We compare the two (i.e., multiply and divide by $u(x)$) and get that 
for $x\in \Gamma_A(x_0) \cap \Gamma_A(z) \cap B( x_0, 10 \rho)\cap B(z, 10 \rho)$,
\begin{equation}\label{a3.47}
(1-r^\beta)^2 \leq \frac{h_{z,r}(x)}{h_{x_0,r}(x)} \leq (1+r^\beta)^2.
\end{equation}
Notice that this last set is not empty: if $\rho_3$ is small enough, the local NTA property
of $U$ gives us a corkscrew point, $\xi$, for $U$ in $B( x_0, \rho)$, as in
Part 1 of Definition \ref{d2.3}. That is, $\delta(\xi) \geq  C_1^{-1}\rho$.
This point $\xi$ lies in the intersection above if $A > 10C_1$, and hence satisfies \eqref{a3.46}.

Now $h_{x_0,r}$ and $h_{z,r}$ are two non-negative harmonic function on $U \cap B( x_0, r/2)$
that vanish on $\d U$, hence by the NTA property and as in \eqref{eqn3.16}
\begin{equation}\label{a3.48}
\left|\frac{h_{z,r}(\xi)}{h_{x_0,r}(\xi)}-\frac{h_{z,r}(y)}{h_{x_0,r}(y)}\right|
\le C_3\frac{h_{z,r}(\xi)}{h_{x_0,r}(\xi)}\left(\frac{|x-y|}{r}\right)^{\eta} \leq 20C_3 r^{\eta \alpha/17n}
\end{equation}
for $y\in U \cap B(x_0, 10\rho)$ and $\xi = A_+(x_0, \rho)$ as above. Recall that we took $\beta = \eta \alpha /18n < \eta \alpha/17n$;
then if we take $\rho_3$ small enough, we get that $20C_3r^{\eta \alpha/17n} < r^\beta/2$.
We compare \eqref{a3.48} with \eqref{a3.47} for $\xi$ and get that
\begin{equation}\label{a3.49}
(1-3r^\beta) \leq \frac{h_{z,r}(y)}{h_{x_0,r}(y)} \leq (1+3r^\beta).
\end{equation}
This holds for $y\in U \cap B(x_0, 10\rho)$, but if in addition $y\in \Gamma_A(z) \cap B(z, 10\rho)$, 
we can apply \eqref{a3.46} to $y$ and we get that
\begin{equation}\label{a3.50}
(1-5r^\beta) \leq \frac{h_{x_0,r}(y)}{u(y)} \leq (1+5r^\beta).
\end{equation}
Let us check that in fact \eqref{a3.50} holds for every $y\in U \cap B(x_0,5\rho)$.
Let $z\in \Gamma^+(u)$ minimize the distance to $y$; then $|z-y| \leq |x_0 - y| < 5\rho$ and 
obviously $y \in U \cap B(x_0, 10\rho) \cap U \cap B(z, 10\rho)$; in addition, 
$\delta(y) = |z-y|$ so $y\in \Gamma_A(z)$. Then \eqref{a3.50} holds, as announced.

In fact our proof of \eqref{a3.50} works just the same if we assume that $B(x_0,3r_0) \subset \Omega$
(instead of $B(x_0,4r_0) \subset \Omega$), so if $z\in \Gamma^+ \cap B(x_0, 10\rho)$, we also get that
\begin{equation}\label{a3.51}
(1-5r^\beta) \leq \frac{h_{z,r}(y)}{u(y)} \leq (1+5r^\beta)
\ \text{ for } y \in U \cap B(z,5\rho)
\end{equation}
with exactly the same proof. In particular, this holds when $|z-x_0|+ |y-x_0| < 5 \rho$,
as announced in the statement. This completes the proof of Theorem \ref{t3.1}.

\end{proof}

\ms
Let us record some simple consequences of Theorem \ref{t3.1}. First observe that when
$u$, $r_0$, $x_0$, $0 < r \leq \rho_3$ are as in the statement, then
\begin{equation}\label{a3.52}
(1-11r^\beta) \leq \frac{h_{z,r}(x)}{h_{w,r}(x)} \leq (1+11r^\beta)
\end{equation}
for $z, w\in \Gamma^+(u)$ and $x \in U$ such that $|x-x_0| + \max(|z-x_0|,|y-x_0|) < 5 r^{1+\alpha/17n}$.
Indeed, \eqref{a3.44} also holds with $z$ replaced by $w$, and then we compare.

\ms
We can also compare $h_{x_0,r}$ with $h_{x_0,s}$. Let $u$, $r_0$, and $x_0$ be as in the statement;
then for $0 < s < r \leq \rho_3$, the ratio $\frac{h_{x_0,s}}{h_{x_0,r}}$ of positive harmonic functions 
on $U \cap B(x_0,s/2)$ is continuous up to the boundary, so we can define
\begin{equation}\label{eqn3.34}
\ell_{s,\rho}(x_0)=\lim_{x \to x_0 \, ; \, x \in U} \ \frac{h_{x_0,s}(x)}{h_{x_0,\rho}(x)}.
\end{equation}
Then, for $x\in U \cap B(x_0,5 s^{1+\alpha/17n})$, we have \eqref{a3.44} for $h_{x_0,r}(x)$,
but also $h_{x_0,s}(x)$; we take the ratio, then take the limit when $x$ tends to $x_0$, and get that
\begin{equation}\label{eqn3.35}
1-11\rho^\beta\le \ell_{s,\rho}(x_0)\le 1+11\rho^\beta.
\end{equation}

\ms
Here is a simple consequence of Theorem \ref{t3.1}, that will be enough in some cases.

\begin{corollary}\label{cor3.3}
Let  $\Omega\subset \R^n$ be an open connected domain, 
and $q_\pm\in L^\infty(\Omega)\cap C(\Omega)$ with $q_+\ge c_0>0$ and \eqref{a3.2}. 
Let $u$ be an almost-minimizer for $J$ or $J^+$ in $\Omega$. 
Given $\epsilon>0$ and $r_0>0$, there exist $\rho_4>0$ and $\rho_5\in(0,\rho_4)$
such that if $x_0\in \Gamma^+(u)$
and $B( x_0, 4r_0)\subset \Omega$, then the harmonic competitor, $h_{x_0,\rho_4}$ (defined near
\eqref{a3.4}, with $r$ replaced by $\rho_4$), satisfies  
\begin{equation}\label{eqn3.11B}
(1-\epsilon)u(x)\le h_{ x_0,\rho_4}(x)\le (1+\epsilon)u(x)
\end{equation}
for $x\in U \cap B( x_0, \rho_5) = \{u>0\} \cap B( x_0, \rho_5)$. 
\end{corollary}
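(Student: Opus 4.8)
The plan is to deduce this directly from Theorem \ref{t3.1} by choosing the radius small enough so that the error terms $5r^{\beta}$ are below $\epsilon$. More precisely, let $\rho_3$ and $\beta$ be the constants produced by Theorem \ref{t3.1} for this $r_0$. First I would fix $\rho_4 \in (0,\rho_3]$ small enough that $5\rho_4^{\beta} \leq \epsilon$; this is where the dependence of $\rho_4$ on $\epsilon$ and $r_0$ enters. With $x_0 \in \Gamma^+(u)$ and $B(x_0,4r_0) \subset \Omega$ given, apply Theorem \ref{t3.1} with $r = \rho_4$ and $z = x_0$: this yields
\begin{equation*}
(1-5\rho_4^{\beta})u(x) \le h_{x_0,\rho_4}(x) \le (1+5\rho_4^{\beta})u(x)
\end{equation*}
for every $x \in U$ with $|x-x_0| < 5\rho_4^{1+\alpha/17n}$ (taking $z=x_0$ makes the condition $|z-x_0|+|x-x_0| < 5\rho_4^{1+\alpha/17n}$ reduce to $|x-x_0| < 5\rho_4^{1+\alpha/17n}$). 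By the choice of $\rho_4$, the factors $1 \mp 5\rho_4^{\beta}$ lie within $1 \mp \epsilon$, so \eqref{eqn3.11B} holds on $U \cap B(x_0, 5\rho_4^{1+\alpha/17n})$.

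The only remaining point is to name the inner radius: set $\rho_5 = 5\rho_4^{1+\alpha/17n}$. Since $1+\alpha/17n > 1$ and $\rho_4$ is small (in particular we may arrange $\rho_4 < 1$ and even $5\rho_4^{\alpha/17n} < 1$, shrinking $\rho_4$ further if necessary), we get $\rho_5 = 5\rho_4^{1+\alpha/17n} = (5\rho_4^{\alpha/17n})\,\rho_4 < \rho_4$, as required by the statement. Finally, note that on $U \cap B(x_0,\rho_5)$ we have $u > 0$ by definition of $U$, so $\{u>0\} \cap B(x_0,\rho_5) = U \cap B(x_0,\rho_5)$ and the two descriptions of the set in \eqref{eqn3.11B} agree. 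This completes the argument.

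I do not anticipate a serious obstacle here: the corollary is essentially a clean restatement of Theorem \ref{t3.1} with the polynomial error $5r^{\beta}$ traded for an arbitrary $\epsilon$ by taking $r$ small. The only thing to be careful about is bookkeeping — making sure $\rho_5 < \rho_4$ (which forces a mild extra smallness condition on $\rho_4$, harmless since $\rho_4$ can always be shrunk) and making sure the hypothesis $B(x_0,4r_0) \subset \Omega$ of Theorem \ref{t3.1} is exactly the hypothesis we assume here, which it is.
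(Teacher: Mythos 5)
Your proof is correct and is essentially the same argument the paper gives: apply Theorem \ref{t3.1} with $z=x_0$, choose $\rho_4<\rho_3$ so that $5\rho_4^\beta<\epsilon$, and set $\rho_5=5\rho_4^{1+\alpha/17n}$. Your additional remark ensuring $\rho_5<\rho_4$ is a harmless bookkeeping point the paper leaves implicit.
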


\begin{proof} This is a straightforward consequence of Theorem \ref{t3.1}.
Given $\epsilon>0$, choose $\rho_4<\rho_3$ (where $\rho_3$ is as in Theorem \ref{t3.1}),
so that in addition $5\rho_4^\beta<\epsilon$. Then choose $\rho_5=5\rho_4^{1+\alpha/17n}$,
and notice that \eqref{eqn3.11B} follows from \eqref{a3.44} with $z=x_0$.
As in the previous remarks, $\rho_3$ and $\beta$
depend only on the usual constants of Remark \ref{rmk3.2}.

\end{proof}

We end this section with a consequence of Theorem \ref{t3.1} and nondegeneracy estimates
from \cite{DT}.

\begin{corollary}\label{cor3.2}
Let  $\Omega$, $q_\pm$, a minimizer $u$ for $J$ or $J^+$, $r_0$, $\rho_3 \in (0,r_0)$, and
$x_0 \in \Gamma^+(u)$ such that $B(x_0,4r_0) \subset \Omega$, 
be as in the statement of Theorem \ref{t3.1}. Then 
\begin{equation}\label{eqn3.11BB}
c_{\mathrm{min}}\le 
\frac{1}{s}\fint_{\partial B(z,s) \cap U} h_{x_0,r}\, d\mathcal{H}^{n-1}
\le C_{\mathrm{max}}
\end{equation}
for $0 < s \leq r \leq \rho_3$ and $z \in \Gamma^+(u)$ such that 
$B(z,s)\subset B(x_0, 5r^{1+\alpha/17n})$.
Here the constants $0 < c_{\mathrm{min}} < C_{\mathrm{max}}$ depend only on 
$n$, $c_0$, $\|q_\pm\|_{L^\infty}$, $\kappa$, $\alpha$, $r_0$, 
and a bound on $\int_\Omega |\nabla u|^2$.
\end{corollary}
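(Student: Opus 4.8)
The plan is to combine the comparison estimate \eqref{a3.44} of Theorem \ref{t3.1} with the Lipschitz bound and the non-degeneracy bound for $u$ from \cite{DT}. The key observation is that the quantity $\frac{1}{s}\fint_{\partial B(z,s)\cap U} h_{x_0,r}\,d\mathcal H^{n-1}$ is, up to the harmless multiplicative error $(1\pm 5r^\beta)$ coming from \eqref{a3.44}, the same as $\frac{1}{s}\fint_{\partial B(z,s)\cap U} u\,d\mathcal H^{n-1}$; and the latter average is comparable to $1$ by the two-sided linear growth of $u$ at the free boundary.

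First I would note that the hypotheses are arranged precisely so that Theorem \ref{t3.1} applies: $z\in\Gamma^+(u)$, $B(z,s)\subset B(x_0,5r^{1+\alpha/17n})$, and $s\le r\le\rho_3$, so for every $x\in\partial B(z,s)\cap U$ we have $|z-x_0|+|x-x_0|<5r^{1+\alpha/17n}$ (since $|x-x_0|\le|x-z|+|z-x_0|=s+|z-x_0|$ and $B(z,s)\subset B(x_0,5r^{1+\alpha/17n})$ forces $s+|z-x_0|<5r^{1+\alpha/17n}$ after slightly shrinking, or one just enlarges the constant $5$ to $10$ harmlessly). Hence \eqref{a3.44} gives pointwise on $\partial B(z,s)\cap U$ that $(1-5r^\beta)u(x)\le h_{x_0,r}(x)\le(1+5r^\beta)u(x)$. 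Averaging over $\partial B(z,s)\cap U$ preserves these inequalities, so it suffices to prove the two-sided bound for $\frac{1}{s}\fint_{\partial B(z,s)\cap U} u\,d\mathcal H^{n-1}$, and then absorb the factor $(1\pm 5r^\beta)$, which is bounded above and below by constants once $\rho_3$ is small.

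For the upper bound: $u$ is locally Lipschitz with a constant $M$ controlled by $n$, $\|q_\pm\|_\infty$, $\kappa$, $\alpha$, $r_0$ and $\int_\Omega|\nabla u|^2$ (Theorems 5.1 and 8.1 in \cite{DT}), and $u(z)=0$ since $z\in\Gamma^+(u)$; hence $u(x)\le M|x-z|=Ms$ for $x\in\partial B(z,s)$, giving $\frac1s\fint_{\partial B(z,s)\cap U}u\le M=:C_{\mathrm{max}}$. For the lower bound I would invoke the non-degeneracy estimate (Theorem 10.2 in \cite{DT}, which uses $q_+\ge c_0>0$): there is $\eta>0$, depending on the same quantities, with $\sup_{B(z,s)}u\ge\eta s$, equivalently $u(w)\ge\eta s$ for some $w$ with $|w-z|\le s$. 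Combined with the interior Harnack inequality for the harmonic function $h_{x_0,r}$ on $U\cap B(x_0,r/2)$ — or, more directly, with the NTA structure and the fact that $\partial B(z,s)\cap U$ contains a corkscrew point $\xi$ for $U$ in $B(z,s)$ with $\delta(\xi)\ge C_1^{-1}s$, at which $u(\xi)\ge C^{-1}\delta(\xi)\ge C^{-1}s$ by \eqref{a3.14} — one gets that $u$ is at least $c_{\mathrm{min}}s$ on a fixed portion of $\partial B(z,s)\cap U$ of $\mathcal H^{n-1}$-measure comparable to $s^{n-1}$, whence $\frac1s\fint_{\partial B(z,s)\cap U}u\ge c_{\mathrm{min}}>0$.

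The main obstacle is the lower bound: non-degeneracy as stated in \cite{DT} produces a point where $u$ (or $\sup u$) is large, not a lower bound for the \emph{average} over the sphere, so one must propagate the largeness to a definite fraction of the sphere $\partial B(z,s)\cap U$. This is exactly where the local NTA property of $U$ (Theorem \ref{t2.3}) and the linear-growth comparison \eqref{a3.14} enter: they guarantee an interior corkscrew point on (or very near) the sphere at which $u\gtrsim s$, and then the boundary Harnack / Hölder continuity of ratios up to $\d U$ used throughout this section (as in \eqref{eqn3.16}) lets one conclude that the set $\{x\in\partial B(z,s)\cap U: u(x)\ge c\,s\}$ has $\mathcal H^{n-1}$-measure bounded below by $c\,s^{n-1}$. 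Once that is in hand, dividing by $\mathcal H^{n-1}(\partial B(z,s)\cap U)\le \mathcal H^{n-1}(\partial B(z,s))=Cs^{n-1}$ gives the claimed lower bound on the average, and the proof is complete.
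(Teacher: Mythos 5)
Your overall structure is the same as the paper's: both proofs reduce \eqref{eqn3.11BB} to a two-sided bound for the spherical averages of $u$ and then transfer it to $h_{x_0,r}$ via the pointwise comparison \eqref{a3.44} of Theorem \ref{t3.1}, with the Lipschitz bound from \cite{DT} giving the upper bound. The only divergence is the lower bound, and there your ``main obstacle'' is not actually an obstacle in the paper: \cite{DT} contains the non-degeneracy estimate directly in average form (Lemma 10.3 there), namely $c'_{\mathrm{min}}\le \frac1s\fint_{\partial B(z,s)}u^+$ as in \eqref{eqn3.32A}, so the paper's lower bound is a one-line citation; since $u^+$ vanishes off $U$, restricting the average to $\partial B(z,s)\cap U$ only helps, and no propagation argument is needed.

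Your substitute argument is salvageable but, as written, contains an inaccuracy: a corkscrew point for $B(z,s)$ lies in $U\cap B(z,s/2)$ by Definition \ref{d2.3}, so it is \emph{not} on $\partial B(z,s)$, and the boundary Harnack estimate \eqref{eqn3.16} controls ratios of two nonnegative harmonic functions vanishing on $\d U$ --- it does not by itself spread a pointwise lower bound for the (non-harmonic) function $u$ over a portion of the sphere. The correct way to implement your idea with tools already in the paper is to use the non-tangential access curve of \eqref{a3.26}: as in \eqref{a3.29} it produces a point $z(s)\in\partial B(z,s)$ with $\delta(z(s))\ge s/A$, so the spherical cap $\partial B(z,s)\cap B(z(s),s/(2A))$ lies in $U$, has $\mathcal H^{n-1}$-measure $\ge c(A)s^{n-1}$, and on it $u\ge C^{-1}s/(2A)$ by \eqref{a3.14}; dividing by $\mathcal H^{n-1}(\partial B(z,s)\cap U)\le Cs^{n-1}$ then gives the lower bound, no Harnack-type propagation required. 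With that correction your proof is complete, but it is longer than the paper's, whose only content beyond Theorem \ref{t3.1} is the citation of the Lipschitz and average non-degeneracy bounds from \cite{DT}.
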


\begin{proof}
We have similar estimates for $u$, namely
\begin{equation}\label{eqn3.32A}
c'_{\mathrm{min}}\le 
\frac{1}{s}\fint_{\partial B(z,s)} u^+\, d\mathcal{H}^{n-1}
\le C'_{\mathrm{max}}.
\end{equation}
The upper bound holds because $u$ us locally Lipschitz (as in Theorems~5.1 and 8.1 in \cite{DT}),
and the lower bound is Lemma 10.3 in \cite{DT}.
Now we use Theorem \ref{t3.1} to show that (if $\rho_3$ is small enough)
$u/2 \leq h_{x_0,r} \leq 2u$ on $\partial B(z,s) \cap U$; the corollary follows.

\end{proof}

\section{Local uniform rectifiability of the free boundary}\label{unif-rect}

In this section we show that under the assumptions that 
$q_\pm\in L^\infty(\Omega)\cap C(\overline\Omega)$ and
$q_\pm\ge c_0>0$, the free boundary of almost-minimizers for $J$ or $J^+$ in $\Omega$
is locally Ahlfors-regular and uniformly rectifiable in $\Omega$.

In fact, given the local NTA property of $U = \big\{x\in \Omega \, ; \, u(x)>0 \big\}$ that was proved in 
Section~\ref{global} (see Theorem \ref{t2.3} or Corollary \ref{t2.4}), the hard part will be to prove 
the local Ahlfors regularity of $\Omega \cap \d U$. 
In the context of minimizers, as studied in \cite{AC}, \cite{ACF}, and others,
the distribution, $\Delta u$, which is a positive measure, plus maybe a controllable error term, is a good candidate for an Ahlfors regular measure supported on $\Omega \cap \d U$.
Here we cannot argue this way, because the almost-minimality of $u$ is not enough to control $\Delta u$, even
inside $U$. Instead we will show that the harmonic measure on $\d U$ is locally Ahlfors-regular, and for this
we will use the harmonic functions $h_{x_0,r}$ introduced in the previous section, plus the control on 
the ratio $\frac{h_{x_0,r}}{u}$ that we proved in that section. 

In the context of almost-minimizers, this result is new, 
and it opens the door to study higher regularity of the free boundary under additional assumptions on $q_\pm$.

\smallskip

Our assumptions for this section are the same as in Section \ref{harmonic-functions}.
We are given a bounded (connected) domain $\Omega \subset \R^n$, and two 
bounded functions $q_\pm$ that are continuous on $\Omega$.
We assume, as in \eqref{a3.1}, that $q_+\ge c_0>0$ on $\Omega$, and, as in \eqref{a3.2}, that 
$0 \leq q_- \leq q_+$ on $\Omega$ or $q_- \geq c_0 > 0$ on $\Omega$.

Under these assumptions, Theorem \ref{t2.3} says that $U$ is locally NTA in $\Omega$.
This implies that for every choice of $r_0 > 0$
we can find constants $C_1$, $C_2$, $C_3$, and also a radius $r_1 \in (0,r_0)$,
such if $x_0 \in \Gamma^+(u)$ is such that $B(x_0,r_0) \subset \Omega$, 
then for $0 < r \leq r_1$ we can find corkscrew points and Harnack chains as in Definition \ref{d2.3}.
In addition, we claim that $C_1$, $C_2$, $C_3$, and $r_1$ depend only on the usual constants of 
Remark \ref{rmk3.2}.

In particular we shall use the notation $A(x_0,r)$ for a corkscrew point for $U$, in $B(x_0,r)$; this means that
$A(x_0,r) \in B(x_0,r/2)$ and 
\begin{equation}\label{a4.1}
\dist(A(x_0,r), \d U) \geq C_1^{-1} r.
\end{equation}
By definition, such a point exists for $0 < r \leq r_1$. We then denote by 
$\omega^{A(x_0,r)}$ the harmonic measure on $\d U$, coming from the open set $U$
and the pole $A(x_0,r)$.

\begin{theorem}\label{t4.1} Let  $\Omega\subset \R^n$, $q_\pm \in L^\infty(\Omega)$,
and the almost-minimizer $u$ for $J$ or $J^+$ satisfy the assumptions above.
For each $r_0 > 0$, there exists $\rho_4 \in (0,r_0)$ and $C_5 \geq 0$ such that 
for any  $x_0\in \Gamma^+(u)$ with $B(x_0, 8r_0)\subset \Omega$,
\begin{equation}\label{eqn4.1}
C_5^{-1}r^{n-1}\le \omega^{A(x_0,\rho_4)}(B(z,r))\le C_5 r^{n-1}
\end{equation}
for all $z\in \Gamma^+(u) \cap B(x_0, r_0)$ and $0<r<\rho_4$.
\end{theorem}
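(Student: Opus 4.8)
The plan is to show that the harmonic measure $\omega^{A(x_0,\rho_4)}$ on $\partial U$ is comparable to the $(n-1)$-dimensional measure of surface balls $B(z,r)\cap\Gamma^+(u)$, using the harmonic competitors $h_{x_0,r}$ (which are comparable to $u$ by Theorem~\ref{t3.1}) together with standard NTA estimates. The key object is the function $h=h_{x_0,\rho_4}$, which is positive and harmonic in $U\cap B(x_0,\rho_4)$ and vanishes continuously on $\Gamma^+(u)\cap B(x_0,\rho_4)$. By the boundary Harnack principle and the fact (from \cite{JK}) that for NTA domains the harmonic measure with a fixed pole is mutually absolutely continuous with the ``elliptic measure'' associated to any positive harmonic function vanishing on a boundary ball, one has for $z\in\Gamma^+(u)\cap B(x_0,r_0)$ and $r$ small the comparison
\begin{equation}\label{prop-key}
\omega^{A(x_0,\rho_4)}(B(z,r)) \approx \frac{h(A(z,r))}{h(A(x_0,\rho_4))} \approx \frac{u(A(z,r))}{u(A(x_0,\rho_4))},
\end{equation}
where $A(z,r)$ is a corkscrew point for $U$ in $B(z,r)$ and the implicit constants depend only on the NTA constants of $U$ (hence only on the usual constants of Remark~\ref{rmk3.2}); in the last step we used Theorem~\ref{t3.1} to replace $h$ by $u$ at points that are at definite distance $\gtrsim C_1^{-1}r$ from $\Gamma^+(u)$, provided $\rho_4$ is taken small enough. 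Since $h(A(x_0,\rho_4))$ is a fixed positive number (comparable to $u(A(x_0,\rho_4))$, which by nondegeneracy and the Lipschitz bound is $\approx \rho_4$), the problem reduces to proving $u(A(z,r))\approx r$.

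This last estimate is exactly the content of the Lipschitz bound and the nondegeneracy estimates from \cite{DT}: the upper bound $u(A(z,r))\le \|\nabla u\|_{L^\infty}\cdot r \le C r$ follows since $u(z)=0$ and $|A(z,r)-z|\le r$ (using Theorems~5.1 and 8.1 in \cite{DT} for the local Lipschitz bound), and the lower bound $u(A(z,r))\ge c\,r$ follows from Theorem~10.2 in \cite{DT} (nondegeneracy, which uses $q_+\ge c_0>0$) applied at the corkscrew point, since $\dist(A(z,r),\Gamma^+(u))\ge C_1^{-1}r$. One should also invoke the Harnack chain property of the NTA domain $U$ to pass between the given corkscrew point $A(z,r)$ and any other corkscrew point, so the comparison in \eqref{prop-key} is independent of the particular choice of corkscrew point. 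Combining these gives $\omega^{A(x_0,\rho_4)}(B(z,r)) \approx r/\rho_4$, which is not yet the claimed $r^{n-1}$ — so in fact the right way to set it up is to compare $\omega^{A(x_0,\rho_4)}(B(z,r))$ with $\omega^{A(x_0,\rho_4)}(B(x_0,\rho_4))\approx 1$ via the doubling property of harmonic measure in NTA domains plus \eqref{prop-key}, and to track the correct power by iterating the comparison across dyadic scales; the $r^{n-1}$ normalization then emerges because $\omega$ is a probability-like measure on an $(n-1)$-dimensional boundary.

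Let me restate the structure more carefully, since the power of $r$ is the subtle point. The correct approach is: (i) use \eqref{prop-key} and the analogous comparison at the top scale to write $\omega^{A(x_0,\rho_4)}(B(z,r))/\omega^{A(x_0,\rho_4)}(B(z,2r)) \approx u(A(z,r))/u(A(z,2r))$; (ii) this ratio is bounded above and below by constants in $(0,1)$ only if one has matching estimates — but here the key additional input is Corollary~\ref{cor3.2}, which says $\frac{1}{s}\fint_{\partial B(z,s)\cap U} h_{x_0,r}\,d\mathcal H^{n-1}$ is bounded above and below, i.e.\ $\int_{\partial B(z,s)\cap U} h_{x_0,r}\,d\mathcal H^{n-1} \approx s\cdot s^{n-1} = s^n$; (iii) combining this with the representation $\omega^{A(x_0,\rho_4)}(B(z,s)) \approx h(\text{corkscrew in } B(z,s))/h(A(x_0,\rho_4))$ and the fact that for a positive harmonic function in $U\cap B(x_0,\rho_4)$ vanishing on the boundary the value at a corkscrew point of $B(z,s)$ is comparable to $s^{1-n}\int_{\partial B(z,s)\cap U} h\,d\mathcal H^{n-1}$ (mean-value-type estimate, valid by Harnack and the geometry), one gets $\omega^{A(x_0,\rho_4)}(B(z,s)) \approx s^{1-n}\cdot s^n / h(A(x_0,\rho_4)) = s/h(A(x_0,\rho_4))$ — again linear, not $s^{n-1}$. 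The resolution of this apparent discrepancy (and the genuine heart of the argument) is that $h(A(x_0,\rho_4))\approx \rho_4$ only if $\Gamma^+(u)$ is $(n-1)$-dimensional near $x_0$, which is not yet known; so instead one fixes the pole estimate $\omega^{A(x_0,\rho_4)}(\Gamma^+(u)\cap B(x_0,\rho_4)) = 1$ and propagates \emph{relative} bounds downward: the genuine output is that $\omega$ is doubling with the comparison $\omega(B(z,r))\approx \big(\tfrac{r}{\rho_4}\big)^{d}$ for the correct exponent $d$, and then one identifies $d = n-1$ precisely because $\int_{\partial B(z,s)\cap U} h\,d\mathcal H^{n-1}\approx s^n$ forces the harmonic measure density, which is $\approx$ the normal derivative of $h$, to live on an $(n-1)$-set. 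I expect the main obstacle to be exactly this bookkeeping: correctly combining the NTA harmonic-measure machinery of \cite{JK} with the mean-value estimate of Corollary~\ref{cor3.2} so that the exponent $n-1$ (and not $1$ or $n$) comes out, and doing so uniformly in $z\in B(x_0,r_0)$ and $0<r<\rho_4$ with constants depending only on the usual constants of Remark~\ref{rmk3.2}.
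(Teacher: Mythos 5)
There is a genuine gap, and it sits exactly where you sensed trouble: the power $r^{n-1}$. Your starting comparison $\omega^{A(x_0,\rho_4)}(B(z,r)) \approx h(A(z,r))/h(A(x_0,\rho_4))$ is false for $n\ge 3$: already in the half-space with $h(x)=x_n$ and pole at height $\rho_4$, the left side is $\approx (r/\rho_4)^{n-1}$ while the right side is $\approx r/\rho_4$. The boundary Harnack principle only compares two positive harmonic functions vanishing on a boundary ball (ratios of such functions), it does not identify harmonic measure of a surface ball with the value of $h$ at a corkscrew. You correctly noticed that this route outputs a linear growth $r/\rho_4$ instead of $r^{n-1}$, but your attempted repair does not close the gap: the claim that the exponent must be $n-1$ ``because the harmonic measure density lives on an $(n-1)$-set'' is circular, since the $(n-1)$-dimensionality (local Ahlfors regularity) of $\Gamma^+(u)$ is precisely what Theorem \ref{t4.2} later deduces \emph{from} Theorem \ref{t4.1}; at this stage of the paper it is not known that $\Gamma^+(u)$ is $(n-1)$-dimensional, and no amount of dyadic iteration of doubling will produce the exponent.

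The missing ingredient is the Green function and the Caffarelli--Fabes--Mortola--Salsa estimate for NTA domains (Lemma 4.8 in \cite{JK}): with $A_0=A(x_0,\rho_4)$ and $G(A_0,\cdot)$ the Green function of $U\cap B(x_0,4\rho_4)$, one has $\omega^{A_0}(B(z,r)) \approx r^{n-2}\,G(A_0,A(z,r))$. This is the only place the exponent $n-1$ can come from: it then suffices to show $G(A_0,A(z,r))\approx r$. That is done exactly by the chain you already have at hand, but applied to $G$ rather than to $\omega$: by Corollary \ref{cor3.3} and the Lipschitz/nondegeneracy estimates of \cite{DT}, $h_{z,\rho_4}\approx u \approx \delta(\cdot)=\dist(\cdot,\Gamma^+(u))$ on $U\cap B(z,\rho_5)$; then the comparison principle for positive harmonic functions vanishing on the boundary (Lemma 4.10 in \cite{JK}), applied to the pair $G(A_0,\cdot)$ and $h_{z,\rho_4}$, together with the fact that both are of size $\approx 1$ at the corkscrew $A(z,\rho_5)$, gives $G(A_0,x)\approx \delta(x)$ on $U\cap B(z,\rho_5)$, hence $G(A_0,A(z,r))\approx r$ and $\omega^{A_0}(B(z,r))\approx r^{n-1}$ for $r\le\rho_5$; the range $\rho_5\le r\le\rho_4$ follows from doubling. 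So your use of $h_{z,\rho_4}\approx u\approx\delta$ and of corkscrew/Harnack machinery matches the paper's proof, but without the CFMS lemma linking $\omega$ to $r^{n-2}G$ the argument cannot be completed, and Corollary \ref{cor3.2} (your step (ii)--(iii)) does not substitute for it.
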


\ms
In fact we shall choose $\rho_4 < r_1$, so $\omega^{A(x_0,\rho_4)}$ is well defined, and also so that
$\rho_4$ and $C_5$ depend only on the usual constants of Remark \ref{rmk3.2}, not on the specific
choices of $\Omega$, $q_\pm$, and $u$.

\begin{proof}
Let $r_0$ be as in the statement, and choose $\rho_4$ and $\rho_5 \in (0,\rho_4)$ as in Corollary \ref {cor3.3},
applied with $\varepsilon = 1/2$. The proof allows us to pick $\rho_4$ smaller if needed, at the expense of
taking $\rho_5$ even smaller. Thus we can assume that $\rho_4 < r_1$, for instance.
Since $B(x_0, 8r_0)\subset \Omega$, we can even apply the corollary to any 
\begin{equation}\label{a4.3}
z\in \Gamma^+(u) \cap B(x_0,4r_0).
\end{equation}
We get that for such $z$,
\begin{equation}\label{eqn4.2}
\frac{u(x)}{2} \le h_{ z,\rho_4}(x)\le \frac{3u(x)}{2}
\ \text{ for $x\in U \cap B(z, \rho_5)$.}
\end{equation}
Furthermore, under the hypothesis above, $u$ is locally Lipschitz and non-degenerate 
(see Theorems 5.1, 8.1 and 10.2 in \cite{DT}), so by \eqref{eqn4.2} there exists a constant $C>1$
such that 
\begin{equation}\label{eqn4.3}
C^{-1}\delta(x)\le h_{z,\rho_4}(x)\le C\delta(x)
\ \text{ for $x\in U \cap B(z,\rho_5)$,}
\end{equation}
where $\delta(x)=\dist(x,\Gamma^+(u))= \dist(x,\d U)$ (because the rest of $\d U$ is much further).
Here and below, $C$ is a constant that depends only on the usual constants of Remark \ref{rmk3.2}.
This allows $C$ to depend also on $r_0$, $\rho_4$, $\rho_5$, and the NTA constants for $U$
in $B(x_0,7r_0)$.

Set $A_0 = A(x_0,\rho_4)$ to simplify the notation, and denote by $G(A_0,\cdot)$
the Green function of $U\cap B(x_0, 4\rho_4)$ with pole $A_0$.
Also denote by $A(z,\rho_5)$ a corkscrew point for $U$ in $B(z,\rho_5)$.
Standard estimates for non-negative harmonic functions vanishing at the boundary of  NTA domains 
(see \cite{JK}, Lemma 4.10) 
ensure that there exists a constant $C>1$, depending only on $n$ and the local NTA constants 
for $U$, such that for $x\in B(z, \rho_5)\cap U$ 
\begin{equation}\label{eqn4.4}
C^{-1} \, \frac{G(A_0, A(z,\rho_5))}{h_{z, \rho_4}(A(z,\rho_5))}
\le \frac{G(A_0, x)}{h_{z, \rho_4}(x)}
\le C \, \frac{G(A_0, A(z,\rho_5))}{h_{z, \rho_4}(A(z,\rho_5))}.
\end{equation}
Notice that 
$\delta(A(z,\rho_5)) \geq C^{-1} \rho_5$, and $\delta(A_0) \geq C^{-1} \rho_4$,
so $C^{-1} \leq G(A_0, A(z,\rho_5)) \leq C$.
In addition, \eqref{eqn4.3} applies to $x=A(z,\rho_5)$, and yields 
$C^{-1} \leq h_{z, \rho_4}(A(z,\rho_5)) \leq C$. Thus by \eqref{eqn4.4} and \eqref{eqn4.3}
\begin{equation}\label{eqn4.5}
C^{-1}\le \frac{G(A_0, x)}{\delta(x)} \le C
\ \text{ for $x\in U \cap B(z,\rho_5)$.}
\end{equation}
A Caffarelli-Fabes-Mortola-Salsa estimate on NTA domains (see, e.g., \cite{JK}, Lemma 4.8)
ensures that for $z\in \Gamma^+(u) \cap B(x_0, \rho_4)$ (as in \eqref{a4.3}) and 
$0 < r \leq \rho_5$, 
\begin{equation}\label{eqn4.6}
C^{-1} \, \frac{G(A_0, A(z,r))}{r} 
\leq \frac{\omega^{A_0}(B(z,r))}{r^{n-1}} \leq C \, \frac{G(A_0, A(z,r))}{r}.
\end{equation}
We can apply \eqref{eqn4.5} with $x= A(z,r)$, because $x\in B(z,\rho_5)$. Since 
$C^{-1} r \leq \delta(x) \leq r$ by definition of a corkscrew point, \eqref{eqn4.5} and \eqref{eqn4.6}
yield
\begin{equation}\label{eqn4.7}
C^{-1}\le  \frac{\omega^{A_0}(B(z,r))}{r^{n-1}}\le C.
 \end{equation}
This is the same estimate as \eqref{eqn4.1}, but we only proved it for $0 < r \leq \rho_5$.
But for $\rho_5 \leq r \leq \rho_4$, 
\begin{equation}\label{a4.10}
\omega^{A_0}(B(z,\rho_5)) \leq \omega^{A_0}(B(z,r)) \leq \omega^{A_0}(B(z,\rho_4))
\leq C \omega^{A_0}(B(z,\rho_5)),
\end{equation}
where $C$ depends on $\rho_5$, $\rho_4$, and the local doubling constant for $\omega^{A_0}$,
which itself depends on the local NTA constants for $U$ (and finally the usual constants).
Since the factor $r^{n-1}$ does not vary too much either, the general case of \eqref{eqn4.1}
follows, and this yields Theorem \ref{t4.1}.  
\end{proof}

We are now ready to prove the local Ahlfors-regularity and the local uniform rectifiability of the 
free boundary, with big pieces of Lipschitz graphs. Let us first recall the notion of uniform rectifiability.

\begin{defn}\label{d:urandbplg}
Let $E \subset \mathbb R^n$ be a $d$-Ahlfors regular set. We say that $E$ is $d$-{\bf uniformly rectifiable} if there exists an $L > 0$ and a $\theta \in (0,1)$ such that for all $x \in E$ and $r > 0$ there is an $L$-Lipschitz function $f_{x,r}: B(0,r) \subset \mathbb R^d \rightarrow \mathbb R^n$ such that \begin{equation}\label{e:bpli} \mathcal H^d(f_{x,r}(B(0,r))\cap E\cap B(x,r)) \geq \theta r^d.\end{equation} 
\end{defn}

The condition of \eqref{e:bpli} is often referred to as ``big pieces of Lipschitz images." In Theorem \ref{t4.2} we would like to prove something stronger, namely, the existence of ``big pieces of Lipschitz graphs". We shall use the fact that for global unbounded Ahlfors regular sets, 
the additional property known as ``Condition B'' implies the existence of big pieces 
of Lipschitz graphs (also known as BPLG). Let us state this formally.

Let $E \subset \mathbb R^n$ be (unbounded) Ahlfors regular.
This means that $E$ is closed (nonempty), and there is a constant $C_0 \geq 1$ such that
\begin{equation}\label{a4.16}
C_0^{-1} t^{n-1} \leq \H^{n-1}(E \cap B(y,t)) \leq C_0 t^{n-1}
\ \text{ for $y\in E$ and $t >0$.}
\end{equation}
We say that $E$ satisfies Condition B if there is a constant $C_1 \geq 1$
such that, for $y\in E$ and $t>0$, we can find two points $y_1 = y_1(y,t)$ and $y_2 = y_2(y,t)$,
that lie in different connected components of $\R^n \sm E$, and such that
\begin{equation}\label{a4.17}
y_i \in B(y,t) \text{ and } \dist(y_i,E) \geq C_1^{-1} t
\ \text{ for } i=1,2.
\end{equation}

\begin{theorem}\label{t4.3} 
If $E$ is an unbounded Ahlfors regular set that satisfies Condition B, 
then there exist constants $C_7$ and $C_8$, that depend only on $n$, $C_0$, and $C_1$ above, 
such that for $y\in E$ and $t > 0$, we can find a $C_7$-Lipschitz graph $G = G(y,t)$ such that
\begin{equation}\label{eqn4.18}
\H^{n-1}(B(y,t) \cap E\cap G) \geq C_8^{-1} {t^{n-1}}.
\end{equation}
\end{theorem}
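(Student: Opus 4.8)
This is a known result (it is essentially the main theorem of David--Jerison \cite{DaJ}, see also \cite{Se}), so the plan is to reproduce its proof rather than invent anything new. The strategy splits into two halves: (i) use Condition B to produce, for each ball $B(y,t)$ centered on $E$, a single big piece of a Lipschitz graph, and (ii) observe that step (i) is scale-invariant and translation-invariant in the relevant sense, so it holds for all $y \in E$ and $t > 0$ with uniform constants. Step (ii) is free once (i) is done, since $E$ is globally Ahlfors regular and the hypotheses are scale-invariant; the content is all in step (i), and we may as well take $y = 0$ and $t = 1$.

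For step (i), the plan is as follows. Fix the two corkscrew points $y_1, y_2 \in B(0,1)$ given by Condition B, lying in different components of $\R^n \sm E$ and at distance $\geq C_1^{-1}$ from $E$. The key geometric input is that $E$ separates $y_1$ from $y_2$ locally: any path from $y_1$ to $y_2$ staying in a slightly larger ball must cross $E$. I would then choose a hyperplane $P$ (for instance, perpendicular to a well-chosen direction, e.g. one close to $y_2 - y_1$) and, for each point $p$ in a large portion of $P \cap B(0,1/2)$, consider the line $\ell_p$ through $p$ in the direction $P^\perp$. Using the separation property together with the corkscrew points, one shows that for a definite fraction (in $\H^{n-1}$-measure) of such $p$, the line $\ell_p$ meets $E$ inside $B(0,1)$. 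The Ahlfors upper regularity bound \eqref{a4.16} then limits how badly these intersection points can be distributed; a standard measure-theoretic/covering argument (the heart of \cite{DaJ}) lets one extract from this family a subset of $E \cap B(0,1)$ of measure $\gtrsim 1$ on which the ``first hitting point'' map $p \mapsto (\text{first point of } \ell_p \cap E)$ is well-defined and Lipschitz with a controlled constant, which is exactly a piece of a Lipschitz graph over $P$. Pushing the measure lower bound through this map (using Ahlfors regularity again to compare $\H^{n-1}$ on the graph with $\H^{n-1}$ on $P$) gives \eqref{eqn4.18}.

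The main obstacle is the same one that makes \cite{DaJ} a real theorem rather than a triviality: producing the Lipschitz bound on the hitting-point map. A priori the first-hitting points over nearby lines $\ell_p$, $\ell_{p'}$ could be very far apart vertically, so one cannot directly read off a graph. The device in \cite{DaJ} is to use Condition B at many scales and locations simultaneously — at every point of $E$ and every scale one has corkscrew balls on both sides — together with a stopping-time / maximal-function argument on the direction of ``approximate tangent planes'' of $E$, to show that on a large subset the normal direction to $E$ stays within a small cone, which forces the graph structure. I would invoke this machinery rather than redo it, citing \cite{DaJ} (and \cite{Se}) for the extraction of BPLG from Condition B plus Ahlfors regularity, and simply note that the constants produced depend only on $n$, $C_0$, and $C_1$, as required. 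In our application (Theorem \ref{t4.2}) the set $E$ will be the uniformly rectifiable extension $E(x,r) \supset \Gamma^+(u) \cap B(x,r)$ built so as to be globally Ahlfors regular and to satisfy Condition B — the latter coming from the interior and exterior corkscrew points of the locally NTA domain $U$ of Theorem \ref{t2.3} — so Theorem \ref{t4.3} applies to it directly and transfers the conclusion back to $\Gamma^+(u)$.
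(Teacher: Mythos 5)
Your proposal is correct and matches the paper's treatment: the paper does not reprove Theorem \ref{t4.3} but simply cites it as a known result, proved in \cite{Morceaux} with a simpler proof in \cite{DaJ}, with Condition B going back to Semmes \cite{Se}, exactly as you do. Your sketch of the David--Jerison machinery is only a heuristic (their argument is a stopping-time construction rather than a direct first-hitting-map estimate), but since you invoke the cited theorem with constants depending only on $n$, $C_0$, $C_1$, this does not affect correctness.
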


\ms
By $C_7$-Lipschitz graph, we mean a set of the form
$G =\big\{ x+A(x) \, ; \, x\in P \big\}$, where $P$ is a hyperplane in $\R^n$
and $A : P \to P^\perp$ is a $C_7$-Lipschitz function from $P$ to its
orthogonal complement $P^\perp$. 

Theorem \ref{t4.3} is proved in \cite{Morceaux}, but a simpler proof can be found in \cite{DaJ}.
Recall also that Condition~B was introduced by S. Semmes in \cite{Se}, who proved the uniform
rectifiability of $E$ under mild additional assumptions (but with estimates that do not
use these assumption).

Theorem \ref{t4.2} and its proof can be understood 
as a local version of \cite{DaJ}.
For the readers' convenience we present a self contained proof which only relies on Theorem \ref{t4.3} 
(see also \cite{Se} and \cite{DaJ}). 

\begin{theorem}\label{t4.2} 
Let  $\Omega\subset \R^n$, $q_\pm \in L^\infty(\Omega)$,
and $u$ satisfy the assumptions of Theorem~\ref{t4.1}. 
That is, $\Omega$ is open, bounded, and connected, $q_+$ and $q_-$ are bounded, continuous, 
and satisfy the nondegeneracy condition \eqref{a3.1} and \eqref{a3.2}, 
and $u$ is an almost-minimizer for $J$ or $J^+$ in $\Omega$.
For each $r_0 > 0$, we can find constants $C_6$, $C_7$, and $C_8$ such that 
for $x\in \Gamma^+(u)$ and $r$ such that
\begin{equation}\label{a4.11}
B(x, 11r_0)\subset \Omega \ \text{ and } \ 0 < r \leq r_0,
\end{equation}
we have 
\begin{equation}\label{a4.12}
C_6^{-1}r^{n-1} \leq \H^{n-1}(\Gamma^+(u) \cap B(x,r))\le C_6 r^{n-1}
\end{equation}
and there exists a $C_7$-Lipschitz graph $G = G(x,r)$ such that
\begin{equation}\label{eqn4.10}
\H^{n-1}(B(x,r) \cap \Gamma^+(u) \cap G) \geq C_8^{-1} {r^{n-1}}.
\end{equation}
In addition, $C_6$, $C_7$, and $C_8$ depend only on the usual constants of Remark \ref{rmk3.2}
\end{theorem}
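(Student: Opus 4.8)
The plan is to deduce Theorem~\ref{t4.2} from the Ahlfors-regularity of the harmonic measure $\omega^{A_0}$ established in Theorem~\ref{t4.1}, together with the local NTA property of $U$ from Theorem~\ref{t2.3}. First I would fix $x \in \Gamma^+(u)$ and $r \le r_0$ with $B(x,11r_0) \subset \Omega$, and select $\rho_4$ and the corkscrew point $A_0 = A(x_0, \rho_4)$ (with $x_0 = x$, or a nearby point of $\Gamma^+(u)$) as in Theorem~\ref{t4.1}, so that $\omega^{A_0}(B(z,s)) \approx s^{n-1}$ for all $z \in \Gamma^+(u) \cap B(x,r_0)$ and $0 < s < \rho_4$. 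The upper bound in \eqref{a4.12} is then immediate from a standard Vitali covering argument: cover $\Gamma^+(u) \cap B(x,r)$ by balls $B(z_i, s)$ with $z_i \in \Gamma^+(u)$ and bounded overlap, use $\sum_i s^{n-1} \le C \sum_i \omega^{A_0}(B(z_i,s)) \le C \omega^{A_0}(B(x, 2r)) \le C$, and pass to the limit $s \to 0$ to bound the $(n-1)$-dimensional Hausdorff (pre)measure; the lower bound $\mathcal{H}^{n-1}(\Gamma^+(u) \cap B(x,r)) \ge C^{-1} r^{n-1}$ follows from $\omega^{A_0}(B(x,r)) \le C r^{n-1}$ combined with the fact that harmonic measure is absolutely continuous with respect to— and in fact, by the Ahlfors-regularity above and the NTA structure, comparable to— surface measure, or more directly from the lower regularity bound applied at scale $r$ together with the general principle that a doubling measure of dimension $n-1$ supported on a set forces that set to have positive $\mathcal{H}^{n-1}$ measure on balls.

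The more substantial point is \eqref{eqn4.10}, the big-pieces-of-Lipschitz-graphs conclusion. Here the plan is to follow the David--Jerison scheme (\cite{DaJ}), as the statement itself advertises. The idea is: since $U$ is NTA, it has interior corkscrew points, and since $\R^n \setminus U$ also has corkscrew points (the exterior condition in Definition~\ref{d2.3}, part (2)), one has a two-sided corkscrew condition at every scale. David and Jerison showed that a set which is Ahlfors $(n-1)$-regular and satisfies this two-sided corkscrew (or "condition B") bounds a domain whose boundary contains big pieces of Lipschitz graphs; equivalently, one shows that a fixed proportion of $\Gamma^+(u) \cap B(x,r)$ lies on a single Lipschitz graph with controlled constant. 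Concretely I would: (i) use the interior and exterior corkscrew points to produce, at scale $r$, two points $A^+$ and $A^-$ on opposite sides of $\Gamma^+(u)$ at distance $\gtrsim r$ from it; (ii) use the Ahlfors regularity from \eqref{a4.12} to run the stopping-time / corona decomposition argument that extracts a Lipschitz graph; and (iii) verify that the portion of $\Gamma^+(u)$ captured has measure $\ge C_8^{-1} r^{n-1}$. Alternatively, and perhaps more economically given what is already in hand, one can invoke Semmes' or David--Jerison's theorem as a black box once Ahlfors regularity and condition B are verified, which is exactly the ``self-contained proof which only relies on Theorem \ref{t4.3}'' the authors mention.

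The main obstacle I anticipate is organizational rather than conceptual: ensuring that all the NTA constants, the radius $\rho_4$ from Theorem~\ref{t4.1}, and the various scales stay uniform and only depend on the usual constants of Remark~\ref{rmk3.2}, while never straying too close to $\partial\Omega$ — this is why the hypothesis is stated with the generous buffer $B(x,11r_0) \subset \Omega$. One must be careful that the harmonic measure estimate \eqref{eqn4.1} is only available with a \emph{fixed} pole $A_0 = A(x_0,\rho_4)$ and only for $z$ and $r$ in a bounded range, so passing from these estimates to Hausdorff-measure bounds at \emph{all} scales $r \le r_0$ requires a covering/iteration argument at the boundary of the allowed range (much as in \eqref{a4.10}). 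Once Ahlfors regularity is secured, the uniform rectifiability and the Lipschitz-graph statement are then quotable consequences of the David--Jerison theory, so I would not re-derive that machinery but cite it, checking only that the two-sided corkscrew hypothesis holds with uniform constants — which it does, by Theorem~\ref{t2.3} and the discussion of exterior corkscrew points in its proof.
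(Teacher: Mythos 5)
Your proposal follows essentially the same route as the paper: the Ahlfors regularity of $\H^{n-1}$ on $\Gamma^+(u)$ is deduced from the Ahlfors regularity of harmonic measure (Theorem \ref{t4.1}) by the standard covering comparison between an Ahlfors-regular measure and Hausdorff measure (plus the easy extension to radii between $\rho_4$ and $r_0$), and the big-pieces-of-Lipschitz-graphs statement is obtained from Ahlfors regularity plus Condition B via Theorem \ref{t4.3}, with the interior and exterior corkscrews supplied by Theorem \ref{t2.3}. The only step you leave implicit is the localization: Theorem \ref{t4.3} concerns unbounded, globally Ahlfors-regular sets, so the paper builds the auxiliary set $E=[B(x,2r)\cap\Gamma^+(u)]\cup\partial B(x,2r)\cup P$ (with $P$ a hyperplane at distance $10r$ from $x$) and verifies global Ahlfors regularity and Condition B for $E$ by a case analysis before applying the theorem to $E$, which coincides with $\Gamma^+(u)$ inside $B(x,r)$ --- the ``organizational'' care you anticipate is exactly this construction (or, as you also suggest, a genuinely local rerun of the David--Jerison argument).
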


\ms

We shall even prove that, when $x \in \Gamma^+(u)$ and $r>0$ are as in the statement,
there is a uniformly rectifiable set $E(x,r)$, with big pieces of Lipschitz graphs
(and with constants that depend only on the usual constants) such that 
\begin{equation}\label{a4.14}
\Gamma^+(u) \cap B(x,r) \subset E(x,r).
\end{equation}
This essentially amounts to the same thing, but seems a little more precise, and in particular it
allows us to use the classical results on uniformly rectifiable sets directly, without having to localize the proofs.
In particular, we get that $\Gamma^+(u)$ is rectifiable (but lose a lot of information when we say this).
See Remark \ref{rmk4.1} below.

\begin{proof}
We start with the local Ahlfors regularity property \eqref{a4.12}.
We decided to restate it in terms of the Hausdorff measure because it is more usual,
but it is a simple consequence of the local existence of some Ahlfors regular measure on 
$\Gamma^+(u)$, namely the harmonic measure of Theorem~\ref{t4.1}.
That is, if $u$ and $r_0$ are as in the statement, we found for each $x_0 \in \Gamma^+(u)$
such that $B(x_0,8r_0)$ a measure $\omega$ such that \eqref{eqn4.1} holds
for $z\in \Gamma^+(u) \cap B(x_0, r_0)$ and $0<r<\rho_4$. By a simple covering argument,
we can prove that $\omega$ is equivalent to $\H^{n-1}$ on $\Gamma^+(u) \cap B(x_0, r_0/2)$,
and more precisely that 
\begin{equation}\label{a4.15}
C^{-1} \omega(E) \leq \H^{n-1}(E) \leq C \omega(E)
\end{equation}
for every Borel set $E \subset \Gamma^+(u) \cap B(x_0, r_0/2)$, where $C$ depends only on $n$ and $C_5$.
See for instance Lemma 18.11 and its proof in Exercise 18.25 (on page 112) of \cite{MSbook}, 
but there was no claim for novelty there.

From \eqref{a4.15} and \eqref{eqn4.1} we now deduce that $\eqref{a4.12}$ holds for 
$x\in \Gamma^+(u)$ such that $B(x,8r_0)$ and $0 < r < \rho_4$. For the remaining radii,
$r \in (\rho_4,r_0)$, and at the price of making $C_6$ outrageously larger, we just say that 
$\H^{n-1}(\Gamma^+(u) \cap B(x,r)) \geq \H^{n-1}(\Gamma^+(u) \cap B(x,\rho_4))$
to get a (rather bad) lower bound, and (if now $B(x,9r_0) \subset \Omega$)
we cover $\Gamma^+(u) \cap B(x,r_0)$ by less than $C$ balls $B(z,\rho_4)$, 
$z\in \Gamma^+(u) \cap B(x,r_0)$, to get an upper bound. So $\eqref{a4.12}$ holds.

To prove \eqref{eqn4.10} we apply Theorem~\ref{t4.3} using a short localization argument, which is rather straightforward in co-dimension $1$ 
(our setting here).
We want to apply Theorem~\ref{t4.3} to some auxiliary Ahlfors-regular set, $E$.
Let $u$ and $r_0$ be as in the statement of Theorem~\ref{t4.2}, and let
$x\in \Gamma^+(u)$ and $r > 0$ be such that \eqref{a4.11} holds. 
Set $B = B(x,2r)$, choose a hyperplane, $P$, such that 
$\dist(x,P) = 10 r$, and take
\begin{equation}\label{eqn4.12}
E = [B \cap \Gamma^+(u)] \cup \partial B \cup P.
\end{equation}
We added $P$ to get an unbounded set $E$, but we easily see that it could not disturb in
the proofs or conclusions. We want to show that $E$ is Ahlfors-regular and satisfies Condition $B$.

Set $\Gamma = \Gamma^+(u)$ to simplify the notation. Notice that 
\begin{equation}\label{a4.20}
\H^{n-1}(\Gamma \cap B) \leq C r^{n-1},
\end{equation}
even if $2r > r_0$, because in this case we can cover $\Gamma \cap B$ by less than $C$ balls
$B(z,r_0)$, with $z\in B$, and \eqref{a4.12} also holds for $z \in B$, because $B(z,9r_0) \subset \Omega$
since $B(x,11r_0) \subset \Omega$.
Next we claim that 
\begin{equation}\label{a4.21}
\H^{n-1}(\Gamma \cap B \cap B(y,t)) \leq C t^{n-1}
\end{equation}
for $y\in \R^n$ and $t > 0$. When $t \geq r/2$, this follows from \eqref{a4.20}. Otherwise, 
even if $y$ does not lie in $\Gamma$, we get \eqref{a4.21} because if 
$\Gamma \cap B\cap B(y,t) \neq \emptyset$, we can find $z \in \Gamma \cap B\cap B(y,t)$, 
then $B(y,t) \subset B(z,2t)$, we can apply \eqref{a4.12} to $z$, and we get\eqref{a4.21}.

Now the upper bound in \eqref{a4.16} follows, because 
$\H^{n-1}(\partial B \cap B(y,t)) + \H^{n-1}(P \cap B(y,t)) \leq C t^{n-1}$ trivially.

For the lower Ahlfors regularity bound, we distinguish between cases.
When $y\in P$ or $\dist(y,P) \leq t/2$, we just need to observe that
$\H^{n-1}(E \cap B(y,t)) \geq \H^{n-1}(P \cap B(y,t)) \geq C^{-1} t^{n-1}$.
Thus we may assume that $y\in [B \cap \Gamma] \cup \d B$ and  $t \leq 20r$.

When $y\in \d B$, or even $\dist(y, \d B) \leq t/2$, we just observe that 
$\H^{n-1}(E \cap B(y,t)) \geq \H^{n-1}(\d B \cap B(y,t)) \geq C^{-1} t^{n-1}$.
So we are left with $y\in \Gamma \cap B$ such that $\dist(y, \d B) > t/2$. 
But then $\H^{n-1}(E \cap B(y,t)) \geq \H^{n-1}(\Gamma \cap B(y,t/20)) \geq C^{-1} t^{n-1}$.
directly by \eqref{a4.12}. So $E$ is Ahlfors regular.

Now we check Condition $B$. Let $y\in E$ and $t > 0$ be given; we want to find
points $y_1$ and $y_2$ as in \eqref{a4.17}. We start with the most interesting
case when $y\in \Gamma \cap B$ and $\dist(y,\d B) \geq t$. In this scenario, we need not consider $\d B$ and $P$, we simply use the local NTA property of $\Gamma$, which is given
by Theorem \ref{t2.3}; we proceed as in the beginning of this section, apply the theorem
with $K = \big\{ x\in \Omega \, ; \, \dist(x,\d \Omega) \geq r_0 \big\}$, and get a radius $r_1 > 0$
such that for $y \in \Gamma \cap K$ and $0 < r \leq r_1$, we can find corkscrew points for
$U$ and for $\big\{ x\in \Omega \, ; \, u(x) \leq 0 \big\}$, inside $B(y,r)$ (see Definition \ref{d2.3}).
If $t \leq r_1$, we simply take for $y_1$ and $y_2$ these two corkscrew points. Notice
that $\Gamma$ separates $y_1$ from $y_2$ in $\Omega$ (by the intermediate value theorem),
hence also in $B$. Thus $y_1$ and $y_2$ lie in different components of 
$\R^n \sm [(\Gamma \cap B) \cup \d B]$, as needed.

The next interesting case is when $y\in \d B$ and $0 \leq t \leq r$. We easily find 
$y_2 \in B(y,t)$ such that $\dist(y_2, P\cup B) \geq 10^{-1} t$, so it is enough to find
$y_1 \in B(y,t) \cap B$, such that $\dist(y_1, \d B) \geq 10^{-1} t$ but also
$\dist(y_1,\Gamma) \geq C^{-1} t$, because $E \supset \d B$ will automatically separate
$y_1$ from $y_2$. Let $\tau>0$ be small, to be chosen soon; we can easily find $C^{-1} \tau^{-n}$ points
$w_i \in \big\{ w\in B \cap B(y,t/2) \, ; \, \dist(y_1, \d B) \geq 10^{-1} t \big\}$,
that lie at distances larger than $4\tau t$ from each other. Suppose all the $B(w_i,\tau t)$
meet $\Gamma$; then $\H^{n-1}(\Gamma \cap B(w_i,2\tau t)) \geq C_6^{-1} (\tau t)^{n-1}$
by the lower bound in \eqref{a4.12}, and since all these balls are disjoint and contained in $B(y,t)$, 
we get that $\H^{n-1}(\Gamma \cap B(y,t)) \geq C^{-1} \tau^{-n} (\tau t)^{n-1}$. On the other hand,
the upper bound \eqref{a4.12} yields $\H^{n-1}(\Gamma \cap B(y,t)) \leq C t^{n-1}$, and if $\tau$
is chosen small enough we get a contradiction. Thus we can find $w_i$ such that 
$\dist(w_i,\Gamma) \geq \tau t$, and use this $w_i$ as $y_1$. 
This settles our second case when $y\in \d B$ and $0 \leq t \leq r$.

When $y \in \d B$ and $r \leq t \leq 20 r$, we can still use the points 
$y_i$ that work for $t=r$, and we get \eqref{a4.17} with a constant $20$ times larger.
When $y\in P$ but $t \leq 20r$, we simply select two points $y_i \in B(y,t/10)$,
that lie on different sides of $P$ and at distance at least $t/100$ from $P$.
They also lie far from the rest of $E$, because $\dist(y,E\sm P) \geq 8r$.
Similarly, when $y\in E$ but $t \geq 20r$, we pick a point $z\in P \cap B(y,t/2)$
such that $\dist(z,B) \geq t/4 \geq 5r$, and then select two points $y_i \in B(z,t/10)$
that lie on different sides of $P$, but at distance $t/100$ from $P$. Again they
are also far from $E \sm P$. 

We are only left with the case when $y \in \Gamma \cap B$ and $t \leq 20r$.
We already treated the case when $t \leq \dist(y,\d B)$. When $\dist(y,\d B) < t \leq 10\dist(y,\d B)$,
we may just use the two points $y_i$ that work for $t=\dist(y, \d B)$ (and get a larger constant).
Finally, when $10\dist(y,\d B) \leq t \leq 20r$, we select a point $z\in \d B$ such that
$|z-y| = \dist(y,\d B)$, and use the points $y_1$ and $y_2$ that correspond to the pair
$(z, t/2)$. This completes our verification of Condition B for $E$.

We apply Theorem \ref{t4.3} and get that $E$ contains big pieces of Lipschitz graphs, as in 
\eqref{eqn4.18}. The constants $C_7$ and $C_8$ depend on $n$, and $C_0$ and $C_1$ for $E$,
which themselves depend only on the usual constants of Remark \ref{rmk3.2}.

This already proves our claim relative to \eqref{a4.14}, but if we apply the conclusion of 
Theorem~\ref{t4.3} to $E$ and the ball $B(x,r)$, we get a Lipschitz graph $G$
that satisfies \eqref{eqn4.10}, just because $E \cap B(x,r) = \Gamma \cap B(x,r)$.
This completes our proof of Theorem \ref{t4.2}.
\end{proof}

\begin{remark}\label{rmk4.1} 
As we said near \eqref{a4.14}, it may be easier to use the existence of $E = E(x,r)$,
to derive information on $\Gamma^+(u)$ from similar information on the uniformly rectifiable
set $E$. Also, we said that $\Gamma^+(u)$ is rectifiable, and this is true, for instance, because
all our sets $E$ are rectifiable. Indeed, call $E_r$ and $E_u$ the rectifiable and unrectifiable parts of $E$
(known modulo a set of vanishing $\H^{n-1}$-measure). If $\H^{n-1}(E_u) > 0$,
then by a standard density argument (see for instance \cite{Ma}) we can find $y \in E_u$
such that $\lim_{t \to 0} t^{1-n} \H^{n-1}(E_r \cap B(y,t)) = 0$. This is impossible,
because almost every point of $G(y,t) \cap B(y,t) \cap E$ lies in $E_r$.
\end{remark}

\section{A Weiss Monotonicity formula}
\label{WeissMF}

The first result of this section is an extension of a monotonicity formula due to Weiss \cite{W}, 
who showed that the functional below is monotone when $u$ is a local minimizer of $J$ or $J^+$ in the sense of \cite{AC} or \cite{ACF}. Recalling that almost-minimizers are locally Lipschitz, the proof in \cite{W} works essentially unchanged for almost-minimizers. We quickly summarize the necessary changes below.

\begin{theorem}\label{thm:monotonicityone}[c.f. Theorem 1.2 in \cite{W}].
Let $u$ be an almost-minimizer for $J$ in the open set $\Omega \subset \R^n$, 
with constant $\kappa$ and exponent $\alpha$. Also let $x_0 \in \Omega$ and $R > 0$
be such that $u(x_0) = 0$ and $\overline B(x_0,R) \subset \Omega$.
Further assume that $q_+$ and $q_-$ are H\"older continuous on $B(x_0,R)$, with exponent $\alpha$.
Define, for $\rho \leq R$, 
\begin{eqnarray}\label{eqn:mono1} 
\widetilde{W}(u, x_0, \rho) 
&=& \frac{1}{\rho^n}\int_{B(x_0,\rho)} |\nabla u|^2 
+ q^2_+(x_0)\1_{\{u > 0\}} + q^2_-(x_0) \1_{\{u < 0\}} 
\nonumber\\
&\,& \hskip3cm
- \frac{1}{\rho}\int_0^\rho \frac{1}{r^{n-1}} \int_{\partial B(x_0,r)} (\nabla u \cdot \nu)^2 d\mathcal H^{n-1}dr.
\end{eqnarray}
Then, there exists $C > 0$, which depends only on $\alpha$, $n$, $\kappa$, the norms 
$\|q_{\pm}^2\|_{L^\infty(B(x_0,R))}$ and $\|q_{\pm}^2\|_{C^{0,\alpha}(B(x_0,R))}$, 
and the Lipschitz norm of $u$ in $B(x_0,R)$, such that for $0 < s < \rho < R$
\begin{eqnarray}\label{a5.2} 
0 &\leq& \int_s^\rho t^{-3}\int_{\partial B(0,1)} \left[t\int_0^t (\nabla u(x_0 + r\xi) \cdot \xi)^2dr 
- \left(\int_0^t \nabla u(x_0 + r\xi)\cdot \xi dr \right)^2\right] d\H^{n-1}(\xi)dt
\nonumber\\
&\,& \hskip3cm
\leq \widetilde{W}(u, x_0, \rho) - \widetilde{W}(u, x_0, s) + C \rho^\alpha.
\end{eqnarray}
\end{theorem}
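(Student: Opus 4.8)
The plan is to follow Weiss's original computation in \cite{W}, tracking carefully where the almost-minimality of $u$ introduces an error term and showing it is controlled by $C\rho^\alpha$. First I would set, for fixed $x_0$ (which we may take to be the origin),
$$e(\rho) = \frac{1}{\rho^n}\int_{B(0,\rho)} |\nabla u|^2 + q_+(0)^2\1_{\{u>0\}} + q_-(0)^2\1_{\{u<0\}}$$
for the bulk term and $b(\rho) = \rho^{-1}\int_0^\rho r^{1-n}\int_{\partial B(0,r)}(\nabla u\cdot\nu)^2\,d\H^{n-1}\,dr$ for the boundary term, so $\widetilde W = e - b$. The strategy is to differentiate $e(\rho)$ and $b(\rho)$ in $\rho$ (for a.e.\ $\rho$, since $u$ is only Lipschitz the quantities are absolutely continuous) and to estimate $\frac{d}{d\rho}\widetilde W(u,0,\rho)$ from below. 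The classical identity, obtained by comparing $u$ on $B(0,\rho)$ with the $1$-homogeneous extension of its boundary values $v_\rho(x) = |x|\,u(\rho x/|x|)$ (rescaled appropriately), gives that for a \emph{minimizer} $\frac{d}{d\rho}\widetilde W \geq 0$, and more precisely equals the nonnegative integrand appearing on the left-hand side of \eqref{a5.2} (after the change of variables $x = r\xi$). This Cauchy–Schwarz-type integrand, $t\int_0^t (\nabla u\cdot\xi)^2\,dr - (\int_0^t \nabla u\cdot\xi\,dr)^2 \geq 0$, is exactly the defect measuring how far $u$ is from being $1$-homogeneous.

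The key step is the modification for almost-minimizers. Plugging the competitor $v_\rho$ into the almost-minimality inequality \eqref{eqn1.11} for the ball $B(0,\rho)$ introduces the multiplicative factor $(1+\kappa\rho^\alpha)$, which costs an extra $\kappa\rho^\alpha J_{0,\rho}(v_\rho) \leq C\rho^{\alpha} \cdot \rho^{n-?}$-type term; since $J_{0,\rho}(v_\rho) \leq C\rho^n$ by the Lipschitz bound on $u$ and the $L^\infty$ bounds on $q_\pm$, and the Weiss functional is normalized by $\rho^{-n}$, this contributes an error of size $C\rho^{\alpha - 1}$ to $\frac{d}{d\rho}\widetilde W$. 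Separately, because $\widetilde W$ is defined with the \emph{frozen} coefficients $q_\pm(x_0)^2$ rather than $q_\pm(x)^2$, the Hölder continuity of $q_\pm$ with exponent $\alpha$ produces a further error: $|q_\pm(x)^2 - q_\pm(x_0)^2| \leq C|x-x_0|^\alpha \leq C\rho^\alpha$ on $B(x_0,\rho)$, again contributing $C\rho^{\alpha-1}$ to the derivative after the $\rho^{-n}$ normalization. Collecting, one gets
$$\frac{d}{d\rho}\widetilde W(u,0,\rho) \geq \rho^{-3}\int_{\partial B(0,1)}\Big[\rho\int_0^\rho (\nabla u\cdot\xi)^2\,dr - \Big(\int_0^\rho \nabla u\cdot\xi\,dr\Big)^2\Big]d\H^{n-1}(\xi) - C\rho^{\alpha-1}$$
for a.e.\ $\rho\in(0,R)$, where I have used the scaling $x = \rho\xi$ to write the homogeneity defect in the form displayed in \eqref{a5.2}. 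Integrating this differential inequality from $s$ to $\rho$ yields \eqref{a5.2}, since $\int_s^\rho C t^{\alpha-1}\,dt \leq \frac{C}{\alpha}\rho^\alpha$.

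The main obstacle I expect is the bookkeeping in the derivative computation: one must verify that $\widetilde W(u,x_0,\rho)$ is absolutely continuous in $\rho$ (which follows from $u$ being Lipschitz, so $|\nabla u|\in L^\infty$ and all integrals are Lipschitz in $\rho$), and one must carry out the comparison-with-homogeneous-competitor argument in the almost-minimality setting, being careful that $v_\rho$ is a legitimate competitor (it has the same trace as $u$ on $\partial B(0,\rho)$, lies in $W^{1,2}(B(0,\rho))$, and $\nabla v_\rho\in L^2$ because $u$ is Lipschitz). The sign term $q_-(x_0)^2\1_{\{u<0\}}$ requires no special care since the competitor $v_\rho$ preserves the sign structure of $u$ on $\partial B(0,\rho)$ radially. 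Since almost-minimizers are locally Lipschitz by Theorems 5.1 and 8.1 of \cite{DT}, every estimate above is justified, and the dependence of $C$ on $\alpha$, $n$, $\kappa$, $\|q_\pm^2\|_{L^\infty(B(x_0,R))}$, $\|q_\pm^2\|_{C^{0,\alpha}(B(x_0,R))}$, and the Lipschitz norm of $u$ is exactly as in the statement. The only genuinely new content relative to \cite{W} is the identification of these two $O(\rho^{\alpha-1})$ error terms, which is why I would keep the exposition brief and refer to \cite{W} for the parts of the computation that are unchanged.
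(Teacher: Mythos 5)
Your proposal is correct and follows essentially the same route as the paper: plug the one-homogeneous competitor into the almost-minimality inequality, freeze the coefficients at $x_0$ using H\"older continuity, and observe that both the factor $(1+\kappa t^\alpha)$ and the coefficient replacement contribute errors of size $Ct^{n+\alpha}$ at scale $t$, which after Weiss's computation become $Ct^{\alpha-1}$ in the differential inequality and integrate to $C\rho^\alpha$. The paper's proof is exactly this bookkeeping on top of \cite{W}, so nothing is missing.
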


For simplicity, we assumed here that the $q_\pm$ are H\"older continuous, and even 
with the same exponent $\alpha$ as in the definition of almost-minimizers; 
otherwise we could take the smallest exponent or modify slightly the estimates.
Also, if instead we only assumed that the $q_\pm$ are continuous on $\Omega$, we would get a
similar result, except that we should add an extra term like
$C \sup_{y\in B(x_0,\rho)} \big(|q_+(y)-q_+(x_0)| + |q_-(y)-q_-(x_0)| \big)$, where $C$ depends also
on the Lipschitz norm of $u$. 

Recall that the first inequality comes directly from Cauchy-Schwarz; the main information is the second one.

Finally, we decided to use the Hausdorff measure $d\H^{n-1}$ in the statement, but we shall also write this 
measure $d\sigma$, at least when we work on a sphere. This will be our definition of surface measure.

\begin{proof}
Without loss of generality we can let $x_0 = 0$.
In the proof of Theorem 1.2 in \cite{W}, Weiss defines $u_t$, for $t \in (0,R]$, by
$$
u_t(x) = \frac{|x|}{t}u\left(t\frac{x}{|x|}\right) \ \text{ for } x \in B_t = B(0,t))
$$ 
and $u_t(x) = u(x)$ outside of $B_t$. 
Taking the derivative we can see that the Lipschitz continuity of $u$ implies the Lipschitz continuity of $u_t$. 
Hence $u_t$ is a competitor for $u$. 
Since $u$ is an almost-minimizer, and with an implicit summation in $\pm$ to shorten the expressions,
\begin{eqnarray}\label{eqn:comparetout}
0 &\leq& (1+\kappa t^\alpha)\int_{B_t}|\nabla u_t|^2 
+ \1_{\{\pm u_t > 0\}}\, q^2_\pm(x) 
\ dx 
- \int_{B_t}|\nabla u|^2 + \1_{\{\pm u > 0\}} \, q^2_\pm(x) 
\nonumber\\
&\leq& \int_{B_t}|\nabla u_t|^2 + \1_{\{\pm u_t > 0\}} \, q^2_\pm(0) \ dx 
- \int_{B_t}|\nabla u|^2 + \1_{\{\pm u > 0\}} \, q^2_\pm(0) \ dx + A t^{n+\alpha},
\end{eqnarray}
with $A = \kappa t^{-n}\int_{B_t} |\nabla u_t|^2 + C \kappa \|q_{\pm}^2\|_{L^\infty(B_R)}
+ C \sup_{B_t} |q_\pm(x)-q_\pm(0)|$. 
It is easy to see that
$A \leq C \kappa ||u||^2_{\mathrm{Lip}(B_R)} + C \kappa \|q_{\pm}^2\|_{L^\infty(B_R)} 
+ C \|q_{\pm}^2\|_{C^{0,\alpha}(B_R)}$.
We compute the integrals of $\nabla u_t$ and $\1_{\{\pm u_t > 0\}}$ as in \cite{W}, and
deduce from \eqref{eqn:comparetout} that for almost every $t \in (0,1)$, 
\begin{eqnarray}\label{eqn:afterutcomputations} 
0 &\leq& \frac{t}{n} \int_{\partial B_t} |\nabla u|^2  + \1_{\{\pm u > 0\}} \, q_\pm^2(0) \ d\sigma 
- \int_{B_t} |\nabla u|^2 + \1_{\{\pm u > 0\}} \, q^2_\pm(0) \ dx
\nonumber\\
 &\,& \hskip2cm
 +A t^{n+\alpha}  + \frac{1}{nt}\int_{\partial B_t} u^2\ d\sigma 
 -\frac{t}{n}\int_{\partial B_t}(\nabla u \cdot \nu)^2\ d\sigma,
\end{eqnarray}
where $\nu$ denotes the unit normal to $\d B_t$.
The proof then proceeds exactly as in \cite{W} to produce the desired result. 
\end{proof}

Here we gave the result for an almost-minimizer for $J$, but the same result holds, with the same proof, when 
$u$ is an almost-minimizer of $J^+$ (and we set $q_- = 0$). We call this the associated monotonicity formula 
for $\widetilde{W}^+$. 

As it is difficult to control the integral of the normal derivative of $u$ on $\d B_t$, 
$\widetilde{W}$ is not well suited to our purposes. 
However, $\widetilde{W}$ is related to a similar, and easier to work with, monotonicity formula. Set
\begin{equation}\label{eqn:mono2}
W(u, x_0, r) \equiv \frac{1}{r^n} \int_{B(x_0,r)} |\nabla u|^2 
+ q^2_+(x_0)\1_{\{u > 0\}} + q^2_-(x_0)\1_{\{u < 0\}} 
- \frac{1}{r^{n+1}} \int_{\partial B(x_0,r)} u^2 d\sigma,
\end{equation}
where we just take $q_- =0$ or remove $q^2_-(x_0)\1_{\{u < 0\}}$ when we work with $J^+$.
This formula appears in \cite{W}, where it is shown to be monotone increasing for local minimizers of $J$ 
in the sense of \cite{ACF}. The proof there uses that the minimizers of $J$ satisfy an equation, something which is not true for almost-minimizers. Instead, our proof will relate $\widetilde{W}$ and $W$,
and then use the almost-monotonicity of $\widetilde{W}$ to prove the almost-monotonicity of $W$. 

\begin{proposition}\label{prop:monotonicitytwo}
Let $u$ be an almost-minimizer for $J$ or $J^+$ in $\O$, 
with constant $\kappa$ and exponent $\alpha$. 
Suppose that the $q^{\pm}$ are bounded and H\"older continuous on $B(x_0,R)$, with exponent $\alpha$.
Furthermore let $x_0 \in \Omega$ and $R > 0$ be such that $u(x_0) = 0$ and $\overline B(x_0,R) \subset \Omega$.
Then for $0 < s < \rho < R$,
\begin{equation}\label{a5.6}
W(u, x_0,\rho) - W(u, x_0, s) \geq -C \rho^\alpha + 
\int_s^\rho \frac{1}{t^{n+2}} \int_{\partial B(x_0,t)} \left(u(x)- (\nabla u(x)\cdot x)\right)^2 d\sigma dt,
\end{equation}
where $C> 0$ depends only on $n$, $\kappa$, $\alpha$, the norms 
$\|q_{\pm}^2\|_{L^\infty(B(x_0,R))}$ and $\|q_{\pm}^2\|_{C^{0,\alpha}(B(x_0,R))}$, 
and the Lipschitz norm of $u$ in $B(x_0,R)$. 
\end{proposition}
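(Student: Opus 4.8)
The plan is to reduce \eqref{a5.6} to Theorem~\ref{thm:monotonicityone} via an exact algebraic identity relating $W$ and $\widetilde W$; no further use of (almost-)minimality is needed beyond what is already encoded in \eqref{a5.2}. As in the proof of Theorem~\ref{thm:monotonicityone} I would take $x_0=0$. Write $g(t,\omega)=u(t\omega)$ for $\omega\in\partial B(0,1)$; since $u$ is locally Lipschitz (Theorems~5.1 and 8.1 in \cite{DT}), $s\mapsto g(s,\omega)$ is absolutely continuous with $\partial_s g(s,\omega)=\nabla u(s\omega)\cdot\omega$ for a.e.\ $s$ and a.e.\ $\omega$, and all the spherical integrals below are absolutely continuous in the radius, so differentiation under the integral sign and the fundamental theorem of calculus along a.e.\ ray are legitimate. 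Both $W$ and $\widetilde W$ share the ``bulk'' term $D(t):=t^{-n}\int_{B(0,t)}|\nabla u|^2+q_+(0)^2\1_{\{u>0\}}+q_-(0)^2\1_{\{u<0\}}$, so that $W(u,0,t)=D(t)-I(t)$ and $\widetilde W(u,0,t)=D(t)-K(t)$, where $I(t)=t^{-n-1}\int_{\partial B(0,t)}u^2\,d\sigma$ and $K(t)=t^{-1}\int_0^t s^{1-n}\int_{\partial B(0,s)}(\nabla u\cdot\nu)^2\,d\sigma\,ds$. In particular $W-\widetilde W=K-I$ (for $J^+$ one drops $q_-$ and uses the $\widetilde W^+$ formula; the proof is unchanged).

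First I would restate Theorem~\ref{thm:monotonicityone} in these terms. Using $\int_0^t\nabla u(r\xi)\cdot\xi\,dr=u(t\xi)-u(0)=g(t,\xi)$ (here $u(x_0)=0$ is used) and the slicing identity $s^{1-n}\int_{\partial B(0,s)}(\nabla u\cdot\nu)^2\,d\sigma=\int_{\partial B(0,1)}(\partial_s g(s,\omega))^2\,d\omega$, one checks that the integrand of the middle term of \eqref{a5.2} is
\[
t^{-3}\int_{\partial B(0,1)}\!\Big[t\!\int_0^t(\nabla u(r\xi)\cdot\xi)^2dr-\big(\!\int_0^t\nabla u(r\xi)\cdot\xi\,dr\big)^2\Big]d\H^{n-1}(\xi)=\frac{K(t)-I(t)}{t}.
\]
Hence \eqref{a5.2} reads $\widetilde W(u,0,\rho)-\widetilde W(u,0,s)\ \ge\ \int_s^\rho\frac{K(t)-I(t)}{t}\,dt\ -\ C\rho^\alpha$, with $C$ exactly the constant of Theorem~\ref{thm:monotonicityone} (so the constant in \eqref{a5.6} is inherited verbatim).

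The computational heart is the identity, valid for a.e.\ $t\in(0,R)$,
\[
\frac{d}{dt}\big[t(K(t)-I(t))\big]=\int_{\partial B(0,1)}\Big(\partial_t g(t,\omega)-\tfrac{1}{t}g(t,\omega)\Big)^2d\omega=\frac{1}{t^{n+1}}\int_{\partial B(0,t)}\big(u(x)-\nabla u(x)\cdot x\big)^2\,d\sigma,
\]
obtained by differentiating $t\,K(t)=\int_0^t\!\int_{\partial B(0,1)}(\partial_s g)^2\,d\omega\,ds$ and $t\,I(t)=t^{-1}\int_{\partial B(0,1)}g(t,\omega)^2\,d\omega$, completing the square, and noting that on $\partial B(0,t)$ one has $\nabla u(x)\cdot x=t\,\partial_t g(t,x/t)$. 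Then I would chain everything: from $W-\widetilde W=K-I$,
\[
W(u,0,\rho)-W(u,0,s)=\big[\widetilde W(u,0,\rho)-\widetilde W(u,0,s)\big]+\int_s^\rho (K-I)'(t)\,dt,
\]
so inserting the inequality above gives
\[
W(u,0,\rho)-W(u,0,s)\ \ge\ \int_s^\rho\Big[\frac{(K-I)(t)}{t}+(K-I)'(t)\Big]dt\ -\ C\rho^\alpha\ =\ \int_s^\rho\frac{1}{t}\frac{d}{dt}\big[t(K-I)(t)\big]\,dt\ -\ C\rho^\alpha,
\]
which by the displayed identity equals $-C\rho^\alpha+\int_s^\rho t^{-n-2}\int_{\partial B(0,t)}(u(x)-\nabla u(x)\cdot x)^2\,d\sigma\,dt$; for general $x_0$ one replaces $x$ by $x-x_0$, and this is exactly \eqref{a5.6}.

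I expect the only real obstacle to be bookkeeping: carefully justifying the slicing/co-area formulas, the differentiation under the integral sign, and the fundamental theorem of calculus along a.e.\ radial ray — all of which are routine because $u$ is locally Lipschitz. No smallness, non-degeneracy, or regularity hypotheses on $q_\pm$ beyond those already used in Theorem~\ref{thm:monotonicityone} enter the argument.
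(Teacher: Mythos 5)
Your proposal is correct and is essentially the paper's own proof: the paper likewise rewrites the middle term of \eqref{a5.2} as $\int_s^\rho F(t)\,dt/t$ with $F=W-\widetilde W$, adds $F(\rho)-F(s)=\int_s^\rho F'$, and identifies $F'(t)+F(t)/t$ (your $\tfrac1t\tfrac{d}{dt}[t(K-I)]$) as the perfect square $\tfrac{1}{t^{n+2}}\int_{\partial B(0,t)}(u-\nabla u\cdot x)^2\,d\sigma$. Your grouping via $K,I$ and $\tfrac{d}{dt}[t(K-I)]$ is only a cosmetic repackaging of the paper's $G,H$ computation, and your remarks on absolute continuity and the use of $u(x_0)=0$ match the paper's treatment.
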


\begin{proof}
Again we may assume that $x_0 = 0$.
We write the right-hand side of \eqref{a5.2} as $A-B$ and compute 
 \begin{eqnarray}\label{eqn:firsttermmanip}
A &=& \int_s^\rho t^{-3}\int_{\partial B(0,1)} t\int_0^t (\nabla u(r\xi) \cdot \xi)^2 dr d\sigma(\xi)dt
\nonumber \\
&=& \int_s^\rho t^{-2} \int_0^t \int_{\partial B(0,1)}(\nabla u(r\xi)\cdot \xi)^2 d\sigma(\xi)drdt
\\ 
&=& \int_s^\rho t^{-2} \int_0^t \frac{1}{r^{n-1}}
\int_{\partial B(0,r)}(\nabla u\cdot \nu)^2 d\sigma drdt.
\nonumber 
\end{eqnarray}
Since $u(x_0) = 0$,  
\begin{eqnarray}\label{eqn:secondtermmanip}
B &=&
\int_s^\rho t^{-3}\int_{\partial B(0,1)} 
\left(\int_0^t \nabla u(r\xi)\cdot \xi dr \right)^2 d\sigma(\xi)dt 
 \nonumber\\
 &=& \int_s^\rho \frac{1}{t^3}\int_{\partial B(0,1)} (u(r\xi) - u(0))^2d\sigma(\xi) dt
 = \int_s^\rho 
 \frac{1}{t^{n+2}} \int_{\partial B(0,t)} u(x)^2 d\sigma(x) dt.
\end{eqnarray}
Set $F(t) = W(u, x_0, t) - \widetilde{W}(u,x_0,t)$ for a moment.
Thus by \eqref{eqn:mono2} and \eqref{eqn:mono1}
\begin{equation}\label{a5.9}
F(t) 
= -\frac{1}{t^{n+1}} \int_{\partial B(0,t)} u^2 d\sigma
+\frac{1}{t}\int_0^t \frac{1}{r^{n-1}} \int_{\partial B(0,r)} (\nabla u \cdot \nu)^2 d\sigma dr
\end{equation}
and now Theorem \ref{thm:monotonicityone} yields
\begin{equation}\label{eqn:differenceintildes}
\widetilde{W}(u, x_0, \rho) - \widetilde{W}(u, x_0, s) + C \rho^\alpha \geq A-B
= \int_s^\rho \frac{1}{t} F(t) dt 
\end{equation}
by \eqref{eqn:firsttermmanip}, \eqref{eqn:secondtermmanip}, and \eqref{a5.9}.
Hence
\begin{eqnarray}\label{eqn:differenceinregular}
W(u, x_0,\rho) - W(u, x_0, s) &=& F(\rho) - F(s) + \widetilde{W}(u, x_0, \rho) - \widetilde{W}(u, x_0, s)
\nonumber\\
 &\geq& F(\rho) - F(s) + \int_s^\rho \frac{1}{t} F(t) dt - C \rho^\alpha.
\end{eqnarray}
We shall see soon that $F$ has a derivative almost everywhere, and is the integral of $F'$.
That is, $F(\rho) - F(s) = \int_s^\rho F'(t) dt$, and hence
\begin{equation}\label{a5.12}
W(u, x_0,\rho) - W(u, x_0, s) \geq - C \rho^\alpha + \int_s^\rho \left(F'(t) + \frac{F(t)}{t}\right)  dt
\end{equation}
Next we compute $F'(t)$; notice first that by \eqref{a5.9}
\begin{equation}\label{a5.13}
F(t) = - \frac{1}{t^2}\int_{\partial B(0,1)} u(t\xi)^2 d\sigma(\xi)
+ \frac{1}{t} \int_0^t \int_{\partial B(0,1)} (\nabla u(r\xi)\cdot \xi)^2 d\sigma(\xi) dr.
\end{equation}
Write $F(t) = - t^{-2} G(t) + t^{-1} H(t)$, with  
\begin{equation}\label{a5.14}
G(t) = \int_{\partial B(0,1)} u(t\xi)^2 d\sigma(\xi) \ \text{ and } \ 
H(t) = \int_0^t \int_{\partial B(0,1)} (\nabla u(r\xi)\cdot \xi)^2 d\sigma(\xi) dr.
\end{equation}
Then 
\begin{equation}\label{a5.15}
G'(t) = 2\int_{\partial B(0,1)} u(t\xi)(\nabla u(t\xi) \cdot \xi) d\sigma(\xi)
\end{equation}
and 
\begin{equation}\label{a5.16}
 H'(t) = \int_{\partial B(0,1)} (\nabla u(t\xi)\cdot \xi)^2 d\sigma(\xi),
\end{equation}
so 
\begin{eqnarray}\label{a5.17}
t^{-1}F(t) + F'(t) &=&  t^{-1}F(t) + 2t^{-3} G(t) - t^{-2} G'(t) - t^{-2} H(t) + t^{-1} H'(t)
\nonumber\\
 &=& t^{-3} G(t) - t^{-2} G'(t) + t^{-1} H'(t)
 \nonumber\\
&=& \frac{1}{t}\int_{\partial B(0,1)} 
\left\{\left(\frac{u(t\xi)}{t}\right)^2 - \frac{2}{t}u(t\xi)(\nabla u(t\xi) \cdot \xi)
+ (\nabla u(t\xi)\cdot \xi)^2 \right\} d\sigma(\xi)
\\
&=&  \frac{1}{t} 
\int_{\partial B(0,1)} \left(\frac{u(t\xi)}{t} - (\nabla u(t\xi)\cdot \xi)\right)^2d\sigma(\xi) \geq 0.
\nonumber
\end{eqnarray}
We promised to return to the absolute continuity of $F$. Notice that both $G$ and $H$ are 
the indefinite integrals of their derivative, essentially by Fubini. Then multiplying them by 
$t^{-2}$ or $t^{-1}$ does not change this (away from $t=0$). This is rather standard and easy;
for instance write $G$ as the integral of $G'$, multiply by $t^{-2}$, and perform a soft integration by part
using Fubini. Thus \eqref{a5.12} holds, and by \eqref{a5.17} we get that
\begin{eqnarray}\label{a5.18}
W(u, x_0,\rho) - W(u, x_0, s) &\geq& - C \rho^\alpha 
+ \int_s^\rho \frac{1}{t}\int_{\partial B(0,1)} \left(\frac{u(t\xi)}{t} 
- (\nabla u(t\xi)\cdot \xi)\right)^2d\sigma(\xi) dt
\nonumber \\
&=& -C \rho^\alpha + 
\int_s^\rho \frac{1}{t^{n+2}} \int_{\partial B(x_0,t)} \left(u(x)- (\nabla u(x)\cdot x)\right)^2 
d\sigma(x) dt,
\end{eqnarray}
as announced in \eqref{a5.6}. The proposition follows.
\end{proof}

Before we examine the consequences of the monotonicity formula, let us make a quick observation 
concerning the case when $W(u, x_0, \cdot)$ is constant.

\begin{lemma}\label{rem:wconstantimplieshomogenous}
Suppose $q_+$ and $q_-$ are constant on $\Omega$, and let $u$ be a minimizer for 
$J$ or $J^+$ on $\Omega$. Suppose that $0 \in \Omega$, $u(0) = 0$, and
$0 < s < \rho < \dist(x_0,\d \Omega)$. Then $W(u, 0,\rho) - W(u, 0, s) = 0$ 
if and only if $u$ is homogeneous of degree $1$ in $B(0, \rho)\sm B(0, s)$. 

Furthermore, if $q_+$ and $q_-$ are constant on $\R^n$, 
$u$ is a minimizer for $J$ or $J^+$ in $\R^n$, and $u$ is homogeneous of degree $1$, 
then for $r > 0$
\begin{equation}\label{eqn:wononehomog}
W(u, 0, r) = W(u,0, 1) =  q_+^2 |B(0,1) \cap \{u > 0\}| + q_-^2|B(0,1) \cap \{u < 0\}|.
\end{equation}
\end{lemma}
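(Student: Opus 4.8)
The plan is to exploit the identity derived in the proof of Proposition~\ref{prop:monotonicitytwo}, in particular the computation \eqref{a5.17}. Indeed, for a genuine minimizer (so $C = 0$ and the error term $C\rho^\alpha$ is absent), the same chain of equalities \eqref{eqn:firsttermmanip}--\eqref{a5.17} shows that
\begin{equation}\label{a5.19}
W(u, 0,\rho) - W(u, 0, s) = \int_s^\rho \frac{1}{t^{n+2}} \int_{\partial B(0,t)} \left(u(x)- (\nabla u(x)\cdot x)\right)^2 d\sigma(x)\, dt,
\end{equation}
with no remainder. (Here I am using that for a minimizer with constant coefficients one may take $\kappa = 0$, or alternatively run the argument of Theorem~\ref{thm:monotonicityone} with the competitor $u_t$ and the \emph{exact} minimality to get $A-B \le \widetilde W(\rho) - \widetilde W(s)$ with no $C\rho^\alpha$; and then $F$ is absolutely continuous exactly as before.) Since the integrand on the right of \eqref{a5.19} is nonnegative, $W(u,0,\rho)-W(u,0,s) = 0$ forces $u(x) - \nabla u(x)\cdot x = 0$ for $\mathcal H^{n-1}$-a.e.\ $x \in \partial B(0,t)$ and a.e.\ $t \in (s,\rho)$, hence (by Fubini) for a.e.\ $x$ in the annulus $B(0,\rho)\setminus B(0,s)$. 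The identity $\nabla u(x)\cdot x = u(x)$ is precisely the Euler relation characterizing positive $1$-homogeneity: writing $g(\lambda) = u(\lambda x)/\lambda$ for fixed $x$, one computes $g'(\lambda) = \lambda^{-2}(\lambda \nabla u(\lambda x)\cdot x - u(\lambda x)) = 0$, so $u$ is $1$-homogeneous along each ray within the annulus. Conversely, if $u$ is $1$-homogeneous in $B(0,\rho)\setminus B(0,s)$ then $u(x) = \nabla u(x)\cdot x$ there (differentiate $u(\lambda x) = \lambda u(x)$ in $\lambda$ at $\lambda = 1$), so the right-hand side of \eqref{a5.19} vanishes and $W(u,0,\cdot)$ is constant on $[s,\rho]$. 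This gives the first assertion.

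For the second assertion, assume $q_\pm$ are constant on all of $\R^n$, $u$ is a global minimizer, and $u$ is $1$-homogeneous. Then $u(x) = \nabla u(x)\cdot x$ everywhere, so in the definition \eqref{eqn:mono2} the boundary term is
\[
\frac{1}{r^{n+1}}\int_{\partial B(0,r)} u^2\, d\sigma = \frac{1}{r^{n+1}}\int_{\partial B(0,r)} (\nabla u\cdot \nu)^2 r^2\, d\sigma = \frac{1}{r^{n-1}}\int_{\partial B(0,r)} (\nabla u \cdot \nu)^2\, d\sigma,
\]
using $x = r\nu$ on $\partial B(0,r)$. On the other hand, $1$-homogeneity of $u$ means $\nabla u$ is $0$-homogeneous, so $|\nabla u|^2$ is $0$-homogeneous; moreover $|\nabla u|^2 = |\nabla_{\mathrm{tan}} u|^2 + (\nabla u\cdot \nu)^2$ and on $\partial B(0,1)$ the tangential part is the gradient of the restriction of $u$ to the sphere. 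Integrating $|\nabla u|^2$ over $B(0,r)$ in polar coordinates and subtracting the boundary term, the normal-derivative contributions cancel and one is left with a scale-invariant quantity; carrying out the polar-coordinate bookkeeping shows $\frac{1}{r^n}\int_{B(0,r)}|\nabla u|^2 - \frac{1}{r^{n-1}}\int_{\partial B(0,r)}(\nabla u\cdot\nu)^2 d\sigma$ is independent of $r$, and similarly $\frac{1}{r^n}\int_{B(0,r)} q_\pm^2 \mathbf 1_{\{\pm u > 0\}}$ is independent of $r$ because $\{\pm u > 0\}$ is a cone. Hence $W(u,0,r) = W(u,0,1)$ for all $r > 0$.

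It remains to identify $W(u,0,1)$. Since $W(u,0,\cdot)$ is constant, the first part tells us the whole quantity is rigid; the clean way to evaluate it is to observe that for a $1$-homogeneous function $W(u,0,1)$ reduces to the bulk coefficient term. Concretely, combining the two displays above, $W(u,0,1) = \int_{B(0,1)}|\nabla u|^2 - \int_{\partial B(0,1)}(\nabla u\cdot \nu)^2 d\sigma + q_+^2|B(0,1)\cap\{u>0\}| + q_-^2|B(0,1)\cap\{u<0\}|$, and the first two terms cancel: writing $\int_{B(0,1)}|\nabla u|^2 = \int_0^1 r^{n-1}\big(\int_{\partial B(0,1)} |\nabla u(r\xi)|^2 d\sigma(\xi)\big)dr$ and using $0$-homogeneity of $|\nabla u|^2$ gives $\int_{B(0,1)}|\nabla u|^2 = \frac1n \int_{\partial B(0,1)}|\nabla u|^2 d\sigma$, while a standard integration by parts (or Rellich--Pohozaev identity) for the harmonic function $u$ on the cone $\{u>0\}$, together with $u = 0$ on the lateral boundary, yields $\int_{\partial B(0,1)}|\nabla u|^2 d\sigma = n\int_{\partial B(0,1)}(\nabla u\cdot\nu)^2 d\sigma$; hence the difference is zero and \eqref{eqn:wononehomog} follows. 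I expect the main obstacle to be the bookkeeping in this last Pohozaev-type cancellation (getting the boundary terms on the lateral part of the cone to vanish cleanly using $u=0$ and the Euler relation), rather than anything conceptually deep; the first assertion is essentially immediate from \eqref{a5.19}.
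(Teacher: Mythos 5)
The central problem is your claimed exact identity $W(u,0,\rho)-W(u,0,s)=\int_s^\rho t^{-n-2}\int_{\partial B(0,t)}(u-\nabla u\cdot x)^2\,d\sigma\,dt$ ``with no remainder.'' The derivation in Theorem \ref{thm:monotonicityone} and Proposition \ref{prop:monotonicitytwo} is not a chain of equalities: it starts from the one-sided comparison \eqref{eqn:comparetout} of $J_{0,t}(u)$ with the $1$-homogeneous competitor $u_t$, and minimality (even exact minimality with constant $q_\pm$, which does correctly kill the $C\rho^\alpha$ term) gives only $J_{0,t}(u)\le J_{0,t}(u_t)$. So what one gets is exactly the inequality \eqref{a5.6} with $C=0$, which is all the paper states and uses: for a.e.\ $t$ the derivative of $W$ equals the Cauchy--Schwarz defect \emph{plus} the nonnegative slack $n\,t^{-n-1}\big[\,$(energy of the $1$-homogeneous extension of $u|_{\partial B(0,t)}$ in $B(0,t)$) $-$ (energy of $u$ in $B(0,t)$)$\,\big]$, and nothing forces that slack to vanish. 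Your ``only if'' direction survives, since $W(\rho)-W(s)=0$ still pinches the nonnegative integral to zero, and there your argument (Euler relation, integrate along rays) coincides with the paper's. But your proof of the converse on the annulus rests entirely on the unproved equality: with only ``$\ge$'', homogeneity making the right-hand side vanish yields no conclusion about $W(\rho)-W(s)$, and the slack term above is not controlled by homogeneity on the annulus alone. For the globally homogeneous case -- which is what the second assertion and the later applications actually need -- the clean fix is scaling: $W(u,0,r)=W_0(u_{r,0},1)$ as in \eqref{wunderblowups}, and $u_{r,0}=u$ when $u$ is $1$-homogeneous, so $W(u,0,r)=W(u,0,1)$ with no monotonicity formula at all.

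For the evaluation of $W(u,0,1)$ your route is correct in substance but the step you defer as ``bookkeeping'' is precisely where the paper inserts the one nontrivial input. The paper quotes the fact that minimizers satisfy $u\Delta u=0$ as a distribution (Theorem 4.5.2 in \cite{AC}, Theorem 2.2 in \cite{ACF}); this is what licenses the integration by parts $\int_{B(0,1)}|\nabla u|^2=\int_{\partial B(0,1)}u\,\partial_\nu u\,d\sigma=\int_{\partial B(0,1)}u^2\,d\sigma$ (the last equality by Euler's relation $\nabla u\cdot x=u$ on the unit sphere), after which the energy and boundary terms in \eqref{eqn:mono2} cancel and \eqref{eqn:wononehomog} drops out. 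Your spherical version, $\int_\Sigma|\nabla_\theta g|^2=(n-1)\int_\Sigma g^2$ for the restriction $g$ of $u$ to the sphere, is the same integration by parts in disguise and needs the same justification (Lipschitz regularity of $u$ up to the free boundary, or the distributional identity $u\Delta u=0$); one cannot integrate by parts across $\partial\{u>0\}$ ``for free,'' so this should be cited rather than left as a computation to be filled in.
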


\begin{proof}
If $u$ is a minimizer and the $q_{\pm}$ are constant, then by \eqref{a5.6}
$$ 
W(u, 0,\rho) - W(u, 0, s) \geq \int_s^\rho \frac{1}{t^{n+2}} \int_{\partial B_t(0)} \left(u(x)- (\nabla u(x)\cdot x)\right)^2d\sigma dt.
$$ 
If in addition $W(u, 0,\rho) - W(u, 0, s) = 0$, then $u(x) = \nabla u(x)\cdot x$ for almost 
every $x \in B(0, \rho)\sm B(0, s)$. The first part follows by integrating along rays.
It is well known (see for instance, Theorem 4.5.2 in \cite{AC} or Theorem 2.2 in \cite{ACF}) that if $u$ is a minimizer, then $u\Delta u = 0$ as a distribution. 
Therefore, an integration by parts implies that 
$$\int_{B(0,1)} |\nabla u|^2\ dx  = \int_{\partial B(0,1)} u^2\ d\sigma$$ 
and \eqref{eqn:wononehomog} follows. 
\end{proof}

\section{Consequences of the Weiss Monotonicity formula} 
\label{Consequences}

Throughout this section we assume that for some choice of $c_0, \alpha > 0$,
\begin{equation}\label{a6.1}
q_\pm\in L^\infty(\Omega)\cap C^\alpha(\Omega) \ \text{ and }\ 
q_\pm\ge c_0>0,
\end{equation}
but rather rapidly we shall concentrate on almost-minimizers for $J^+$, and thus work with $q_+$ alone,
and use the monotonicity formula of the previous section to detect points where the free boundary 
is infinitesimally flat.  We shall call these points ``regular" and denote the corresponding set 
by  $\mathcal R$ (see Definition \ref{defn:flatpoints} below). 
A key component of this analysis will be the identification of the blowup limits of almost-minimizers. 

We start with a few definitions. 
Set $U = \big\{x\in \Omega \, ; \, u(x) > 0 \big\}$ and $\Gamma^+(u) = \Omega \cap \d U$
as usual. For $x_0 \in \Gamma^+(u)$ and $r > 0$, define 
\begin{equation}\label{eqn:blowups}
u_{r,x_0}(x) = r^{-1} u(rx+ x_0)
\end{equation}
If $\{ r_j \}$ is a sequence that tend to $0$,  
we may also write $u_{j, x_0} = r_j^{-1} u(r_j x + x_0)$.  
Furthermore, when no confusion is possible, we may even drop the dependence of $u_j$ on $x_0$. 
We shall use the quantity $W(u,x_0,r)$ defined in \eqref{eqn:mono2} and associated to the 
monotonicity formula of Proposition \ref{prop:monotonicitytwo}, i.e., 
\begin{equation}\label{eqn:mono2a}
W(u, x_0, r) = \frac{1}{r^n} \int_{B(x_0,r)} \Big\{ |\nabla u|^2 
+ q^2_+(x_0)\1_{\{u > 0\}} + q^2_-(x_0)\1_{\{u < 0\}} \Big\}
- \frac{1}{r^{n+1}} \int_{\partial B(x_0,r)} u^2 d\sigma.
\end{equation} 
With our assumption \eqref{a6.1}, an application of the almost monotonicity 
Proposition~\ref{prop:monotonicitytwo} to a decreasing sequence of radii yields the existence of the limit
\begin{equation}\label{a6.4}
W(u, x_0, 0) = \lim_{r \to 0} W(u, x_0, r).
\end{equation}
Also, we immediately deduce from \eqref{eqn:mono2a} 
and the change of variables formula that
\begin{equation}\label{wunderblowups}
W(u, x_0, t r) = W_{x_0}(u_{r, x_0}, t),
\end{equation}
where
\begin{equation}\label{a6.6}
W_{x_0}(v, t) = \frac{1}{t^n} \int_{B(0,t)}
\Big\{ |\nabla v|^2 
+ q^2_+(x_0)\1_{\{v > 0\}} + q^2_-(x_0)\1_{\{v < 0\}} \Big\}
- \frac{1}{t^{n+1}} \int_{\partial B(0,t)} v^2 d\sigma
\end{equation}
is the analogue of $W$ at the origin, but with constant functions $q_\pm \equiv q^2_\pm(x_0)$.

We wish to take limits of the functions $u_{j, x_0} = r_j^{-1} u(r_jx + x_0)$; 
the existence of sufficiently many of blow-up limits is be given by the following lemma.

\begin{lemma}\label{blowupsexistandarehomogenous1}
Let $u$ be an almost-minimizer for $J$ or $J^+$, and assume that \eqref{a6.1} holds
for some $c_0 > 0$. For each $x_0 \in \Gamma^+(u)$ and every sequence $\{ r_j \}$ of positive numbers
such that $\lim_{j \to +\infty} r_j = 0$, we can find a subsequence $\{ r_{j_k} \}$, 
such that the $u_{r_{j_k},x_0}$ converge to a limit $u_\infty$, uniformly on
compact subsets of $\R^n$.
\end{lemma}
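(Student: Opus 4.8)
The plan is to establish uniform Lipschitz bounds for the rescaled functions $u_{r_j, x_0}$ on every fixed ball, and then invoke Arzelà–Ascoli together with a diagonal argument over an exhaustion of $\R^n$ by balls. First I would fix a radius $r_0 > 0$ with $\overline B(x_0, 2r_0) \subset \Omega$. By Theorems~5.1 and 8.1 in \cite{DT}, $u$ is Lipschitz on $\overline B(x_0, r_0)$, with some constant $M$ depending on $n$, $\|q_\pm\|_{L^\infty}$, $\kappa$, $\alpha$, $r_0$, and a bound on $\int_{B(x_0, 2r_0)} |\nabla u|^2$. Then for any $R > 0$ and any $j$ large enough that $r_j R < r_0$, the function $u_{r_j, x_0}(x) = r_j^{-1} u(r_j x + x_0)$ is $M$-Lipschitz on $\overline B(0, R)$, since rescaling of this form preserves the Lipschitz constant. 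Moreover $u_{r_j, x_0}(0) = r_j^{-1} u(x_0) = 0$ because $x_0 \in \Gamma^+(u)$ forces $u(x_0) = 0$ (as $u$ is continuous and $x_0 \in \d U$). Hence on each $\overline B(0, R)$ the family $\{ u_{r_j, x_0} \}_{j \geq j(R)}$ is uniformly bounded (by $MR$) and equicontinuous.

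Next I would apply Arzelà–Ascoli on $\overline B(0, 1)$ to extract a subsequence along which $\{ u_{r_j, x_0} \}$ converges uniformly on $\overline B(0,1)$; then pass to a further subsequence converging uniformly on $\overline B(0,2)$; and continue inductively, extracting at stage $k$ a subsequence of the stage-$(k-1)$ subsequence that converges uniformly on $\overline B(0,k)$. The usual diagonal extraction then yields a single subsequence $\{ r_{j_k} \}$ along which $u_{r_{j_k}, x_0}$ converges uniformly on every $\overline B(0, k)$, hence uniformly on every compact subset of $\R^n$, to a limit function $u_\infty$. The limit $u_\infty$ is automatically $M$-Lipschitz on all of $\R^n$ and satisfies $u_\infty(0) = 0$.

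This argument is essentially routine; there is no real obstacle, the only point requiring a little care being the bookkeeping that the Lipschitz bound $M$ obtained from \cite{DT} on the fixed ball $B(x_0, r_0)$ transfers, after rescaling, to a bound on $u_{r_j, x_0}$ that is \emph{uniform in $j$} on each fixed ball $B(0,R)$ — which it does precisely because $r_j \to 0$, so that $B(x_0, r_j R) \subset B(x_0, r_0)$ for all large $j$. (In fact more is true: by Theorem~9.1 of \cite{DT}, $u_\infty$ is a global minimizer and the convergence also holds in $W^{1,2}_{\mathrm{loc}}$, and by Theorem~9.2 of \cite{DT} one can moreover arrange, after passing to a further subsequence, that $u_\infty$ is $1$-homogeneous; but the statement of the lemma asks only for the extraction of a uniformly convergent subsequence, which is what the above provides.)
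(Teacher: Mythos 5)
Your proposal is correct and follows essentially the same route as the paper, which simply notes that the rescaled functions $u_{r_j,x_0}$ are uniformly Lipschitz on each ball (citing the remark above Theorem~9.2 in \cite{DT}) and extracts a limit by Arzel\`a--Ascoli. Your additional care in noting that $u_{r_j,x_0}(0)=0$ gives the uniform bound needed alongside equicontinuity, and the diagonal extraction over an exhaustion by balls, is exactly the standard argument the paper has in mind.
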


This is easy, because the $u_{r_{j},x_0}$ are uniformly Lipschitz in each ball; see
the remark above Theorem 9.2 in \cite{DT}. 
We shall call a \underbar{blow-up limit} of $u$ at $x_0$ any limit $u_\infty$ of a sequence 
$\{ u_{r_{j},x_0} \}$ that converges (as above). The following lemma
gives a little more information on the convergence and the blow-up limits.

\begin{lemma}\label{blowupsexistandarehomogenous2}
Let $x_0 \in \Gamma^+(u)$ and  $\{ r_j \}$ be as in Lemma \ref{blowupsexistandarehomogenous1},
and assume that the $u_{r_{j},x_0}$ converge (uniformly on compact subsets of $\R^n$) to a limit
$u_\infty$. Then $u_\infty$ is a global minimizer for $J^\infty$ for $J^{\infty, +}$, 
as defined by \eqref{eqn2.1} and \eqref{eqn2.2a} with the constants $\lambda_\pm = q_\pm(x_0)$.
In addition, 
\begin{equation}\label{a6.7}
\nabla u_\infty \ \text{ is the limit in $L^2_{\mathrm{loc}}(\R^n)$ of the } \nabla u_{r_{j},x_0},
\end{equation}
\begin{equation}\label{a6.8}
W_{x_0}(u_\infty, r) = W(u, x_0, 0) := \lim_{\rho \to 0} W(u, x_0, \rho) \ \text{ for } r > 0,
\end{equation}
and $u_\infty$ is (positively) homogeneous of degree $1$, i.e., $u_\infty(\lambda x) = \lambda u_\infty(x)$
for $x\in \R^n$ and $\lambda > 0$.
\end{lemma}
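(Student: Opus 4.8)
The plan is to prove Lemma \ref{blowupsexistandarehomogenous2} in four stages, each leaning on results already established: the limiting theorem (Theorem 9.1 and Theorem 9.2) from \cite{DT}, and the almost-monotonicity Proposition \ref{prop:monotonicitytwo}.

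\textbf{Step 1: $u_\infty$ is a global minimizer and $\nabla u_{r_j,x_0} \to \nabla u_\infty$ in $L^2_{\loc}$.} First I would observe that each $u_{r_j,x_0}$ is an almost-minimizer for the functional $J_{j}$ (resp. $J_j^+$) associated with the rescaled coefficients $q_\pm^j(x) := q_\pm(r_j x + x_0)$ and with almost-minimality constant $\kappa r_j^\alpha$, which tends to $0$. Since $q_\pm$ is continuous (indeed $C^\alpha$) at $x_0$, the $q_\pm^j$ converge, uniformly on compact subsets of $\R^n$, to the constants $\lambda_\pm = q_\pm(x_0)$. This is exactly the setup of Theorem 9.1 and Theorem 9.2 in \cite{DT}: these give that the uniform limit $u_\infty$ is a global minimizer for $J^\infty$ (resp. $J^{\infty,+}$) with constants $\lambda_\pm$, and that $\nabla u_{r_j,x_0} \to \nabla u_\infty$ in $L^2(B(0,R))$ for every $R > 0$. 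This disposes of \eqref{a6.7} and the first assertion.

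\textbf{Step 2: The value of $W$ at small scales is carried to $W_{x_0}(u_\infty,\cdot)$.} Here I would use \eqref{wunderblowups}: $W(u,x_0, r_j t) = W_{x_0}(u_{r_j,x_0}, t)$ is \emph{not quite} the right identity because the left side uses the true (variable) $q_\pm$ while $W_{x_0}$ uses the frozen constants $q_\pm^2(x_0)$; but the difference between $W(u,x_0,\rho)$ and the ``frozen'' version at $x_0$ is controlled by $C\sup_{B(x_0,\rho)}(|q_+(y)-q_+(x_0)|+|q_-(y)-q_-(x_0)|) \le C\rho^\alpha$ by $C^\alpha$ continuity, which tends to $0$. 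So after passing to the limit $j \to \infty$ and using the $L^2_{\loc}$ convergence of the gradients (Step 1) plus uniform convergence of $u_{r_j,x_0}$ to control the boundary integral $\frac{1}{t^{n+1}}\int_{\partial B(0,t)} u^2\,d\sigma$ (trace on spheres is continuous under uniform+$W^{1,2}$ convergence, or just use Fubini to pass to a.e.\ $t$), I obtain $W_{x_0}(u_\infty, t) = \lim_{j} W(u,x_0, r_j t) = W(u,x_0,0)$ for every $t > 0$, using that the limit \eqref{a6.4} exists by Proposition \ref{prop:monotonicitytwo}. This gives \eqref{a6.8}.

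\textbf{Step 3: $u_\infty$ is $1$-homogeneous.} Since $u_\infty$ is a global minimizer and the coefficients $\lambda_\pm$ are constant, I would apply Proposition \ref{prop:monotonicitytwo} (the case of constant $q_\pm$, so the $C\rho^\alpha$ error term vanishes) on $B(0,t)$ for any $0 < s < t$:
\begin{equation*}
0 = W_{x_0}(u_\infty, t) - W_{x_0}(u_\infty, s) \ge \int_s^t \frac{1}{\tau^{n+2}}\int_{\partial B(0,\tau)}\big(u_\infty(x) - \nabla u_\infty(x)\cdot x\big)^2\, d\sigma\, d\tau \ge 0,
\end{equation*}
where the equality on the left is \eqref{a6.8} from Step 2 (the monotone quantity is constant). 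Hence $u_\infty(x) = \nabla u_\infty(x) \cdot x$ for a.e.\ $x$, and integrating along rays (as in Lemma \ref{rem:wconstantimplieshomogenous}) shows $u_\infty$ is positively homogeneous of degree $1$.

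\textbf{Main obstacle.} The analytic content is entirely inherited from \cite{DT} (Step 1) and Proposition \ref{prop:monotonicitytwo} (Step 3), so the only genuine care needed is in Step 2: justifying that the ``frozen-coefficient'' Weiss functional $W_{x_0}(u_{r_j,x_0}, t)$ and the true one $W(u, x_0, r_j t)$ differ by an error tending to $0$, and that each term of $W$ passes to the limit. The gradient term needs the strong $L^2_{\loc}$ convergence \eqref{a6.7} (weak convergence alone would only give a lower-semicontinuity inequality, which is the wrong direction for pinning down a limit), and the spherical boundary term needs either the $W^{1,2}_{\loc}$-strong convergence together with Fubini to select a good sequence of radii, or a trace estimate; this is routine but is the one place where the various convergences must be combined with some attention.
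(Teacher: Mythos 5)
Your overall architecture matches the paper's proof (cite Theorems 9.1--9.2 of \cite{DT} for global minimality of $u_\infty$ and the strong $L^2_{\loc}$ convergence of gradients, use the scaling identity \eqref{wunderblowups} together with the existence of the limit \eqref{a6.4} to get \eqref{a6.8}, then conclude homogeneity from constancy of the Weiss functional via Lemma \ref{rem:wconstantimplieshomogenous}). But Step 2 has a genuine gap: when you pass to the limit in $W_{x_0}(u_{r_j,x_0},t)$ you account for the gradient term (strong $L^2_{\loc}$ convergence) and the spherical term (uniform convergence), but you never address the volume terms $\big|B(0,t)\cap\{u_{r_j,x_0}>0\}\big|$ and $\big|B(0,t)\cap\{u_{r_j,x_0}<0\}\big|$. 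Uniform convergence of $u_{r_j,x_0}$ to $u_\infty$ does \emph{not} imply convergence of these measures: the rescaled functions could be positive but uniformly small on a set of substantial measure where $u_\infty\equiv 0$, so one only gets $\liminf_j|\{u_{r_j,x_0}>0\}\cap B|\ge |\{u_\infty>0\}\cap B|$ for free, and the opposite inequality is exactly the delicate point. This is where the paper invokes (9.14) of Theorem 9.1 in \cite{DT} (which encodes the nondegeneracy coming from $q_\pm\ge c_0>0$) to obtain \eqref{a6.9}--\eqref{a6.10}; without some such input (either (9.14) or a direct nondegeneracy argument for almost-minimizers) your identification $W_{x_0}(u_\infty,t)=\lim_j W(u,x_0,r_jt)$ is not justified, and \eqref{a6.8} --- hence the homogeneity in Step 3 --- does not follow.

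Two smaller remarks. First, your concern that \eqref{wunderblowups} is ``not quite the right identity'' rests on a misreading: by \eqref{eqn:mono2} and \eqref{eqn:mono2a}, $W(u,x_0,\cdot)$ is already defined with the frozen constants $q_\pm^2(x_0)$, so the scaling identity is exact and no $O(\rho^\alpha)$ correction between ``frozen'' and ``variable'' versions is needed (your estimate of that correction is correct but superfluous). Second, the Fubini/trace precautions for the term $t^{-(n+1)}\int_{\partial B(0,t)}u^2\,d\sigma$ are unnecessary, since uniform convergence on $\overline B(0,t)$ already gives convergence of that integral for every $t$; the remainder of Steps 1 and 3 is fine and coincides with the paper's argument.
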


\begin{proof}
The fact that $u_\infty$ is a global minimizer and the convergence of $\nabla u_{r_{j},x_0}$
in $L^2_{\mathrm{loc}}(\R^n)$ are a part of Theorem 9.2 in \cite{DT}, which itself
is a direct application of Theorem 9.1 in \cite{DT}, applied to $u_{r_{j},x_0}$ which is
almost minimal with the functions $q_{j,\pm}(z) = q_\pm(x_0 + r_j z)$.
Now in Theorem 9.1 in \cite{DT}, (9.14) says that for each ball $B(x,r)$ and each choice of sign $\pm$, 
\begin{equation}\label{a6.9}
\int_{B(x,r)}  q_\pm(x_0) \1_{\{\pm u_{\infty} > 0 \}}(z) dz = \lim_{j\rightarrow \infty}
\int_{B(x,r)} q_{j,\pm}(z) \1_{\{ \pm u_{j,x_0} > 0 \}} (z) dz.
\end{equation}
Since by \eqref{a6.1} the $q_{j,\pm}$ converge uniformly in $B(x,r)$ to $q_\pm(x_0)$, and also $q_\pm \geq c_0 > 0$,
we may drop the $q$-functions and get that
\begin{equation}\label{a6.10}
\int_{B(x,r)}  \1_{\{\pm u_{\infty} > 0 \}}(z) dz = \lim_{j\rightarrow \infty}
\int_{B(x,r)} \1_{\{ \pm u_{j,x_0} > 0 \}} (z) dz
\end{equation}
(and in fact the proof of (9.14) in \cite{DT} essentially goes through this).
We may now use this and \eqref{a6.7} to take a limit in \eqref{a6.6} and get that
\begin{equation}\label{a6.11}
W_{x_0}(u_\infty, r) = \lim_{j\rightarrow \infty} W_{x_0}(u_{j,x_0}, r)
= \lim_{j\rightarrow \infty} W(u, x_0, r_j r) = W(u, x_0, 0),
\end{equation}
by \eqref{wunderblowups} and \eqref{a6.4}. Thus \eqref{a6.8} holds, and
$W_{x_0}(u_\infty, \cdot)$ is constant. Then by Lemma \ref{rem:wconstantimplieshomogenous},
$u_\infty$ is $1$-homogeneous, and Lemma \ref{blowupsexistandarehomogenous2} follows.
\end{proof}

For the rest of this section we keep the assumption \eqref{a6.1} but restrict to the case when $u$ 
is an almost-minimizer for $J^+$. Hence we drop $q_-$ and the definition of $W$ is a little simpler.

\begin{defn}\label{defn:flatpoints}
Set $\Gamma^+(u) = \Omega \cap \d U = \Omega \cap \partial \{u > 0\}$ as above, and
denote by $\omega_n$ the volume of the unit ball in $\mathbb R^n$.
The points of the set 
\begin{equation}\label{a6.12}
\mathcal R = \big\{ x_0 \in \Gamma^+(u) \, ; W(u,x_0,0) = q_+^2(x_0)\frac{\omega_n}{2} \, \big\}
\end{equation}
will be called \underbar{regular points} of $\Gamma^+(u)$ (for the one-phase problem).
\end{defn}

The next proposition will give a characterization of these points $x_0$ in terms of the blow-up limits of 
$u$ at $x_0$.
Notice that by \eqref{a6.8}, $W(u,x_0,0)$ is the constant value of the Weiss functional 
$W_{x_0}(u_\infty)$ for every blow-up limit $u_\infty$ of $u$ at $x_0$. In addition, since $u_\infty$
is homogeneous, \eqref{eqn:wononehomog} says that
\begin{equation}\label{a6.13}
W(u,x_0,0) = W_{x_0}(u_\infty,1) = q_+^2(x_0) |B(0,1) \cap \{u_\infty > 0\}|.
\end{equation}

As we shall see soon, $q_+^2(x_0)\frac{\omega_n}{2}$ is the smallest possible value
of $W(u,x_0,0)$, and is attained only when $u_\infty$ is a half-plane solution.
We say that $v$ is a \underbar{half-plane solution} (associated to $q_+(x_0)$)
when there  is a unit vector $\nu \in \mathbb S^{n-1}$ such that 
\begin{equation}\label{a6.14}
v(x) = q_+(x_0) \langle x, \nu \rangle_+  := q_+(x_0) \max(0, \langle x, \nu \rangle) 
\ \ \text{ for } x\in \R^n.
\end{equation} 
The name of solution and the choice of the ``slope" $q_+(x_0)$ are correct, 
because it is proved in \cite{AC}, Theorem 2.5, that 
$v(x) = a \langle x, \nu \rangle_+$  
is a a global minimizer of the functional $J^{\infty,+}$ associated to the constant 
coefficient $\lambda_+ = q_+(x_0)$ if and only if $a = q_+(x_0)$.

Analyzing the eigenvalues of the spherical Laplacian gives us several equivalent definitions of regular points
for the one phase problem.

\begin{proposition}\label{flatpointsregularpoints}
Assume \eqref{a6.1}, and let $u$ be an almost-minimizer for $J^+$. Then
\begin{equation}\label{a6.15}
W(u,x_0,0) \geq q_+^2(x_0)\frac{\omega_n}{2}
\ \text{ for every } x_0 \in \Gamma^+(u).
\end{equation}
In addition, for $x_0 \in \Gamma^+(u)$, the following are equivalent:
\begin{enumerate}[(1)]
\item $x_0 \in \mathcal R$;
\item Every blow-up limit of $u$ at $x_0$ is a half-plane solution;
\item Some blow-up limit of $u$ at $x_0$ is a half-plane solution.
\end{enumerate}
\end{proposition}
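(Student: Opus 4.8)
The plan is to reduce everything to an eigenvalue computation on the sphere. Since the blow-up limits are $1$-homogeneous global minimizers (Lemma~\ref{blowupsexistandarehomogenous2}), writing $u_\infty(x) = |x| \, g(x/|x|)$ for some Lipschitz $g : \SS^{n-1} \to \R$, a direct computation gives
$$
W_{x_0}(u_\infty,1) = \int_{\SS^{n-1}} \Big( |\nabla_{\SS} g|^2 + (1-n) g^2 \Big) \, d\sigma + q_+^2(x_0) \, \sigma(\{g > 0\}) - \int_{\SS^{n-1}} g^2 \, d\sigma,
$$
and one checks (integrating by parts in the radial variable, or using that $u_\infty$ is harmonic where positive) that in fact the energy and boundary terms combine so that $W(u,x_0,0) = q_+^2(x_0) \, |B(0,1) \cap \{u_\infty > 0\}|$ — this is exactly \eqref{a6.13}, which we may assume. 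So the whole problem is: among $1$-homogeneous global minimizers $u_\infty$ associated to the constant $\lambda_+ = q_+(x_0)$, show $|B(0,1) \cap \{u_\infty > 0\}| \geq \omega_n/2$, with equality iff $u_\infty$ is a half-plane solution \eqref{a6.14}.

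First I would establish \eqref{a6.15}. Let $u_\infty$ be any blow-up limit at $x_0$; by \eqref{a6.13} it suffices to show $|B(0,1) \cap \{u_\infty > 0\}| \geq \omega_n/2$. Write $\Sigma = \{g > 0\} \subset \SS^{n-1}$, so the quantity in question is $\sigma(\Sigma)/n$ (times the appropriate normalization). The restriction $g|_\Sigma$ satisfies, on $\Sigma$, the equation $\Delta_{\SS} g = -(n-1) g$ in the sense that $u_\infty$ is harmonic on the cone over $\Sigma$ (being harmonic on $\{u_\infty > 0\}$, by the minimizing property and \eqref{a3.5}-type reasoning, cf.\ \cite{AC}); hence $\lambda_1(\Sigma) = n-1$, where $\lambda_1(\Sigma)$ is the first Dirichlet eigenvalue of $-\Delta_{\SS}$ on $\Sigma$, with $g|_\Sigma > 0$ the corresponding eigenfunction. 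By Sperron--Frobenius (positivity of the first eigenfunction) and the Faber--Krahn inequality on the sphere (Sperner's theorem: among subsets of $\SS^{n-1}$ of given measure, the geodesic ball minimizes $\lambda_1$), we have $\lambda_1(\Sigma) \geq \lambda_1(\text{cap of measure } \sigma(\Sigma))$. A spherical cap $\{ \langle x, \nu\rangle > \cos\theta\}$ has first eigenvalue equal to $n-1$ precisely when $\theta = \pi/2$, i.e.\ the hemisphere; and $\lambda_1$ of a cap is strictly decreasing in $\theta$. Combined with monotonicity of $\lambda_1$ in $\Sigma$ (domain monotonicity) and the fact that a hemisphere has measure $\sigma(\SS^{n-1})/2$, we conclude $\sigma(\Sigma) \geq \sigma(\SS^{n-1})/2$, which is \eqref{a6.15}, and that equality forces $\Sigma$ to be (up to measure zero) a hemisphere.

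Next, the equivalences. $(2) \Rightarrow (3)$ is trivial once we know a blow-up limit exists, which is Lemma~\ref{blowupsexistandarehomogenous1}. $(3) \Rightarrow (1)$: if some blow-up limit $u_\infty$ is a half-plane solution \eqref{a6.14}, then $|B(0,1) \cap \{u_\infty > 0\}| = \omega_n/2$, so by \eqref{a6.13}, $W(u,x_0,0) = q_+^2(x_0)\,\omega_n/2$, i.e.\ $x_0 \in \mathcal R$. $(1) \Rightarrow (2)$: suppose $x_0 \in \mathcal R$ and let $u_\infty$ be \emph{any} blow-up limit. By \eqref{a6.13} and $x_0 \in \mathcal R$, we get $|B(0,1) \cap \{u_\infty > 0\}| = \omega_n/2$, hence $\sigma(\Sigma) = \sigma(\SS^{n-1})/2$; by the equality case of Faber--Krahn/Sperner established above, $\Sigma$ is a hemisphere (up to null sets), so $\{u_\infty > 0\}$ is a half-space $\{\langle x, \nu \rangle > 0\}$ up to a null set, and on it $u_\infty$ is the positive $1$-homogeneous harmonic function vanishing on $\{\langle x,\nu\rangle = 0\}$, which is $a\langle x,\nu\rangle_+$ for some $a > 0$. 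Finally, by \cite{AC}, Theorem~2.5 (quoted in the excerpt just before the Proposition), a function $a \langle x,\nu\rangle_+$ is a global minimizer for the constant-coefficient functional with coefficient $q_+(x_0)$ only when $a = q_+(x_0)$; since $u_\infty$ is such a global minimizer (Lemma~\ref{blowupsexistandarehomogenous2}), we get $a = q_+(x_0)$ and $u_\infty$ is a half-plane solution.

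The main obstacle is the equality case in the Faber--Krahn inequality on the sphere: one needs not just $\sigma(\Sigma) \geq \sigma(\SS^{n-1})/2$ but the rigidity statement that equality forces $\Sigma$ to be a hemisphere. I would get this from the known rigidity in Sperner's spherical symmetrization theorem, combined with the observation that the only spherical cap with $\lambda_1 = n-1$ is the hemisphere; a subtle point to handle carefully is that $\Sigma = \{g > 0\}$ for the blow-up limit could a priori fail to be connected or open, so one should first note (using non-degeneracy, e.g.\ Theorem~10.2 in \cite{DT} applied to $u_\infty$, or the minimality) that $\{u_\infty > 0\}$ is open and that $g|_\Sigma$ being a positive first eigenfunction forces $\Sigma$ to be connected. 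An alternative, perhaps cleaner route avoiding heavy symmetrization machinery is to cite the corresponding computation already in the literature for the Alt--Caffarelli functional (e.g.\ \cite{W} or \cite{AC}), where this ``$W$ is minimized exactly at half-plane solutions'' statement for homogeneous global minimizers is essentially proved; I would state the lemma and refer to that computation, adapting constants.
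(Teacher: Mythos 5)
Your proposal is correct and follows essentially the same route as the paper: reduce via \eqref{a6.13} to the volume of $B(0,1)\cap\{u_\infty>0\}$ for the $1$-homogeneous blow-up limit, apply the spherical eigenvalue comparison (Sperner's inequality with the rigidity of the equality case, as in \cite{BKP}) to $\Sigma=\{g>0\}$ where $g$ is an eigenfunction with eigenvalue $n-1$, and then use Theorem 2.5 of \cite{AC} to pin down the slope $q_+(x_0)$ in the equality case. The only differences are cosmetic (your intermediate spherical formula for $W_{x_0}(u_\infty,1)$ has inessential constant slips and is not used, and your caution about connectedness/openness of $\Sigma$ is a point the paper also handles only implicitly).
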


\begin{proof}
Let $u$ and $x_0 \in \Gamma^+(u)$ be given, and let $u_\infty$ be a blow-up
limit of $u$ at $x_0$, associated as above to a sequence $\{ r_j \}$.
Then $u_\infty$ is homogeneous of degree $1$ and harmonic on 
$U_\infty = \big\{ x\in \R^n \, ; \, u_\infty(x) > 0\big\}$. Let $g$ denote the restriction of
$u_\infty$ to the unit sphere $\mathbb S^{n-1}$; then 
\begin{equation}\label{a6.16}
\Delta_{\mathbb S^{n-1}}g(\theta) + (n-1)g(\theta) =0,
\ \text{ for } \theta\in \{g >0\} \cap \mathbb S^{n-1},
\end{equation}
where $\Delta_{\mathbb S^{n-1}}$ is the Laplace-Beltrami operator on the sphere. 
In other words, $g$ is an eigenfunction for $-\Delta_{\mathbb S^{n-1}}$ on $\{g >0\}$,
with the eigenvalue $n-1$.

For every open subset $\Sigma \subset \mathbb S^{n-1}$  
denote by $\lambda(\Sigma)$ the smallest eigenvalue of $-\Delta_{\mathbb S^{n-1}}$ 
on $\Sigma$ and by $V(\Sigma)$ its $(n-1)$-volume.
Sperner \cite{Sp} showed that $\lambda(\Sigma) \geq \lambda(S_{V(\Sigma)})$,
where $S_{V}$ denotes the spherical cap with the $(n-1)$-volume $V$. 
Later, Beckner, Kenig and Pipher \cite{BKP} (see also \cite{CK}, Remark 2.4.4 and Theorem 2.4.5) 
showed that this inequality is strict unless $\Sigma$ is a spherical cap. 

Finally, since $\lambda(S_{V})$ can also be expressed in terms of the optimal constant for 
a Poincar\'e inequality on $S_{V}$, it is clear that $\lambda(S_{V})$ is a decreasing function of $V$,
and a quick computation shows that for the half sphere, $\lambda(S_{\alpha_{n-1}/2}) = (n-1)$, where 
$\alpha_{n-1}$ is the $(n-1)$-volume of $\mathbb S^{n-1}$.
It follows that, if $V(\Sigma) \leq \alpha_{n-1}/2$, then 
\begin{equation}\label{a6.17}
\lambda(\Sigma) \geq \lambda(S_{V(\Sigma)}) \geq \lambda(S_{\alpha_{n-1}/2}) = n-1,
\end{equation}
with equality if and only if $\Sigma$ is a hemisphere. 

Return to $u_\infty$ and $g$. Since $(n-1)$ is an eigenvalue of $\Delta_{\mathbb S^{n-1}}$
on $\Sigma = \{ g > 0 \}$, \eqref{a6.17} says that $V(\Sigma) \geq \alpha_{n-1}/2$,
and that $\Sigma$ is a half sphere if $V(\Sigma) = \alpha_{n-1}/2$.
Since $W(u,x_0,0) =  q_+^2(x_0) |B(0,1) \cap \{u_\infty > 0\}|$ by\eqref{a6.13}
and $u_\infty$ is  the homogeneous extension of $g$,
we get that $W(u,x_0,0) \geq  q_+^2(x_0) \frac{\omega_n}{2}$,
and $\{ u_\infty > 0 \}$ is a half space if $W(u,x_0,0) =  q_+^2(x_0) \frac{\omega_n}{2}$.

In particular, \eqref{a6.15} holds, and we are ready to prove the equivalence of our three conditions.
First assume that $x_0 \in \mathcal R$. Then for any blow-up limit, $\{ u_\infty > 0 \}$ is a half space,
$g$ is a solution of \eqref{a6.16} for a half sphere, and it is known that in this case $g$ is affine and 
$u_\infty$ is a multiple of a half-plane solution. Since $u_\infty$ is a global minimizer, it is actually equal
to a half-plane solution. Thus (1) implies (2), which obviously implies (3). 
Finally, if some blow-up limit of $u$ at $x_0$ is a half-plane solution, then 
$W(u,x_0,0) = q_+^2(x_0) \frac{\omega_n}{2}$ by \eqref{a6.13}, hence $x_0 \in \mathcal R$.
The proposition follows.

\end{proof}

Recall from Theorem \ref{t4.2} that under the current assumptions, $\Gamma^+(u)$ is locally 
Ahlfors-regular and uniformly rectifiable, and the proof also gives a local version of Condition $B$. Thus
$\H^{n-1}$-almost every $x_0 \in \Gamma^+(u)$ (in fact, every point $x_0 \in \Gamma^+(u)$ 
where $\Gamma^+(u)$ has a tangent plane) lies in the reduced boundary $\partial^*\{u > 0\}$.
The next corollary shows that these points lie in $\mathcal R$. 

\begin{corollary}\label{cor:regularpoints}
Assume \eqref{a6.1} and let $u$ be an almost-minimizer for  
$J^+$ in $\Omega$. Then the reduced boundary $\Omega \cap \partial^*\{u > 0\}$ is contained
in $\mathcal R$. 
\end{corollary}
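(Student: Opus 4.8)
The plan is to reduce the statement to the blow-up characterization of regular points established in Proposition~\ref{flatpointsregularpoints}. Concretely, let $x_0 \in \Omega \cap \partial^*\{u > 0\}$ be a point of the reduced boundary. By definition of the reduced boundary, the set $\{u > 0\}$ has a measure-theoretic outer normal $\nu = \nu(x_0)$ at $x_0$, which means that as $r \to 0$ the rescaled sets $r^{-1}(\{u > 0\} - x_0)$ converge in $L^1_{\mathrm{loc}}$ to the half-space $H_\nu = \{x \in \R^n \, ; \, \langle x, \nu\rangle < 0\}$ (or $>0$, depending on the sign convention for the outer normal). So first I would fix a sequence $r_j \to 0$ along which $u_{r_j, x_0} \to u_\infty$ uniformly on compact sets (Lemma~\ref{blowupsexistandarehomogenous1}), and observe that by Lemma~\ref{blowupsexistandarehomogenous2}, $u_\infty$ is a $1$-homogeneous global minimizer of $J^{\infty,+}$ with constant $\lambda_+ = q_+(x_0) \geq c_0 > 0$.

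The key step is to identify $\{u_\infty > 0\}$ with the half-space $H_\nu$. This follows from combining the $L^1_{\mathrm{loc}}$ convergence of the positivity sets coming from \eqref{a6.10} in Lemma~\ref{blowupsexistandarehomogenous2} with the $L^1_{\mathrm{loc}}$ convergence of $r^{-1}(\{u>0\}-x_0)$ to $H_\nu$ given by the reduced-boundary hypothesis: both limits must agree up to a null set, so $\{u_\infty > 0\} = H_\nu$ up to measure zero. Since $u_\infty$ is continuous (it is Lipschitz) and $1$-homogeneous, and $q_+ \geq c_0 > 0$ forces the non-degeneracy/interior density estimates of \cite{DT} to pass to the limit $u_\infty$ (these estimates survive under blow-up, as already used for \eqref{a6.10}), the open set $\{u_\infty > 0\}$ has no interior exceptional behavior: it must literally equal the open half-space $H_\nu$, and its complement is the closed half-space. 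Then $|B(0,1) \cap \{u_\infty > 0\}| = \omega_n/2$, so by \eqref{a6.13} we get $W(u, x_0, 0) = q_+^2(x_0)\,\omega_n/2$, which is exactly the condition $x_0 \in \mathcal R$ in Definition~\ref{defn:flatpoints}.

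Alternatively — and perhaps more cleanly — once $\{u_\infty > 0\} = H_\nu$ is known, one may directly invoke Proposition~\ref{flatpointsregularpoints}: its equivalence $(3) \Leftrightarrow (1)$ says it suffices to produce \emph{one} blow-up limit that is a half-plane solution. Since $u_\infty$ is a $1$-homogeneous global minimizer of $J^{\infty,+}$ whose positivity set is the half-space $H_\nu$, the restriction $g = u_\infty|_{\mathbb{S}^{n-1}}$ solves \eqref{a6.16} on a hemisphere, hence is affine (the first Dirichlet eigenfunction of the hemisphere, up to scaling), so $u_\infty(x) = a\langle x,\nu\rangle_+$ for some $a > 0$; and by the Alt--Caffarelli computation (\cite{AC}, Theorem 2.5) cited just before Proposition~\ref{flatpointsregularpoints}, a global minimizer of this form must have $a = q_+(x_0)$. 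Thus $u_\infty$ is a half-plane solution, condition $(3)$ holds, and $x_0 \in \mathcal R$.

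The main obstacle is the first identification step: carefully justifying that the blow-up positivity set $\{u_\infty > 0\}$ coincides with the measure-theoretic half-space $H_\nu$ \emph{as open sets}, not merely up to a null set. This requires knowing that $\{u_\infty > 0\}$ has no ``thin'' parts or lower-dimensional pieces contributing no volume — which is precisely where the non-degeneracy hypothesis $q_+ \geq c_0 > 0$ enters, via the lower density bounds of \cite{DT} (Theorem~10.2) that were already shown to pass to blow-up limits in the proof of Lemma~\ref{blowupsexistandarehomogenous2}. Once this topological upgrade is in hand, the rest is a direct appeal to Proposition~\ref{flatpointsregularpoints}.
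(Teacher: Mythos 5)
Your proposal is correct and follows essentially the same route as the paper: blow up at $x_0$, use the $L^1_{\mathrm{loc}}$ convergence of $\1_{\{u_j>0\}}$ to the indicator of a half-space (the reduced-boundary hypothesis) together with \eqref{a6.10} to get $|B(0,1)\cap\{u_\infty>0\}|=\frac{\omega_n}{2}$, and conclude $W(u,x_0,0)=q_+^2(x_0)\frac{\omega_n}{2}$ via \eqref{a6.13}, which is the definition of $x_0\in\mathcal R$. The one remark worth making is that the ``main obstacle'' you single out --- upgrading $\{u_\infty>0\}=H_\nu$ from equality up to a null set to equality of open sets --- is not needed at all: \eqref{a6.13} only involves the Lebesgue measure of $B(0,1)\cap\{u_\infty>0\}$, which the a.e.\ identification (or, as in the paper, just the limit of volumes in \eqref{a6.10}) already determines, so your first route closes immediately without invoking non-degeneracy, and your second route could likewise bypass it by noting that $|B(0,1)\cap\{u_\infty>0\}|\le\frac{\omega_n}{2}$ plus \eqref{a6.15} forces equality.
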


\begin{proof}
Let $x_0 \in \Omega \cap \partial^*\{u > 0\}$ be given, and let $u_\infty$ be a blow-up
limit of $u$ at $x_0$, associated as above to the sequence $\{ r_j \}$.
Set $u_j = u_{r_j, x_0}$; thus the $u_j$ tend to $u_\infty$ as in 
Lemma \ref{blowupsexistandarehomogenous2}.

By definition of $\partial^*\{u > 0\}$, the functions $\1_{\{u_j > 0\}}$
converge in $L^1_{\mathrm{loc}}(\R^n)$ to the characteristic function of a half plane.
Then \eqref{a6.10} (applied with $x=0$, $r=1$, and the sign $+$) yields 
$\int_{B(0,1)}  \1_{\{u_{\infty} > 0 \}}(z) dz = \frac{\omega_n}{2}$
and hence, by \eqref{a6.13}, $W(u,x_0,0) = q_+^2(x_0)\frac{\omega_n}{2}$ and $x_0 \in \mathcal R$.

\end{proof}

\begin{corollary}\label{low-dim}
Let $u$ be an almost-minimizer for $J^+$ in $\Omega\subset \R^n$,
and assume that \eqref{a6.1} holds and $2 \leq n \leq 4$. Then $\mathcal R = \Gamma^+(u)$.
\end{corollary}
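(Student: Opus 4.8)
The plan is to combine Proposition \ref{flatpointsregularpoints} with the classical dimension bound for the singular set of one-homogeneous global minimizers of the Alt--Caffarelli functional with constant coefficient. Recall that by Proposition \ref{flatpointsregularpoints}, a point $x_0 \in \Gamma^+(u)$ fails to be regular precisely when \emph{no} blow-up limit of $u$ at $x_0$ is a half-plane solution; equivalently, by the equivalence of (1) and (3) there, $x_0 \notin \mathcal R$ means every blow-up limit $u_\infty$ is a one-homogeneous global minimizer of $J^{\infty,+}$ (with constant $\lambda_+ = q_+(x_0)$, by Lemma \ref{blowupsexistandarehomogenous2}) whose free boundary is \emph{not} a hyperplane through the origin. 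So it suffices to show that no such object exists when $2 \le n \le 4$.

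First I would reduce to the known statement about global minimizers. Fix $x_0 \in \Gamma^+(u)$ and let $u_\infty$ be any blow-up limit of $u$ at $x_0$; by Lemma \ref{blowupsexistandarehomogenous2} it is a one-homogeneous global minimizer of the constant-coefficient functional $J^{\infty,+}$ with slope $a = q_+(x_0) > 0$, and $x_0$ being non-regular would force the free boundary $\partial\{u_\infty > 0\}$ to be a singular (i.e.\ non-smooth, or rather non-flat at the origin) one-homogeneous cone. Then I would invoke the structural results for minimizers of the Alt--Caffarelli functional: by the results of \cite{AC}, \cite{CJK}, \cite{W} (and in particular the absence of singular one-homogeneous minimizers in low dimensions, which is exactly the content carried by the Weiss functional analysis together with the dimension reduction of \cite{W} and \cite{CJK}), every one-homogeneous global minimizer of the constant-coefficient one-phase functional in $\R^n$ for $n \le 4$ is a half-plane solution. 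Here one uses that a one-homogeneous minimizer whose free boundary is not a hyperplane would, after a further blow-up at a singular point of its free boundary away from the origin (which exists by homogeneity if the cone is not flat), produce a one-homogeneous minimizer in $\R^{n-1}$ that is again singular; iterating down, one would reach dimension $\le 3$ (or even $\le 2$), contradicting the classical regularity of free boundaries for minimizers of $J^+$ in those dimensions established in \cite{AC}, \cite{ACF}.

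Concretely, the key steps in order are: (i) take $x_0 \in \Gamma^+(u)$ and a blow-up limit $u_\infty$, which is one-homogeneous and a global minimizer by Lemma \ref{blowupsexistandarehomogenous2}; (ii) observe via Proposition \ref{flatpointsregularpoints} that $x_0 \in \mathcal R$ is equivalent to $u_\infty$ being a half-plane solution; (iii) quote the fact that for $n \le 4$ every one-homogeneous global minimizer of the constant-coefficient one-phase functional is a half-plane solution (this is where the standing results of \cite{AC}, \cite{ACF}, \cite{CJK}, \cite{W} enter, together with the dimension-reduction argument); (iv) conclude that every $x_0 \in \Gamma^+(u)$ is regular, i.e.\ $\mathcal R = \Gamma^+(u)$. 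Note that the argument does not need the hypothesis $q_+ \in C^\alpha$ beyond what is already assumed in \eqref{a6.1} for the monotonicity formula, and the constancy of the coefficient in the blow-up is supplied by Lemma \ref{blowupsexistandarehomogenous2}.

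The main obstacle is step (iii): one must either cite cleanly, or recall the proof of, the statement that singular one-homogeneous global minimizers of the one-phase Bernoulli functional do not exist for $n \le 4$. This is not elementary — it rests on the Weiss monotonicity formula (Section \ref{WeissMF}), the epiperimetric-type or dimension-reduction arguments of Weiss \cite{W}, and the verification that the lowest possible dimension of a genuine singularity exceeds $4$ at this stage of the literature; in the paper's framework it follows from the fact that the free boundary of a \emph{minimizer} of $J^+$ with constant coefficient is smooth in dimensions $\le 4$ (a result one is entitled to use), applied to the blow-up of a putative singular cone at a non-origin point of its free boundary. Care is needed that the blow-up at such a point is still a \emph{global} minimizer (translation invariance of the minimizing property plus homogeneity give a cone splitting off a line, reducing the ambient dimension), so that the induction on dimension closes. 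Everything else is a direct application of the already-established Proposition \ref{flatpointsregularpoints} and Lemma \ref{blowupsexistandarehomogenous2}.
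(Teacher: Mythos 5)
Your steps (i), (ii) and (iv) are exactly the paper's argument: blow up at an arbitrary $x_0\in\Gamma^+(u)$, use Lemma \ref{blowupsexistandarehomogenous2} to see the limit is a one-homogeneous global minimizer of the constant-coefficient functional with $\lambda_+=q_+(x_0)$, and conclude via Proposition \ref{flatpointsregularpoints}. The problem is the way you propose to justify step (iii). Your dimension-reduction sketch --- blow up a putative singular one-homogeneous cone at a singular point of its free boundary \emph{away from the origin}, split off a line, and descend until you contradict ``classical regularity'' from \cite{AC}, \cite{ACF} --- does not close. A one-homogeneous minimizing cone in $\R^3$ or $\R^4$ could a priori be singular \emph{only at the origin}, in which case there is no non-origin singular point to blow up at; dimension reduction only bounds the dimension of the singular set of the cone, it does not exclude cones with an isolated singularity at the vertex. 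Ruling out precisely those cones is the hard analytic content of the low-dimensional classification: \cite{AC} (Corollary 6.7) for $n=2$, the theorem of \cite{CJK} for $n=3$, and Theorem 1.1 of \cite{JS} for $n=4$. In particular, ``full regularity of minimizers in dimension $\le 4$'' is not something you are entitled to assume independently of \cite{JS}; it is essentially equivalent to the statement you are trying to prove, and \cite{ACF} gives nothing of this kind beyond $n=2$.

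The fix is simply to replace your step (iii) by a clean citation of these three results (which is what the paper does): every one-homogeneous global minimizer of $J^{\infty,+}$ in $\R^n$, $2\le n\le 4$, is a half-plane solution. With that input, your argument coincides with the paper's proof and is complete; also note that, as in the paper, Proposition \ref{flatpointsregularpoints} lets you pass from ``some/every blow-up is a half-plane solution'' to $x_0\in\mathcal R$ without any further monotonicity or epiperimetric argument.
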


\begin{proof}
Results of \cite{AC} when $n=2$ (Corollary 6.7), the Theorem in \cite{CJK} when $n=3$  
and \cite{JS} when $n=4$ (Theorem 1.1) guarantee that every 
one-homogeneous global minimizer for $J^{\infty, +}$ is a 
one-plane solution. If $u_\infty$ is any blow-up limit of $u$ at $x_0 \in \Gamma^+(u)$,
Lemma \ref{blowupsexistandarehomogenous2} says that $u_\infty$ is such a homogeneous global minimizer,
and hence is a one-plane solution. The corollary now follows from Proposition \ref{flatpointsregularpoints}.

\end{proof}

\ms
We know from \eqref{a6.15} that $W(u,x_0,0)$ is smallest at regular points. 
We are interested in quantitative versions of this, that will often be obtained with limiting arguments.
First, there is a gap between $q_+^2(x_0)\frac{\omega_n}{2}$ and the next authorized value.

\begin{lemma}\label{6gap}
There is a positive constant $\varepsilon(n) > 0$ such that if $v$ is a global minimizer for $J^+$,
as in Definition \ref{d2.2} with the constant $\lambda_+ > 0$, which is also homogeneous of degree $1$ and
such that
\begin{equation}\label{a6.gap}
\lambda_+ \big| B(0,1) \cap \{ v > 0 \}\big| \leq (1+\varepsilon(n)) \lambda_+^2\frac{\omega_n}{2},
\end{equation}
then $v$ is a half-plane solution.
\end{lemma}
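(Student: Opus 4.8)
The plan is to argue by contradiction and compactness, using the rigidity part of the Sperner/Beckner–Kenig–Pipher inequality that was recalled in the proof of Proposition~\ref{flatpointsregularpoints}. Suppose the statement fails: then there is a sequence $\varepsilon_k \to 0$ and global minimizers $v_k$ for $J^+$ with constants $\lambda_{+,k} > 0$, each homogeneous of degree $1$, such that $\lambda_{+,k}|B(0,1)\cap\{v_k>0\}| \leq (1+\varepsilon_k)\lambda_{+,k}^2\frac{\omega_n}{2}$, but no $v_k$ is a half-plane solution. By the scaling $v \mapsto v/\lambda_{+,k}$ (which sends a global minimizer with constant $\lambda_{+,k}$ to one with constant $1$ and preserves $1$-homogeneity and the positivity set) we may normalize $\lambda_{+,k}=1$ for all $k$. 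Each $v_k$ is then a $1$-homogeneous global minimizer for $J^{\infty,+}$ with unit coefficient, hence locally Lipschitz with a uniform constant (the Lipschitz bound for global minimizers depends only on $n$ and $\lambda_+$; see \cite{AC}, \cite{ACF}, or Theorem~9.1 in \cite{DT}), and $v_k(0)=0$. By Arzelà–Ascoli and the $L^2_{\mathrm{loc}}$-convergence of gradients for sequences of (almost-)minimizers (Theorem~9.1 in \cite{DT}, exactly as used in the proof of Theorem~\ref{t2.1} and Lemma~\ref{blowupsexistandarehomogenous2}), after passing to a subsequence $v_k \to v_\infty$ uniformly on compact sets, with $\nabla v_k \to \nabla v_\infty$ in $L^2_{\mathrm{loc}}$, and $v_\infty$ is again a $1$-homogeneous global minimizer for $J^{\infty,+}$ with coefficient $1$.

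Next I would pass the volume inequality to the limit. Since $q_+\equiv 1$ is constant and $v_\infty$ is a $1$-homogeneous global minimizer, the non-degeneracy of minimizers (Theorem~10.2 in \cite{DT}) together with the uniform Lipschitz bound gives, exactly as in the proof of \eqref{a6.10}, that $\1_{\{v_k>0\}} \to \1_{\{v_\infty>0\}}$ in $L^1_{\mathrm{loc}}(\R^n)$; in particular $|B(0,1)\cap\{v_k>0\}| \to |B(0,1)\cap\{v_\infty>0\}|$. Letting $k\to\infty$ in \eqref{a6.gap} yields $|B(0,1)\cap\{v_\infty>0\}| \leq \frac{\omega_n}{2}$. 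On the other hand, the argument in the proof of Proposition~\ref{flatpointsregularpoints} (applied to $v_\infty$ in place of a blow-up $u_\infty$: its restriction $g$ to $\mathbb S^{n-1}$ is an eigenfunction of $-\Delta_{\mathbb S^{n-1}}$ on $\{g>0\}$ with eigenvalue $n-1$, so by \eqref{a6.17} $V(\{g>0\}) \geq \alpha_{n-1}/2$, i.e.\ $|B(0,1)\cap\{v_\infty>0\}| \geq \frac{\omega_n}{2}$) forces equality, and the rigidity clause in \eqref{a6.17}—due to Beckner–Kenig–Pipher \cite{BKP}—says that then $\{g>0\}$ is a hemisphere. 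Hence $g$ is the first Dirichlet eigenfunction on a hemisphere, which is (a multiple of) an affine function $\langle\theta,\nu\rangle_+$, so $v_\infty(x) = c\langle x,\nu\rangle_+$ for some $c\geq 0$; since $v_\infty$ is a global minimizer with coefficient $1$, the characterization in \cite{AC}, Theorem~2.5 (recalled below \eqref{a6.14}) forces $c=1$, i.e.\ $v_\infty$ is a half-plane solution.

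It remains to derive the contradiction with the fact that no $v_k$ is a half-plane solution; this is the one step requiring genuine care. Knowing $v_\infty$ is a half-plane solution does not immediately say $v_k$ is one for large $k$—a priori the $v_k$ could converge to a half-plane solution without being half-plane solutions themselves. To close this gap I would invoke the regularity machinery already available: by Proposition~\ref{flatpointsregularpoints} the value $W_{x_0=0}(v_\infty,\cdot) \equiv \frac{\omega_n}{2}$ means $0$ is a ``regular'' (flat) point for $v_\infty$; combined with the improved-flatness/$C^{1,\beta}$ regularity theory for the one-phase problem with constant coefficient (Alt–Caffarelli \cite{AC}, or Theorem~\ref{main-reg-theo} applied with $\kappa=0$), uniform closeness of $v_k$ to the half-plane solution on a fixed ball (which follows from the uniform $L^\infty$ and Hausdorff-distance convergence of the free boundaries, via the non-degeneracy and Lipschitz bounds) propagates to all smaller scales, forcing $v_k$ to be a $C^{1,\beta}$ free boundary through $0$ that is moreover flat at every scale. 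A $1$-homogeneous global minimizer whose free boundary is flat (i.e.\ a hyperplane through the origin) at one point is exactly a half-plane solution by the $1$-homogeneity, contradicting our choice of $v_k$. An alternative, cleaner route—which I would prefer if the paper wants to avoid forward references—is to prove the statement by a direct application of the rigidity argument above at the level of each $v_k$ is impossible, so instead strengthen the compactness conclusion: since every subsequential limit of the $v_k$ is a half-plane solution and the set of half-plane solutions is compact modulo rotation, for $k$ large $v_k$ lies in an arbitrarily small neighborhood (in $C^0_{\mathrm{loc}}$ together with $L^1_{\mathrm{loc}}$ convergence of positivity sets) of some half-plane solution, and then the $\varepsilon$-regularity of \cite{AC} applies to $v_k$ at the origin at a fixed scale and iterates, yielding that $v_k$ has a hyperplane free boundary through $0$, hence is a half-plane solution. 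Either way the contradiction completes the proof, and the main obstacle is precisely arranging that ``close to a half-plane solution'' upgrades to ``equal to a half-plane solution'' for the $1$-homogeneous objects $v_k$, which is where the flatness-regularity iteration is essential.
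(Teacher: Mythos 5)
Your proposal is correct and follows essentially the same route as the paper: contradiction plus compactness (uniform Lipschitz bounds and the limiting Theorem~9.1 of \cite{DT}), passage of the volume bound to the limit, the Sperner/Beckner--Kenig--Pipher rigidity to identify the limit as a half-plane solution, and then the Alt--Caffarelli flatness-implies-regularity theorem applied to $v_k$ at the origin (with the flatness hypotheses checked via non-degeneracy and the NTA/corkscrew estimates), so that $1$-homogeneity forces $\Gamma^+(v_k)$ to be a hyperplane and $v_k$ to be a half-plane solution. The step you flag as delicate is exactly the one the paper handles the same way, via Theorem~8.1 of \cite{AC}.
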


Of course we replace $q_+(x_0)$ by $\lambda_+$ in the definition of a half-plane solution for $J^+$.

\begin{proof}
This result is not trivial at all, but it will be a rather simple consequence of Theorem~8.1 in \cite{AC}.
It is easy to see that $v$ is a global minimizer for $J^+$ with the constant $\lambda_+ > 0$
if and only if $v/\lambda_+$ a global minimizer for $J^+$ with the constant $1$. Thus we may
restrict to $\lambda_+=1$.

Assume, in order to obtain a contradiction, that for every $k \geq 0$ there exists a one-homogenous 
global minimizer $v_k$ for $J^+$ with $\lambda_+=1$, such that 
\begin{equation}\label{a6gap1}
\big| B(0,1) \cap \{ v_k > 0 \}\big| \leq (1+2^{-k}) \frac{\omega_n}{2}
\end{equation}
but which is not a half-plane solution. 
By Theorem 5.3 in \cite{ACF}, the functions $v_k$ are uniformly Lipschitz on $B(0,1)$
(or equivalently, since they are homogeneous, on any ball $B(0,R)$), and $v_k(0) = 0$ for all $k$, 
so we may extract a subsequence that converges uniformly on compact subsets of $\R^n$ to some limit
$v$. Then we can apply Theorem 9.1 in \cite{DT}, in the simpler situation where all the functions $q_+$
are identically equal to $1$. We get that $v$ is also a global minimizer for $J^+$,
and that (after extraction) the $\nabla v_k$ converge to $\nabla v$ in $L^2_{\mathrm{loc}}(\R^n)$.
We may also use (9.14) in \cite{DT}, as we did for \eqref{a6.9} and \eqref{a6.10},
to get that 
\begin{equation}\label{a6gap1a}
\big| B(0,1) \cap \{ v > 0 \}\big| = \int_{B(0,1)} \1_{\{ v > 0 \}}(z) dz
= \lim_{k\rightarrow \infty} \big| B(0,1) \cap \{ v_k > 0 \}\big| \leq  \frac{\omega_n}{2}.
\end{equation}
Then by the proof of \eqref{a6.15}, $v$ is a half-plane solution. That is, there is a unit vector $\nu$
such that $v(x) = \langle x, \nu \rangle_+$ for $x\in \R^n$. Without loss of generality,
we may assume that $\nu$ is the last coordinate vector and $v(x) = (x_n)_+$.

At this point we want to use the proximity to $v$ to show that for $k$ large, the free boundary
$\Gamma^+(v_k)$ is smooth at the origin, and this is where we apply Theorem~8.1 in \cite{AC}.
Thus we need to check, with the notation of \cite{AC}, that $v_k \in F(\sigma,1,\infty)$ in $B(0,1)$, say. 
Here the size of the ball does not matter, because $v_k$ is a minimizer (and is homogeneous anyway), 
and $\sigma$ is a small constant that comes from the theorem. 

Returning to Definition 7.1 in \cite{AC}, we see that in order to prove that
$v_k \in F(\sigma,\sigma_-,\infty)$ (in $B(0,1)$ and in the direction $\nu$),
we need to prove that $v_k$ is a weak solution (with $Q = 1$ here),  $0 \in \Gamma^+(v_k)$, 
\begin{equation}\label{a6gap2}
v_k(x) = 0 \ \text{ for $x\in B(0,1)$ such that } x_n \leq -\sigma
\end{equation}
(compared with \cite{AC}, we look in the other direction and $x_n$ is replaced with $-x_n$),  
\begin{equation}\label{a6gap3}
v_k(x) \geq x_n-\sigma_- \ \text{ for $x\in B(0,1)$ such that } x_n \geq \sigma_- \, ,
\end{equation}
and also $v_k$ is Lipschitz and bounded in $B(0,1)$.
It would not be hard to prove \eqref{a6gap3} with any $\sigma_- > 0$, because $\{v_k \}$
converges to $v$ uniformly in $B(0,1)$, but here $\sigma_- = 1$ and we do not even need to
do this. We know most of the other properties, and are only left with \eqref{a6gap2} to check.

So we let $x\in B(0,1)$ be such that $x_n \leq -\sigma$, assume that $v_k(x) > 0$, and 
prove that this leads to a contradiction if $k$ is large enough. Recall that $v_k$ is 
Lipschitz in $B(0,2)$, with a Lipschitz bound that depends only on $n$, 
so Theorem 10.2 in \cite{DT} (about the nondegeneracy of $v_k$ near the free boundary)
says that there is a constant $\tau > 0$, that depends only on $n$, such that 
\begin{equation}\label{a6gap4}
v_k(z) \geq \tau \dist(z,\Gamma^+(v_k)) \ \text{ for } z\in B(0,3/2) \cap \{ v_k > 0 \}.
\end{equation}
In particular, since $v_k(x) = |v_k(x)- v(x)| \leq ||v_k- v||_{L^\infty(B(0,2))}$
which tends to $0$, we see that if $k$ is large enough, we can find $y\in \Gamma^+(v_k)$ such that
$|x-y| < \sigma/2$. Then by the NTA property, we can find a corkscrew point
$z\in B(y,\sigma/2) \cap \{ v_k > 0 \}$ such that $\dist(z,\Gamma^+(v_k)) \geq C^{-1} \sigma$.
See Theorem \ref{t2.3} and the first item of Definition \ref{d2.3}. 
Then $v_k(z) \geq C^{-1} \sigma \tau$ by \eqref{a6gap4}. 
But $|z-x| < \sigma$ and $x_n \leq - \sigma$, so $z_n \leq 0$ and 
$v(z) = 0$. Our last estimate contradicts the fact that $||v_k- v||_{L^\infty(B(0,2))}$ tends to $0$,
and this completes our proof of \eqref{a6gap2}.

So we may apply Theorem~8.1 in \cite{AC}. We get that for $k$ large, $\Gamma^+(v_k)$
is smooth at the origin. Since $v_k$ is homogeneous, $\Gamma^+(v_k)$ is a hyperplane,
and hence $\big| B(0,1) \cap \{ v_k > 0 \}\big| =  \frac{\omega_n}{2}$.
This forces $v_k$ to be a half-plane solution, as in the proof of \eqref{a6.15}.
This contradiction with the definition of $v_k$ completes our proof of Lemma \ref{6gap}.

\end{proof}

Because of Lemma \ref{6gap}, we can also say that
\begin{equation}\label{a6.24}
\mathcal R = \big\{ x_0 \in \Gamma^+(u) \, ; 
W(u,x_0,0) \leq  (1+\varepsilon(n)) \, q_+^2(x_0)\frac{\omega_n}{2} \, \big\}.
\end{equation}
Indeed, one inclusion is obvious, and for the other one let $x_0 \in \Gamma^+(u)$ be such that 
$W(u,x_0,0) \leq  (1+\varepsilon(n)) \, q_+^2(x_0)\frac{\omega_n}{2}$ and let 
$u_\infty$ be any blow-up limit of $u$ at $x_0$.
By Lemma \ref{blowupsexistandarehomogenous2}, $u_\infty$ is a homogeneous global
minimizer with $\lambda_+ = q_+^2(x_0)$, and since
\begin{equation}\label{a6.25}
\big| B(0,1) \cap \{ u_\infty > 0 \}\big| = 
W_{x_0}(u_\infty,r) = W(u,x_0,0) \leq (1+\varepsilon(n)) \, q_+^2(x_0)\frac{\omega_n}{2}
\end{equation}
by \eqref{a6.13} and \eqref{a6.8}, Lemma \ref{6gap} says that $u_\infty$ is a 
half-plane solution, and hence $x_0 \in \mathcal R$ by Proposition \ref{flatpointsregularpoints}.
Here is a simple consequence of \eqref{a6.24}. 

\ms
\begin{corollary}\label{flatpointsareopen}
$\mathcal R$ is open in $\Gamma^+(u)$. 
\end{corollary}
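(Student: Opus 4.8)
The plan is to deduce openness of $\mathcal R$ in $\Gamma^+(u)$ from the characterization \eqref{a6.24} together with the almost-monotonicity of $W$ and an upper semicontinuity property of $x_0 \mapsto W(u,x_0,0)$. The key point is that $W(u,x_0,0)$, being a decreasing-in-$r$ limit (up to the error term $Cr^\alpha$ from Proposition \ref{prop:monotonicitytwo}), is controlled from above by $W(u,x_0,r) + Cr^\alpha$ for any fixed small $r$, and the quantity $x_0 \mapsto W(u,x_0,r)$ is continuous in $x_0$ for fixed $r$.

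First I would fix $x_0 \in \mathcal R$ and use \eqref{a6.24}: we have $W(u,x_0,0) \leq (1+\varepsilon(n))\,q_+^2(x_0)\frac{\omega_n}{2}$. Since the gap in \eqref{a6.gap} is strict — i.e., the members of $\mathcal R$ actually satisfy $W(u,x_0,0) = q_+^2(x_0)\frac{\omega_n}{2}$, which is strictly below $(1+\varepsilon(n))\,q_+^2(x_0)\frac{\omega_n}{2}$ because $q_+ \geq c_0 > 0$ — there is room to spare. Choose $r > 0$ small enough (depending on $x_0$, $c_0$, $n$, $\kappa$, $\alpha$, and the local Lipschitz/Hölder data) so that, by Proposition \ref{prop:monotonicitytwo} applied with $s \to 0$,
\begin{equation*}
W(u,x_0,r) \leq W(u,x_0,0) + Cr^\alpha < \left(1 + \tfrac{\varepsilon(n)}{2}\right) q_+^2(x_0)\frac{\omega_n}{2},
\end{equation*}
where the last inequality uses $W(u,x_0,0) = q_+^2(x_0)\frac{\omega_n}{2}$ and $Cr^\alpha$ small.

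Next I would exploit continuity of $z \mapsto W(u,z,r)$ for this fixed $r$. The functional $W(u,z,r)$ defined in \eqref{eqn:mono2a} depends on $z$ through: the ball $B(z,r)$ over which $|\nabla u|^2$ and the (now single) term $q_+^2(z)\1_{\{u>0\}}$ are integrated, the value $q_+^2(z)$, and the sphere $\partial B(z,r)$ over which $u^2$ is integrated. Since $u$ is locally Lipschitz (hence $\nabla u \in L^\infty_{\loc}$ and $u$ continuous), $q_+$ is continuous, and $\1_{\{u>0\}}$ is integrated against a fixed-radius ball (so translating the domain of integration is continuous in $L^1$), the map $z \mapsto W(u,z,r)$ is continuous near $x_0$; the error coming from $|q_+^2(z)-q_+^2(x_0)|$ and from the translated balls/spheres can be made as small as we like. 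Also $q_+^2(z)\frac{\omega_n}{2}$ is continuous. Hence for $z \in \Gamma^+(u)$ close enough to $x_0$,
\begin{equation*}
W(u,z,0) \leq W(u,z,r) + Cr^\alpha \leq \left(1 + \varepsilon(n)\right) q_+^2(z)\frac{\omega_n}{2},
\end{equation*}
using that $W(u,z,r)$ is within a small error of $W(u,x_0,r)$ and similarly for the right-hand side, and that $r$ was chosen with slack. By \eqref{a6.24} this gives $z \in \mathcal R$, proving $\mathcal R$ is open in $\Gamma^+(u)$.

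The main obstacle is making the continuity of $z \mapsto W(u,z,r)$ for fixed $r$ fully rigorous — in particular the term $\frac{1}{r^{n+1}}\int_{\partial B(z,r)} u^2\,d\sigma$, which involves a trace on a moving sphere; one handles this by writing it as an integral over the fixed unit sphere after rescaling and translating, $\frac{1}{r^2}\int_{\partial B(0,1)} u(z+r\xi)^2\,d\sigma(\xi)$, which is manifestly continuous in $z$ since $u$ is continuous and the integrand is uniformly bounded (by the local Lipschitz bound on $u$ near $x_0$). The volume term $q_+^2(z)\int_{B(0,1)}\1_{\{u(z+r\cdot)>0\}}$ is continuous because $|\{u>0\}\cap B(z,r)\,\triangle\,\{u>0\}\cap B(z',r)\| \to 0$ as $z'\to z$ (translation continuity of $L^1$ functions), and the Dirichlet energy term is continuous by the same translation-continuity argument applied to $|\nabla u|^2 \in L^1_{\loc}$. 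All error terms are absorbed into the slack created by the strict gap $\varepsilon(n)$ and the freedom in choosing $r$.
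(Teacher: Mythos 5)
Your argument is correct and is essentially the paper's proof: continuity of $z\mapsto W(u,z,r)$ for fixed $r$, together with the almost-monotonicity bound $W(u,z,0)\le W(u,z,r)+Cr^\alpha$ (with $C$ locally uniform in $z$ near $x_0$), makes $W(u,\cdot,0)$ upper semicontinuous, and the gap characterization \eqref{a6.24} then gives openness of $\mathcal R$ in $\Gamma^+(u)$. One correction: Proposition \ref{prop:monotonicitytwo} gives $W(u,x_0,0)\le W(u,x_0,r)+Cr^\alpha$, not your intermediate claim $W(u,x_0,r)\le W(u,x_0,0)+Cr^\alpha$; however, at that step you only need $W(u,x_0,r)$ to be close to $W(u,x_0,0)=q_+^2(x_0)\frac{\omega_n}{2}$ for $r$ small, which is exactly the convergence \eqref{a6.4}, so the proof goes through unchanged.
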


\begin{proof}
Notice that since each $W(u,x,r)$ is a continuous function of $x$, and by \eqref{a5.6},
for each $x_0 \in \Gamma^+(u)$ there exist constants $C > 0$ and $\alpha > 0$ such that 
for $x \in \Gamma^+(u)$ near $x_0$, the sequence $\{ W(u,x,2^{-k})+C2^{-k\alpha} \}$
is decreasing. Then the almost monotone limit $W(u,x,0)$ is upper semi-continuous. That is, 
$\big\{ x\in \Gamma^+(u) \, ; \, W(u,x,0) < \lambda \big\}$ is open.

If $x_0 \in \mathcal R$, then $W(u,x_0,0) = q_+^2(x_0)\frac{\omega_n}{2}$ and, by semicontinuity, 
$W(u,x,0) < (1+\varepsilon(n)) q_+^2(x)\frac{\omega_n}{2}$ for $x\in \Gamma^+(u)$
close enough to $x_0$, as needed.
\end{proof}

The next proposition is another quantitative version of Proposition \ref{flatpointsregularpoints}.

\begin{proposition}\label{Missmallestatflatpoints}
Assume \eqref{a6.1}, let $u$ be an almost-minimizer for 
$J^+$ in $\Omega$, and let $K \subset \subset \Omega$ be compact. For every 
$\sigma > 0$ there exist $\varepsilon_\sigma > 0$ and 
$\rho_\sigma > 0$ (which may depend on $K$, $q_+$, and $u$) 
such that if $x_0 \in K \cap \Gamma^+(u)$ and $\rho \in (0,\rho_\sigma)$ are such that
\begin{equation}\label{a6.26}
W(u, x_0, 2\rho) \leq (1+ \varepsilon_\sigma) \, q_+(x_0)^2 \, \frac{\omega_n}{2},
\end{equation}
then $x_0 \in \mathcal R$ and we can find 
$\nu_\rho \in \mathbb S^{n-1}$ such that 
\begin{equation}\label{eqn:flatonbothsides1}
|u(x+x_0) - q_+(x_0)\left\langle x, \nu_\rho\right\rangle_+| \leq \sigma \rho 
\ \ \text{ for } x\in B(0,\rho) 
\end{equation}
and 
\begin{equation}\label{eqn:flatonbothsides2}
u(x+x_0) = 0
\ \ \text{ for $x\in B(0,\rho)$ such that } \left\langle x, \nu_\rho\right\rangle \leq -\sigma\rho.
\end{equation}
\end{proposition}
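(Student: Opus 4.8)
The plan is a compactness-and-contradiction argument based on blow-ups, combined with the almost-monotonicity of Proposition~\ref{prop:monotonicitytwo} and the spectral rigidity already used in Proposition~\ref{flatpointsregularpoints} and Lemma~\ref{6gap}. First I would dispose of the conclusion $x_0\in\mathcal R$ directly, without compactness. Since $x_0\in K\subset\subset\Omega$, the constant $C$ in Proposition~\ref{prop:monotonicitytwo} can be chosen uniformly over $x_0\in K$, with $R=\tfrac12\dist(K,\partial\Omega)$, using \eqref{a6.1} and the interior Lipschitz bounds of \cite{DT}. Almost-monotonicity and \eqref{a6.4} give $W(u,x_0,0)\le W(u,x_0,2\rho)+C(2\rho)^\alpha$; hence, provided $\varepsilon_\sigma<\varepsilon(n)$ and $\rho_\sigma$ is small enough that $C(2\rho_\sigma)^\alpha\le\tfrac12(\varepsilon(n)-\varepsilon_\sigma)c_0^2\tfrac{\omega_n}{2}$, assumption \eqref{a6.26} forces $W(u,x_0,0)\le(1+\varepsilon(n))q_+^2(x_0)\tfrac{\omega_n}{2}$, i.e.\ $x_0\in\mathcal R$ by \eqref{a6.24}. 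It therefore remains to produce, for suitable (possibly smaller) $\varepsilon_\sigma,\rho_\sigma$, the vector $\nu_\rho$ together with \eqref{eqn:flatonbothsides1}--\eqref{eqn:flatonbothsides2}.

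Suppose this part fails. Then there is $\sigma>0$ and, for every $k$, points $x_k\in K\cap\Gamma^+(u)$ and radii $\rho_k\in(0,1/k)$ with $W(u,x_k,2\rho_k)\le(1+\tfrac1k)q_+(x_k)^2\tfrac{\omega_n}{2}$ for which no unit vector makes both \eqref{eqn:flatonbothsides1} and \eqref{eqn:flatonbothsides2} hold at $(x_k,\rho_k)$. Set $u_k=u_{\rho_k,x_k}$, an almost-minimizer for $J^+$ with coefficient $q_{+,k}(z)=q_+(x_k+\rho_k z)$ and constant $\kappa\rho_k^\alpha$, with $u_k(0)=0$. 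On every fixed ball the $u_k$ are uniformly Lipschitz and uniformly nondegenerate near $\Gamma^+(u_k)$ (Theorems~5.1, 8.1 and 10.2 in \cite{DT}), and the sets $\{u_k>0\}$ are uniformly locally NTA (Theorem~\ref{t2.3}, since the $q_{+,k}$ share a modulus of continuity because $q_+\in C^\alpha$). Passing to a subsequence: $x_k\to x_*\in\overline K$, $q_{+,k}\to q_+(x_*)$ locally uniformly, $u_k\to u_\infty$ locally uniformly with $\nabla u_k\to\nabla u_\infty$ in $L^2_{\mathrm{loc}}(\R^n)$, and, as in \eqref{a6.10}, $\1_{\{u_k>0\}}\to\1_{\{u_\infty>0\}}$ in $L^1_{\mathrm{loc}}(\R^n)$, where $u_\infty$ is a global minimizer for $J^{\infty,+}$ with $\lambda_+=q_+(x_*)$ (Theorems~9.1--9.2 in \cite{DT}).

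The key step — and the one I expect to be the main obstacle — is to show that $u_\infty$ agrees on $B(0,2)$ with a half-plane solution of slope $q_+(x_*)$. By \eqref{wunderblowups} and the convergences above, $W_{x_*}(u_\infty,2)=\lim_k W_{x_k}(u_k,2)=\lim_k W(u,x_k,2\rho_k)\le q_+(x_*)^2\tfrac{\omega_n}{2}$. Since $u_\infty$ is a genuine minimizer with constant coefficients, $t\mapsto W_{x_*}(u_\infty,t)$ is nondecreasing (this is contained in Proposition~\ref{prop:monotonicitytwo} with vanishing error term, cf.\ \cite{W}); taking any blow-up $w$ of $u_\infty$ at $0$, Lemma~\ref{rem:wconstantimplieshomogenous} makes $w$ one-homogeneous, and the Sperner argument inside the proof of \eqref{a6.15} gives $W_{x_*}(u_\infty,0^+)=W_{x_*}(w,1)\ge q_+(x_*)^2\tfrac{\omega_n}{2}$. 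Monotonicity now squeezes $W_{x_*}(u_\infty,\cdot)$ to the constant $q_+(x_*)^2\tfrac{\omega_n}{2}$ on $(0,2]$, so $u_\infty$ is one-homogeneous on $B(0,2)\sm\{0\}$ by Lemma~\ref{rem:wconstantimplieshomogenous}, hence $|B(0,1)\cap\{u_\infty>0\}|=\omega_n/2$ by the computation behind \eqref{a6.13}, and the equality case of Sperner forces $\{u_\infty>0\}\cap B(0,2)$ to be a half-ball and $u_\infty=a\langle\cdot,\nu_*\rangle_+$ on $B(0,2)$; minimality of $u_\infty$ together with \cite{AC}, Theorem~2.5 (as used around \eqref{a6.14}) then yields $a=q_+(x_*)$.

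To conclude, uniform convergence $u_k\to q_+(x_*)\langle\cdot,\nu_*\rangle_+$ on $\overline B(0,1)$ together with $q_+(x_k)\to q_+(x_*)$ gives $\|u_k-q_+(x_k)\langle\cdot,\nu_*\rangle_+\|_{L^\infty(B(0,1))}<\sigma$ for $k$ large, which, rescaling back through $u_k(x)=\rho_k^{-1}u(\rho_k x+x_k)$, is exactly \eqref{eqn:flatonbothsides1} at $(x_k,\rho_k)$ with $\nu_{\rho_k}=\nu_*$. For \eqref{eqn:flatonbothsides2} I would reuse verbatim the argument given for \eqref{a6gap2} in the proof of Lemma~\ref{6gap}: if $u_k(x)>0$ for some $x\in B(0,1)$ with $\langle x,\nu_*\rangle\le-\sigma$, then uniform nondegeneracy produces a point of $\Gamma^+(u_k)$ within $\sigma/2$ of $x$, the uniform NTA property produces a corkscrew point $z$ there with $u_k(z)\ge C^{-1}\sigma$, yet $\langle z,\nu_*\rangle\le0$ so $q_+(x_*)\langle z,\nu_*\rangle_+=0$, contradicting the uniform convergence. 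Hence $\nu_*$ serves as $\nu_{\rho_k}$ at scale $\rho_k$ for all large $k$, which contradicts the choice of $(x_k,\rho_k)$ and proves the proposition. The only genuinely delicate point in executing this plan is the chain of reductions in the third paragraph (limit of the almost-monotone quantity, monotonicity for the limiting minimizer, a second blow-up, Sperner's inequality, and the rigidity of Lemma~\ref{rem:wconstantimplieshomogenous}); everything else is routine bookkeeping of the $\varepsilon_\sigma,\rho_\sigma$ thresholds and of uniform constants along the sequence.
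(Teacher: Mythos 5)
Your proposal is correct and follows essentially the same route as the paper: reduce the membership $x_0\in\mathcal R$ to almost-monotonicity plus \eqref{a6.24}, then argue by compactness with blow-ups at varying centers $x_k$ (via Theorem 9.1 of \cite{DT}), pass the Weiss functional to the limit, use monotonicity and the Sperner rigidity of Proposition \ref{flatpointsregularpoints} to force the limit to be a half-plane solution on $B(0,2)$, and finally obtain \eqref{eqn:flatonbothsides2} from nondegeneracy and the corkscrew/NTA argument exactly as in the proof of \eqref{a6gap2}. The only cosmetic difference is that you unwind the lower bound $W_{x_*}(u_\infty,0^+)\ge q_+(x_*)^2\tfrac{\omega_n}{2}$ through an explicit second blow-up, whereas the paper simply invokes Proposition \ref{flatpointsregularpoints} applied to $u_\infty$; the content is the same.
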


\ms
As per usual, we shall not try to see that $\varepsilon_\sigma > 0$ and $\rho_\sigma > 0$ depends only on $n$,
$\dist(K,\d U)$, $q_+$, and the almost minimality constants for $u$, but this would not be very hard.
We added the conclusion that $x_0 \in \mathcal R$ to comfort the reader, but what really matters is the
uniform approximation in \eqref{eqn:flatonbothsides1} and â\eqref{eqn:flatonbothsides2}.
In fact, if $x_0$ and $\rho \in (0,\rho_\sigma)$ are as in the statement, and if $\rho_\varepsilon$ is chosen small enough, then by 
Proposition \ref{prop:monotonicitytwo} (the almost monotonicity of $W(u,x_0,\cdot)$) we also have that 
$W(u, x_0, t) \leq (1+ 2\varepsilon_\sigma) \, q_+(x_0)^2 \, \frac{\omega_n}{2}$
for $0 \leq t \leq 2\rho$. We shall take $\varepsilon_\sigma < \varepsilon(n)/2$, so
$x_0 \in \mathcal R$ by \eqref{a6.24}. But also, at the price of making $\varepsilon_\sigma$ twice
smaller, we see that the approximation conclusion holds for $0 < \rho' < \rho$, although with possibly
different directions $\nu_{\rho'}$.

\begin{proof}
Let $\sigma > 0$ be given and assume, in order to obtain a contradiction, that there are points 
$x_i \in K\cap \Gamma^+(u)$ and scales $\{\rho_i\}_{i=1}^\infty$, 
with $\rho_i \downarrow 0$, such that 
\begin{equation}\label{a6.29}
W(u, x_j, 2\rho) \leq (1+2^{-j}) \, q_+(x_j)^2 \, \frac{\omega_n}{2},
\end{equation}
but the conclusion fails. Since we proved above that $x_j \in \mathcal R$, this means
that we cannot find $\nu \in \mathbb S^{n-1}$ such that
\eqref{eqn:flatonbothsides1} and \eqref{eqn:flatonbothsides2} hold (with $x_0 = x_j$ and $\rho = \rho_j$).
Set $u_i = u_{\rho_i,x_i}\,$, i.e., $u_i(x) = \rho_i^{-1} u(x_i + \rho_i x)$.
We may replace $\{ u_i \}$ by a subsequence for which $x_i$ tends to a limit 
$x_0 \in K\cap \Gamma^+(u)$. Also, $u$ is Lipschitz near $K$, and since the $x_i$
stay in $K$ and the $\rho_i$ tend to $0$, it is easy to extract a new subsequence, 
which we shall still denote by $\{ u_i \}$, which converges uniformly on compact subsets of $\R^n$
to a limit $u_\infty$.

We claim that we may now proceed as in Lemma \ref{blowupsexistandarehomogenous2} to control $u_\infty$. 
There is a small difference with the situation of Lemma \ref{blowupsexistandarehomogenous2}, 
because here $x_j$ is not fixed and so we cannot apply Theorem 9.2 in \cite{DT} directly.
Instead we apply Theorem 9.1 in \cite{DT} to the sequence $\{ u_j \}$
(just as Theorem 9.2 was deduced from Theorem 9.1 in \cite{DT}). The corresponding
weights $x \to q_+(x_i+\rho_j x)$ converge to $q_+(x_0)$ uniformly on compact sets of $\R^n$,
because $q_+$ is H\"older-continuous and $x_i \to x_0$, and the $u_j$ are locally Lipschitz
with estimates that do not depend on $j$.
This is enough to apply Theorem 9.1 in \cite{DT}. We get that 
$u_\infty$ is a global minimizer for $J^{\infty,+}$, the functional of Section \ref{global} associated
to the constant weight $\lambda_+ = q_+(x_0)$, and also that $\nabla u_\infty$ is the limit of 
$\nabla u_j$ in $L^2_{\mathrm{loc}}(\R^n)$. In addition (9.14) in \cite{DT} implies, as in \eqref{a6.10}, 
that for $r>0$,
\begin{equation}\label{a6.30}
\int_{B(0,r)}  \1_{\{ u_{\infty} > 0 \}} = \lim_{j\rightarrow \infty}
\int_{B(0,r)} \1_{\{ u_j > 0 \}}.
\end{equation}
We multiply by $q_+^2(x_0)$ and add energy integrals that converge and get that
\begin{equation}\label{a6.31}
W_{x_0}(u_\infty, r) = \lim_{j\rightarrow \infty} W_{x_0}(u_j, r).
\end{equation}
But
\begin{equation}\label{a6.32}
\begin{aligned}
W_{x_0}(u_j, r) - W(u,x_j,\rho_j r)
&= W_{x_0}(u_j, r) - W_{x_j}(u_j, r) 
\\
&= r^{-n} [q_+^2(x_0)-q_+^2(x_j)] \big| B(0,r) \cap \{ u_j > 0 \} \big|
\end{aligned}
\end{equation}
by \eqref{wunderblowups} and the definition \eqref{a6.6}. Since the right-hand side 
tends to $0$ because $q_+(x_i+\rho_j x)$ converges to $q_+(x_0)$ uniformly on $B(0,r)$,
we see that 
\begin{equation}\label{a6.33}
W_{x_0}(u_\infty, r) = \lim_{j\rightarrow \infty} W(u,x_j,\rho_j r).
\end{equation}
We use this with $r=2$ and deduce from \eqref{a6.29} that 
\begin{equation}\label{a6.34}
W_{x_0}(u_\infty, 2) \leq  q_+(x_0)^2 \, \frac{\omega_n}{2}
\end{equation}
because $q_+(x_j)$ tends to $q_+(x_0)$.
Since $u_\infty$ is a global minimizer, $W_{x_0}(u_\infty, r)$ is a nondecreasing function of $r$
and $W_{x_0}(u_\infty, r) \leq {q_+(x_0)^2}\frac{\omega_n}{2}$ for $0< r \leq 2$.

By Proposition \ref{flatpointsregularpoints}, applied to $u_\infty$ instead of $u$,
$W_{x_0}(u_\infty, 0) \geq {q_+(x_0)^2}\frac{\omega_n}{2}$, hence in fact 
$W_{x_0}(u_\infty, r) = {q_+(x_0)^2}\frac{\omega_n}{2}$ for $0< r \leq 2$.
By the proof of Proposition \ref{flatpointsregularpoints}, $u_\infty$ is homogeneous of degree $1$
on $B(0,2)$, and then (by the eigenvalue argument) coincides with a half-plane solution on that ball.

Thus we proved that the $u_j$ converge uniformly on $B(0,2)$ 
to a half-plane solution, which we write $v(x) = q_+(x_0)\left\langle x, \nu_\rho\right\rangle_+$
for some unit vector $\nu$ (see \eqref{a6.14}). We just need to show that for this $\nu$,
\eqref{eqn:flatonbothsides1} and \eqref{eqn:flatonbothsides2} hold for $j$ large 
(with $\nu_\rho = \nu$ and $x_0$ replaced by $x_j$), and this will prove the proposition by contradiction.

Now \eqref{eqn:flatonbothsides1} holds precisely because $\{ u_j \}$ converges to $v$
uniformly and $q_+(x_j)$ tends to $q_+(x_0)$, so we may concentrate on \eqref{eqn:flatonbothsides2}.
The proof will be quite similar to what we did for \eqref{a6gap2}, but we give the argument because 
the reader may worry that we used extra properties of global minimizers.

It is enough to let $x\in B(0,\rho_j)$ be such that $\left\langle x, \nu\right\rangle \leq -\sigma\rho_j$,
suppose that $u(x+x_j) > 0$, and get a contradiction.
Set $y = \rho_j^{-1} x$; thus $y\in B(0,1)$, $\left\langle y, \nu\right\rangle \leq -\sigma$, and 
$u_j(y) > 0$. Recall  that $u$ is Lipschitz in a neighborhood of $K$, and hence the $u_j$ are 
Lipschitz in $B(0,2)$, with a Lipschitz bound $M$ that does not depend on $j$. By the nondegeneracy
of $(u_{j})_+$ (see Theorem~10.2 in \cite{DT}), there is a constant $\tau > 0$, that depends only on 
$M$, $n$, $||q_+||_\infty$, and $c_0$, such that 
\begin{equation}\label{a6.22}
u_j(z) \geq \tau \dist(z,\Gamma^+(u_j)) \ \text{ for } z\in B(0,3/2) \cap \{ u_j > 0 \}.
\end{equation}
In particular, since $u_j(y) = |u_j(y)- v(y)| \leq ||u_j- v||_{L^\infty(B(0,2))}$
which tends to $0$, we see that if $j$ is large enough, we can find $w\in \Gamma^+(u_j)$ such that
$|y-w| < \sigma/2$. Then by the NTA property, we can find a corkscrew point
$z\in B(z,\sigma/2) \cap \{ u_j > 0 \}$ such that $\dist(z,\Gamma^+(u_j)) \geq C^{-1} \sigma$.
See Theorem \ref{t2.3} and the first item of Definition \ref{d2.3}. Then $u_j(z) \geq C^{-1} \sigma \tau$
by \eqref{a6.22}. But $|z-y| < \sigma$, so $v(y) = 0$ and the last estimate contradicts
the fact that $||u_j- v||_{L^\infty(B(0,2))}$ tends to $0$.
This contradiction completes our proof of \eqref{eqn:flatonbothsides2} and 
Proposition \ref{Missmallestatflatpoints}.
\end{proof}

{\it A priori}, the blow-up limit $u_\infty$ may vary with the sequence $\rho_j \downarrow 0$
that we chose to define it. However, if we are given extra geometric information about the point 
$x_0 \in \Gamma^+(u)$, 
then we can prove that there is a unique blow-up limit. 
We start with the existence of a tangent exterior ball.

\begin{corollary}\label{zerosetball}
Assume \eqref{a6.1} and let $u$ be an almost-minimizer  for $J^+$ in $\Omega$. 
Assume that $x_0 \in \Gamma^+(u)$ 
is such that there exists an open ball $B$, with $B \subset \{u = 0\}$ and 
$x_0 \in \d B$. 
Then $x_0 \in \mathcal R$ and we can find 
$\nu \in \mathbb S^{n-1}$ such that for every $\sigma > 0$, there exists 
$\rho_{\sigma, x_0} > 0$ such that  
\begin{equation}\label{eq: uniqueblowupzeroball} 
\left|u(x) - q_+(x_0)\left\langle x-x_0,\nu\right\rangle_+\right| < \sigma r 
\ \ \text{ for $r < r_{\sigma, x_0}$ and } x\in B(x_0,r).
\end{equation}
\end{corollary}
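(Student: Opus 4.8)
The plan is to derive Corollary~\ref{zerosetball} from Proposition~\ref{Missmallestatflatpoints} by showing that the existence of a tangent exterior ball forces $W(u,x_0,0)$ to be minimal, and then by upgrading the flatness conclusion of Proposition~\ref{Missmallestatflatpoints} to a statement about a single, fixed normal direction $\nu$. First I would show $x_0 \in \mathcal R$. Let $B = B(y_0,\rho_0)$ be the exterior ball with $x_0 \in \d B$, and let $u_\infty$ be any blow-up limit of $u$ at $x_0$ along a sequence $r_j \downarrow 0$. The rescaled balls $r_j^{-1}(B - x_0)$ converge (locally in the Hausdorff sense) to a half-space $H = \{x \, : \, \langle x, \nu\rangle < 0\}$, where $\nu = (x_0 - y_0)/|x_0 - y_0|$ is the inner unit normal. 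Since $u = 0$ on $B$, we get (using \eqref{a6.10} with the sign $+$, exactly as in the proof of Corollary~\ref{cor:regularpoints}) that $|B(0,1) \cap \{u_\infty > 0\}| \leq |B(0,1) \cap H^c| = \omega_n/2$. Combined with \eqref{a6.13} and the lower bound \eqref{a6.15}, this forces $W(u,x_0,0) = q_+^2(x_0)\omega_n/2$, i.e.\ $x_0 \in \mathcal R$. By Lemma~\ref{blowupsexistandarehomogenous2} and Proposition~\ref{flatpointsregularpoints}, every blow-up limit $u_\infty$ is then a half-plane solution, and from $\{u_\infty > 0\} \subset H^c$ together with $|B(0,1)\cap\{u_\infty>0\}| = \omega_n/2$ we conclude $u_\infty(x) = q_+(x_0)\langle x,\nu\rangle_+$ with this same $\nu$ — so the half-plane solution is the same for every blow-up sequence.

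Next I would fix $\sigma > 0$ and invoke Proposition~\ref{Missmallestatflatpoints} with the compact set $K = \{x_0\}$ (or a small closed ball around $x_0$, whichever the statement prefers). Since $W(u,x_0,0) = q_+^2(x_0)\omega_n/2$, the almost-monotonicity of $W(u,x_0,\cdot)$ (Proposition~\ref{prop:monotonicitytwo}) gives $W(u,x_0,2\rho) \leq (1 + \varepsilon_\sigma)q_+^2(x_0)\omega_n/2$ for all $\rho$ below some threshold $\rho_{\sigma,x_0}$, so the hypothesis \eqref{a6.26} is satisfied at $x_0$ for all such $\rho$. This yields, for each small $\rho$, a unit vector $\nu_\rho$ with $|u(x+x_0) - q_+(x_0)\langle x,\nu_\rho\rangle_+| \leq \sigma\rho$ on $B(0,\rho)$. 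The remaining issue is that \emph{a priori} $\nu_\rho$ depends on $\rho$, whereas the corollary asserts a single $\nu$. I would resolve this exactly as follows: if along some sequence $\rho_j \downarrow 0$ the directions $\nu_{\rho_j}$ did not converge to the $\nu$ identified in the first paragraph, then by compactness of $\mathbb S^{n-1}$ they would have a subsequential limit $\nu' \neq \nu$; but the functions $u_{\rho_j,x_0}$ would then converge uniformly on $B(0,1)$ to $q_+(x_0)\langle x,\nu'\rangle_+$ (taking $\rho = 1$ in the rescaled inequality and passing to the limit), contradicting that every blow-up limit equals $q_+(x_0)\langle x,\nu\rangle_+$. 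Hence $\nu_\rho \to \nu$, and by absorbing the small error $|\nu_\rho - \nu|$ into the constant (replacing $\sigma$ by $2\sigma$ and shrinking $\rho_{\sigma,x_0}$), we obtain \eqref{eq: uniqueblowupzeroball} with the fixed direction $\nu$. A change of variables $r \leftrightarrow \rho$ and $x \leftrightarrow x - x_0$ rewrites this in the form stated.

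The main obstacle I anticipate is the bookkeeping around the direction $\nu$: one must be careful that the $\nu$ produced by the exterior-ball argument (blow-up limits) genuinely coincides with the $\nu_\rho$'s from Proposition~\ref{Missmallestatflatpoints}, and that the passage from ``for each $\sigma$ there is a $\rho_\sigma$'' to ``a single $\nu$ works for all $\sigma$'' does not secretly require a diagonal argument that reintroduces $\sigma$-dependence into $\nu$. The clean way around this is to observe that $\nu$ is determined \emph{intrinsically} by the exterior ball ($\nu = (x_0-y_0)/|x_0-y_0|$), so it is fixed once and for all before $\sigma$ enters; the role of Proposition~\ref{Missmallestatflatpoints} is then only to supply the quantitative approximation at finite scales, and the contradiction-compactness step above shows its $\nu_\rho$ must agree with this intrinsic $\nu$ in the limit. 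Everything else — the convergence of rescaled balls to a half-space, the use of \eqref{a6.10}, the almost-monotonicity — is routine given the results already established in the excerpt.
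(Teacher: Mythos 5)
Your proposal is correct, and its first half is exactly the paper's argument: rescale the exterior ball, note $u_\infty=0$ on the limiting half-space $H$, deduce $W(u,x_0,0)\le q_+^2(x_0)\frac{\omega_n}{2}$ from \eqref{a6.13}, conclude via Proposition \ref{flatpointsregularpoints} that $x_0\in\mathcal R$ and that every blow-up limit is a half-plane solution, and then observe that the constraint $u_\infty=0$ on $H$ leaves no choice for the direction, so every blow-up limit equals $q_+(x_0)\langle\cdot,\nu\rangle_+$ with $\nu$ determined intrinsically by the ball.

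Where you diverge is the endgame. The paper stops essentially at that point: since all blow-up limits coincide, the standard compactness argument (the rescalings $u_{r,x_0}$ are uniformly Lipschitz, so every sequence has a convergent subsequence, and all subsequential limits are the same $u_\infty$) shows that $u_{r,x_0}\to u_\infty$ uniformly on compact sets as $r\to0$, and \eqref{eq: uniqueblowupzeroball} is literally this uniform convergence on $B(0,1)$ after undoing the rescaling. You instead detour through Proposition \ref{Missmallestatflatpoints} to get, for each small $\rho$, a direction $\nu_\rho$ with the $\sigma\rho$-flatness estimate, and then pin $\nu_\rho$ to $\nu$ by compactness. This works, but it is heavier than needed, and one claim is overstated: from the flatness estimate at scale $\rho_j$ you only get that the limit of $u_{\rho_j,x_0}$ is within $\sigma$ of $q_+(x_0)\langle\cdot,\nu'\rangle_+$ on $B(0,1)$, not that it equals it; comparing with the known blow-up limit therefore yields $|\nu_{\rho}-\nu|\le C\sigma$ for small $\rho$ rather than $\nu_\rho\to\nu$ for fixed $\sigma$. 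That weaker conclusion is exactly what your absorption step (replacing $\sigma$ by a constant multiple) requires, so your argument still closes; but the direct route — blow-up uniqueness implies full convergence of $u_{r,x_0}$, which \emph{is} the statement — avoids Proposition \ref{Missmallestatflatpoints} and the direction-pinning bookkeeping entirely.
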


\begin{proof}
Let $u_\infty$ be any blow-up limit of $u$ at $x_0$, and let $\{ r_j \}$ be the associated sequence, 
so that $r_j$ tends to $0$ and the $u_j(x) = \frac{u(r_j x + x_0)}{r_j}$ converge to $u_\infty$ 
uniformly on compact sets. Set $D_j = r_j^{-1}(B-x_0)$; by assumption $u_j = 0$ on $D_j$.
and since the $D_j$ converge to a half space $H$, we get that $u_\infty = 0$ on $H$.

By \eqref{a6.13}, $W(u,x_0,0) = q_+^2(x_0) |B(0,1) \cap \{u_\infty > 0\}| \leq 
q_+^2(x_0) |B(0,1) \sm H| \leq q_+^2(x_0) \, \frac{\omega_n}{2}$, so
Proposition \ref{flatpointsregularpoints} says that $x_0 \in \mathcal R$ and $u_\infty$ is a half-plane solution. 

Since $u_\infty = 0$ on $H$, there is no choice and $u_\infty(x) = q_+(x_0) \langle x, \nu \rangle_+$,
where $\nu$ is the unit vector that points directly away from the center of $B$ seen from $x_0$.
Thus all the blow-up limits of $u$ at $x_0$ are the same $u_\infty$, associated to $\nu$.
This implies (by the existence of convergent subsequences) that the functions 
$u_{r,x_0}$ of \eqref{eqn:blowups} actually converge to this $u_\infty$, uniformly on compact sets, and 
\eqref{eq: uniqueblowupzeroball} follows at once.
\end{proof}

Here is a variant of the previous corollary, but for points of the reduced boundary $\partial^* \{u > 0\}$.

\begin{corollary}\label{zerosetreduced}
Assume \eqref{a6.1} and let $u$ be an almost-minimizer for $J^+$ in $\Omega$. 
Assume that $x_0 \in \Omega \cap \partial^* \{u > 0\}$, 
and let $\nu = \nu(x_0)$ denote the associated unit normal, pointing in the direction of $\{u > 0\}$.
Then $x_0 \in \mathcal R$ and for every $\sigma > 0$ there exists $r_{\sigma, x_0} > 0$ such that 
\begin{equation}\label{eq:uniqueblowupzeroball} 
\left|u(x) - q_+(x_0)\left\langle x-x_0,\nu\right\rangle_+\right| < \sigma r 
\ \ \text{ for $r < r_{\sigma, x_0}$ and } x\in B(x_0,r).
\end{equation}
\end{corollary}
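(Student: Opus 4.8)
The plan is to mimic the proof of Corollary \ref{zerosetball}, replacing the tangent exterior ball by the defining property of the reduced boundary. First, the fact that $x_0\in\mathcal R$ is immediate from Corollary \ref{cor:regularpoints}, since $\Omega\cap\partial^*\{u>0\}\subset\mathcal R$. So the real content is to prove the quantitative flatness \eqref{eq:uniqueblowupzeroball}, and for this it suffices, as in Corollary \ref{zerosetball}, to show that $u$ has a \emph{unique} blow-up limit at $x_0$, namely the half-plane solution $v(x)=q_+(x_0)\langle x,\nu\rangle_+$.

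So let $u_\infty$ be an arbitrary blow-up limit of $u$ at $x_0$, associated to a sequence $r_j\downarrow 0$, and set $u_j=u_{r_j,x_0}$, so that $u_j\to u_\infty$ uniformly on compact sets (Lemma \ref{blowupsexistandarehomogenous1}). Since $x_0\in\mathcal R$, Proposition \ref{flatpointsregularpoints} says $u_\infty$ is a half-plane solution, i.e.\ $u_\infty(x)=q_+(x_0)\langle x,\mu\rangle_+$ for some $\mu\in\mathbb S^{n-1}$, and the only thing left is to identify $\mu=\nu$. On the one hand, by the very definition of the reduced boundary, $\1_{\{u_j>0\}}\to\1_{H}$ in $L^1_{\mathrm{loc}}(\R^n)$, where $H=\{x\,;\,\langle x,\nu\rangle>0\}$ is the blow-up half-space of $\{u>0\}$ at $x_0$; here one uses that $\{u>0\}$ has locally finite perimeter, which holds because $\Gamma^+(u)$ is locally Ahlfors-regular by Theorem \ref{t4.2}, so that De Giorgi's structure theorem applies. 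On the other hand, \eqref{a6.10} applied to the sequence $u_j$ (with $x=0$ and any $r>0$) gives $\1_{\{u_j>0\}}\to\1_{\{u_\infty>0\}}$ in $L^1_{\mathrm{loc}}(\R^n)$. Comparing, $\{u_\infty>0\}$ agrees with $H$ up to a set of measure zero, which forces $\mu=\nu$ and hence $u_\infty=v$.

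Since every blow-up limit of $u$ at $x_0$ equals $v$, a standard compactness argument finishes the proof: any sequence $r_j\downarrow 0$ has a subsequence along which $u_{r_j,x_0}$ converges uniformly on compact sets (Lemma \ref{blowupsexistandarehomogenous1}), and the limit must be $v$; therefore $u_{r,x_0}\to v$ uniformly on compact subsets of $\R^n$ as $r\to 0$. Concretely, given $\sigma>0$, apply this with the compact set $\overline B(0,1)$ to produce $r_{\sigma,x_0}>0$ with $\|u_{r,x_0}-v\|_{L^\infty(\overline B(0,1))}\le\sigma$ for $0<r<r_{\sigma,x_0}$; unwinding $u_{r,x_0}(y)=r^{-1}u(x_0+ry)$ and $v(y)=q_+(x_0)\langle y,\nu\rangle_+$ through the substitution $x=x_0+ry$ with $y\in\overline B(0,1)$ gives exactly $|u(x)-q_+(x_0)\langle x-x_0,\nu\rangle_+|<\sigma r$ for $x\in B(x_0,r)$, which is \eqref{eq:uniqueblowupzeroball}.

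The only mildly delicate point is the identification $\mu=\nu$, i.e.\ matching the blow-up of the positivity set $\{u>0\}$ in the finite-perimeter sense with the positivity set of the blow-up function $u_\infty$; but this is precisely what \eqref{a6.10} delivers, so there is no serious obstacle. Everything else is the same bookkeeping as in the proof of Corollary \ref{zerosetball}.
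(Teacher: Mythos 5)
Your proof is correct and follows essentially the same route as the paper: classify every blow-up limit of $u$ at $x_0$ as a half-plane solution (Proposition \ref{flatpointsregularpoints}), identify its direction with $\nu$ using the $L^1_{\mathrm{loc}}$ blow-up of $\{u>0\}$ at reduced boundary points together with \eqref{a6.10}, and then get the uniform estimate from uniqueness of the blow-up plus compactness, the only cosmetic difference being that the paper pins down the direction by showing directly that $u_\infty$ vanishes on the half-space opposite to $\nu$. One small imprecision: applying \eqref{a6.10} only to balls centered at $x=0$ gives no information (every half-space through the origin occupies exactly half of each such ball), so to force $\mu=\nu$ you should either use \eqref{a6.10} for balls with arbitrary centers and conclude $\1_{\{u_\infty>0\}}=\1_H$ a.e.\ by Lebesgue differentiation, or justify the claimed $L^1_{\mathrm{loc}}$ convergence $\1_{\{u_j>0\}}\to\1_{\{u_\infty>0\}}$ by combining \eqref{a6.10} with the locally uniform convergence $u_j\to u_\infty$ (which gives $|(\{u_\infty>0\}\setminus\{u_j>0\})\cap B|\to 0$ for each ball $B$).
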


We already said in Corollary \ref{cor:regularpoints} that $x_0 \in \mathcal R$, but we will prove it again.
When we restrict \eqref{eq:uniqueblowupzeroball} to $x = x_0 + t \nu$, $t > 0$, we get the existence of
a normal derivative
\begin{equation}\label{normalderivative1} 
\frac{\partial^+ u}{\partial \nu}(x_0) := \lim_{t \to 0_+} t^{-1} u(x_0 + t \nu) = q_+(x_0)\nu.
\end{equation}
When we stay in $U = \big\{ x\in \Omega \, ; \, u(x) > 0 \big\}$, \eqref{eq:uniqueblowupzeroball}
gives an expansion
\begin{equation}\label{Ihavetotrythisoncetoseehowitfeelstogivelongnamestothingsbutifyouseethisyoumayreplacebygradient}
u(x) = \left\langle\nabla^+ u(x_0),x-x_0\right\rangle_+ + o(|x-x_0|),
\end{equation}
(where by the Landau convention, $o(|x-x_0|)/|x-x_0|$ tends to $0$ when $x$ tends to $x_0$,
and we may also have dropped the positive part) with 
\begin{equation}\label{normalderivative}
\nabla^+ u(x_0)= \frac{\partial^+ u}{\partial \nu}(x_0)\nu =q_+(x_0)\nu.
\end{equation}

\begin{proof}
Let $u_\infty$ be a blow-up limit of $u$ at $x_0$, associated as above to a sequence $\{ r_j \}$.
Set $u_j(x) = \frac{u(r_j x + x_0)}{r_j}$ as above. By definition of $\partial^* \{u > 0\}$,
the functions $\1_{\{ u_j = 0 \}}$ converge in $L^1_{loc}(\R^n)$ to $\1_H$, where 
$H$ is the half space pointing in the direction opposite to $\nu$. If $u_\infty(x) > 0$ for some
interior point $x$ of $H$, then by the uniform convergence of $u_j$ to $u_\infty$ there is 
a small ball $B$ centered at $x$ such that for $j$ large, $u_j(y) > 0$ for $y\in B$. This 
contradicts the local $L^1$ convergence, so $u_\infty(x) = 0$ on $H$, and we may conclude as in
Corollary \ref{zerosetball}.
\end{proof}

We now use Corollary \ref{zerosetreduced} to prove the existence of a normal derivative and gradient, 
at points of the reduced boundary, of the function $h_{ x_0,r}$ that was defined near \eqref{a3.4}.

\begin{corollary}\label{cor6.1A} 
Let  $\Omega\subset \R^n$ be a bounded, connected open set, 
and let $q_+ \in L^\infty(\Omega)$ be H\"older-continuous and such that $q_+\ge c_0>0$.  
For each $r_0>0$, we can find a radius $\rho_4>0$, that depends only on 
$n$, $c_0$, $\|q_+\|_{L^\infty}$, $\kappa$, $\alpha$ and $r_0$,  
and a constant $\beta \in(0,\alpha/16n)$,  
that depends only on $n$ and $\alpha$, with the following properties.

Let $u$ be an almost-minimizer for  $J^+$ in $\Omega$ (with the constants $\alpha$ and $\kappa$).  
If $0 < r <\rho_4$, $x_0\in \Gamma^+(u) = \Omega \cap \partial\{u>0\}$, 
$B( x_0, 6r_0)\subset \Omega$, and $z\in \partial^\ast\{u>0\} \cap 
B(x_0, 2r^{1+\alpha/17n})$,  
then the function $h_{ x_0,r}$ defined near \eqref{a3.4} satisfies
\begin{equation}\label{eqn6.1A} 
(1-5r^\beta) q_+(z)\le \frac{\partial^+ h_{x_0,r}}{\partial \nu} (z)
\le (1+5r^\beta)q_+(z)
\end{equation}
and
\begin{equation}\label{eqn6.1B}
\nabla^+ h_{x_0,r}(z)= \frac{\partial^+ h_{x_0,r}}{\partial \nu}(z)\nu (z),
\end{equation}
where the existence of $\frac{\partial^+ h_{x_0,r}}{\partial \nu} (z)$ and $\nabla^+ h_{x_0,r}(z)$,
as defined below Corollary \ref{zerosetreduced}, are part of the statement.
\end{corollary}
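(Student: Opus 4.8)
The plan is to transfer the behavior of $u$ near a reduced-boundary point $z$ — which is governed by Corollary~\ref{zerosetreduced} — to the harmonic competitor $h_{x_0,r}$, using the two-sided comparison $(1-5r^\beta)u \le h_{x_0,r} \le (1+5r^\beta)u$ from Theorem~\ref{t3.1}. First I would fix $r_0>0$, set $\rho_4 = \rho_3$ and $\beta$ as in Theorem~\ref{t3.1}, and take $z \in \partial^*\{u>0\} \cap B(x_0, 2r^{1+\alpha/17n})$ with $B(x_0,6r_0)\subset\Omega$ (so that $B(z,4r_0)\subset\Omega$ and Theorem~\ref{t3.1} applies). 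By Corollary~\ref{zerosetreduced} there is a unit normal $\nu=\nu(z)$ pointing into $\{u>0\}$ and an expansion $u(x) = q_+(z)\langle x-z,\nu\rangle_+ + o(|x-z|)$ as $x\to z$; in particular $u(z+t\nu) = q_+(z)\,t + o(t)$ and $u(x) = o(|x-z|)$ when $\langle x-z,\nu\rangle \le 0$.

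The next step is to feed these asymptotics into Theorem~\ref{t3.1}. Since $z\in\Gamma^+(u)$ and $|z-x_0|$ is controlled, \eqref{a3.44} (with the center of the ball taken to be $x_0$, and noting $|z-x_0| + |x-x_0| < 5r^{1+\alpha/17n}$ for $x$ near $z$) gives
\begin{equation*}
(1-5r^\beta)\,u(x) \le h_{x_0,r}(x) \le (1+5r^\beta)\,u(x) \quad \text{for } x\in U \cap B(z, 4r^{1+\alpha/17n}),
\end{equation*}
after possibly shrinking $\rho_4$. Combining this with the expansion for $u$ along the ray $x = z + t\nu$, $t\to 0_+$: we get
\begin{equation*}
(1-5r^\beta)\big(q_+(z)\,t + o(t)\big) \le h_{x_0,r}(z+t\nu) \le (1+5r^\beta)\big(q_+(z)\,t + o(t)\big),
\end{equation*}
so dividing by $t$ and letting $t\to 0_+$ shows that the limit $\frac{\partial^+ h_{x_0,r}}{\partial\nu}(z) = \lim_{t\to 0_+} t^{-1} h_{x_0,r}(z+t\nu)$ exists (the $o(t)/t$ terms vanish, and the upper and lower bounds both converge) and lies in $[(1-5r^\beta)q_+(z), (1+5r^\beta)q_+(z)]$; this is \eqref{eqn6.1A}. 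For \eqref{eqn6.1B} I would similarly squeeze $h_{x_0,r}(x)$ between $(1\mp 5r^\beta)u(x)$ and use the full first-order expansion \eqref{Ihavetotrythisoncetoseehowitfeelstogivelongnamestothingsbutifyouseethisyoumayreplacebygradient} of $u$ at $z$, i.e. $u(x) = q_+(z)\langle x-z,\nu\rangle_+ + o(|x-z|)$: since $h_{x_0,r}$ is harmonic (hence smooth) in $U\cap B(x_0,r)$ and vanishes continuously on $\d U$, one shows that $h_{x_0,r}(x) = \langle \nabla^+ h_{x_0,r}(z), x-z\rangle_+ + o(|x-z|)$ with a well-defined gradient $\nabla^+ h_{x_0,r}(z)$, and the squeeze forces this vector to be parallel to $\nu$ (any tangential component would violate one of the two inequalities along an appropriate direction in $\{u>0\}$), giving $\nabla^+ h_{x_0,r}(z) = \frac{\partial^+ h_{x_0,r}}{\partial\nu}(z)\,\nu$.

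The main obstacle I expect is the careful justification that $h_{x_0,r}$ genuinely has a one-sided gradient (not merely a directional derivative along $\nu$) at the boundary point $z$, and that the $o(|x-z|)$ error in the comparison is uniform enough to survive the limit — one needs $h_{x_0,r}$ to vanish linearly and with a linear error term at $z$, which should follow by comparing $h_{x_0,r}$ with the positive harmonic function estimates near NTA boundaries (boundary Harnack, as already used for \eqref{eqn3.16}) together with the fact that $z$ has both an interior corkscrew and, via the reduced-boundary structure, an exterior tangent half-space. A clean way to package this is: the blow-ups of $h_{x_0,r}$ at $z$, being positive harmonic functions in the blow-up domain (a half-space, because $\{u>0\}$ has a measure-theoretic tangent half-space at $z\in\partial^*\{u>0\}$) vanishing on its boundary, must be linear, i.e. of the form $c\langle x,\nu\rangle_+$; the comparison with $u$ then pins down $c \in [(1-5r^\beta)q_+(z),(1+5r^\beta)q_+(z)]$ and the uniqueness of the blow-up upgrades directional convergence to the full expansion \eqref{eqn6.1B}. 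Everything else is routine bookkeeping with the powers of $r$.
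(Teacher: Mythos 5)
There is a genuine gap, and it sits exactly at the point you flag as an ``obstacle'': the squeeze $(1-5r^\beta)u \le h_{x_0,r} \le (1+5r^\beta)u$ from Theorem \ref{t3.1}, combined with the expansion of $u$ at $z$ from Corollary \ref{zerosetreduced}, does \emph{not} show that $\lim_{t\to 0_+}t^{-1}h_{x_0,r}(z+t\nu)$ exists. Your upper and lower bounds converge to the two \emph{different} numbers $(1+5r^\beta)q_+(z)$ and $(1-5r^\beta)q_+(z)$, so all you get is that the $\liminf$ and $\limsup$ lie in an interval of width $10r^\beta q_+(z)$, which is of fixed positive length for fixed $r$. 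The blow-up fix you then sketch has the same defect: every subsequential blow-up of $h_{x_0,r}$ at $z$ is indeed of the form $c\langle x,\nu\rangle_+$ (positive harmonic in a half-space, vanishing on its boundary), but the comparison with $u$ only pins $c$ inside that same interval, so different subsequences could a priori produce different slopes; ``uniqueness of the blow-up'' is precisely what needs proof and is not supplied. For a positive harmonic function vanishing on an NTA boundary that merely has a tangent plane at one point, the existence of the normal derivative is not automatic, so some extra mechanism is genuinely required.

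The paper supplies that mechanism by playing the scales against each other rather than staying at the single scale $r$. It works first with $h_{z,s}$ for \emph{all} small $s$ (this is why the hypothesis is $B(x_0,6r_0)\subset\Omega$, so Theorem \ref{t3.1} applies centered at $z$), defines nontangential sub/super slopes $a_\pm(s)$ as in \eqref{a6.45}, and gets $(1-5s^\beta)q_+(z)\le a_-(s)\le a_+(s)\le(1+5s^\beta)q_+(z)$ at every scale. The key point is boundary Harnack: by \eqref{eqn3.16} the ratio $h_{z,s}/h_{z,r}$ of two positive harmonic functions vanishing on $\partial U$ extends continuously to $z$, with limit $\ell_{s,r}(z)$ as in \eqref{eqn3.34}, which forces the exact proportionality $a_\pm(s)=\ell_{s,r}(z)\,a_\pm(r)$ of \eqref{a6.52}. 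Letting $s\to 0$, the squeeze interval at scale $s$ shrinks to the point $q_+(z)$, so $a_-(r)=a_+(r)$ and the limit $a(r)$ exists; this is the step your argument is missing, and it cannot be replaced by the comparison with $u$ alone because $u$ is not harmonic, so no boundary Harnack is available for $h/u$. The paper then passes from $h_{z,r}$ to your $h_{x_0,r}$ by a second boundary Harnack ratio (\eqref{eqn6.3A}--\eqref{eqn6.4A}), and finally upgrades the nontangential expansion to the full gradient statement \eqref{eqn6.1B} by using the uniform Lipschitz bound to control the nearly tangential directions --- that last step is close to what you propose and is fine once existence of the slope is secured. The inequalities \eqref{eqn6.1A} then do follow from \eqref{a3.44} exactly as you say.
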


The proof starts with Corollary \ref{zerosetreduced}, which gives similar results for $u$,
and deduce the result from estimates on $h_{x_0,r}/u$ that we proved in earlier sections.
But let us compare with the slightly different function $h_{z,r}$ first, for which we shall be 
able to use Theorem \ref{t3.1} more directly.

Our H\"older assumption on $q_+$ is used to prove the existence of $\frac{\partial^+ u}{\partial \nu}(z)$
and $\nabla^+ u(z)$, but we do not need quantitative estimates for this. The other assumptions come from
Section \ref{harmonic-functions} and are used to connect $h_{x_0,r}$ to $u$ and prove \eqref{eqn6.1A}.

\begin{proof} 
Let $r_0 > 0$ be given, and let $\beta \in (0,\alpha/16n)$ and $\rho_3$ be as in Theorem \ref{t3.1}
(the assumptions are satisfied). Suppose that $\rho_4 < \rho_3$ (other similar constraints will be added soon), and let $x_0$ and $z$ be as in the statement. Then Theorem~\ref{t3.1} says that
\begin{equation}\label{a6.43}
(1-5r^{\beta})u(x)\le h_{z,r}(x)\le (1+5r^{\beta})u(x)
\end{equation}
for $x\in U = \big\{ x\in \Omega \, ; \, u(x)> 0\big\}$ such that 
$|z-x_0|+ |x-x_0| < 5 r^{1+\alpha/17n}$. Since $|z-x_0| \leq 2r^{1+\alpha/17n}$,
this works for $|x-x_0| < 3 r^{1+\alpha/17n}$ and in particular for $|x-z| < r^{1+\alpha/17n}$.

Since $z \in \partial^\ast\{u>0\} \cap \Omega$, $\d U$ (or equivalently $\Gamma^+(u)$)
has an approximate tangent plane $P$ at $x$, and since $\d U$ is locally Ahlfors-regular, 
$P$ is actually a true tangent plane. Let us assume, without loss of generality, that we have coordinates in 
$\R^n$ such that $z=0$, $P$ is given by the equation $x_n = 0$, and $U$ lies above $\d U$ near $z$.
Let $\nu = e_n$ denote the unit normal at $z$, pointing in the direction of $e$.
We first want a control on $h_{z,r}$ on nontangential sectors, so we define, for $\tau \in (0,1)$, a sector
\begin{equation}\label{a6.44}
\Gamma_\tau = \big\{ \theta \in \SS^{n-1} \, ; \, \theta_n \geq \tau\big\}
\end{equation}
(where $\theta_n = \langle \theta, \nu \rangle $)
and two functions
\begin{equation}\label{a6.45}
a_-(r,t) = \inf_{\theta \in \Gamma_\tau}  
(t \theta_n)^{-1 } h_{z,r}(z+t\theta)
\ \text{ and } \ 
a_+(r,t) = \sup_{\theta \in \Gamma_\tau}  
(t \theta_n)^{-1 } h_{z,r}(z+t\theta).
\end{equation}
Also denote by $a_-^u(r,t)$ and $a_+^u(r,t)$ the analogues for $u$ of $a_-(r,t)$ and $a_+(r,t)$;
we want to compare the two and then use Corollary \ref{zerosetreduced} to compute their limits.
First observe that by taking infimums and supremums in the two halves of \eqref{a6.43},
\begin{equation}\label{a6.46}
(1-5r^{\beta}) a_-^u(r,t) \leq a_-(r,t)
\ \text{ and } \ 
a_+(r,t) \leq (1+5r^{\beta}) a_+^u(r,t)
\end{equation}
for $t < r^{1+\alpha/17n}$. Next we use the expansion of $u$ near the point $z$ that is given by
\eqref{Ihavetotrythisoncetoseehowitfeelstogivelongnamestothingsbutifyouseethisyoumayreplacebygradient}
and \eqref{normalderivative}. We get that for $\theta \in \Gamma_\tau$,
\begin{equation}\label{a6.47}
u(z+t\theta) = \langle\nabla^+ u(z), t\theta \rangle_+ + o(t)
= t \theta_n q_+(z) + o(t).
\end{equation}
This implies that 
\begin{equation} \label{a6.48}
\lim_{t \to 0^+} a_-^u(r,t) = \lim_{t \to 0^+} a_+^u(r,t) = q_+(z).
\end{equation}
Now set $a_-(r) = \liminf_{t \to 0^+} a_-(r,t)$ and $a_+(r) = \limsup_{t \to 0^+} a_+(r,t)$.
It is clear that $a_-(r) \leq a_+(r)$, but by \eqref{a6.46}
\begin{equation}\label{a6.49}
a_-(r) \geq (1-5r^{\beta}) q_+(z)
\ \text{ and } \ 
a_+(r) \leq (1+5r^{\beta}) q_+(z).
\end{equation}
This still leaves some uncertainty concerning the existence of limits for the $a_\pm(r,t)$, which we shall
resolve by replacing $r$ with smaller radii for which the error tends to $0$. 
For what we said so far, it was enough to assume that 
$B( x_0, 4r_0)\subset \Omega$, but we made sure to assume that $B( x_0, 6r_0)\subset \Omega$,
so that our argument is also directly valid (without thinking about the proof) with $x_0=z$.
Thus the estimates above are also valid for the functions $h_{z,s}$, $s \in (0,r)$. In particular, \eqref{a6.49}
says that
\begin{equation}\label{a6.50}
(1-5s^{\beta}) q_+(z) \leq a_-(s) \leq a_+(s) \leq (1+5s^{\beta}) q_+(z).
\end{equation}

We can relate $h_{z,s}$ and $h_{z,r}$ (say, on $U \cap B(z,r/2)$) because they are both positive 
harmonic functions that vanish at the boundary. In particular, \eqref{eqn3.34} 
(with $\rho = r$ and $x_0 = z$) says that for $0 < s < r$, 
we can define the limit
\begin{equation}\label{a6.51}
\ell_{s,r}(z) = \lim_{x \in U \, ; \,  x \to z}\frac{h_{z,s}(x)}{h_{z,r}(x)}
\end{equation}
(see \eqref{eqn3.34}) and in addition $1/2 \leq \ell_{s,r}(z) \leq 2$. It is then clear that
\begin{equation}\label{a6.52}
\ell_{s,r}(z)  a_-(r) = a_-(s) \ \text{ and} \ \ell_{s,r}(z)  a_+(r) = a_+(s)
\end{equation}
and since \eqref{a6.50} implies that $a_-(s)$ and $a_+(s)$ both tend to $q_+(z)$,
we see that $a_-(r)= a_+(r)$.

So we proved that $a_(r,t)$ and $a_+(r,t)$ have a common limit $a(r)$. 
We intend to check that we can take
\begin{equation}\label{a6.53}
\frac{\partial^+ h_{z,r}}{\partial \nu}(z) = a(r)
\ \text{ and }\ 
\nabla^+ h_{z,r} = a(r) \nu
\end{equation}
in the definitions \eqref{normalderivative1}-\eqref{normalderivative}, but first observe that
\begin{equation}\label{a6.54}
(1-5r^{\beta}) q_+(z) \leq a(r) \leq (1+5r^{\beta}) q_+(z)
\end{equation}
by \eqref{a6.49}. Now we return to the 
definition \eqref{a6.45} and find that for $x=z+t\theta$, with $\theta \in \Gamma_\tau$,
we have the expansion
\begin{equation}\label{a6.55}
h_{z,r}(z+t\theta) = t\theta_n a(r) + o(t).
\end{equation}
This implies that $\frac{\partial^+ h_{z,r}}{\partial \nu}(z) = a(r)$, as in \eqref{normalderivative1},
and the only difference with the definition of $\nabla^+$ is that we restrict to the sector 
$\R \Gamma_\tau$. Notice first that $a(r)$ does not depend on $\tau$, because it gives the derivative
in the normal direction; this will allow us let $\tau$ tend to $0$ and use the Lipschitz property
for the remaining region. That is, let $M$ be a bound for the Lipschitz norm of $u$ in 
$B(x_0,3 r^{1+\alpha/17n})$. Then let $\varepsilon > 0$ be given, and choose 
$\tau = M^{-1} \varepsilon$. Then for $x=z+t\theta \in U$ such that $\theta \notin \Gamma_\tau$, 
and if $t$ is small enough (depending on the good approximation of $\d U$ by its tangent plane),
\begin{equation}\label{a6.56}
|h_{z,r}(x) - t\theta_n a(r)| \leq |t\theta_n a(r)| + M \dist(x,\d U)
\leq t \tau a(r) + 2M\tau t \leq (a(r)+2) \varepsilon t.
\end{equation}
Since \eqref{a6.55} gives a good enough control when $\theta \in \Gamma_\tau$,
we get the full \eqref{a6.53}.
\ms
This gives the desired control on the function $h_{z,r}$, but our statement involved the slightly
different function $h_{x_0,r}$.
Notice that (if $\rho_4$ is chosen small enough, so that $r^{1+\alpha/17n} < r/10$),
$h_{z,r}$ and $h_{x_0,r}$ are both non-negative harmonic functions on $U \cap B(z, r/2)$,
that vanish on $\d U \cap B(z, r/2)$. By the local NTA property of $U$ and the comparison principle,

there exist constants $C \geq 1$ and $\eta \in (0,1)$ (that depend on $r_0$ and the usual constants
through the NTA constants) such that
\begin{equation}\label{eqn6.3A}
\left|\frac{h_{x_0,r}(x)}{h_{z,r}(x)} - \frac{h_{x_0,r}(y)}{h_{z,r}(y)}\right|
\le C \,\frac{h_{x_0,r}(x)}{h_{z,r}(x)} \left(\frac{|x-y|}{r}\right)^\eta
\end{equation}
for $x,y \in U \cap B(z, r/4)$. See the proof of \eqref{eqn3.16} for some additional detail.
Then, by the proof of \eqref{eqn3.34} (using the continuity of the ratio at the boundary),
there exists
\begin{equation}\label{eqn6.4A}
\ell(z)=\lim_{x\to z} \frac{h_{x_0,r}(x)}{h_{z,r}(x)}.
\end{equation}
At this point, for each $\tau \in (0,1)$ \eqref{a6.55} gives us a nice expansion for $h_{z,r}$
in the cone over $\Gamma_\tau$, and \eqref{eqn6.4A} implies that we have the same expansion
for $h_{x_0,r}$, with $a(r)$ replaced by $\ell(z) a(r)$. We can control the points that lie outside of the 
cone as we did for \eqref{a6.56}, and now the existence of $\frac{\partial^+ h_{x_0,r}}{\partial \nu} (z)$ 
and \eqref{eqn6.1B} follow from (the proof of) \eqref{a6.53}. 
Finally, for the inequalities in \eqref{eqn6.1A}, observe that \eqref{a6.43} also holds for $h_{x_0,r}$,
which gives a good control on $h_{x_0,r}/u$, and $\frac{\partial^+ h_{x_0,r}}{\partial \nu} (z) = q_+(z)$,
by \eqref{normalderivative1}. Proposition \ref{rn-derivative} follows.
\end{proof}

We end this section by showing that $h_{x_0,\rho}$ satisfies Definition 5.1 in \cite{AC}.

\begin{proposition}\label{rn-derivative}
The function $h_{x_0,r}$ of Corollary \ref{cor6.1A} satisfies
\begin{equation}\label{eqn6.10}
-\int_U \langle\nabla h_{x_0,r}, \nabla \zeta\rangle 
=\int_{\partial\{u>0\}} \zeta \; \frac{\partial^+ h_{x_0,r}}{\partial\nu}\; d\mathcal{H}^{n-1} 
\end{equation}
for all $\zeta\in C^1_c(B(x_0,r^{1+\alpha/17n}))$. 
\end{proposition}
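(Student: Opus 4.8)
The identity \eqref{eqn6.10} is a distributional Green's formula: it says that the (nonnegative) measure $-\Delta h_{x_0,r}$, when tested against $\zeta \in C^1_c(B(x_0,r^{1+\alpha/17n}))$, is represented by integration of the normal derivative $\frac{\partial^+ h_{x_0,r}}{\partial\nu}$ against $\mathcal H^{n-1}$ on $\partial\{u>0\}$. The plan is to prove this by approximation from inside $U$, using the harmonicity of $h_{x_0,r}$ in $U$ (from \eqref{a3.5}) together with the ingredients already assembled: the local Ahlfors-regularity and rectifiability of $\Gamma^+(u)$ (Theorem \ref{t4.2}), the fact that $\mathcal H^{n-1}$-a.e.\ point of $\Gamma^+(u)$ lies in $\partial^*\{u>0\}$, and the pointwise expansion of $h_{x_0,r}$ at reduced-boundary points furnished by Corollary \ref{cor6.1A}.

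First I would fix $\zeta \in C^1_c(B(x_0,r^{1+\alpha/17n}))$ and, for $\varepsilon>0$, set $U_\varepsilon = \{x \in U \,;\, \delta(x) > \varepsilon\}$, where $\delta(x) = \dist(x,\Gamma^+(u))$. Since $h_{x_0,r}$ is harmonic and $C^\infty$ in $U$ (hence near $\partial U_\varepsilon$ for a.e.\ $\varepsilon$, where $\partial U_\varepsilon$ is rectifiable by the coarea/Sard argument applied to the Lipschitz function $\delta$), the classical divergence theorem gives
\begin{equation}\label{eqn:approxgreen}
-\int_{U_\varepsilon} \langle \nabla h_{x_0,r}, \nabla \zeta\rangle
= \int_{\partial U_\varepsilon} \zeta \, \frac{\partial h_{x_0,r}}{\partial n_\varepsilon}\, d\mathcal H^{n-1},
\end{equation}
where $n_\varepsilon$ is the inward unit normal to $U_\varepsilon$ (the boundary term on $\partial B(x_0,r)$-type pieces vanishes because $\zeta$ is compactly supported well inside). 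Here I use that $h_{x_0,r}$ is Lipschitz up to $\Gamma^+(u)$: indeed \eqref{a3.44} of Theorem \ref{t3.1} with $z=x_0$ gives $h_{x_0,r}\le (1+5r^\beta)u$, and $u$ is locally Lipschitz by \cite{DT}, so $\nabla h_{x_0,r} \in L^\infty$ near the support of $\zeta$ and the left side of \eqref{eqn:approxgreen} converges to $-\int_U \langle \nabla h_{x_0,r},\nabla\zeta\rangle$ as $\varepsilon\to 0$ by dominated convergence.

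The substance is in identifying the limit of the right side of \eqref{eqn:approxgreen} with $\int_{\partial\{u>0\}} \zeta\, \frac{\partial^+ h_{x_0,r}}{\partial\nu}\, d\mathcal H^{n-1}$. I would parametrize $\partial U_\varepsilon$ by the nearest-point projection $\pi_\varepsilon: \partial U_\varepsilon \to \Gamma^+(u)$ (well defined $\mathcal H^{n-1}$-a.e.\ on $\Gamma^+(u)$, since at $\mathcal H^{n-1}$-a.e.\ point of the rectifiable set $\Gamma^+(u)$ there is a genuine tangent plane, so the level set $\partial U_\varepsilon$ is, for small $\varepsilon$, a Lipschitz graph over the tangent plane near such points, with normal $n_\varepsilon \to \nu$ and with the area element converging to that of $\Gamma^+(u)$). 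At a point $z \in \partial^*\{u>0\}$, Corollary \ref{cor6.1A} (specifically the expansion \eqref{a6.55}-\eqref{a6.53}, transported to $h_{x_0,r}$ via \eqref{eqn6.1B}) gives $h_{x_0,r}(z + t\nu(z)) = t\,\frac{\partial^+ h_{x_0,r}}{\partial\nu}(z) + o(t)$ and $\nabla h_{x_0,r} \to \frac{\partial^+ h_{x_0,r}}{\partial\nu}(z)\,\nu(z)$ as one approaches $z$ nontangentially, so $\frac{\partial h_{x_0,r}}{\partial n_\varepsilon} \to \frac{\partial^+ h_{x_0,r}}{\partial\nu}(z)$ for the corresponding point on $\partial U_\varepsilon$. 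Passing to the limit requires a dominated-convergence argument on $\partial U_\varepsilon$: the integrands $\zeta\,\frac{\partial h_{x_0,r}}{\partial n_\varepsilon}$ are bounded by $\|\zeta\|_\infty \|\nabla h_{x_0,r}\|_{L^\infty}$ uniformly, and the measures $\pi_{\varepsilon\,\#}(\mathcal H^{n-1}\lfloor \partial U_\varepsilon)$ converge to $\mathcal H^{n-1}\lfloor \Gamma^+(u)$ (on $\supp\zeta$) by Ahlfors-regularity plus rectifiability, so Fatou/Vitali together with the a.e.\ pointwise convergence of the normal derivative yields \eqref{eqn6.10}.

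The main obstacle I expect is the control of $\partial U_\varepsilon$ as $\varepsilon\to 0$: one must know that $\mathcal H^{n-1}(\partial U_\varepsilon) \to \mathcal H^{n-1}(\Gamma^+(u)\cap \supp\zeta)$ (no mass lost or created, e.g.\ from the ``multiplicity-two'' parts of level sets of $\delta$), and that the normals $n_\varepsilon$ and the area elements converge in the appropriate weak sense. This is where the uniform rectifiability and local Ahlfors regularity of $\Gamma^+(u)$ from Theorem \ref{t4.2}, combined with the fact that the bad set $\Gamma^+(u)\setminus\partial^*\{u>0\}$ is $\mathcal H^{n-1}$-null, do the real work; a clean way to package it is to note that $h_{x_0,r}$, being equal to $u$ up to the factor $1+O(r^\beta)$ outside a thin layer and having the same reduced-boundary expansion with slope $q_+(z)$ in the limit, inherits from $u$ (which satisfies Definition 5.1 of \cite{AC} by the results of Section \ref{Consequences}) the same Green-type identity; alternatively, one invokes the general fact that a nonnegative function harmonic in an NTA domain $U$, Lipschitz up to $\partial U$, with $\partial U$ Ahlfors-regular and rectifiable, satisfies exactly this representation formula with density equal to its nontangential normal derivative.
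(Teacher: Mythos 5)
Your overall plan (exhaust $U$ by $U_\varepsilon=\{\delta>\varepsilon\}$, apply the classical divergence theorem there, and pass to the limit) is a natural idea, and the easy half works: $\nabla h_{x_0,r}$ is bounded near $\supp\zeta$, so the interior term converges. But the two steps that carry all the weight are not justified. First, for the boundary terms you need the measures $\mathcal H^{n-1}$ restricted to $\partial U_\varepsilon\cap\supp\zeta$ (pushed to $\Gamma^+(u)$) to converge weakly to $\mathcal H^{n-1}$ restricted to $\Gamma^+(u)$ \emph{with density exactly one}. Local Ahlfors regularity and rectifiability of $\Gamma^+(u)$ only give uniform upper and lower mass bounds, hence subsequential weak limits mutually absolutely continuous with $\mathcal H^{n-1}\res\Gamma^+(u)$ with bounded density; they do not by themselves exclude excess multiplicity of the level sets of $\delta$ or concentration of mass near the null set $\Gamma^+(u)\setminus\partial^*\{u>0\}$, and proving the exact density requires a blow-up argument at a.e.\ boundary point that is work of the same order as the proposition itself. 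Second, to identify the limit of $\frac{\partial h_{x_0,r}}{\partial n_\varepsilon}$ you invoke that $\nabla h_{x_0,r}$ converges nontangentially to $\frac{\partial^+ h_{x_0,r}}{\partial\nu}(z)\,\nu(z)$ at $\mathcal H^{n-1}$-a.e.\ $z$, attributing this to Corollary \ref{cor6.1A}; that corollary only gives the first-order expansion of $h_{x_0,r}$ (convergence of difference quotients along nontangential directions), not a.e.\ nontangential convergence of the gradient, which is a strictly stronger statement. Your fallback, namely to ``invoke the general fact'' that a nonnegative harmonic function on an NTA domain with Ahlfors-regular rectifiable boundary, Lipschitz up to the boundary, satisfies this representation formula with density the nontangential normal derivative, is essentially a restatement of the proposition, so quoting it does not close the argument.

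For comparison, the paper avoids the interior approximation altogether: since $h_{x_0,r}$ is continuous, nonnegative, harmonic in $U\cap B(x_0,r)$ and satisfies \eqref{eqn3.11BB}, Theorem 4.3 of \cite{AC} already yields that $\Delta h_{x_0,r}$ is an Ahlfors-regular measure supported on $\partial U$ near the ball, which gives \eqref{eqn6.11} with a density $k$; then Lemmata 3.1, 3.2 and 3.4 of \cite{KT1} supply exactly the missing analytic input, namely the a.e.\ existence of the nontangential limit $F$ of $\nabla h_{x_0,r}$ together with $F=k\nu$; finally a short argument with the fundamental theorem of calculus on the segment $z+[t,2t]\nu(z)$ identifies $\langle F(z),\nu(z)\rangle$ with $\frac{\partial^+ h_{x_0,r}}{\partial\nu}(z)$ using the expansion from Corollary \ref{cor6.1A}. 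If you want to rescue your route, you would have to reprove these two inputs (the density-one convergence of the level-set measures and the nontangential convergence of the gradient), at which point you would essentially be redoing the cited results.
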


\begin{proof}
Set $B = B(x_0, r^{1+\alpha/17n})$. 
By its definition near \eqref{a3.4}, $h_{x_0,r}$ is continuous on $B(x_0,r)$
and harmonic on $U \cap B(x_0,r)$; in addition, it satisfies the estimate \eqref{eqn3.11BB}
in $5B$, and Theorem~4.3 in \cite{AC} guarantees that $\lambda=\Delta h_{x_0,r}$ 
is an Ahlfors regular measure on $\d U \cap 3B$, say.
Let $k$ denote the Radon-Nikodym of $\lambda$ with respect to $\H^{n-1}$, thus for 
$\zeta\in C^1_c(B)$ we have
\begin{equation}\label{eqn6.11}
-\int \langle\nabla h_{x_0,r}, \nabla \zeta\rangle =\int_{\partial\{u>0\}} \zeta k\, d\mathcal{H}^{n-1}.
\end{equation}
Since $\nabla h_{x_0,r} = 0$ almost everywhere on $B \sm U$ (because $h_{x_0,r} = 0$ there),
the proposition will follow as soon as we prove that
\begin{equation}\label{a6.61}
k(z) = \frac{\partial^+ h_{x_0,r}}{\partial\nu}(z)
\ \text{ for $\H^{n-1}$-almost every $z\in \d U \cap B$.}
\end{equation}
Notice that $k\in L_{loc}^\infty(\H^{n-1}\res\partial U)$ near $B$, because
$\lambda$ is Ahlfors regular. The same arguments as those used in the proofs of 
Lemmata 3.1, 3.2 and 3.4 in \cite{KT1} show that the non-tangential limit $F$ of 
$\nabla h_{x_0,r}$ exists $\mathcal{H}^{n-1}$-a.e on $\partial U\cap B$ and,
\begin{equation}\label{eqn6.12}
F(z)= k(z)\nu(z) \ \  \text{for $\mathcal{H}^{n-1}$-almost every } z\in \partial U\cap B.
\end{equation}
Thus we just need to check that 
$\frac{\partial^+ h_{x_0,r}}{\partial\nu}(z) = \langle F(z) , \nu(z)\rangle$ a.e. on $\partial U\cap B$.
Recall that almost every $z \in \partial U\cap B$ lies in $\d^\ast U$, so Corollary \ref{cor6.1A}
applies to it, and gives the existence of the normal derivative $\frac{\partial^+ h_{x_0,r}}{\partial\nu}(z)$.
Here we use the definition \eqref{normalderivative1}, which mean that we have the expansion 
\begin{equation}\label{eqn6.13}
h_{x_0,r}(z+t\nu(z))=t \,\frac{\partial^+ h_{x_0,r}}{\partial\nu}(z) +o(t),
\end{equation}
valid for $t$ small, and where $\nu(z)$ is the same normal derivative that points towards $U$
as in \eqref{eqn6.12}, say. As before, the convention is that $t^{-1}o(t)$ tends to $0$.
The fact that $z+t\nu(z) \in U$ for $t$ small is easy here, since $\d U$
has a true tangent plane at $z$. We apply this to $2t$ and subtract to get that
\begin{equation}\label{a6.64}
h_{x_0,r}(z+2t\nu(z))-h_{x_0,r}(z+t\nu(z)) = t \,\frac{\partial^+ h_{x_0,r}}{\partial\nu}(z) +o(t)
\end{equation}
On the other hand, by the fundamental theorem of calculus (and for $t$ small),
\begin{equation}\label{a6.65}
h_{x_0,r}(z+2t\nu(z))-h_{x_0,r}(z+t\nu(z))
= t \langle\nabla h_{x_0,r}(z + \xi \nu(z)),\nu(z)\rangle
\end{equation}
for some $\xi \in [t,2t]$. Let $t$ tend to $0$. If $z$ is also such that the notangential limit at $z$
of $\nabla h_{x_0,r}$ is $F(z)$, then $\nabla h_{x_0,r}(z + \xi \nu(z))$ tends to $F(z)$
and the comparison of \eqref{a6.64} and \eqref{a6.65} yields $\frac{\partial^+ h_{x_0,r}}{\partial\nu}(z)
= \langle F(z),\nu(z)\rangle$, as needed. 
Proposition \ref{rn-derivative} follows.

\end{proof}

\section{Free boundary regularity for almost-minimizers}\label{bdryreg}

In this section we show that if $u$ is an almost-minimizer for $J^+$ in $\Omega\subset\R^{n}$ 
with $q_+$ H\"older continuous and bounded below, then the set 
$\mathcal R\subset \partial\{u>0\}$ (see Definition \ref{defn:flatpoints}) is locally 
a $C^{1,\beta}$ $(n-1)$-submanifold (see Theorem \ref{main-reg-theo}).
The definitions and arguments used in this section are reminiscent of those that appear in \cite{AC}. 
We discuss some of the technical arguments that concern harmonic functions (and specifically
weak solutions) in Section \ref{appendix}. 

In this whole section, we assume that $u$ is an almost-minimizer for $J^+$ in 
$\Omega\subset\R^{n}$, and that
\begin{equation} \label{a7.1}
q_+ \in C^{\alpha}(\Omega) \cap L^{\infty}(\Omega),
\ \text{ and there is a constant $c_0 > 0$ such that $q_+\ge c_0>0$ on } \Omega. 
\end{equation}
We set $U = \big\{ x\in \Omega \, ; \, u(x) > 0 \big\}$ and $\Gamma^+(u) = \Omega \cap \d U$
as usual. 

\begin{defn}\label{defn:flatpoint} 
Let $\sigma>0$. For $x_0 \in \Gamma^+(u)$ 
and $r_0>0$ with $B(x_0,r_0)\subset \Omega$ we say that 
\begin{equation}\label{f-notation}
u\in \mathcal{F}(\sigma;x_0,r_0)\hbox{ in the direction  } e_0\in\SS^{n-1}
\end{equation}
if for $x\in B(x_0,r_0)$, 
\begin{eqnarray}\label{f-definition}
\left\{ \begin{array}{ll}
u(x)=0 &\hbox{ if }  \langle x-x_0,e_0\rangle\le -\sigma r_0\\[2mm]
u(x)\ge q_+(x_0)[\langle x-x_0,e_0\rangle -\sigma r_0] &\hbox{ if }  \langle x-x_0,e_0\rangle\ge \sigma r_0.\end{array}\right.
\end{eqnarray}
\end{defn}

\ms

\begin{lemma}\label{reifenberg}
Let $u$ be an almost-minimizer for $J^+$ in $\Omega\subset\R^{n}$ and $\sigma>0$. 
If $u\in \mathcal{F}(\sigma;x_0,r_0)$  in the direction $e_0\in\SS^{n-1}$
and $L_0=x_0 + \langle e_0\rangle^\perp$, then
\begin{equation}\label{reif-flat}
\frac{1}{r_0}D[\partial\{u>0\}\cap B(x_0, r_0), L_0 \cap B(x_0, r_0)]\le C\sigma,
\end{equation}
where $D$ denotes the Hausdorff distance, and $C$ is a constant depending on $n$
\end{lemma}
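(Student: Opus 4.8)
The statement is a purely geometric consequence of the flatness condition \eqref{f-definition}, so the plan is to bound the two one-sided Hausdorff distances between $\Gamma^+(u)\cap B(x_0,r_0)$ and $L_0\cap B(x_0,r_0)$ separately. By translating and scaling we may assume $x_0 = 0$ and $r_0 = 1$, so that $L_0 = \langle e_0\rangle^\perp$ and \eqref{f-definition} reads: $u = 0$ on $\{\langle x, e_0\rangle \le -\sigma\}$, and $u(x) \ge q_+(0)[\langle x,e_0\rangle - \sigma]$ on $\{\langle x, e_0\rangle \ge \sigma\}$. In particular, on the slab $\{|\langle x, e_0\rangle| < \sigma\}$ we have no information, but \emph{outside} the slab the sign of $u$ is completely determined (using $q_+(0) \ge c_0 > 0$ for the lower bound). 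Thus $\Gamma^+(u) = \Omega \cap \partial\{u>0\}$ cannot meet $\{\langle x, e_0\rangle < -\sigma\}$ (an open set where $u\equiv 0$, so it is contained in the interior of $\{u = 0\}$, hence disjoint from $\partial\{u>0\}$) nor $\{\langle x, e_0\rangle > \sigma\}$ (an open set where $u > 0$, hence contained in $U$ and disjoint from $\partial U$). Therefore
\begin{equation}\label{eqn:slabcontainment}
\Gamma^+(u) \cap B(0,1) \subset \{ x : |\langle x, e_0\rangle| \le \sigma\} \cap B(0,1),
\end{equation}
which immediately gives $\mathrm{dist}(z, L_0 \cap B(0,1)) \le C\sigma$ for every $z \in \Gamma^+(u) \cap B(0,1)$: the nearest-point projection of $z$ onto $L_0$ lies within $\sigma$ of $z$, and — if $z$ is near $\partial B(0,1)$ so that this projection could fall outside $B(0,1)$ — one corrects by moving a distance $O(\sigma)$ along $L_0$, which is where the constant $C = C(n)$ enters. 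This handles the first one-sided distance.

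For the reverse inclusion I would show that every point $w \in L_0 \cap B(0,1)$ is within $C\sigma$ of $\Gamma^+(u)$. Fix such a $w$; by a harmless further shrinking (replacing $B(0,1)$ by a slightly larger ball inside $\Omega$, or just noting $B(0,1)\subset\Omega$ with room to spare, which is guaranteed since $B(x_0,r_0)\subset\Omega$ — if $w$ is too close to $\partial B(0,1)$ one first slides it a distance $O(\sigma)$ inward along $L_0$) we may assume $w \in B(0, 1-10\sigma)$. Consider the segment $I_w = \{ w + t e_0 : |t| \le 2\sigma \}$, which lies in $B(0,1)$. Its lower endpoint $w - 2\sigma e_0$ has $\langle \cdot, e_0\rangle = -2\sigma < -\sigma$, so $u$ vanishes there; its upper endpoint $w + 2\sigma e_0$ has $\langle\cdot, e_0\rangle = 2\sigma > \sigma$, so by \eqref{f-definition} and $q_+(0)\ge c_0>0$ we get $u(w+2\sigma e_0) \ge q_+(0)\cdot\sigma > 0$. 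Since $u$ is continuous (being locally Lipschitz by Theorems 5.1 and 8.1 in \cite{DT}), there is a point on $I_w$ where $u = 0$ which is a limit of points where $u > 0$, hence a point of $\partial\{u>0\} = \Gamma^+(u)$; more carefully, the set $\{ t \in [-2\sigma, 2\sigma] : u(w + te_0) > 0\}$ is open, nonempty, and does not contain $-2\sigma$, so its infimum $t^*$ satisfies $u(w + t^* e_0) = 0$ while every neighborhood of $w + t^* e_0$ contains points of $U$, giving $w + t^* e_0 \in \Gamma^+(u)$. Then $|w - (w + t^* e_0)| = |t^*| \le 2\sigma$, so $\mathrm{dist}(w, \Gamma^+(u)\cap B(0,1)) \le 2\sigma$.

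Combining the two bounds gives $D[\Gamma^+(u)\cap B(0,1), L_0\cap B(0,1)] \le C\sigma$ with $C = C(n)$; undoing the normalization multiplies distances by $r_0$ and yields \eqref{reif-flat}. I expect no serious obstacle here: the only mildly delicate point is the bookkeeping near $\partial B(x_0,r_0)$ — the nearest-point projection onto $L_0$ can leave the ball, and a point of $L_0$ near the sphere may need to be nudged inward before running the segment argument — but in both cases the correction is a translation of size $O(\sigma)$ within the affine plane $L_0$, absorbed into the dimensional constant $C$. One should also note that the continuity of $u$, used to extract a free boundary point on the vertical segment, is exactly the local Lipschitz regularity from \cite{DT}, which is available for all almost-minimizers.
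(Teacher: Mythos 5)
Your argument is correct and is essentially the paper's own proof: the same slab containment $\partial\{u>0\}\cap B(x_0,r_0)\subset\{|\langle x-x_0,e_0\rangle|\le\sigma r_0\}$ for one inclusion, and the same vertical segment plus continuity (intermediate-value) argument producing a free boundary point within $O(\sigma r_0)$ of any point of $L_0$, with the same inward nudge along $L_0$ near $\partial B(x_0,r_0)$. The only thing to add is the trivial large-$\sigma$ case: your reduction ``we may assume $w\in B(0,1-10\sigma)$'' (and the claim $I_w\subset B(0,1)$) presupposes $\sigma$ small, say $\sigma<1/10$, while for $\sigma$ above any fixed threshold the lemma is immediate because both sets are nonempty (each contains $x_0$) subsets of $B(x_0,r_0)$, so $D\le 2r_0\le C\sigma r_0$ — precisely the one-line observation ($\sigma\ge 2^{-n}$, $C=2^n$) with which the paper opens its proof.
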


\begin{proof}
Notice that if $\sigma\ge 2^{-n}$, then we have \eqref{reif-flat} with $C=2^n$.  
Thus let $\s<2^{-n}$.
Note that \eqref{f-definition} implies that $|\langle y_0-x_0, e_0\rangle|\le \sigma r_0$
for $y_0 \in \d U \cap B(x_0, r_0)$. 
For $y\in L_0\cap B(x_0, r_0\sqrt{1-4\s^2})$ observe that 
$u(y +2\s r_0e_0)>0$ and $u(y -2\s r_0e_0)=0$, thus since $u$ is continuous there is 
$y'=y +t r_0e_0\in \partial\{u>0\}\cap B(x_0, r_0\sqrt{1-4\s^2})$ with $t\in (-2\s, 2\s)$, 
thus $|y-y'|\le 2\s r_0$. For $z\in L_0\cap B(x_0, r_0)\backslash B(x_0, r_0\sqrt{1-4\s^2})$ 
there is $y\in L_0\cap B(x_0, r_0\sqrt{1-4\s^2})$ with $|z-y|\le \s r_0$ and using $y'$ as above 
we have $|z-y'|\le 3\s r_0$. 

\end{proof}

With the notation of Definitions \ref{defn:flatpoints} and \ref{defn:flatpoint}, 
Proposition \ref{Missmallestatflatpoints} implies that regular points are flat.

\begin{corollary}\label{flatp--f-flat}
Let $u$ be an almost-minimizer for $J^+$, assume \eqref{a7.1}, and let $\sigma > 0$ be given.
Then for every $x_0\in \mathcal R$ there exists $\rho_\sigma>0$
such that for $0<\rho\le \rho_\sigma$ there is $e_\rho\in\SS^{n-1}$ such that 
$u\in {\mathcal F}(\sigma; x_0,\rho)$ in the direction $e_\rho$. 
\end{corollary}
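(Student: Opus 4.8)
The plan is to derive this directly from Proposition~\ref{Missmallestatflatpoints}, which is the quantitative ``flatness at regular points'' statement, by translating its two conclusions \eqref{eqn:flatonbothsides1} and \eqref{eqn:flatonbothsides2} into the language of Definition~\ref{defn:flatpoint}. First I would recall that $x_0 \in \mathcal R$ means $W(u,x_0,0) = q_+^2(x_0)\frac{\omega_n}{2}$; by the almost-monotonicity of $W(u,x_0,\cdot)$ (Proposition~\ref{prop:monotonicitytwo}) together with the existence of the limit $W(u,x_0,0)$ in \eqref{a6.4}, we have $W(u,x_0,2\rho) \le W(u,x_0,0) + C(2\rho)^\alpha = q_+^2(x_0)\frac{\omega_n}{2} + C(2\rho)^\alpha$ for all small $\rho$. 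Hence, given $\sigma > 0$, if I first invoke Proposition~\ref{Missmallestatflatpoints} to obtain $\varepsilon_\sigma$ and $\rho_\sigma$ (applied with $K$ a fixed compact neighbourhood of $x_0$ contained in $\Omega$, and with parameter $\sigma$ replaced by something comparable, say $\sigma/C_n$ where $C_n$ absorbs the switch between the normalizations of the two definitions), then I can shrink $\rho_\sigma$ further so that $C(2\rho_\sigma)^\alpha \le \varepsilon_\sigma\, q_+(x_0)^2\frac{\omega_n}{2}$. With this choice, hypothesis \eqref{a6.26} of Proposition~\ref{Missmallestatflatpoints} is automatically satisfied for every $\rho \in (0,\rho_\sigma]$.

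Next I would read off the conclusion. For each such $\rho$, Proposition~\ref{Missmallestatflatpoints} produces a unit vector $\nu_\rho$ with
\[
|u(x+x_0) - q_+(x_0)\langle x,\nu_\rho\rangle_+| \le \sigma' \rho \quad\text{for } x\in B(0,\rho),
\]
and $u(x+x_0)=0$ whenever $\langle x,\nu_\rho\rangle \le -\sigma'\rho$, where $\sigma'$ is the parameter I fed into that proposition. Setting $e_\rho = \nu_\rho$, the second estimate is exactly the first line of \eqref{f-definition} (with $r_0 = \rho$), provided $\sigma' \le \sigma$. For the second line of \eqref{f-definition}, I use the first estimate: when $\langle x,e_\rho\rangle \ge \sigma'\rho \ge 0$ we have $\langle x,e_\rho\rangle_+ = \langle x,e_\rho\rangle$, so
\[
u(x+x_0) \ge q_+(x_0)\langle x,e_\rho\rangle - \sigma'\rho = q_+(x_0)\big[\langle x,e_\rho\rangle - (\sigma'/q_+(x_0))\rho\big].
\]
Since $q_+(x_0) \ge c_0$, choosing $\sigma'$ so that $\sigma'/c_0 \le \sigma$ and $\sigma' \le \sigma$ (e.g.\ $\sigma' = \sigma \min(1,c_0)$) gives $u(x+x_0) \ge q_+(x_0)[\langle x-x_0,e_\rho\rangle - \sigma\rho]$ after recentering, which is precisely \eqref{f-definition}. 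Thus $u \in \mathcal F(\sigma; x_0,\rho)$ in the direction $e_\rho$.

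There is essentially no serious obstacle here: the corollary is a bookkeeping translation, and the only mild care needed is (i) checking that the hypothesis \eqref{a6.26} is produced by the almost-monotonicity estimate for regular points — which is the content of the remark following Proposition~\ref{Missmallestatflatpoints} — and (ii) tracking the constants $c_0$ and the normalization so that the single parameter $\sigma'$ fed to Proposition~\ref{Missmallestatflatpoints} controls both lines of \eqref{f-definition}. I would also note in passing that the statement is uniform in the sense that once $\rho_\sigma$ is fixed, the conclusion holds for \emph{all} $\rho \le \rho_\sigma$ (with direction $e_\rho$ possibly depending on $\rho$), which is all that is claimed. No compactness or blow-up argument needs to be redone, since all of that work is already packaged inside Proposition~\ref{Missmallestatflatpoints}.
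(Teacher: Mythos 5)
Your route is the same as the paper's: the corollary is stated there as an immediate consequence of Proposition \ref{Missmallestatflatpoints}, and your translation of \eqref{eqn:flatonbothsides1}--\eqref{eqn:flatonbothsides2} into Definition \ref{defn:flatpoint} (feeding the proposition a smaller parameter $\sigma'=\sigma\min(1,c_0)$ so that the additive error $\sigma'\rho$ can be absorbed into $q_+(x_0)\sigma\rho$, and noting that the two containments of half-spaces go the right way because $\sigma'\le\sigma$) is exactly the bookkeeping that is needed.

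One step in your verification of the hypothesis \eqref{a6.26} is stated backwards, though it does not damage the argument. Almost-monotonicity (Proposition \ref{prop:monotonicitytwo}) gives $W(u,x_0,s)\le W(u,x_0,\rho)+C\rho^\alpha$ for $s<\rho$, i.e.\ letting $s\to0$ it yields the \emph{lower} bound $W(u,x_0,2\rho)\ge W(u,x_0,0)-C(2\rho)^\alpha$; the inequality $W(u,x_0,2\rho)\le W(u,x_0,0)+C(2\rho)^\alpha$ that you invoke is not what the formula provides. What you actually need is weaker and free: since the limit $W(u,x_0,0)=\lim_{r\to0}W(u,x_0,r)$ exists (which is where almost-monotonicity enters, via \eqref{a6.4}) and equals $q_+^2(x_0)\frac{\omega_n}{2}$ at a regular point, one has $W(u,x_0,2\rho)\le(1+\varepsilon_\sigma)q_+^2(x_0)\frac{\omega_n}{2}$ for all $\rho\le\rho_\sigma$ with $\rho_\sigma$ small enough (no rate required, and since $\rho_\sigma$ may depend on $x_0$, pointwise convergence suffices; take $K$ a small closed ball around $x_0$ when applying the proposition). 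With that correction the proof is complete.
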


Note that by Corollary \ref{cor:regularpoints}, Corollary \ref{flatp--f-flat} applies to points in the reduced boundary $\partial^\ast U \cap \Omega$. 
Our first result uses Theorem \ref{t4.3} to study how the fact that 
$u\in \mathcal{F}(\sigma;x_0,r_0)$ in the direction $e$ translates into the behavior of 
the intermediate functions $h_{x_0,\rho}$.

\begin{lemma}\label{uflat-hflat}
Set $\gamma = \alpha/17n$ and $\tilde\gamma = 2 \gamma$, assume \eqref{a7.1},
and $u$ be an almost-minimizer for $J^+$ in $\Omega$. Then 
for $r_0>0$ there exist a radius $\rho_5>0$, 
depending only on $n$, $c_0$, $\|q_+\|_{L^\infty}$, 
$\|q_+\|_{C^\alpha}$, $\kappa$, $\alpha$, $\sigma$ and $r_0$, and a constant $\mu\in(0,1)$,
depending on $n$ and $\alpha$, such that if $0<\rho<\rho_5$,  
$x_0\in\partial\{u>0\}$, $B(x_0,4r_0)\subset\Omega$ and 
$u\in {\mathcal F}(\sigma;x_0,\rho^{1+\tilde\gamma})$ in the direction $e_0$ 
then the function $h_{x_0,\rho}$ defined near \eqref{a3.4}  
is such that for $x\in B(x_0,\rho^{1+\tilde\gamma})$,
\begin{eqnarray}\label{h-properties}
\left\{ \begin{array}{ll}
h_{x_0,\rho}(x)=0 &\hbox{ if }  \langle x-x_0,e_0\rangle\le -2\sigma \rho^{1+\tilde\gamma}\\[2mm]
h_{x_0,\rho}(x)\ge q_+(x_0)[\langle x-x_0,e_0\rangle -2\sigma \rho^{1+\tilde\gamma}] &\hbox{ if }  \langle x-x_0,e_0\rangle\ge 2\sigma \rho^{1+\tilde\gamma}\\[2mm]
|\nabla h_{x_0,\rho}(x)|\le  q_+(x_0)(1+ \rho^{\mu}).\end{array}\right.
\end{eqnarray}
Moreover for  $z\in\partial^\ast \{u>0\}\cap B(x_0,\rho^{1+\tilde\gamma})$
\begin{equation}\label{normal-derivative}
\frac{\partial^+ h_{ x_0,\rho}}{\partial \nu}(z)\ge q_+(x_0) (1-\rho^\mu).
\end{equation}
\end{lemma}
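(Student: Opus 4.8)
\textbf{Proof proposal for Lemma \ref{uflat-hflat}.}

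The plan is to transfer the flatness information on $u$ to $h_{x_0,\rho}$ in three stages: first the vanishing condition, then the lower linear bound, and finally the gradient bound together with the normal derivative estimate. Throughout I would use the comparison $h_{x_0,\rho} \approx u$ provided by Theorem \ref{t3.1} (and its consequences) on balls of radius comparable to $\rho^{1+\gamma}$, noting that $\rho^{1+\tilde\gamma} = \rho^{1+2\gamma}$ is genuinely smaller, so we have some room: on $B(x_0, 5\rho^{1+\gamma})$ we have $(1-5\rho^\beta)u \le h_{x_0,\rho} \le (1+5\rho^\beta)u$, where $\beta \in (0,\alpha/16n)$ depends only on $n,\alpha$. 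The first line of \eqref{h-properties} is then immediate: if $\langle x-x_0,e_0\rangle \le -2\sigma\rho^{1+\tilde\gamma}$, then certainly $\langle x-x_0,e_0\rangle \le -\sigma\rho^{1+\tilde\gamma}$, so $u(x)=0$ by \eqref{f-definition} (applied with $r_0$ replaced by $\rho^{1+\tilde\gamma}$), and since $0 \le h_{x_0,\rho} \le (1+5\rho^\beta)u$ on $U$ and $h_{x_0,\rho}=u=0$ off $U$, we get $h_{x_0,\rho}(x)=0$.

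For the second line I would combine the lower bound $h_{x_0,\rho} \ge (1-5\rho^\beta)u$ with \eqref{f-definition}: for $\langle x-x_0,e_0\rangle \ge \sigma\rho^{1+\tilde\gamma}$ this gives $h_{x_0,\rho}(x) \ge (1-5\rho^\beta)q_+(x_0)[\langle x-x_0,e_0\rangle - \sigma\rho^{1+\tilde\gamma}]$, and one checks that for $\langle x-x_0,e_0\rangle \ge 2\sigma\rho^{1+\tilde\gamma}$ the factor $(1-5\rho^\beta)$ and the shift from $\sigma$ to $2\sigma$ can be absorbed when $\rho$ is small enough (since $|\langle x-x_0,e_0\rangle| \le \rho^{1+\tilde\gamma}$ bounds the whole expression, the loss $5\rho^\beta q_+(x_0)\rho^{1+\tilde\gamma}$ is much smaller than $q_+(x_0)\sigma\rho^{1+\tilde\gamma}$). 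This is where choosing $\rho_5$ small in terms of $\sigma, \beta, c_0, \|q_+\|_{L^\infty}$ enters. The gradient bound (third line) is the main point: $h_{x_0,\rho}$ is harmonic in $U \cap B(x_0,\rho)$ and vanishes continuously on $\partial U$ there, and by Lemma \ref{reifenberg} the set $\partial\{u>0\}$ is $C\sigma$-flat on $B(x_0,\rho^{1+\tilde\gamma})$, so on this ball $U$ looks like a half-space with a Reifenberg-flat boundary. One then uses that a nonnegative harmonic function vanishing on such a nearly-flat boundary, with the normalization coming from \eqref{eqn3.11BB} of Corollary \ref{cor3.2} (which pins the average of $h_{x_0,\rho}/s$ on spheres between $c_{\min}$ and $C_{\max}$), has gradient close to a constant; more precisely, I would compare $h_{x_0,\rho}$ with the linear function $q_+(x_0)\langle x-x_0,e_0\rangle_+$ using the flatness/boundary-Harnack machinery (this is exactly the kind of estimate carried out in \cite{AC}, Section 7--8, and is discussed in Section \ref{appendix}). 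The flatness parameter $C\sigma$ and the power-of-$\rho$ errors from Theorem \ref{t3.1} yield $|\nabla h_{x_0,\rho}| \le q_+(x_0)(1+\rho^\mu)$ for a suitable $\mu \in (0,1)$ depending only on $n,\alpha$ (taking $\mu$ slightly less than $\min(\beta, \eta\tilde\gamma)$ for the boundary-Harnack Hölder exponent $\eta$), provided $\sigma$ itself was taken small — which one may do since the hypothesis $u \in \mathcal F(\sigma; x_0, \rho^{1+\tilde\gamma})$ for the $\sigma$ in the statement is what we work with, and the lemma is only useful for small $\sigma$.

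Finally, for \eqref{normal-derivative}: at a reduced-boundary point $z \in \partial^*\{u>0\} \cap B(x_0,\rho^{1+\tilde\gamma})$, Corollary \ref{cor6.1A} already gives the existence of $\frac{\partial^+ h_{x_0,\rho}}{\partial\nu}(z)$ together with $(1-5\rho^\beta)q_+(z) \le \frac{\partial^+ h_{x_0,\rho}}{\partial\nu}(z) \le (1+5\rho^\beta)q_+(z)$; combined with the Hölder continuity of $q_+$ (so $q_+(z) \ge q_+(x_0) - \|q_+\|_{C^\alpha}|z-x_0|^\alpha \ge q_+(x_0) - \|q_+\|_{C^\alpha}\rho^{\alpha(1+\tilde\gamma)}$) this yields $\frac{\partial^+ h_{x_0,\rho}}{\partial\nu}(z) \ge q_+(x_0)(1-\rho^\mu)$ once $\mu$ is chosen at most $\min(\beta, \alpha(1+\tilde\gamma))$ and $\rho_5$ small enough to absorb constants. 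The main obstacle is the gradient bound in the third line of \eqref{h-properties}: unlike the vanishing and lower-bound assertions, which follow formally from the $h/u$ comparison, the \emph{upper} bound on the full gradient requires genuine regularity theory for harmonic functions in Reifenberg-flat domains with the correct normalization, and getting the error to be an honest power $\rho^\mu$ (rather than just $o(1)$) is where one must track the quantitative flatness $C\sigma$ against the scale $\rho^{1+\tilde\gamma}$ and invoke the results of Section \ref{appendix}.
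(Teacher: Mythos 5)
Your handling of the first two lines of \eqref{h-properties} and of \eqref{normal-derivative} is essentially the paper's argument: transfer the vanishing and the linear lower bound through the comparison \eqref{a3.44}, and get the normal derivative bound from Corollary \ref{cor6.1A} plus the H\"older continuity of $q_+$. Those parts are fine.

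The genuine gap is exactly where you locate the main work: the gradient bound $|\nabla h_{x_0,\rho}|\le q_+(x_0)(1+\rho^\mu)$. Your proposed route derives it from the $C\sigma$-Reifenberg flatness of $\partial\{u>0\}$ at scale $\rho^{1+\tilde\gamma}$ (Lemma \ref{reifenberg}) via a comparison with the linear function and boundary-Harnack machinery, ``provided $\sigma$ is small.'' This cannot produce the stated estimate. The flatness of the free boundary is only known to be of size $C\sigma$ at the single scale $\rho^{1+\tilde\gamma}$ (and nothing better at smaller scales --- improving it is precisely what Lemma \ref{u-flat-improv} is for), so any bound extracted from flatness has an error controlled by a function of $\sigma$, not by a power of $\rho$ that tends to $0$ for fixed $\sigma$. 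But the conclusion requires the error $\tau=\rho^\mu$ to be made arbitrarily small \emph{independently of} $\sigma$; indeed the downstream iteration (Lemma \ref{u-flat-improv} via Corollary \ref{main-cor}) needs $\tau\sigma^{-2}$ small, which no estimate of the form $C\sigma^{\text{power}}$ can supply. So the proposed argument is structurally circular with respect to the improvement scheme and would fail.

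The paper's actual proof of the third line uses no flatness at all. One first gets a crude bound $|\nabla h_{x_0,\rho}|\le M$ on $B(x_0,\rho^{1+\gamma})\cap U$ from $h_{x_0,\rho}\simeq\delta(\cdot)$. Then, since each component of $\nabla h_{x_0,\rho}$ is a bounded harmonic function on $U\cap B(x_0,\rho^{1+\gamma})$, one represents it at points $x\in B(x_0,\rho^{1+2\gamma})$ by integrating its boundary values against harmonic measure $\omega^x$ of $U\cap B(x_0,\rho^{1+\gamma})$: on $\partial U$ the nontangential limit of $|\nabla h_{x_0,\rho}|$ equals $\frac{\partial^+ h_{x_0,\rho}}{\partial\nu}\le (1+5\rho^\beta)q_+(z)$ a.e.\ (Corollary \ref{cor6.1A} and Proposition \ref{rn-derivative}), while the spherical piece $\partial B(x_0,\rho^{1+\gamma})\cap U$ only contributes $M\,\omega^x(\partial B\cap U)\le CM\rho^{\gamma\eta}$ by the H\"older decay of harmonic measure in the locally NTA domain $U$. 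It is the gap between the two scales $\rho^{1+\gamma}$ and $\rho^{1+2\gamma}$ --- not the flatness $\sigma$ --- that produces the power $\rho^{\eta\gamma}$, and then $\mu$ is taken comparable to $\min\{\beta,\alpha,\eta\gamma\}$. If you replace your flatness-based step with this two-scale harmonic-measure representation, the rest of your outline goes through.
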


\begin{proof}
In addition to the large ball $B(x_0,\rho)$, we shall often use the 
smaller $B = B(x_0,\rho^{1+\gamma})$ and the even smaller ball 
$\wt B = B(x_0,\rho^{1+2\gamma})$. Recall that 
\begin{equation}\label{eqn7.1}
\left\{
  \begin{array}{ll}
    \Delta h_{ x_0,\rho}=0 & {\rm{in}} \ B(x_0,\rho)\cap U 
    \\
    h_{ x_0,\rho}=u & {\rm{in}}\ \Omega\backslash [B(x_0,\rho)\cap U] . 
  \end{array}
\right.
\end{equation}
Let us decide to pick $\rho_5 \leq \rho_3$, where $\rho_3$ comes from Theorem \ref{t3.1}. Then
$h_{x_0,\rho}$ satisfies \eqref{a3.44}, i.e. 
\begin{equation}\label{eqn7.2}
(1-5\rho^\beta)u(x)\le h_{x_0,\rho}(x)\le (1+5\rho^\beta)u(x)\hbox{   for   }
x\in 4B,
\end{equation} 
where $\beta \in (0,1)$ is as in Theorem \ref{t3.1}.
Moreover, if we also take $\rho_5$ smaller than $\rho_4$ in Corollary \ref{cor6.1A},
and if $z\in\partial^\ast U\cap B$, then by \eqref{eqn6.1A} 
\begin{equation}\label{eqn7.3}
(1-5\rho^\beta)q_+(z)\le \frac{\partial^+ h_{ x_0,\rho}}{\partial \nu}(z)\le (1+5\rho^\beta)q_+(z),
\end{equation} 
and 
\begin{eqnarray}\label{eqn7.3A}
 \frac{\partial^+  h_{ x_0,\rho}}{\partial \nu}(z)&\ge &(1-5\rho^\beta)q_+(x_0) 
  + (1-5\rho^\beta)(q_+(z)-q_+(x_0)) 
  \nonumber\\
 &\ge & q_+(x_0) (1-5\rho^\beta -c \rho^\alpha(1+\gamma))\\
 &\ge & q_+(x_0) (1-6\rho^\beta),
 \nonumber
\end{eqnarray} 
provided that we choose $\rho_5$ small enough, and because $\beta$ 
was chosen smaller than $\alpha$. We picked  $\tilde \gamma =2\gamma$ and assume that
 $u\in \mathcal{F}(\sigma;x_0,\rho^{1+\tilde\gamma})$ in the direction $e_0$. 
Then \eqref{f-definition} and \eqref{eqn7.2} yield for $x\in \wt B$
\begin{equation}\label{eqn7.4}
h_{x_0,\rho}(x)=0 \hbox{ if }  \langle x-x_0,e_0\rangle\le -\sigma \rho^{1+\tilde\gamma}.
\end{equation}
Moreover, provided that $\rho_4^\beta<1$ and $0<\rho<\rho_4$, then 
for $x\in \wt B$ such that $\langle x-x_0,e_0\rangle\ge \sigma \rho^{1+\tilde\gamma}$, 
\begin{eqnarray}\label{eqn7.5}
h_{x_0,\rho}(x)&\ge & (1-\rho^\beta)u(x)\nonumber\\
&\ge &(1-\rho^\beta)
q_+(x_0)[\langle x-x_0,e_0\rangle -\sigma  \rho^{1+\tilde\gamma}]\nonumber\\
&\ge & q_+(x_0)[\langle x-x_0,e_0\rangle -\sigma \rho^{1+\tilde\gamma}-\rho^{1+\beta+\tilde\gamma}] \nonumber\\
&\ge &q_+(x_0)[\langle x-x_0,e_0\rangle -2\sigma \rho^{1+\tilde\gamma}].
\end{eqnarray}
Since $h_{x_0,\rho}$ is harmonic in $B( x_0,\rho)\cap U$, 
so is $\nabla h_{x_0,\rho}$. 
By \eqref{eqn7.2} and Theorems 5.1 and 10.2 in \cite{DT}, there exists $C>0$, 
that depends on the usual constants, such that for 
$x\in B \cap U = B(x_0,\rho^{1+\gamma})\cap U$,  
\begin{equation}\label{eqn7.6}
C^{-1}\delta(x)\le h_{x_0,\rho}(x)\le C\delta(x)
\end{equation}
where $\delta(x)={\rm{dist}}(x,\partial U)$. 
Thus by standard PDE arguments (see \eqref{eqn3.7}), \eqref{eqn7.6} implies that 
$|\nabla h_{x_0,\rho}|$ it is bounded on $B\cap U$. 
Recall that $U$ is locally NTA in $\Omega$ (see Theorem~\ref{t2.3}). 
Let $\omega$ denote the harmonic measure of $B\cap U$. 
Theorem \ref{t4.1}, together with  
the fact that on a connected domain, harmonic measures with different poles are mutually 
absolutely continuous, ensures that for $x\in \wt B = B(x_0,\rho^{1+2\gamma})$, $\omega^x$ 
and $\mathcal H^{n-1}$ are mutually absolutely continuous. 
This fact plus \eqref{eqn7.3} yield, for  $x\in \wt B\cap U$, 
\begin{eqnarray}\label{eqn7.6A} 
|\nabla h_{x_0,\rho}(x)| &=& \Big| \int_{\partial (U \cap B)}
\nabla h_{x_0,\rho}(z)\, d\omega^x(z) \Big| 
\nonumber\\
&\le &\int_{\partial B\cap U} |\nabla h_{x_0,\rho}(z)|\, d\omega^x(z) +
\int_{\partial U \cap B} |\nabla h_{x_0,\rho}(z)|\, d\omega^x(z),
\end{eqnarray}
where in the second integral $\nabla h_{x_0,\rho}(z)$ denotes the nontangential limit
of $\nabla h_{x_0,\rho}$ at $z_0$, whose existence follows Lemmata 3.1, 3.2, and 3.4
in \cite{KT1}, and was already used in Proposition~\ref{rn-derivative} under the name of $F(z)$
(see \eqref{eqn6.12}). It follows from \eqref{eqn6.12}, \eqref{a6.61}, and
\eqref{eqn7.3} that 
\begin{equation} \label{a7.15}
|\nabla h_{x_0,\rho}(z)| = |F(z)| = k(z) = \frac{\partial^+ h_{x_0,\rho}}{\partial\nu}(z)
\leq (1+5\rho^\beta)q_+(z)
\end{equation}
for $\H^{n-1}$-almost every $z\in \d U \cap B$.

For the first integral we use the fact that $|\nabla h_{x_0,\rho}| \leq M$ for some $M \geq 0$
that does not depend on $\rho$, and altogether \eqref{eqn7.6A} yields
\begin{equation} \label{a7.16}
|\nabla h_{x_0,\rho}(x)| 
\leq M\omega^x(\partial B\cap U) 
+ (1+5\rho^\beta)\int_{\partial U\cap B}q_+(z)\, d\omega^x(z).
\end{equation}
By the assumption that $q_+\in C^{\alpha}$ the second term in \eqref{a7.16} is bounded by
\begin{eqnarray}\label{eqn7.7} 
(1+5\rho^\beta)q_+(x_0) + C\rho^{\alpha(1+\gamma)} 
\leq (1+ 10\rho^\beta + C'\rho^{\alpha(1+\gamma)})q_+(x_0),
\end{eqnarray}
where we have used the fact that $q_+\ge c_0>0$.
Since  
$\omega^x(\partial B\cap U)$ is a harmonic function on $B\cap U$  
which vanishes continuously on $\frac12 B \cap \partial U$ and that $U$ 
is locally NTA we have (see \cite{JK}) that for $x\in \wt B = B(x_0,\rho^{1+2\gamma})$
\begin{equation}\label{eqn7.8}
\omega^x(\partial B\cap U) 
\le C\left(\frac{|x-x_0|}{\rho^{1+\gamma}}\right)^\eta\le C\rho^{\gamma\eta}
\end{equation}
where $C$ and $\eta$ depend on the local NTA constants.
Combining \eqref{eqn7.6A}, \eqref{eqn7.7}, \eqref{eqn7.8} and using the fact that $q_+\ge c_0>0$ we obtain that for $x\in \wt B\cap U$ 
\begin{equation}\label{eqn7.9}
|\nabla h_{x_0,\rho}(x)|
\le (1+ 5 \rho^\beta + C'\rho^{\alpha(1+\gamma)}+ C\rho^{\eta\gamma})q_+(x_0).
\end{equation}
Letting $\mu=\frac{1}{2}\min\{\beta,\alpha, \eta\gamma\}$ then choosing $\rho_5$ 
such that $(5+C+C')\rho_4^\mu<1$, $6\rho^{\beta/2}<1$ and recalling that 
$\tilde \gamma = 2\gamma$, \eqref{eqn7.9} and \eqref{eqn7.3A} 
become
\begin{equation}\label{eqn7.10}
\sup_{\wt B}|\nabla h_{x_0,\rho}|
\le q_+(x_0)(1+\rho^\mu) \ \hbox{ and } \ 
\frac{\partial^+ h_{x_0,\rho}}{\partial \nu}(z)\ge q_+(x_0) (1-\rho^\mu).
\end{equation}
Note that \eqref{eqn7.4}, \eqref{eqn7.5} and \eqref{eqn7.10} yield \eqref{h-properties} 
and \eqref{normal-derivative}.

\end{proof}

\begin{lemma}\label{u-flat-improv}
Let $u$ be an almost-minimizer for $J^+$ in $\Omega \subset \R^n$ and assume \eqref{a7.1} holds. In addition, let $x_0 \in \Gamma^+(u)\equiv \partial U\cap \Omega$ and $r_0 > 0$ be such that $B(x_0, 4r_0) \subset \Omega$. Given $\theta \in (0,1)$ there exist $\sigma_{n, \theta} > 0$ and $\eta = \eta_{n, \theta} \in (0,1)$ so that if $\sigma \leq \sigma_{n,\theta}$, then we can choose an $r_1 > 0$ (which depends only on $n, c_0, \|q_+\|_{L^\infty}, \|q_+\|_{C^{\alpha}}, \kappa, \alpha, \sigma$ and $r_0$) such that for all $0 < r < r_1$, if $u \in \mathcal F(\sigma; x_0 ,r)$ in the direction $e_{x_0, r}$, then $u \in \mathcal F(\theta\sigma; x_0, \eta r)$ in some direction $e_{x_0, \eta r}$ where \begin{equation}\label{apb-eqn7B}
|e_{x_0,r}-e_{x_0,\eta r}|\le C\sigma.
\end{equation} (Here $C > 0$ depends only on $n, c_0, \|q_+\|_{L^\infty}, \|q_+\|_{C^{\alpha}}, \kappa, \alpha$ and $r_0$). 
\end{lemma}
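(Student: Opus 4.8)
The proof plan is to run the classical Alt--Caffarelli ``improvement of flatness'' argument, but applied to the harmonic competitor $h_{x_0,r}$ rather than to $u$ directly, and then transfer the improved flatness back to $u$. The point is that $h_{x_0,r}$ is genuinely harmonic in $U \cap B(x_0,r)$ and is, by Proposition~\ref{rn-derivative}, a weak solution in the sense of \cite{AC} (Definition~5.1), so the harmonic-function machinery of \cite{AC} (to be quantified in Section~\ref{appendix}) applies to it.

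\textbf{Step 1: From flatness of $u$ to flatness of $h_{x_0,\rho}$.} Given that $u \in \mathcal F(\sigma; x_0, r)$, first choose $\rho$ so that $r = \rho^{1+\tilde\gamma}$ with $\tilde\gamma = 2\alpha/17n$, i.e.\ $\rho = r^{1/(1+\tilde\gamma)}$; note $\rho > r$ and $\rho \to 0$ as $r \to 0$. Apply Lemma~\ref{uflat-hflat} to get that $h_{x_0,\rho}$ satisfies \eqref{h-properties} and \eqref{normal-derivative} on $B(x_0, \rho^{1+\tilde\gamma}) = B(x_0,r)$: it vanishes on one side, is bounded below by the linear function $q_+(x_0)[\langle x-x_0,e_0\rangle - 2\sigma r]$ on the other, has gradient bounded by $q_+(x_0)(1+\rho^\mu)$, and has normal derivative at reduced-boundary points at least $q_+(x_0)(1-\rho^\mu)$. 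Thus $h_{x_0,\rho}/q_+(x_0)$ belongs to a flatness class $F(2\sigma, 1, \infty)$ (after a harmless rescaling, and absorbing the $\rho^\mu$ errors into $\sigma$ since we will take $r_1$ small), in the terminology of \cite{AC}.

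\textbf{Step 2: Quantified improvement of flatness for $h_{x_0,\rho}$.} Invoke the quantified version of the Alt--Caffarelli flatness improvement (Corollary~\ref{main-cor} and the rest of Section~\ref{appendix}): there are $\sigma_{n,\theta}>0$ and $\eta_{n,\theta}\in(0,1)$ such that if $\sigma \le \sigma_{n,\theta}$, then on the smaller ball $B(x_0, \eta r)$ the function $h_{x_0,\rho}$ satisfies the flatness conditions with the improved constant $\theta'\sigma$ for some $\theta' < \theta$ (say $\theta' = \theta/2$), in a new direction $e'$ with $|e' - e_0| \le C\sigma$. This is the only genuinely new analytic input and is handled in the appendix; the rest is bookkeeping.

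\textbf{Step 3: Transfer back to $u$.} On $B(x_0, \eta r)$ use \eqref{eqn7.2}, i.e.\ $(1-5\rho^\beta)u \le h_{x_0,\rho} \le (1+5\rho^\beta)u$, to convert the improved flatness of $h_{x_0,\rho}$ into flatness of $u$: where $h_{x_0,\rho}$ vanishes, so does $u$ (both vanish exactly on $B(x_0,r)\setminus U$), and the lower linear bound for $h_{x_0,\rho}$ gives a lower linear bound for $u = h_{x_0,\rho}/(1+5\rho^\beta) \ge q_+(x_0)(1+5\rho^\beta)^{-1}[\langle x-x_0,e'\rangle - \theta'\sigma \eta r]$; since $q_+(x_0)(1+5\rho^\beta)^{-1} \ge q_+(x_0)(1 - 6\rho^\beta)$ and, by H\"older continuity, $|q_+(x_0) - q_+(x)| \le C(\eta r)^\alpha$ on $B(x_0,\eta r)$, the coefficient is $\ge q_+(x)$ after enlarging the error slightly. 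Choosing $r_1$ small enough that $6\rho^\beta + C(\eta r)^\alpha + \rho^\mu \le (\theta - \theta')\sigma$ (possible since $\rho = r^{1/(1+\tilde\gamma)} \to 0$), all these error terms are absorbed, and we conclude $u \in \mathcal F(\theta\sigma; x_0, \eta r)$ in the direction $e_{x_0,\eta r} := e'$, with $|e_{x_0,r} - e_{x_0,\eta r}| = |e_0 - e'| \le C\sigma$, which is \eqref{apb-eqn7B}.

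\textbf{Main obstacle.} The heart of the matter is Step~2 — establishing a \emph{quantified} (effective constants $\sigma_{n,\theta}$, $\eta_{n,\theta}$, and the $C\sigma$ control on the direction) version of the Alt--Caffarelli improvement-of-flatness lemma for weak solutions; the original argument in \cite{AC} is essentially compactness-based and gives a dichotomy rather than an explicit iteration rate. This is precisely what Section~\ref{appendix} is devoted to, so here we simply cite Corollary~\ref{main-cor}. The secondary, purely technical, difficulty is the careful tracking of the various powers of $\rho$ versus $r$ through the change of scales $r = \rho^{1+\tilde\gamma}$ so that every error term (from \eqref{eqn7.2}, from \eqref{normal-derivative}, from H\"older continuity of $q_+$, and from the gradient bound in \eqref{h-properties}) is of the form $r^{c}$ for some $c>0$ and hence can be made $\le (\theta - \theta')\sigma$ by shrinking $r_1$; this is routine but must be done in the right order (choose $\theta$, then $\sigma_{n,\theta}$ and $\eta_{n,\theta}$ from the appendix, then $r_1$ last).
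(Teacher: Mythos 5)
Your proposal is correct and follows essentially the same route as the paper: use Lemma \ref{uflat-hflat} to pass the flatness of $u$ at scale $r=\rho^{1+\tilde\gamma}$ to $v=h_{x_0,\rho}$, apply the quantified improvement of Corollary \ref{main-cor}, and transfer back to $u$ via the comparison \eqref{a3.44}, choosing $r_1$ last (depending on $\sigma$) to absorb all $\rho^\beta$, $\rho^\mu$ errors. The only caveat is a labeling slip in Step 1: Corollary \ref{main-cor} needs the two-sided class $F(2\sigma,2\sigma;\tau)$ with $\tau=\rho^\mu$ satisfying $\tau\sigma^{-2}\le\sigma_{n,\theta}$ (not ``$F(2\sigma,1,\infty)$'' with the $\tau$-errors folded into $\sigma$), which your ordering of choices does ensure, exactly as in the paper's requirement $\rho_1\le\big(\tfrac12\sigma_{n,\theta'}\sigma^2\big)^{1/\mu}$; also the H\"older adjustment of the slope from $q_+(x_0)$ to $q_+(x)$ is unnecessary here since Definition \ref{defn:flatpoint} uses $q_+(x_0)$.
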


\begin{proof}
Let $\theta\in(0,1)$ be given, and set $\theta' = \theta/3$. 
Let  $\s_{n,\theta'}>0$ and $\eta' = \eta_{n,\theta'}\in(0,1)$ 
be as in Corollary \ref{main-cor}. Let $\beta$ as in Theorem \ref{t3.1},  
$\tilde\gamma$ and $\mu$ as in Lemma \ref{uflat-hflat}. 
For $\s\le \frac12 \s_{n,\theta'}$ let $\rho_5$ be as in Lemma \ref{uflat-hflat}. 
Let $\rho_1 \leq \min\{\rho_5, (\theta' \s)^\frac{1}{\beta}, 
(\frac12 \s_{n,\theta'}\s^2)^\frac{1}{\mu}\}$, to be chosen later, and set
$r_1 = \rho_1^{1+\tilde\gamma}$.

For $0<r<r_1$ and $x_0\in\partial U$ such that $B(x_0,4r_0)\subset\Omega$, set
\begin{equation} \label{a7.22}
\rho = r^ \frac{1}{1+\tilde\gamma}, \ 
\tau=\rho^\mu = r^\frac{\mu}{1+\tilde\gamma},
\ \text{ and } v = h_{x_0,\rho} \, ;
\end{equation}
thus $\rho < \rho_1 \leq \rho_5$. All this is arranged so that
if $u\in {\mathcal F}(\sigma;x_0,r)$ in the direction $e_{x_0,r}$,
Lemma~\ref{uflat-hflat} says that 
$v\in F(2\s, 2\s;\tau)$ in $B(x_0,r)$ in the direction $-e_{x_0,r}$,
where the notation for $F$ will be given in Definition \ref{weak-ap-sol}.
Also, $\tau\s^{-2} \leq \rho_1^\mu \s^{-2} \leq \frac12 \s_{n,\theta'}$, and by our choice of constants
Corollary \ref{main-cor} guarantees that $v\in F(2\theta'\s, 2\theta'\s;\tau)$ in $B(x_0,\eta' r)$ 
in some direction $-e_{x_0,\eta' r}$ such that  
$|e_{x_0,r}-e_{x_0,\eta' r}|\le C\sigma$ (see \eqref{apb-eqn7A}). Thus  
for $x\in B(x_0,\eta' r)$
\begin{eqnarray}\label{eqn7.11}
\left\{ \begin{array}{ll}
v(x)=0 &\hbox{ if }  \langle x-x_0,e_{x_0,\eta' r}\rangle\le -2\theta'\sigma \eta r\\[2mm]
v(x)\ge q_+(x_0)[\langle x-x_0,e_{x_0,\eta' r}\rangle -2\theta'\sigma \eta' r] &\hbox{ if }  \langle x-x_0,e_{x_0,\eta' r}\rangle\ge 2\theta' \sigma \eta' r.\end{array}\right.
\end{eqnarray}
By the definition of $v$, \eqref{eqn7.11} ensures that 
\begin{equation} \label{a7.24}
u(x)=0 \ \text{ for $x\in B(x_0, \eta' r)$ such that }
\langle x-x_0,e_{x_0,\eta' r}\rangle\le -2\theta'\sigma \eta r.
\end{equation}

Next consider $x\in B(x_0,\eta' r)$ such that 
$\langle x-x_0,e_{x_0,\eta' r}\rangle > 2\theta' \sigma \eta' r$ 
(so that $u(x) > 0$ by \eqref{eqn7.11}). 
If we choose $\rho_1$ also smaller than $\rho_3$ from Theorem \ref{t3.1}, 
then this theorem applies to the pair $(x_0,\rho)$, and since
$x \in U \cap B(x_0,\eta' r) \subset  B(x_0, \rho^{1+\gamma})$ (because $\eta' < 1$),
\eqref{a3.44} yields
\begin{equation} \label{a7.25}
u(x) \geq (1+5\rho^\beta)^{-1} h_{x_0,\rho}(x) = (1+5\rho^\beta)^{-1} v(x),
\end{equation}
and hence by \eqref{eqn7.11} 
\begin{eqnarray} \label{eqn7.12}
u(x) &\ge& (1+5\rho^\beta)^{-1} v(x) 
\geq  (1+5\rho^\beta)^{-1} q_+(x_0) [\langle x-x_0,e_{x_0,\eta' r}\rangle -2\theta'\sigma \eta' r]
\nonumber\\
&\geq& q_+(x_0) 
\big[\langle x-x_0,e_{x_0,\eta' r}\rangle -2\theta'\sigma \eta' r - 5\rho^\beta |x-x_0| \big]
\nonumber\\
&\geq& q_+(x_0) 
\big[\langle x-x_0,e_{x_0,\eta' r}\rangle -3\theta'\sigma \eta' r \big]
\end{eqnarray}
because $(1+5\rho^\beta)^{-1} \geq 1-5\rho^\beta$ and $|x-x_0|\leq \eta' r$, and if 
$\rho_1$ is small enough (depending on $\theta'$, $\sigma$, and $\eta'$).

By \eqref{a7.24} and \eqref{eqn7.12}, $u\in {\mathcal F}(3\theta'\sigma;x_0,\eta' r)$ 
in the direction $e_{x_0,\eta' r}$. Choosing $\theta'=\frac{\theta}{3}$, $\eta_\theta=\eta'$ 
and recalling \eqref{apb-eqn7A} we conclude that $u\in {\mathcal F}(\theta\sigma;x_0,\eta r)$ 
and \eqref{apb-eqn7B} holds.
\end{proof}

\begin{theorem}\label{main-reg-theo}
Let $u$ be an almost-minimizer for $J^+$ in $\Omega\subset\R^{n}$, 
and assume that \eqref{a7.1} holds. 
There exists $\tilde \alpha\in (0,1)$ depending on $c_0$, $\alpha$ and $n$ such that 
$\mathcal R$ is (locally) a $C^{1,\tilde\alpha}$ $(n-1)$-submanifold.
\end{theorem}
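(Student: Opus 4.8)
The plan is to combine the ``improved flatness'' iteration of Lemma \ref{u-flat-improv} with the fact (Corollary \ref{flatp--f-flat}) that every point of $\mathcal R$ is flat at all sufficiently small scales, and then to run a standard Campanato-type argument producing a H\"older-continuous unit normal. First I would fix $x_0 \in \mathcal R$. Since $\mathcal R$ is open in $\Gamma^+(u)$ (Corollary \ref{flatpointsareopen}), there is a radius $r_\ast>0$ such that $B(x_0,10r_\ast)\subset\Omega$ and $\Gamma^+(u)\cap B(x_0,r_\ast)\subset\mathcal R$; I also want the approximation of Proposition \ref{Missmallestatflatpoints} (equivalently Corollary \ref{flatp--f-flat}) to hold uniformly at all these nearby points. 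Choose $\theta\in(0,1)$ small (say $\theta=1/2$), let $\sigma_{n,\theta}$ and $\eta=\eta_{n,\theta}$ be as in Lemma \ref{u-flat-improv}, and fix $\sigma_0\le\sigma_{n,\theta}$. By Corollary \ref{flatp--f-flat} applied uniformly on the compact set $\overline{B(x_0,r_\ast/2)}\cap\Gamma^+(u)$, there is $r_1>0$ such that for every $y_0\in B(x_0,r_\ast/2)\cap\Gamma^+(u)$ and every $r\le r_1$ we have $u\in\mathcal F(\sigma_0;y_0,r)$ in some direction $e_{y_0,r}$ (shrinking $r_1$ so that it is also below the threshold of Lemma \ref{u-flat-improv}).

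The key step is the iteration. Fix $y_0\in B(x_0,r_\ast/2)\cap\Gamma^+(u)$ and $r\le r_1$. Applying Lemma \ref{u-flat-improv} repeatedly, with $\sigma$ replaced successively by $\theta^k\sigma_0$ — which stays below $\sigma_{n,\theta}$ — we get for each $k\ge 0$ that $u\in\mathcal F(\theta^k\sigma_0;y_0,\eta^k r)$ in a direction $e_k := e_{y_0,\eta^k r}$, with
\begin{equation}\label{p-iterate}
|e_k-e_{k+1}|\le C\theta^k\sigma_0 .
\end{equation}
Hence $\{e_k\}$ is Cauchy and converges to a unit vector $e(y_0)$ (a priori depending on $y_0$ and on the starting scale $r$, but summing \eqref{p-iterate} shows the limit is independent of the initial $r\le r_1$, since two starting scales give sequences that must merge). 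Summing \eqref{p-iterate} from $k$ onward gives $|e_k-e(y_0)|\le C'\theta^k\sigma_0$. Writing $\tilde\alpha>0$ so that $\theta=\eta^{\tilde\alpha}$ (possibly after lowering $\theta$ so that $\tilde\alpha<1$; this is where the dependence of $\tilde\alpha$ on $n,\alpha,c_0$ enters, since $\eta$ depends on these), this translates into
\begin{equation}\label{p-normal-holder-scales}
|e_{y_0,\rho}-e(y_0)|\le C''\,\rho^{\tilde\alpha}\quad\text{for }0<\rho\le r_1 .
\end{equation}
Moreover, by Lemma \ref{reifenberg}, the scale-$\eta^k r$ flatness says $\Gamma^+(u)\cap B(y_0,\eta^k r)$ lies within Hausdorff distance $C\theta^k\sigma_0\,\eta^k r$ of the affine hyperplane $y_0+e_k^\perp$; combining with \eqref{p-iterate} one gets that $\Gamma^+(u)\cap B(y_0,\rho)$ is within distance $\le C\rho^{1+\tilde\alpha}$ of $y_0+e(y_0)^\perp$ for $\rho\le r_1$.

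Next I would deduce from \eqref{p-normal-holder-scales} that $y_0\mapsto e(y_0)$ is $C^{\tilde\alpha}$ on $B(x_0,r_\ast/4)\cap\Gamma^+(u)$. For two nearby points $y_0,y_1$ with $\rho:=|y_0-y_1|\le r_1/4$, both $u\in\mathcal F(C\sigma_0\cdot(\rho/R);y_0,R)$-type flatness at scale $R\approx 4\rho$ forces $e_{y_0,R}$ and $e_{y_1,R}$ to be close: indeed $\Gamma^+(u)\cap B(y_1,\rho)\subset B(y_0,2\rho)$ is squeezed between two slabs of width $\lesssim \rho^{1+\tilde\alpha}$ with normals $e(y_0)$ and $e(y_1)$, and since the free boundary is locally Ahlfors regular (Theorem \ref{t4.2}) it is not contained in a lower-dimensional set, so the two slabs must be nearly parallel: $|e(y_0)-e(y_1)|\le C\rho^{\tilde\alpha}$. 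This is the standard ``two flat approximations of the same Ahlfors-regular set are comparable'' argument. Finally, \eqref{p-normal-holder-scales} combined with the Ahlfors regularity and the two-sided corkscrew (Condition B, from Theorem \ref{t2.3} / Theorem \ref{t4.2}) lets me run Reifenberg's theorem / a direct graph construction: near $x_0$, after rotating so $e(x_0)=e_n$, the set $\Gamma^+(u)$ is the graph $\{x_n=\varphi(x')\}$ of a function $\varphi$ whose gradient at $x'$ corresponds to $e((x',\varphi(x')))$, so $\varphi\in C^{1,\tilde\alpha}$. Hence $\mathcal R$ is locally a $C^{1,\tilde\alpha}$ hypersurface.

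I expect the main obstacle to be the bookkeeping that makes the limiting direction $e(y_0)$ \emph{well defined and base-point H\"older} rather than just scale-wise convergent: one must check that (i) the iterated directions do not depend on the chosen initial scale, which uses the uniformity of the flatness hypothesis over a whole neighborhood in $\Gamma^+(u)$ together with \eqref{p-iterate}, and (ii) comparing $e(y_0)$ and $e(y_1)$ for distinct base points genuinely needs the Ahlfors regularity of $\Gamma^+(u)$ (Theorem \ref{t4.2}) to rule out degenerate configurations where two non-parallel slabs could both contain the free boundary piece. A secondary, more routine point is verifying that the exponent $\tilde\alpha=\log\theta/\log\eta$ can be taken in $(0,1)$ and depends only on $n$, $\alpha$, $c_0$ — this is just a matter of choosing $\theta$ small relative to the fixed $\eta=\eta_{n,\theta}$, noting $\eta_{n,\theta}$ is bounded away from $0$ and $1$ uniformly in the relevant range. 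Everything else (passing from a H\"older normal field to a $C^{1,\tilde\alpha}$ graph) is classical once the flatness-at-all-scales plus regularity of the normal is in hand.
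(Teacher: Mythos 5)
Your overall scheme is the same as the paper's: establish flatness at small scales at points of $\mathcal R$, iterate Lemma \ref{u-flat-improv} to get $u\in\mathcal{F}(\theta^m\sigma;y_0,\eta^m r)$ with $|e_{y_0,\eta^m r}-e_{y_0,\eta^{m-1}r}|\le C\theta^{m-1}\sigma$, convert this via Lemma \ref{reifenberg} into a power-rate Reifenberg condition with exponent $\tilde\alpha$ defined by $\theta=\eta^{\tilde\alpha}$, and conclude. The only cosmetic difference is at the end: the paper cites Proposition 9.1 of \cite{DKT}, while you redo the classical H\"older-normal-field and graph construction by hand (which is fine; note that your comparison of $e(y_0)$ and $e(y_1)$ only needs the two-sided Hausdorff estimate from Lemma \ref{reifenberg}, not Ahlfors regularity).

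The genuine gap is the step you dispose of with ``Corollary \ref{flatp--f-flat} applied uniformly on the compact set.'' That corollary is pointwise, with a scale $\rho_\sigma$ that depends on the point, and it does not by itself produce a single $r_1>0$ such that $u\in\mathcal{F}(\sigma_0;y_0,r)$ for \emph{all} $y_0\in B(x_0,r_\ast/2)\cap\Gamma^+(u)$ and all $r\le r_1$. This uniformity is precisely what the first half of the paper's proof establishes, and it is also where the H\"older continuity of $q_+$ is actually used: the paper applies Corollary \ref{flatp--f-flat} only at $x_0$ and at one scale $4r$, uses Lemma \ref{reifenberg} to see that every $y_0\in B(x_0,r)\cap\partial\{u>0\}$ lies within $4\sigma' r$ of the plane through $x_0$, and then transfers the flatness to $y_0$ at scale $r$ with constant $10\sigma'$; since the lower bound in Definition \ref{defn:flatpoint} at $y_0$ must be written with $q_+(y_0)$ rather than $q_+(x_0)$, one must absorb the error $(q_+(x_0)-q_+(y_0))[\langle x-x_0,e_{x_0,4r}\rangle-4\sigma' r]$, and this is where $q_+\in C^\alpha$ and $q_+\ge c_0$ enter. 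Your route can be repaired without this propagation trick, e.g.\ by observing that $z\mapsto W(u,z,2R)$ is continuous at fixed $R$, using the almost-monotonicity of Proposition \ref{prop:monotonicitytwo} to pass to all smaller scales, and then invoking Proposition \ref{Missmallestatflatpoints} together with a finite covering of the compact set; but as written the uniform starting flatness is asserted rather than proved, and it is the one nontrivial point of the theorem beyond the iteration you describe.
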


\begin{proof}
Fix $\theta\in (0,1)$ and let $\s_{n,\theta}$ as in Lemma \ref{u-flat-improv}. 
Choose $\sigma'<\frac{\s_{n,\theta}}{10}$. 
Let $r\le\frac{1}{4}\min\{r_1, \rho_\s'\}$ where $r_1$ is an in Lemma \ref{u-flat-improv} and 
$\rho_\s$ is as in Corollary \ref{flatp--f-flat}. In particular  
$u\in\mathcal{F}(\s';x_0,4r)$ in the direction $e_{x_0, 4r}$ which by Lemma \ref{reifenberg}
yields $|\langle x_0-y_0, e_{x_0,4r}\rangle|\le 4\s' r$ for $y_0\in B(x_0, r)\cap\partial U$. 
Thus if $x\in B(y_0, r)$ and $\langle x-y_0,e_{x_0,4 r}\rangle\le -8\sigma' r$ then $u(x)=0$. 
Moreover if $\langle x-y_0,e_{x_0,4 r}\rangle\ge 8\sigma' r$ then 
$\langle x-x_0,e_{x_0,4 r}\rangle\ge 4\sigma' r$ and 
\begin{equation} \label{a7.27}
u(x) \geq q_+(x_0)[\langle x-x_0, e_{x_0,4r}\rangle - 4\sigma' r]
= q_+(y_0)[\langle x-x_0, e_{x_0,4r}\rangle - 4\sigma' r] + {\mathcal E}
\end{equation}
where 
\begin{eqnarray} \label{a7.28}
|{\mathcal E}| &=&  \big|(q_+(x_0)-q_+(y_0))[\langle x-x_0, e_{x_0,4r}\rangle - 4\sigma' r] \big|
\leq C |x_0-y_0|^{\alpha} [|x-x_0| + 4\sigma' r]
\nonumber\\
&\leq& C r^\alpha [2r + 4\sigma' r] \leq \sigma' r q_+(x_0)
\end{eqnarray}
if $r_1$ is chosen small enough (depending on the $\sigma'$, the H\"older constants for $q_+$,
and $c_0$ in particular). Thus by \eqref{a7.27}
\begin{equation} \label{eqn7.13}
u(x) \geq q_+(x_0)[\langle x-x_0, e_{x_0,4r}\rangle - 5\sigma' r].
\end{equation}
Thus if $u\in\mathcal{F}(\s';x_0,4r)$ then for all $y_0\in B(x_0, r)\cap\partial U$, 
\eqref{eqn7.13} ensures that $u\in\mathcal{F}(10\s';y_0,r)$ in the same direction. 
Letting $\s=10\s'<\sigma_{\theta,n}$ we conclude that for $x_0\in\mathcal{R}$ there exists 
$r\in (0, r_1)$ such that for $y_0\in B(x_0, r)\cap\partial\{u>0\}$,
$u\in\mathcal{F}(\s;y_0,r)$ in the direction $e_{y_0,r}=e_{x_0,4r}$. 
An iterative application of Lemma \ref{u-flat-improv} ensures that there exists $\eta$ so 
that for $m\in\NN$, $u\in\mathcal{F}(\theta^m\s;y_0,\eta^m r)$ 
in a direction $e_{y_0, \eta^m r}$ such that
\begin{equation}\label{eqn7.14}
|e_{y_0, \eta^m r}-e_{y_0, \eta^{m-1} r}|\le C \theta^{m-1}\s.
\end{equation}
Furthermore by Lemma \ref{reifenberg} 
\begin{equation}\label{eqn7.14A} 
D[\partial U 
\cap B(y_0, \eta^{m} r), L_{e_{y_0, \eta^{m} r}}  
\cap B(y_0, \eta^{m} r)]\le C \theta^{m}\s\eta^{m} r.
\end{equation}
Let $\tilde\alpha$ be such that $\theta=\eta^{\tilde\alpha}$; note that
for $s<r$ there is $m\in\NN$ such that $\eta^{m+1} r\le s<\eta^mr$ and \eqref{eqn7.14A} yields
\begin{eqnarray}\label{eqn7.14B}
\frac{1}{s} D[\partial\{u>0\}\cap B(y_0, s), L_{e_{z_0, \eta^{m} r}}\cap B(y_0, s)]
&\le& C\theta^{m}\s\frac{\eta^{m} r}{s} 
\nonumber\\
&\le& C\theta^{m}\s\eta^{-1} 
= C\s\eta^{-1}\theta^{-1}(\eta^{m-1})^{\tilde\alpha} 
\nonumber\\
&\le& C'\left(\frac{s}{r}\right)^{\tilde\alpha}= C'' s^{\tilde\alpha}.
\end{eqnarray}
Hence for each $x_0\in \mathcal R$ there exists $r>0$ such that the hypothesis of 
Proposition 9.1 in \cite{DKT} holds in $B(x_0, r)\cap\partial U$, 
which ensures that  $B(x_0, r)\cap\partial U$ 
is a $C^{1,\tilde\alpha}$ $(n-1)$-submanifold. 
Since $\mathcal R$ is an open subset of $\partial U$ 
by Corollary \ref{flatpointsareopen},
we also get that $\mathcal R$ is (locally) a $C^{1,\tilde\alpha}$ $(n-1)$-submanifold of $\R^n$.

\end{proof}

Combining Theorem \ref{main-reg-theo} and Corollaries \ref{cor:regularpoints} and \ref{low-dim} 
we get the following. 

\begin{corollary}\label{regularity-cor}
Let $u$ be an almost-minimizer for $J^+$ in $\Omega\subset\R^{n}$, and assume 
that \eqref{a7.1} holds.  
Then
\begin{equation}
\partial\{u>0\}=\mathcal R\cup\mathcal S,
\end{equation}
where
$\mathcal S$ is a closed set with $\mathcal H^{n-1}(\mathcal S)=0$ and $\mathcal R$ is a $C^{1,\tilde\alpha}$ $(n-1)$-submanifold for some $\tilde\alpha$ that depends only on 
$n$, $\alpha$, $||q_+||_\infty$, and $c_0$. Furthermore $\mathcal S=\emptyset$ when $n=2,3,4$. 
\end{corollary}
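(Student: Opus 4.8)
The plan is to assemble Corollary~\ref{regularity-cor} purely by citing the pieces that are already in place. The decomposition $\partial\{u>0\} = \mathcal R \cup \mathcal S$ is simply a definition, with $\mathcal S := \partial\{u>0\} \setminus \mathcal R$; so the real content is three assertions: (a) $\mathcal R$ is a $C^{1,\tilde\alpha}$ $(n-1)$-submanifold, (b) $\mathcal H^{n-1}(\mathcal S) = 0$, and (c) $\mathcal S = \emptyset$ when $n \in \{2,3,4\}$. For (a) I would just invoke Theorem~\ref{main-reg-theo}, which gives exactly this statement together with the dependence of $\tilde\alpha$ on $n$, $\alpha$, $\|q_+\|_\infty$, and $c_0$; I should also note that $\mathcal S$ is closed in $\partial\{u>0\}$ because $\mathcal R$ is open in $\partial\{u>0\}$ by Corollary~\ref{flatpointsareopen}.

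For (b), the key point is that $\mathcal H^{n-1}$-a.e.\ point of $\Gamma^+(u)$ lies in the reduced boundary $\partial^*\{u>0\}$. This follows from the local Ahlfors regularity and uniform rectifiability proved in Theorem~\ref{t4.2}: a uniformly rectifiable (in particular, rectifiable) set of locally finite $\mathcal H^{n-1}$ measure has a tangent plane at $\mathcal H^{n-1}$-a.e.\ point, and at such points one has locally finite perimeter density behaviour placing the point in $\partial^*\{u>0\}$ (this is the remark made in the paragraph preceding Corollary~\ref{cor:regularpoints}). Then Corollary~\ref{cor:regularpoints} says $\Omega \cap \partial^*\{u>0\} \subset \mathcal R$. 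Hence $\mathcal H^{n-1}(\partial\{u>0\} \setminus \mathcal R) = 0$, i.e.\ $\mathcal H^{n-1}(\mathcal S)=0$. I would phrase this in two or three sentences, citing Theorem~\ref{t4.2}, the standard rectifiability/density fact, and Corollary~\ref{cor:regularpoints}.

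For (c), I would cite Corollary~\ref{low-dim}, which states that $\mathcal R = \Gamma^+(u)$ when $2 \le n \le 4$ (using the classification of $1$-homogeneous global minimizers for $J^{\infty,+}$ from \cite{AC}, \cite{CJK}, \cite{JS} via Proposition~\ref{flatpointsregularpoints}); this immediately gives $\mathcal S = \emptyset$ in those dimensions.

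I do not expect any genuine obstacle here: the corollary is a bookkeeping statement and every ingredient is established earlier in the paper. The only mild subtlety worth a careful sentence is the measure-theoretic step in (b) — namely that uniform rectifiability plus local Ahlfors regularity forces $\mathcal H^{n-1}$-a.e.\ free boundary point into the reduced boundary — and I would make sure to reference the relevant discussion (the paragraph above Corollary~\ref{cor:regularpoints}) rather than reprove it. A short proof along these lines:

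\begin{proof}
Set $\mathcal S = \partial\{u>0\} \setminus \mathcal R$. By Corollary~\ref{flatpointsareopen}, $\mathcal R$ is open in $\partial\{u>0\}$, so $\mathcal S$ is (relatively) closed. By Theorem~\ref{main-reg-theo}, $\mathcal R$ is locally a $C^{1,\tilde\alpha}$ $(n-1)$-submanifold, with $\tilde\alpha$ depending only on $n$, $\alpha$, $\|q_+\|_\infty$, and $c_0$.

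It remains to show $\mathcal H^{n-1}(\mathcal S) = 0$. By Theorem~\ref{t4.2}, $\Gamma^+(u) = \Omega \cap \partial\{u>0\}$ is locally Ahlfors-regular and (uniformly) rectifiable; in particular it has locally finite $\mathcal H^{n-1}$ measure and is rectifiable, so at $\mathcal H^{n-1}$-almost every $x_0 \in \Gamma^+(u)$ it admits an approximate tangent plane, and such points lie in the reduced boundary $\Omega \cap \partial^*\{u>0\}$ (see the discussion preceding Corollary~\ref{cor:regularpoints}). By Corollary~\ref{cor:regularpoints}, $\Omega \cap \partial^*\{u>0\} \subset \mathcal R$. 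Hence $\mathcal H^{n-1}(\partial\{u>0\} \setminus \mathcal R) = 0$, i.e.\ $\mathcal H^{n-1}(\mathcal S) = 0$.

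Finally, if $2 \leq n \leq 4$, then Corollary~\ref{low-dim} gives $\mathcal R = \Gamma^+(u) = \partial\{u>0\}$, so $\mathcal S = \emptyset$.
\end{proof}
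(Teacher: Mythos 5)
Your proposal is correct and follows essentially the same route as the paper, which simply combines Theorem~\ref{main-reg-theo}, Corollary~\ref{cor:regularpoints} (via the rectifiability/Ahlfors-regularity discussion preceding it, which places $\H^{n-1}$-a.e.\ point of $\Gamma^+(u)$ in the reduced boundary), and Corollary~\ref{low-dim}. Your additional remarks (defining $\mathcal S$ as the complement and invoking Corollary~\ref{flatpointsareopen} for closedness) are exactly the bookkeeping the paper leaves implicit.
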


\section{Dimension of the Singular Set}
\label{dimensionofSS} 
In this section we establish bounds on the Hausdorff dimension of the singular set 
$\Gamma^+ \sm \mathcal R$ of the free boundary for almost minimizers to the one-phase problem. 

The arguments here follow very closely those of Sections 3 and 4 in Weiss \cite{W}, where analogous results 
for minimizers of $J^+$ are proven. Let $k^*$ be the smallest natural number such there exists a stable one-homogeneous  globally defined minimizer $u:\mathbb R^{k^*}\rightarrow \mathbb R$ which is not the half plane solution.  The work of Caffarelli-Jerison-Kenig \cite{CJK}, Jerison-Savin \cite{JS} and De Silva-Jerison \cite{DeJ}, implies that $4 < k^* \leq 7$ but the exact value is still an open question. 

The assumptions for this section are the same as for Section \ref{bdryreg}: $u$ is an almost 
minimizer for $J^+$ in $\Omega \subset \mathbb R^n$, and $q_+$ is H\"older continuous,
bounded, and bounded below. We still denote by $\mathcal R$ the set of regular points of 
$\Gamma^+(u) = \Omega \cap \partial \{u > 0\}$; see Definition \ref{defn:flatpoints}.
Here is the main result of this section.

\begin{theorem}\label{thm:dimsingularset}
Let $u$ be an almost-minimizer of $J^+$ in $\Omega \subset \mathbb R^n$,
assume that $q_+$ is H\"older continuous, bounded, and bounded below, and let $s > n-k^*$. 
Then $\mathcal H^s(\Gamma^+ \sm \mathcal R) = 0$. 
\end{theorem}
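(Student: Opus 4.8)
The strategy is the standard Federer-type dimension reduction argument, following Weiss \cite{W} closely, but using the almost-monotonicity of $W(u,x_0,\cdot)$ (Proposition \ref{prop:monotonicitytwo}) in place of exact monotonicity. First I would set up the preliminaries: by Lemma \ref{blowupsexistandarehomogenous1} blow-up limits exist along subsequences, by Lemma \ref{blowupsexistandarehomogenous2} every blow-up limit $u_\infty$ at $x_0 \in \Gamma^+(u)$ is a one-homogeneous global minimizer of $J^{\infty,+}$ with constant $\lambda_+ = q_+(x_0)$, and $W_{x_0}(u_\infty,\cdot) \equiv W(u,x_0,0)$. Proposition \ref{flatpointsregularpoints} tells us $x_0 \in \mathcal R$ iff some (equivalently every) blow-up is a half-plane solution, and Corollary \ref{flatpointsareopen} says $\mathcal R$ is open in $\Gamma^+(u)$, so $\mathcal S := \Gamma^+(u) \sm \mathcal R$ is relatively closed. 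Rescaling $q_+$ by $q_+(x_0)$, we may normalize $\lambda_+ = 1$, so the relevant global objects are one-homogeneous global minimizers of $J^{\infty,+}$ with $\lambda_+ = 1$.

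\textbf{Key steps.} (i) \emph{Upper semicontinuity and the stratification.} Using \eqref{a5.6}, for $x$ near $x_0$ the sequence $\{W(u,x,2^{-k}) + C 2^{-k\alpha}\}$ is nonincreasing, so $x \mapsto W(u,x,0)$ is upper semicontinuous on $\Gamma^+(u)$; this was already observed in the proof of Corollary \ref{flatpointsareopen}. Moreover if $x_k \to x_0$ in $\Gamma^+(u)$, $r_k \to 0$, and $u_{r_k,x_k} \to w$ locally uniformly, then exactly as in the proof of Proposition \ref{Missmallestatflatpoints} (via Theorem 9.1 in \cite{DT} and the argument of \eqref{a6.30}--\eqref{a6.33}) $w$ is a one-homogeneous global minimizer with $\lambda_+ = q_+(x_0)$ and $W_{x_0}(w,\cdot) = \lim_k W(u,x_k,r_k\cdot) \le \liminf_k W(u,x_k,0)$; combined with monotonicity of $W_{x_0}(w,\cdot)$ and \eqref{a6.15} this pins down $W_{x_0}(w,r) = W(u,x_0,0)$ when the $x_k$ are chosen realizing $W(u,x_0,0) = \lim_k W(u,x_k,0)$. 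This ``closedness of blow-ups under varying base point'' is the engine of the reduction. (ii) \emph{The class of singular homogeneous minimizers is closed and has a spectral gap.} If $w_k$ are one-homogeneous global minimizers with $\lambda_+=1$ that are not half-plane solutions and $w_k \to w$, then by Theorem 9.1 in \cite{DT} and \eqref{a6.10} $w$ is again a one-homogeneous global minimizer, and by Lemma \ref{6gap} $|B(0,1)\cap\{w>0\}| > (1+\varepsilon(n))\frac{\omega_n}{2}$, hence $w$ is not a half-plane solution either. (iii) \emph{Dimension reduction.} Define $k^*$ as in the statement (the least dimension admitting a nontrivial one-homogeneous global minimizer of $J^{\infty,+}$ other than the half-plane solution; results of \cite{CJK}, \cite{JS}, \cite{DeJ} give $4 < k^* \le 7$, but we only need $k^* \ge 5$, i.e. Corollary \ref{low-dim}). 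One shows by induction on $n$: if $w$ is a one-homogeneous global minimizer on $\mathbb R^n$ and $z_0 \ne 0$ lies in $\Gamma^+(w)\sm\mathcal R(w)$, then a blow-up of $w$ at $z_0$ splits as a one-homogeneous global minimizer on $\mathbb R^{n-1}$ times a line (using that $w$ is translation-invariant in the $z_0$ direction, a consequence of homogeneity plus the monotonicity equality case, Lemma \ref{rem:wconstantimplieshomogenous}); iterating, $\mathcal S(w)$ must be contained in a linear subspace of dimension $\le n - k^*$. (iv) \emph{Federer's covering argument.} Combine (i)--(iii) in the usual way: if $\mathcal H^s(\mathcal S) > 0$ for some $s > n - k^*$, pick a point $x_0 \in \mathcal S$ of positive upper $s$-density, blow up at $x_0$ along the density-realizing scales, use (i) to get a homogeneous global minimizer $w$ whose singular set $\mathcal S(w)$ has positive $\mathcal H^s$-density at $0$; by homogeneity $\mathcal S(w)$ then has positive $\mathcal H^s$-measure, but by (iii) $\dim \mathcal S(w) \le n - k^* < s$, a contradiction. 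The standard bookkeeping (Federer's dimension reduction lemma, e.g. as in \S3--4 of \cite{W} or Appendix A of \cite{Si}) makes this precise; since all the ingredients transfer verbatim once we have the almost-monotone $W$ and the compactness/closedness statements above, I would simply cite the structure of Weiss's argument and indicate the modifications.

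\textbf{Main obstacle.} The only genuinely new point compared with the minimizer case is that $W(u,x_0,\cdot)$ is not exactly monotone, only almost monotone, so $W(u,x_0,0)$ is an upper, not lower, semicontinuous limit and one cannot directly say $W(u,x_0,r) \ge W(u,x_0,0)$ without the error term $C r^\alpha$. The resolution — already used repeatedly in Sections \ref{Consequences} and \ref{bdryreg} — is that the error $C\rho^\alpha$ is $o(1)$ under blow-up: in \eqref{a6.8} and \eqref{a6.33} the limiting Weiss functional of any blow-up is genuinely constant, so all the rigidity (homogeneity via Lemma \ref{rem:wconstantimplieshomogenous}, the gap via Lemma \ref{6gap}) applies to blow-up limits exactly as in the minimizing case. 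Thus the almost-minimizing error never actually obstructs the reduction; it only forces us to phrase the semicontinuity and compactness statements carefully, which we have done above. The translation-invariance step in (iii) deserves a careful write-up: one must check that a blow-up of a one-homogeneous minimizer $w$ at a nonzero free boundary point is invariant along the ray through that point, which follows because homogeneity of $w$ forces $W(w,z_0,\cdot)$ to be constant (the $\frac{1}{t^{n+2}}$-integral in \eqref{a5.6} vanishes since $w(x) = \nabla w(x)\cdot x$ identically for a $1$-homogeneous function), and Lemma \ref{rem:wconstantimplieshomogenous} then yields $1$-homogeneity about $z_0$ as well, which together with $1$-homogeneity about $0$ gives translation-invariance along $\mathbb R z_0$. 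I expect this to be the step requiring the most care, but it is entirely parallel to \S3 of \cite{W}.
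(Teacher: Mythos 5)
Your overall architecture is the same as the paper's: both proofs follow Weiss's dimension-reduction scheme, using the almost-monotonicity of $W$ (Proposition \ref{prop:monotonicitytwo}) to get homogeneity of blow-ups, Proposition \ref{Missmallestatflatpoints} to show that singular points of the approximating rescalings cannot accumulate at regular points of the limit (this is exactly what the paper isolates as Lemmas \ref{covertheblowup}--\ref{measurepassestoblowups}), and then Federer-type bookkeeping plus Lemmas 3.1--3.2 of \cite{W} for the reduction to dimension $k^*$. So steps (i), (ii), (iv) of your plan are in substance the paper's proof.

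There is, however, a genuine error in the justification you offer for the crucial splitting step (iii). You claim that for a one-homogeneous global minimizer $w$ and a free boundary point $z_0\neq 0$, the quantity $W(w,z_0,\cdot)$ is constant because ``$w(x)=\nabla w(x)\cdot x$ makes the integral in \eqref{a5.6} vanish,'' and you then invoke Lemma \ref{rem:wconstantimplieshomogenous} to conclude that $w$ itself is one-homogeneous about $z_0$, hence translation-invariant along $\R z_0$. This misreads \eqref{a5.6}: that formula is written after normalizing $x_0=0$ in the proof, and for a general center the integrand is $\bigl(w(x)-\nabla w(x)\cdot(x-z_0)\bigr)^2$, which does not vanish for a function homogeneous about the origin. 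Indeed, if your claim were true, every one-homogeneous global minimizer would be invariant along every line joining $0$ to a point of its free boundary, hence along the span of the $(n-1)$-dimensional cone $\Gamma^+(w)$, and would therefore be a half-plane solution; this contradicts the existence of singular minimizing cones in dimension $k^*\le 7$ (\cite{DeJ}), i.e.\ the very objects that make the theorem nontrivial. What is true---and what the paper uses by citing Lemma 3.1 of \cite{W}---is that a \emph{blow-up} of $w$ at $z_0$ (not $w$ itself) is invariant in the direction $z_0$; the standard proof uses the scaling identity $w_{z_0,r}(y+tz_0)=w_{z_0,\,r/(1+rt)}(y)$, which follows from homogeneity about the origin, rather than any constancy of $W(w,z_0,\cdot)$. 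With that step repaired (and Weiss's Lemma 3.2 for the statement that the pushforward to $z_0^\perp$ is again a global minimizer), your argument goes through; note also that your assertion that $\mathcal S(w)$ is ``contained in a linear subspace of dimension $\le n-k^*$'' is stronger than what the iteration yields and is not needed---the correct bookkeeping is the upper semicontinuity of $\H^s_\infty$ of singular sets under blow-up followed by Federer's argument, as in the paper.
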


We now have an analogue of Theorem 4.1 in \cite{W}, which says that if $n \leq k^*$ then the singular set
$\Gamma^+(u) \sm \mathcal R$ consists of at most isolated points.

\begin{lemma}\label{lessthankstarisolatedpoints}
Let $u$ be an almost minimizer of $J^+$ in $\Omega \subset \mathbb R^n$ and assume $n \leq k^*$. 
Then $\Gamma^+(u) \sm \mathcal R$ is composed of isolated points.
\end{lemma}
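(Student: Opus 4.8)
\textbf{Proof plan for Lemma \ref{lessthankstarisolatedpoints}.}

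The plan is to argue by contradiction using a blow-up/compactness argument, modeled on Weiss's Theorem 4.1 in \cite{W}, but adapted to the almost-minimizer setting via the tools already developed above. Suppose $x_0 \in \Gamma^+(u) \sm \mathcal R$ is not isolated in the singular set; then there is a sequence $x_j \in \Gamma^+(u) \sm \mathcal R$ with $x_j \to x_0$ and $x_j \neq x_0$. Set $r_j = |x_j - x_0| \to 0$ and consider the rescaled functions $u_j = u_{r_j, x_0}(x) = r_j^{-1} u(r_j x + x_0)$ as in \eqref{eqn:blowups}. By Lemma \ref{blowupsexistandarehomogenous1} we may pass to a subsequence so that $u_j \to u_\infty$ uniformly on compact sets, and by Lemma \ref{blowupsexistandarehomogenous2} the limit $u_\infty$ is a $1$-homogeneous global minimizer for $J^{\infty,+}$ with constant $\lambda_+ = q_+(x_0)$. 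Since $x_0 \notin \mathcal R$, Proposition \ref{flatpointsregularpoints} tells us $u_\infty$ is not a half-plane solution. The rescaled points $y_j = r_j^{-1}(x_j - x_0)$ satisfy $|y_j| = 1$, so after a further extraction $y_j \to y_\infty$ with $|y_\infty| = 1$.

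The key step is to show that $y_\infty$ is also a singular point of $\Gamma^+(u_\infty)$, i.e.\ $y_\infty \in \Gamma^+(u_\infty) \sm \mathcal R_\infty$, where $\mathcal R_\infty$ denotes the regular set of $u_\infty$. First, $y_\infty \in \Gamma^+(u_\infty)$: this follows from the nondegeneracy of almost-minimizers (Theorem 10.2 in \cite{DT}) together with the Lipschitz bounds, exactly as in the proof of \eqref{a2.33} — interior points of $\{u_\infty > 0\}$ and interior points of $\{u_\infty = 0\}$ are preserved under the Hausdorff limit of the sets $\overline{\{u_j > 0\}}$, so the free boundaries converge locally in Hausdorff distance and $y_\infty$, being the limit of free boundary points $y_j$, lies on $\Gamma^+(u_\infty)$. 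Second, $y_\infty \notin \mathcal R_\infty$: here I would use the upper semicontinuity of the Weiss density. Since each $x_j \notin \mathcal R$, \eqref{a6.24} gives $W(u, x_j, 0) > (1 + \varepsilon(n)) q_+(x_j)^2 \frac{\omega_n}{2}$. Using the identity \eqref{a6.33}-type computation (the argument in Proposition \ref{Missmallestatflatpoints} showing $W_{x_0}(u_\infty, r) = \lim_j W(u, x_j, \rho_j r)$, here with the single base point $x_0$ and the relation $W(u, x_j, t) = W_{x_j}(u_{r_j,x_0}, (t-\text{shift}))$ handled by translation in the limit), one transfers the density lower bound to $W_{x_0}(u_\infty, y_\infty, r) \geq (1+\varepsilon(n)) q_+(x_0)^2\frac{\omega_n}{2}$ for all $r>0$, hence $W(u_\infty, y_\infty, 0) > q_+(x_0)^2 \frac{\omega_n}{2}$, so by Proposition \ref{flatpointsregularpoints} (applied to the minimizer $u_\infty$) $y_\infty$ is singular for $u_\infty$.

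Now I exploit homogeneity: since $u_\infty$ is $1$-homogeneous, the density $W(u_\infty, \cdot, 0)$ is constant along rays through the origin, and more precisely, by the standard dimension-reduction argument (Federer-type, as in \cite{W} Section 3), one can consider a blow-up of $u_\infty$ at the point $y_\infty \neq 0$; this blow-up is a $1$-homogeneous global minimizer on $\R^n$ that additionally is translation-invariant in the direction $y_\infty$, hence descends to a $1$-homogeneous global minimizer on $\R^{n-1}$ which is still singular (not a half-plane solution) — this uses that the half-plane solution property is preserved/reflected under splitting off a translation-invariant direction, again via Proposition \ref{flatpointsregularpoints} and \eqref{eqn:wononehomog}. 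Iterating this reduction $n - k^*$ times (here since $n \leq k^*$, even one step suffices to reach dimension $< k^*$, or we reduce all the way down) produces a nontrivial $1$-homogeneous singular global minimizer in dimension $m < k^*$; but by the definition of $k^*$ — using the classification results of \cite{AC} ($n=2$), \cite{CJK} ($n=3$), \cite{JS} ($n=4$) and the definition of $k^*$ as the first dimension admitting a nontrivial homogeneous minimizer — every $1$-homogeneous global minimizer for $J^{\infty,+}$ in dimension $m < k^*$ is a half-plane solution. This contradiction shows no such accumulating sequence $x_j$ exists, so $\Gamma^+(u) \sm \mathcal R$ consists of isolated points.

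\textbf{Main obstacle.} The delicate point is the transfer of the Weiss-density lower bound from the singular points $x_j$ to the blow-up limit: one needs the convergence $W_{x_0}(u_\infty, y_\infty, r) = \lim_j W(u, x_j, r_j r)$, which requires strong $W^{1,2}_{\mathrm{loc}}$ convergence of $\nabla u_j$ (available from Theorem 9.1 in \cite{DT}), the convergence of the volume terms $|B \cap \{u_j > 0\}|$ (available from \eqref{a6.10} and nondegeneracy, which needs $q_+ \geq c_0 > 0$), and careful bookkeeping of the centering: the points $x_j$ are translating within the blow-up, so one works with $W$ centered at the moving points $y_j$ and passes to the limit using the Hölder continuity of $q_+$ as in \eqref{a6.32}. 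The dimension-reduction step (splitting off the $y_\infty$ direction) is standard but must be justified in the almost-minimizer framework by noting that the relevant objects ($u_\infty$ and its further blow-ups) are genuine global minimizers, so Weiss's original arguments in \cite{W} apply verbatim to them.
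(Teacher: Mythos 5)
Your plan is correct and is essentially the paper's own argument: blow up at the accumulation point with scales $|x_j-x_0|$, use the Weiss density and its almost-monotonicity (with the moving-center limiting argument) to see that the limit $y_\infty\in\partial B(0,1)$ of the rescaled singular points is a singular point of the homogeneous blow-up, and then apply Weiss's Lemmas 3.1--3.2 to descend to a singular one-homogeneous global minimizer in dimension $n-1<k^*$, contradicting the definition of $k^*$. The only cosmetic difference is that the paper runs the density-transfer step contrapositively (regularity of the blow-up away from $0$ would force $x_k\in\mathcal R$), while you transfer the lower bound $W(u,x_j,0)>(1+\varepsilon(n))q_+^2(x_j)\frac{\omega_n}{2}$ directly to $y_\infty$; both are valid, and your aside about iterating the reduction is harmless since, as you note, one step already lands below $k^*$.
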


\begin{proof}
Assume that there is a sequence of points $x_k \in \Gamma^+(u) \sm \mathcal R$ 
such that $x_k \rightarrow x_0 \in \Gamma^+(u)$. Set 
$\rho_k = |x_k - x_0|$ and define a blow-up sequence by 
$u_{k, x_0}(x) = \rho_k^{-1} u(\rho_kx+ x_0)$. 
Passing to a subsequence we may assume that $u_{k,x_0}$ converges to $u_0$
(see Lemma~\ref{blowupsexistandarehomogenous1})
and by Lemma~\ref{blowupsexistandarehomogenous2} $u_0$ is a
a homogeneous global minimizer, with $\lambda_+ = q_+(x_0)$ in \eqref{eqn2.4}. 
Further passing to a subsequence we may assume that 
$\frac{x_k-x_0}{\rho_k} \rightarrow y_0 \in \d B(0,1)$. 
Suppose that $\partial \{u_0 > 0\}$ is non-singular away from the origin (and in particular at $y_0$).
By Proposition \ref{flatpointsregularpoints},  this also means that $y_0 \in \mathcal R$
(with respect to $u_0$), and the definition \eqref{a6.12} of $\mathcal R$ and \eqref{a6.4} yield that
for each $\varepsilon > 0$ we can find $r_0 > 0$ such that 
$$
W(u_0, y_0, r) - q_+^2(x_0) \,\frac{\omega_n}{2} < \varepsilon/4
\ \text{ for } r < r_0.
$$
By the proof of Lemma~\ref{blowupsexistandarehomogenous2} (slightly modified because
now we take a function $W$ centered at a different point), we get that 
$$
W\left(u_k, \frac{x_k-x_0}{\rho_k}, r\right) - q_+^2(x_0) \,\frac{\omega_n}{2} < \varepsilon/2,
$$
where in the definition \eqref{eqn:mono2a} of $W(u_k, \frac{x_k-x_0}{\rho_k}, r)$ 
we use the constants $q_+(x_k)$ instead of $q_+(x_0)$, but this does not matter because $q_+$
is H\"older continuous and $x_k$ tends to $x_0$.

Then by almost-monotonicity (Proposition \ref{prop:monotonicitytwo}),
\begin{equation}\label{mxkismall}
\begin{aligned} 
W(u, x_k, 0) - q_+^2(x_k) \, \frac{\omega_n}{2} 
&\leq W(u, x_k,r \rho_k) + C(r\rho_k)^{\alpha} - q_+^2(x_k) \, \frac{\omega_n}{2}\\
& = W(u_k, \frac{x_k -x_0}{\rho_k}, r) - q_+^2(x_0) \, \frac{\omega_n}{2} 
+ C(r\rho_k)^\alpha  + C |x_0-x_k|^\alpha 
\\
&< \varepsilon/2 + C(r\rho_k)^\alpha + C |x_0-x_k|^\alpha  < \varepsilon,
\end{aligned}
\end{equation}
for $k$ large enough. But this implies, by Proposition \ref{Missmallestatflatpoints}, that 
$x_k \in \mathcal R$, a contradiction. 

Thus we can find $\wt x \in \partial \{u_0 > 0\}$, $\wt x \neq 0$,  
such that $\{u_0 > 0\}$ is not flat at $\wt{x}$. Consider $u_{00}$, any 
blowup limit of $u_0$ at $\wt{x}$. By Lemma 3.1 in \cite{W}, $u_{00}$ is constant in the direction 
of $\wt{x}$ and the whole line $t\wt{x}$ consists of singular points. 
Lemma 3.2 in \cite{W} tells us that $\widehat{u}$, the pushforward of $u$ under the projection 
map $\mathbb R^n \mapsto \wt{x}^\perp$, is a global minimizer with a singularity at $0$. 
However, $\dim\wt{x}^\perp < k^*$, which contradicts the definition of $k^*$. 
Ergo, our sequence $\{x_k \}$ in $\Gamma^+ \sm \mathcal R$ 
could not have an accumulation point in $\Gamma^+$. 
\end{proof}

The following is a version of Lemma 4.2 in \cite{W}. 

\begin{lemma}\label{covertheblowup}
Let $u$ be an almost minimizer of $J^+$, $x_0 \in \Gamma^+(u)$, and let 
$$
u_0 = \lim_{k\rightarrow \infty} \frac{u(x_0+ \rho_kx)}{\rho_k q_+(x_0)} 
= :  \lim_{k\rightarrow \infty} u_k(x)
$$
be any (normalized) blow-up limit of $u$ at $x_0$.
Call $\Sigma_k$ the singular part of $\Gamma^+(u_k)$ and $\Sigma_0$ 
the singular part of $\Gamma^+(u_0)$. 
Then for every compact set $K \subset \R^n$ and open set $U \subset \R^n$ such that 
$K \cap \Sigma_0 \subset U$, there is a $k_0 < \infty$ such that 
$\Sigma_k \cap K \subset U$ for $k\geq k_0$. 
\end{lemma}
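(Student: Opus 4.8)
\textbf{Proof proposal for Lemma \ref{covertheblowup}.}

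The plan is to argue by contradiction, exploiting the uniform convergence $u_k \to u_0$ together with the quantitative characterization of regular points furnished by the Weiss functional (Proposition \ref{flatpointsregularpoints}, Lemma \ref{6gap}, and especially Proposition \ref{Missmallestatflatpoints}). Suppose the conclusion fails: then there is a compact $K$, an open $U \supset K \cap \Sigma_0$, a subsequence (not relabeled), and points $y_k \in \Sigma_k \cap K \setminus U$. Since $K \setminus U$ is compact, after passing to a further subsequence $y_k \to y_\infty \in K \setminus U$, so in particular $y_\infty \notin \Sigma_0$. First I would check that $y_\infty \in \Gamma^+(u_0)$: indeed $u_k(y_k) = 0$ and $u_k \to u_0$ uniformly on compacta gives $u_0(y_\infty) = 0$, while nondegeneracy of the $u_k$ near their free boundaries (Theorem 10.2 in \cite{DT}, uniform in $k$ because the $u_k$ are uniformly Lipschitz and the relevant coefficients $q_+(x_0 + \rho_k x)/q_+(x_0)$ tend to $1$) produces, in any ball $B(y_\infty,\epsilon)$, points where $u_k$ is bounded below, hence points where $u_0 > 0$; so $y_\infty \in \partial\{u_0 > 0\}$. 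Since $y_\infty \notin \Sigma_0$, we get $y_\infty \in \mathcal R$ with respect to $u_0$.

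The heart of the argument is then to transfer flatness from $u_0$ at $y_\infty$ back to the $u_k$ at $y_k$. Because $y_\infty \in \mathcal R(u_0)$, the definition \eqref{a6.12} and the almost-monotone limit \eqref{a6.4} — here $u_0$ is an honest global minimizer, so $W(u_0,\cdot,\cdot)$ is genuinely monotone — give, for every $\varepsilon > 0$, a radius $r_0 > 0$ with $W_{x_0}(u_0, y_\infty, r) < q_+^2(x_0)\frac{\omega_n}{2} + \varepsilon/4$ for $r < r_0$ (writing $W_{x_0}$ for the constant-coefficient Weiss functional associated to $\lambda_+ = q_+(x_0)$, as in \eqref{a6.6}). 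Next I would pass to the $u_k$: exactly as in the proof of Lemma \ref{blowupsexistandarehomogenous2} and in the chain \eqref{a6.30}--\eqref{a6.33} of Proposition \ref{Missmallestatflatpoints}, the $L^2_{\mathrm{loc}}$ convergence of gradients (from Theorem 9.1 in \cite{DT}) and the volume convergence of the positivity sets give $W_{x_0}(u_k, y_k, r) \to W_{x_0}(u_0, y_\infty, r)$ for fixed $r$, where in $W_{x_0}(u_k, y_k, r)$ we may use the constant $q_+(x_0)$ (switching to $q_+(x_k + \rho_k y_k)$ only changes things by $O(|x_k - x_0|^\alpha + (\rho_k|y_k|)^\alpha) \to 0$, since $q_+$ is Hölder and the base points converge to $x_0$). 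Hence for $k$ large, $W_{x_0}(u_k, y_k, r) < q_+^2(x_0)\frac{\omega_n}{2} + \varepsilon/2$.

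Now I would unwind the rescaling: $W_{x_0}(u_k, y_k, r) = W(u, x_0 + \rho_k y_k, \rho_k r)$ by \eqref{wunderblowups}, and then almost-monotonicity of the true Weiss functional $W(u,\cdot,\cdot)$ (Proposition \ref{prop:monotonicitytwo}) bounds $W(u, x_0 + \rho_k y_k, t)$ for all $t \le \rho_k r$ by $W(u, x_0 + \rho_k y_k, \rho_k r) + C(\rho_k r)^\alpha$; correcting the coefficient from $q_+(x_0)$ to $q_+(x_0 + \rho_k y_k)$ costs another $O(|x_0 + \rho_k y_k - x_0|^\alpha)$. Choosing $\varepsilon = \varepsilon_\sigma/2$ with $\varepsilon_\sigma$ small (smaller than $\varepsilon(n)$ of Lemma \ref{6gap}), this says $W(u, x_0 + \rho_k y_k, 2\rho) \le (1 + \varepsilon_\sigma) q_+(x_0 + \rho_k y_k)^2 \frac{\omega_n}{2}$ for a suitable small $\rho$, and then Proposition \ref{Missmallestatflatpoints} (applied on a fixed compact neighborhood of $x_0$ containing all $x_0 + \rho_k y_k$) forces $x_0 + \rho_k y_k \in \mathcal R$, i.e. $y_k \in \mathcal R(u_k)$, i.e. $y_k \notin \Sigma_k$ — contradicting $y_k \in \Sigma_k$. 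This closes the argument.

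\textbf{Main obstacle.} The delicate point is not any single estimate but the bookkeeping of \emph{which} Weiss functional is used where: the blow-up limit $u_0$ is a bona fide minimizer with a strictly monotone, constant-coefficient $W_{x_0}$, whereas the pre-limit objects $u_k$ (equivalently $u$ at the moving centers $x_0 + \rho_k y_k$) only satisfy the \emph{almost}-monotonicity of Proposition \ref{prop:monotonicitytwo}, with variable coefficients. One must verify that all the coefficient-swap errors ($|x_0 + \rho_k y_k - x_0|^\alpha$ and $(\rho_k r)^\alpha$) genuinely tend to $0$ uniformly, which requires that $\{y_k\}$ stays bounded — guaranteed here since $y_k \in K$ — and that Proposition \ref{Missmallestatflatpoints} can be applied with constants uniform over the compact set $K' = \{x_0\} \cup \overline{\bigcup_k \{x_0 + \rho_k y_k\}}$, which is compact in $\Omega$ for $k$ large. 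Once one is careful that the convergence $W_{x_0}(u_k, y_k, r) \to W_{x_0}(u_0, y_\infty, r)$ holds with the center $y_k$ also moving (a harmless variant of \eqref{a6.30}, since the relevant integrals are over balls $B(y_k, r) \to B(y_\infty, r)$ and the $\nabla u_k$ converge in $L^2_{\mathrm{loc}}$), the rest is routine.
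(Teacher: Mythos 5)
Your proposal is correct and follows essentially the same route as the paper's proof: argue by contradiction with $y_k \in \Sigma_k \cap K \setminus U$ converging to a limit point, note that the limit is a regular point of $u_0$ so its Weiss density is close to $q_+^2(x_0)\frac{\omega_n}{2}$ at small radii, transfer this to $W$ at $y_k$ (equivalently at $x_0+\rho_k y_k$) by the limiting argument, invoke almost-monotonicity of Proposition \ref{prop:monotonicitytwo}, and conclude via Proposition \ref{Missmallestatflatpoints} that $y_k$ is a flat point, a contradiction. Your write-up merely spells out more explicitly two points the paper leaves implicit, namely that the limit point lies in $\Gamma^+(u_0)$ (via nondegeneracy) and the bookkeeping of the coefficient errors $O(|x_0+\rho_k y_k - x_0|^\alpha + (\rho_k r)^\alpha)$.
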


\begin{proof}
Recall that the singular set of $u$ is $\Gamma^+(u) \sm \mathcal R$, and similarly for $u_k$ and $u_0$.
Assume, in order to obtain a contradiction, that there are $y_k \in (\Sigma_k \cap K)\setminus U$, which, passing to a subsequence, 
we may assume converge a limit $y_0$. Notice that $y_0 \in \Gamma^+(u_0) \cap K\setminus U$
because this set is closed. 

By assumption $y_0$ is a flat point of $\partial \{u_0 > 0\}$, so there exists $r_0 > 0$
such that if $r < r_0$ then 
$$
W(u_0, y_0, r) - \frac{\omega_n}{2} < \varepsilon/4,
$$ 
where $\varepsilon > 0$ is as in Proposition \ref{Missmallestatflatpoints}. 
A limiting argument gives us that for $k$ large enough (which depends on $r$), 
$W(u_k, y_k, r) - \frac{\omega_n}{2} < \varepsilon/2$. 
By almost-monotonicity (Proposition \ref{prop:monotonicitytwo}) this implies that 
$W(u_k, y_k, 0) - \frac{\omega_n}{2} < \varepsilon/2 + Cr^\alpha$. If $r$ is small enough, 
so that $\varepsilon/2 + Cr^\alpha< \varepsilon$, Proposition \ref{Missmallestatflatpoints} implies that 
$y_k$ is a flat point of $u_k$ for large enough $k$. This is the desired contradiction.  

\end{proof}

The proof of Theorem \ref{thm:dimsingularset} will now follow 
exactly as in  \cite{W}. Let us simply recall (without proofs) the sequence of results that gives 
our dimension estimate. 

The following result follows from Lemma \ref{covertheblowup} and a covering argument. 

\begin{lemma}\label{contentpassestoblowups}
Keeping the notation from Lemma \ref{covertheblowup}, for any $0\leq m < \infty$, the estimate $\mathcal H^m_\infty(\Sigma_0\cap K) \geq \limsup_{k\rightarrow \infty} \mathcal H^m_\infty(\Sigma_k\cap K)$ holds.
\end{lemma}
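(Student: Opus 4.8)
The claim is a standard semicontinuity fact for Hausdorff content under Hausdorff convergence of the singular sets, made possible precisely by Lemma \ref{covertheblowup}.

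\begin{proof}
The plan is to use Lemma \ref{covertheblowup} to engulf $\Sigma_k \cap K$ inside a small neighborhood of $\Sigma_0 \cap K$, and then exploit the sub-additivity and monotonicity of the Hausdorff content $\mathcal{H}^m_\infty$. Fix $\varepsilon > 0$. By the definition of $\mathcal{H}^m_\infty$, we can choose a countable cover of the compact set $\Sigma_0 \cap K$ by open balls $\{B(z_i,s_i)\}_{i}$ such that
\begin{equation}\label{a10.cover}
\sum_i \omega_m s_i^m \leq \mathcal{H}^m_\infty(\Sigma_0 \cap K) + \varepsilon,
\end{equation}
where $\omega_m$ is the usual normalizing constant (one may take $\omega_m s^m = \diam(B(z,s))^m$ if one prefers the unnormalized content; the argument is identical). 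Since $\Sigma_0 \cap K$ is compact, finitely many of these balls already cover it; call their union $V$, which is open and satisfies $\Sigma_0 \cap K \subset V$, and \eqref{a10.cover} still holds for the finite subfamily.

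Now apply Lemma \ref{covertheblowup} with this open set $U := V$: there exists $k_0 < \infty$ such that $\Sigma_k \cap K \subset V = \bigcup_i B(z_i, s_i)$ for all $k \geq k_0$. Hence $\{B(z_i,s_i)\}$ is an admissible cover of $\Sigma_k \cap K$ as well, so by the definition of $\mathcal{H}^m_\infty$ and \eqref{a10.cover},
\begin{equation}\label{a10.bound}
\mathcal{H}^m_\infty(\Sigma_k \cap K) \leq \sum_i \omega_m s_i^m \leq \mathcal{H}^m_\infty(\Sigma_0 \cap K) + \varepsilon
\ \text{ for all } k \geq k_0.
\end{equation}
Taking $\limsup_{k \to \infty}$ in \eqref{a10.bound} gives $\limsup_{k\to\infty} \mathcal{H}^m_\infty(\Sigma_k \cap K) \leq \mathcal{H}^m_\infty(\Sigma_0 \cap K) + \varepsilon$, and since $\varepsilon > 0$ was arbitrary, the desired inequality $\mathcal{H}^m_\infty(\Sigma_0\cap K) \geq \limsup_{k\rightarrow \infty} \mathcal H^m_\infty(\Sigma_k\cap K)$ follows.

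There is no serious obstacle here; the only point requiring care is the passage from the countable cover realizing the content to a \emph{finite} open cover to which Lemma \ref{covertheblowup} applies — this is exactly where the compactness of $\Sigma_0 \cap K$ (which holds because $\Gamma^+(u_0)$ is closed and the singular part is relatively closed in it) is used, and it is also why the statement is phrased with $\mathcal{H}^m_\infty$ (content) rather than full Hausdorff measure, for which no such lower semicontinuity holds in general. One should note, as in \cite{W}, that $\Sigma_k$ and $\Sigma_0$ are indeed closed subsets of $\R^n$, so that $\Sigma_k \cap K$ is compact and the finite subcover step is legitimate.
\end{proof}
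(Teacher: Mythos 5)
Your proof is correct and is exactly the argument the paper intends: the paper omits the proof, saying only that the lemma ``follows from Lemma \ref{covertheblowup} and a covering argument,'' and your compactness/engulfing scheme is that argument. One small refinement: if $\mathcal H^m_\infty$ is defined via covers by \emph{arbitrary} sets, you should pass to a nearly-optimal cover by open \emph{sets} (enlarging each covering set slightly, which costs only $\varepsilon$) rather than insisting on balls, since requiring balls can change the content by a dimensional constant; the rest of your argument — finite subcover by compactness of $\Sigma_0\cap K$, Lemma \ref{covertheblowup} applied to the resulting open set, monotonicity of the content, and $\varepsilon\to 0$ — goes through verbatim.
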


We can then immediately deduce the following. 

\begin{lemma}\label{measurepassestoblowups}
Again let $u$ be an almost-minimizer for $J^+$ in dimension $n$ and
suppose that $\mathcal H^m(\Sigma \cap D) > 0$ for some open set $D$ 
(where $\Sigma$ is the singular set of $\Gamma^+(u)$).
Then there exists $x_0 \in D$ and a blowup limit, $u_0$, of $u$ at the point $x_0$,
such that $\mathcal H^m(\Sigma_0\cap \overline{B(0,1)}) > 0$, where $\Sigma_0$  
is the singular set of $\partial \{u_0 > 0\}$. 
\end{lemma}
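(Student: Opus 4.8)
\textbf{Proof plan for Lemma \ref{measurepassestoblowups}.} The statement is the standard ``dimension reduction transfers to blowups'' lemma, and the plan is to combine a density argument for Hausdorff measure with Lemma~\ref{contentpassestoblowups}. First I would recall the elementary fact (see, e.g., \cite{Ma}) that if $\mathcal H^m(\Sigma \cap D) > 0$, then for $\mathcal H^m$-almost every point $x_0 \in \Sigma \cap D$ the upper $m$-density is positive and finite; in particular we can pick $x_0 \in \Sigma \cap D$ and a constant $c > 0$ together with a sequence $\rho_k \downarrow 0$ such that
\begin{equation}\label{densitybound}
\mathcal H^m_\infty(\Sigma \cap B(x_0,\rho_k)) \geq c\,\rho_k^m \quad \text{for all } k.
\end{equation}
Here $\mathcal H^m_\infty$ is the $m$-dimensional Hausdorff content; the passage from measure to content is harmless since on a fixed bounded set the content is comparable to the measure up to the usual covering considerations, and the density estimate for $\mathcal H^m_\infty$ follows from the one for $\mathcal H^m$ by a routine argument (one only needs the lower density bound, which is the easy direction of the density theorem).

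Next I would rescale: set $u_k(x) = \rho_k^{-1} q_+(x_0)^{-1} u(x_0 + \rho_k x)$ as in the statement of Lemma~\ref{covertheblowup}, and pass to a subsequence (not relabeled) so that $u_k \to u_0$ uniformly on compact sets, where $u_0$ is a normalized blow-up limit of $u$ at $x_0$ (such a subsequence exists by Lemma~\ref{blowupsexistandarehomogenous1}, and $u_0$ is a homogeneous global minimizer by Lemma~\ref{blowupsexistandarehomogenous2}). The singular set $\Sigma_k$ of $\Gamma^+(u_k)$ is exactly the image of $\Sigma$ under $x \mapsto \rho_k^{-1}(x - x_0)$, because the regular/singular dichotomy is defined through the Weiss density $W(u,\cdot,0)$, which is invariant under this rescaling (cf. \eqref{wunderblowups} together with the fact that $q_+(x_0+\rho_k z) \to q_+(x_0)$ uniformly, so the slight change of constant in $W$ does not affect which points are regular). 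By the scaling behaviour of Hausdorff content, $\mathcal H^m_\infty(\Sigma_k \cap B(0,1)) = \rho_k^{-m}\,\mathcal H^m_\infty(\Sigma \cap B(x_0,\rho_k)) \geq c$ for every $k$, using \eqref{densitybound}.

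Finally I would apply Lemma~\ref{contentpassestoblowups} with $K = \overline{B(0,1)}$: it gives
\begin{equation}\label{finalchain}
\mathcal H^m_\infty(\Sigma_0 \cap \overline{B(0,1)}) \geq \limsup_{k\to\infty} \mathcal H^m_\infty(\Sigma_k \cap \overline{B(0,1)}) \geq c > 0,
\end{equation}
and since $\mathcal H^m \geq \mathcal H^m_\infty$ we conclude $\mathcal H^m(\Sigma_0 \cap \overline{B(0,1)}) > 0$, where $\Sigma_0$ is the singular set of $\partial\{u_0 > 0\}$. This is precisely the assertion of the lemma. The only mildly delicate point — and the step I would expect to require the most care — is the rescaling identity $\Sigma_k = \rho_k^{-1}(\Sigma - x_0)$ together with the claim that the blow-up $u_0$ of $u$ at $x_0$ has its singular set equal (or at least contained in the relevant limit) under the convergence; but this is handled by Lemma~\ref{covertheblowup} applied inside Lemma~\ref{contentpassestoblowups}, so in fact nothing new needs to be proved here beyond the density selection of $x_0$. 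Everything else is bookkeeping with Hausdorff content and scaling.
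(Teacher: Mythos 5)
Your argument is correct and is exactly the route the paper intends: the paper states this lemma without proof, deferring to Weiss's dimension-reduction scheme, which is precisely your combination of a density-point selection for $\mathcal H^m$ (equivalently $\mathcal H^m_\infty$), the scale-invariance of the regular/singular dichotomy giving $\Sigma_k=\rho_k^{-1}(\Sigma-x_0)$, and the upper semicontinuity of content from Lemma \ref{contentpassestoblowups} applied with $K=\overline{B(0,1)}$. No genuinely different ideas are involved, so nothing further is needed.
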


Finally Theorem \ref{thm:dimsingularset} follows.

\section{ A quantified version of the free boundary regularity theorem of Alt and Caffarelli}\label{appendix}

In this section we complete the proof of Theorem \ref{main-reg-theo} by showing \emph{flatness}  improvement estimates on 
weak solutions. A key feature is that at this point, we have transformed
our initial problem on almost-minimizers into a problem that only concerns harmonic functions, and more
specifically the weak minimizers defined below. Moreover, the proofs below follow the same
scheme as arguments of \cite{AC} and then \cite{KT2}. 
Because of this, we are able to go more rapidly over estimates that are very close to those of 
\cite{AC} and \cite{KT2}, and focus on those that are different.

To emphasize the similarities between the properties of $h_{x_0,\rho}$ obtained in 
Lemma \ref{uflat-hflat} and those described in Definition 7.1 in \cite{AC} or those studied in \cite{KT2},
we isolate some of the characteristics of $h_{x_0,\rho}$ for $x_0\in\partial\{h_{x_0,\rho}>0\}$ 
and $\rho>0$ as in Lemma \ref{uflat-hflat}. 
Set $v= h_{x_0,\rho}$, with $\rho = r^{\frac{1}{1+\tilde\gamma}}$, and set
$\tau=r^{\frac{\mu}{1+\tilde\gamma}}$, as we did in \eqref{a7.22}, but also replace 
$2\sigma$ by $\sigma$ and $e_0$ by $-e_0$ (that is, the ``positive" direction is where the zero set lies and the ``negative" direction is where the positivity set lies).
$v\in C(\overline{B(x_0,4r)})$, is harmonic on $\{v>0\}\cap B(x_0,4r)$, and for $x\in B(x_0,r)$
\begin{eqnarray}\label{a-h-properties}
\left\{ \begin{array}{ll}
v(x)=0 &\hbox{ if }  \langle x-x_0,e_0\rangle\ge \sigma r\\[2mm]
v(x)\ge -q_+(x_0)[\langle x-x_0,e_0\rangle +\sigma r] &\hbox{ if }  \langle x-x_0,e_0\rangle\le -\sigma r\\[2mm]
|\nabla v(x)|\le  q_+(x_0)(1+ \tau).\end{array}\right.
\end{eqnarray}
Moreover for $\mathcal H^{n-1}$-a.e $z\in\partial U 
\cap B(x_0,r)$
\begin{equation}\label{a-normal-derivative}
\frac{\partial^+ v}{\partial \nu}(z)\ge q_+(x_0)(1-\tau), 
\end{equation}
where, by Proposition \ref{rn-derivative},
$\Delta v=  \frac{\partial v}{\partial\nu}\, d\mathcal{H}^{n-1}\res\partial^\ast\{v>0\}$ 
in the sense that
\begin{equation}\label{eqn-a.1}
-\int \langle\nabla v, \nabla \zeta\rangle 
=\int_{\partial^\ast\{v>0\}} \zeta \, \frac{\partial^+ v}{\partial\nu}
\, d\mathcal{H}^{n-1} 
\hbox{  for all  }\zeta\in C^1_c(B(x_0,r)).
\end{equation}
In the present situation we do not need to worry about the regularity of $\partial\{v>0\}$,
because it is equal to $\partial U = \Gamma^+(u)$ near the support of $\zeta$, and we
could have integrated on $\partial \{v>0\}$ rather than the reduced boundary $\partial^\ast\{v>0\}$
because the difference has vanishing measure.

Furthermore Corollary \ref{cor3.2} ensures that there exist $0<c_{{min}}\le C_{{max}}<\infty$ such 
that for all $z\in \partial\{v>0\}\cap B(x_0,3r)$ and $0<s\le r$,
\begin{equation}\label{eqn-a.2}
c_{{min}}\le 
\frac{1}{s}\fint_{\partial B(z,s)} v\, d\mathcal{H}^{n-1}
\le C_{{max}}.
\end{equation}

By analogy with definitions 5.1 and 7.1 in \cite{AC} we define weak solutions and flat free boundary points.

\begin{defn}\label{weak-ap-sol}
A non-negative function $v$ is a weak solution in $B(x_0,4r)$ if 
\begin{enumerate}
\item $v\in C(\overline{B(x_0,4r)})$ is harmonic on $\{v>0\}\cap B(x_0,4r)$.
\item There exist $0<c_{{min}}\le C_{{max}}<\infty$ such that \eqref{eqn-a.2} holds
for all $z\in \partial \{v>0\}\cap B(x_0,3r)$ and $0<s\le r$. 
\item $\{ v > 0 \}$ is a set of finite perimeter in $B(x_0,\rho)$ for $0 < \rho < 4r$,
and \eqref{eqn-a.1} holds.
\end{enumerate}
\end{defn}

Here we added the condition on the finite perimeter so that we can easily integrate by parts
and talk about the reduced boundary. Similarly, we can take $\eqref{eqn-a.1}$ as a definition
of $\frac{\partial^+ v}{\partial\nu}$; we do not need to know that it can actually be computed
from $v$ as a derivative in the normal direction. But anyway, both things are true for our main
example $v = h_{x_0,\rho}$ above.

The weak solution $v$ comes with two constants $c_{{min}}$ and $C_{{max}}$,
which in the previous sections were estimated from properties of $q_+$, but observe here
that $q_+$ does not show up in the definition of a weak solution.

\begin{defn}\label{a-flat-point}
Let $\sigma_+, \sigma_-\in (0,1]$, $\tau\in (0,1/2)$. We say that 
\begin{equation}\label{a-flatness-def}
v\in F(\sigma_+,\sigma_-;\tau) \hbox{  in  } B(x_0,r)  \hbox{ in the direction } e_0 
\end{equation}
when
\begin{enumerate}
\item $v$ is a weak solution in $B(x_0,4r)$
\item $x_0\in \partial \{v>0\}$ and, for $x\in B(x_0,r)$,
\begin{eqnarray}\label{a-flatness-1}
\left\{ \begin{array}{ll}
v(x)=0 &\hbox{ if }  \langle x-x_0,e_0\rangle\ge \sigma_+ r\\[2mm]
v(x)\ge -q_+(x_0)[\langle x-x_0,e_0\rangle +\sigma_- r] 
&\hbox{ if }  \langle x-x_0,e_0\rangle\le -\sigma_- r.\end{array} \right.
\end{eqnarray}
\item 
\begin{equation} \label{a-flatness-2a}
\sup_{B(x_0,r)}|\nabla v(x)|\le q_+(x_0)(1+ \tau).
\end{equation}
and
\begin{equation}\label{a-flatness-2}
k(z)=\frac{\partial^+ v}{\partial \nu}(z)\ge q_+(x_0)(1-\tau)
\ \text{ for $\mathcal H^{n-1}$-a.e } z\in\partial^\ast \{v>0\}\cap B(x_0,r). 
\end{equation} 
\end{enumerate}
\end{defn} 

A few comments on this definition may help the reader 
get more familiar with the notion. The definition only depends on $q_+$
through the number $q_+(x_0)$, and incidentally this number could be estimated
from $v$, with a relative error of roughly $2\tau$, by comparing 
\eqref{a-flatness-2a} and \eqref{a-flatness-2}. So the variations of $q_+$
do not matter: we just use $q_+(x_0)$ as a normalization.

Some of our constraints (such as $x_0 \in \partial \{v>0\}$)
will concern generic points of $\partial \{v>0\}$, while others concern points
of the reduced boundary $\partial^\ast \{v>0\}$. We'll try to distinguish 
between the two, but when $v$ comes from an almost minimizer $u$ as in the sections above,
the two sets are almost the same because $\partial^\ast \{v>0\} \subset \partial \{v>0\}$
(as always) and
\begin{equation} \label{a8.9}
\H^{n-1}(B(x_0,3r) \cap \partial \{v>0\} \sm \partial^\ast \{v>0\}) = 0
\end{equation}
by the local uniform rectifiability properties of $\Gamma^+(u)$ that were proved above.
Possibly there is a simple argument that says that this stays true for any weak solution $v$, 
but we did not find it, so the reader that wants to feel safe could simply assume that \eqref{a8.9}
holds.

The main difference between Definition 7.1 in \cite{AC} and Definition \ref{a-flat-point} 
concerns the behavior of the derivative of $v$ at the boundary. 
A detailed analysis of the work in \cite{AC} reveals that condition \eqref{a-flatness-2}
(with the normal derivative for which we have \eqref{eqn-a.1})  
is enough to obtain some degree of improvement. Definition \ref{a-flat-point} can be 
understood as a perturbation of the case studied in \cite{KT2}, where  
the authors considered the case when $\tau=0$. Given the extent to which the arguments 
presented below are related to those in \cite{AC} and \cite{KT2} we only state the main results and 
describe in detail the proofs in which the condition concerning the behavior of the of 
the derivative of $v$ at the boundary plays a role.

\ms
The following preliminary technical lemma is closely related to Lemma 4.10
in \cite{AC}(see also Lemma 0.3 in \cite{KT2}).

\begin{lemma}\label{apb-lem0}
Let $v$ is a weak solution in $B(x_0,4r)$. Suppose that \eqref{a-flatness-2a} and
\eqref{a-flatness-2} hold. 
Let $z\in\partial\{v>0\}\cap B(x_0,r)$ 
and assume that there exists a ball
$B\subset\{v=0\}$ so that $z\in\partial B$. Then
\begin{equation}\label{apb-eqn501}
\liminf_{\mathop{x\to z}\limits_{\scriptstyle{x\in\{v>0\}}}} \frac{v(x)}{d(x,B)}
\ge q_+(x_0)(1-\tau).
\end{equation}
\end{lemma}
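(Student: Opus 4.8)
The statement is a one-sided (lower) estimate for the normal derivative of a weak solution at a point $z$ of the free boundary where the zero set contains a tangent ball $B$. The strategy is the classical one from Lemma 4.10 of \cite{AC} (see also Lemma 0.3 of \cite{KT2}): compare $v$ in a thin spherical shell near $z$ with an explicit harmonic barrier built from the fundamental solution, exploiting that $v$ is positive and harmonic in $\{v>0\}\cap B(x_0,4r)$, vanishes on the ball $B$, and has normal derivative bounded below by $q_+(x_0)(1-\tau)$ in the sense of \eqref{eqn-a.1} on $\partial^\ast\{v>0\}$.

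First I would normalize: set $a = q_+(x_0)$, translate so that $z=0$, and write $B = B(p,\rho_0)$ for the tangent ball, so $|p|=\rho_0$ and $v=0$ on $B$. It suffices to prove the inequality along the inward normal ray $x = -t p/|p|$, $t\downarrow 0$, and then note $d(x,B) = t$ along that ray; the general $\liminf$ follows since near $z$ the region $\{v>0\}$ is squeezed against $\partial B$ and $v$ is Lipschitz (by \eqref{a-flatness-2a}). Fix a small $\e>0$. The plan is to build, in an annular region $A = \{x : \rho_0 < |x-p| < \rho_0 + \delta\} \cap B(0, c\delta)$ for suitable small $\delta, c$, a barrier $w$ which is harmonic, vanishes on $\partial B(p,\rho_0)$, and satisfies $\partial w/\partial n = a(1-\tau) - \e$ on that inner sphere: concretely $w(x) = (a(1-\tau)-\e)\,\phi(|x-p|)$ where $\phi$ solves the radial Laplace equation (so $\phi(s) = c_1 + c_2 s^{2-n}$ for $n\ge 3$, $\phi(s)=c_1+c_2\log s$ for $n=2$) with $\phi(\rho_0)=0$, $\phi'(\rho_0)=1$. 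Then $w$ is an explicit subsolution-type comparison function; I want to show $v \ge w$ on $A\cap\{v>0\}$, which forces $\liminf_{t\to 0} v(-tp/|p|)/t \ge a(1-\tau)-\e$, and letting $\e\to 0$ finishes.

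To compare, I would argue by contradiction / maximum principle on the open set $V = A \cap \{v > 0\}$. On the part of $\partial V$ lying on the outer sphere $|x-p| = \rho_0+\delta$ and on $|x| = c\delta$, the non-degeneracy \eqref{eqn-a.2} together with the Lipschitz bound \eqref{a-flatness-2a} gives $v \ge c'\delta$ there (for $\delta$ small, using that $z\in\partial\{v>0\}$ so there are free boundary points arbitrarily close, hence balls of comparable radius where the average lower bound applies, then Harnack), while $w$ is $O(\delta)$ with a smaller constant once $c$ is chosen small; so $v\ge w$ on that portion of the boundary. On $\partial V \cap \{v=0\}$ (which includes $\partial B(p,\rho_0)\cap A$ and any other free-boundary pieces), we have $v=0$; here the subtlety is that $w$ could be positive on free-boundary points not lying on $\partial B$. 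This is exactly where \eqref{a-flatness-2} — the lower bound on the normal derivative — must be used, as in \cite{AC} Lemma 4.10: one shows via the representation \eqref{eqn-a.1} / \eqref{a-normal-derivative} that $v$ cannot vanish faster than the barrier at such points, i.e. one tests \eqref{eqn-a.1} against suitable cutoffs to get that $v - w$, extended by lower semicontinuity, attains no negative interior infimum touching the free boundary. The cleanest route is to instead compare $v$ with $w$ only on the region where $w \ge 0$ and use the fact that the distributional inequality $\Delta(v-w) \le 0$ fails only on $\partial^\ast\{v>0\}$ where the jump of $\partial v/\partial\nu$ dominates that of $w$ by \eqref{a-flatness-2}, so $v-w$ is superharmonic across the free boundary in the appropriate weak sense, and the minimum principle applies.

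\textbf{Main obstacle.} The delicate point is precisely the boundary behavior on the free boundary portion of $\partial V$ away from the tangent ball $B$: one must rule out that $v$ dips below the barrier there. In \cite{AC} this is handled by a careful argument combining the normal-derivative lower bound with the measure-theoretic structure of $\partial^\ast\{v>0\}$ and an integration by parts against the barrier; I expect to spend most of the work reproducing that argument in the present ``weak solution'' setting, being careful that we only have \eqref{a-flatness-2} on the \emph{reduced} boundary and that \eqref{a8.9} (or the rectifiability hypothesis) is what lets us ignore the non-reduced part. Everything else — the radial barrier computation, the non-degeneracy lower bound from \eqref{eqn-a.2}, the Lipschitz upper bound from \eqref{a-flatness-2a}, and the passage to the limit along the normal ray — is routine and parallels \cite{AC} and \cite{KT2} closely enough that I would only sketch it.
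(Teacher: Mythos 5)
Your barrier strategy has a genuine gap, and at its crux the sign is wrong. Extending $v$ by zero across the free boundary, \eqref{eqn-a.1} together with $k\ge 0$ says that $\Delta v = \frac{\partial^+v}{\partial\nu}\,\H^{n-1}\res\partial^\ast\{v>0\}$ is a \emph{nonnegative} measure, i.e.\ $v$ is subharmonic; since your radial barrier $w$ is harmonic, $v-w$ is subharmonic, not superharmonic, "across the free boundary in the appropriate weak sense". So the minimum principle you invoke to force $v\ge w$ on $A\cap\{v>0\}$ is not available, and the lower bound \eqref{a-flatness-2} pushes in the unhelpful direction here: a larger distributional Laplacian concentrated on $\partial^\ast\{v>0\}$ only makes it easier for $v$ to dip below a harmonic comparison function. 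There is a second, independent problem on the outer boundary: on $\{|x|=c\delta\}\cap\{v>0\}$ the free boundary comes arbitrarily close (it passes through $z$), and the Lipschitz bound \eqref{a-flatness-2a} forces $v\lesssim \dist(\cdot,\partial\{v>0\})$ there, so the claimed pointwise bound $v\ge c'\delta$ fails; moreover \eqref{eqn-a.2} only produces the crude constant $c_{\mathrm{min}}$, which has no relation to the sharp constant $q_+(x_0)(1-\tau)$ that the lemma must deliver. (Your reduction of the liminf to the normal ray at $z$ is also not justified, since the nearest point of $B$ to $x$ need not be $z$, but this is minor by comparison.)

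The missing idea — and the paper's actual route, following Lemma 4.10 of \cite{AC} and Lemma 0.3 of \cite{KT2} — is a blow-up combined with the weak identity, which supplies exactly the averaging that a pointwise barrier at one touching point cannot. One takes a sequence $y_k$ realizing $l=\liminf v(x)/d(x,B)$, blows up at the nearest points $x_k\in\partial B$ at scale $d_k=d(y_k,B)$, and shows the rescalings $v_k$ converge to $v_\infty(y)=l\,\langle y,e\rangle_+$. Then, testing \eqref{eqn-a.1} (rescaled) against $\zeta\ge 0$ and passing to the limit gives $\lim_k\int_{\partial\{v_k>0\}}\zeta h_k\,d\H^{n-1}= l\int_{\{\langle y,e\rangle=0\}}\zeta\,d\H^{n-1}$, while the divergence theorem applied to $\zeta e$ on the sets of finite perimeter $\{v_k>0\}$ yields $\liminf_k\int_{\partial\{v_k>0\}}\zeta\,d\H^{n-1}\ge\int_{\{\langle y,e\rangle=0\}}\zeta\,d\H^{n-1}$; combining these with $h_k\ge q_+(x_0)(1-\tau)$ a.e.\ on the reduced boundary gives $l\ge q_+(x_0)(1-\tau)$. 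If you want to keep a comparison-principle flavor, you would still need this blow-up/integration-by-parts mechanism (or an equivalent viscosity-style argument) to convert the a.e.\ reduced-boundary bound \eqref{a-flatness-2} into an asymptotic pointwise statement at the tangency point; the explicit radial barrier alone cannot do it.
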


\begin{proof} Without loss of generality assume that $q_+(x_0)=1$.
Let $l=\mathop{\liminf}\limits_{\mathop{x\to z}\limits_{\scriptstyle{x\in\{v>0\}}}}
\frac{v(x)}{d(x,B)}$. 
Choose a sequence $\{y_k\}_{k\ge 1}$ in $\{v>0\}$ that tends to $z$ and such that
$\frac{v(y_k)}{d(y_k,B)}$ tends to $l$.
Set $d_k=d(y_k,B)$ and choose $x_k\in\p B$ so that $|y_k-x_k|=d_k$.
Set $v_k(x)=d_k^{-1} v(d_kx+x_k)$ for $x\in B(0,2/d_k)$ 
and $z_k= d_k^{-1}(y_k-x_k)$. 
Without loss of generality we may assume that $z_k\to e$ as $k\to\infty$, with $|e|=1$, and 
that $v_k$ converges to some limit $v_\infty$ in a suitable sense.
We shall not get into details here, because the argument is the same as in \cite{KT2},
but one gets that $v_\infty(e)=l$ (using the uniform convergence of the $v_k$)
and $v_\infty(y)=l \langle y,e\rangle_+$ 
for $y\in B(0,1)$ (this time, using a detailed analysis of the blow-up speed of $\{v>0\}$ 
as well as the maximum principle).

Set $h_k(x) = \frac{\d^+ v}{\d \nu}(d_kx+x_k)$; 
then for $\zeta\in C^\infty_c(B(0,1))$, $\zeta\ge 0$,
\begin{equation}\label{apb-eqn3.35}
\int_{\partial\{v_k>0\}}\zeta h_k d\H^{n-1}= - \int_{\R^n}\nabla v_k\cdot\nabla\zeta
\mathop{\longrightarrow}\limits_{k\to\infty} - \int_{\R^n}\nabla
v_\infty\cdot\nabla\zeta = \int_{\{\langle y,e\rangle=0\}} l\zeta d\H^{n-1}
\end{equation}
by \eqref{eqn-a.1}, because the $\nabla v_k$ happen to converge weakly to $\nabla v_\infty$,
because $v_\infty(y)=l \langle y,e\rangle_+$, and by the reverse integration by parts.
Thus
\begin{equation}\label{apb-eqn3.36}
\lim_{k\to\infty}\int_{\partial\{v_k>0\}}\zeta h_kd\H^{n-1} 
= \int_{\{\langle y, e\rangle=0\}}l\zeta d\H^{n-1}.
\end{equation}

On the other hand since $\zeta \geq 0$ and by the divergence theorem
(recall that $\{ v > 0 \}$ is locally a set of finite perimeter), 
\begin{equation}\label{apb-eqn3.36A}
\int_{\partial\{v_k>0\}} \zeta \, d\H^{n-1} 
\ge \int_{\partial\{v_k>0\}} \zeta \langle e, \nu_k \rangle  \, d\H^{n-1} 
= \int_{\{v_k>0\}}\div(\zeta e).
\end{equation}
Since
\begin{equation}\label{apb-eqn3.36B}
\int_{\{v_k>0\}}\div(\zeta e) \mathop{\longrightarrow}\limits_{k\to\infty}
\int_{\{v_\infty>0\}} \div(\zeta e) = \int_{\partial\{v_\infty>0\}}\zeta
d\H^{n-1} = \int_{\langle y,e\rangle=0}\zeta d\H^{n-1},
\end{equation}
then by \eqref{apb-eqn3.36A} and \eqref{apb-eqn3.36B} 
\begin{equation}\label{apb-eqn505}
\lim_{k\to\infty} \int_{\partial\{v_k>0\}}\zeta d\H^{n-1}\ge \int_{\{\langle y,
e\rangle=0\}} \zeta d\H^{n-1}.
\end{equation}
Since by \eqref{a-flatness-2}  $\frac{\d^+ v}{\d \nu} \geq (1-\tau) q_+(x_0) = 1-\tau$  
for $\H^{n-1}$-a.e. point of $\partial^\ast \{v>0\} \cap B(x_0,r)$, 
\begin{equation} \label{apb-eqn506}
\lim_{k\to\infty} \int_{\p\{v_k>0\}}h_k\zeta d\H^{n-1}
\ge  (1-\tau)\limsup_{k\to\infty}
\int_{\p\{v_k>0\}}\zeta d\H^{n-1}
\end{equation}
and hence, by \eqref{apb-eqn3.36} and \eqref{apb-eqn505},
\begin{equation} \label{apb-eqn506b}
l\int_{\{\langle y,e\rangle=0\}}\zeta d\H^{n-1} 
\ge  (1-\tau)\int_{\{\langle y,e\rangle=0\}}\zeta d\H^{n-1}
\end{equation}
for any $\zeta\in C^\infty_c(B(1,0))$ such that $\zeta\ge 0$. Therefore
(\ref{apb-eqn506}) yields
\begin{equation}\label{apb-eqn507}
l\ge 1-\tau,
\end{equation}
which is the same as \eqref{apb-eqn501}

\end{proof}

\ms
The next two lemmata will play an important role in the proof. 
They are quite close to Lemmata 7.2 and 7.9 in \cite{AC} or Lemmata 0.4 and 0.5 in \cite{KT2},
but nonetheless we shall sketch their proof for the reader's convenience.

\begin{lemma}\label{apb-lem1}
Suppose $v$ is a weak solution in $B(x_0,4r)$. 
There exists $\sigma_n > 0$ (that depends only on $n$), such that if 
$0 < \sigma \leq \sigma_n$, $0<\tau \le \sigma$, $e\in\SS^n$, and
$v\in F(\sigma, 1, \tau)$ in $B(x_0, r)$ in the direction $e$,
then $v\in F(2\sigma, C\sigma;\tau)$ in $B(x_0, \frac{r}{2})$ in the direction $e$.
\end{lemma}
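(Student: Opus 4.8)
\textbf{Plan of proof for Lemma \ref{apb-lem1}.}
The statement says that if $v$ is flat on one side only (the constraint $\sigma_+ = \sigma$ is small, but $\sigma_- = 1$ is uninformative), then we can upgrade to two-sided flatness on a slightly smaller ball, with the ``negative side'' flatness $C\sigma$ proportional to $\sigma$. This is precisely the content of Lemma 7.2 in \cite{AC}, and the plan is to follow that argument, paying attention only to the places where our hypothesis \eqref{a-flatness-2} (a lower bound on the \emph{normal derivative}, rather than on $|\nabla v|$ or on $v$ directly as in \cite{AC}) enters. First I would recall that by \eqref{a-flatness-1} with $\sigma_+ = \sigma$, we know $v(x) = 0$ whenever $\langle x - x_0, e\rangle \geq \sigma r$, so $\{v > 0\} \cap B(x_0,r)$ lies in the slab $\{\langle x - x_0, e\rangle < \sigma r\}$; hence $x_0 \in \partial\{v > 0\}$ has a tangent exterior ball of radius comparable to $r$ on the positive side (for instance the ball $B(x_0 + 2\sigma r e + \rho e, \rho)$ with $\rho$ large, contained in $\{v = 0\}$, touching a nearby boundary point). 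This puts us in position to apply Lemma \ref{apb-lem0}.

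The core estimate is a lower bound of the form $v(x) \geq q_+(x_0)(1-\tau)[\langle x_0 - x, e\rangle - C\sigma r]$ for $x \in B(x_0, r/2)$ with $\langle x - x_0, e\rangle \leq -C\sigma r$, which gives the required $v \in F(2\sigma, C\sigma; \tau)$ (the $2\sigma$ in the positive direction is automatic, since $\sigma < 2\sigma$ and we only shrank the ball). The standard way to get this, following \cite{AC}, is a barrier/sup-convolution argument: one considers, for a boundary point $z$ near the ``bottom'' of the slab, the harmonic function in an annular region that is dominated by $v$, constructed so that its normal derivative at $z$ is controlled, and then uses Lemma \ref{apb-lem0} (applied at $z$, with the exterior ball coming from the slab structure and the non-degeneracy \eqref{eqn-a.2}) to conclude that $v$ grows at least at rate $q_+(x_0)(1-\tau)$ away from $\{v=0\}$ in the normal direction. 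The non-degeneracy \eqref{eqn-a.2} guarantees that $\partial\{v>0\}$ actually reaches down to near the bottom of the slab (otherwise $v$ would have to be too small on some sphere), which is what forces the two-sided flatness; and the linear growth estimate \eqref{apb-eqn501}, integrated, yields the lower bound in \eqref{a-flatness-1} with $\sigma_- = C\sigma$. Conditions \eqref{a-flatness-2a} and \eqref{a-flatness-2} for the smaller ball $B(x_0, r/2)$ are inherited from the larger ball essentially verbatim (they are pointwise/a.e.\ conditions and $B(x_0, 2r) \supset B(x_0, r)$), so no new work is needed there, and harmonicity of $v$ on $\{v>0\} \cap B(x_0, 2r)$ persists.

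The main obstacle, and the only place the argument genuinely differs from \cite{AC}, is that in \cite{AC} the flatness-improvement machinery is driven by the pointwise gradient condition $|\nabla v| \leq Q(1+\tau)$ combined with a \emph{pointwise} lower bound, whereas here the lower information \eqref{a-flatness-2} is only about the normal derivative $\partial^+ v/\partial\nu = k$ and only holds $\H^{n-1}$-a.e.\ on the reduced boundary. The role of Lemma \ref{apb-lem0} is exactly to convert this measure-theoretic normal-derivative bound into the pointwise \emph{liminf} growth statement \eqref{apb-eqn501}, via the weak convergence of $\nabla v_k$ and the divergence-theorem comparison between $\int_{\partial\{v_k>0\}} \zeta\, d\H^{n-1}$ and $\int_{\{\langle y,e\rangle=0\}}\zeta\,d\H^{n-1}$ carried out in its proof. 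So the strategy is: first establish the exterior-ball geometry from one-sided flatness plus non-degeneracy, then invoke Lemma \ref{apb-lem0} to get normal linear growth $q_+(x_0)(1-\tau)$, then integrate/compare with a half-plane barrier exactly as in Lemma 7.2 of \cite{AC} (or Lemma 0.4 of \cite{KT2}) to produce the $F(2\sigma, C\sigma;\tau)$ conclusion, taking $\sigma_n$ small enough that all the geometric constructions fit inside $B(x_0,2r)$ and the constant $C = C(n)$ absorbs the various covering constants. I would present the geometric setup and the application of Lemma \ref{apb-lem0} in detail and refer to \cite{AC,KT2} for the now-routine barrier comparison.
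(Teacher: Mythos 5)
Your high-level plan --- adapt Lemma 7.2 of \cite{AC} (equivalently Lemma 0.4 of \cite{KT2}), with Lemma \ref{apb-lem0} as the sole conduit through which the a.e.\ normal-derivative hypothesis \eqref{a-flatness-2} enters --- is exactly the route the paper takes. But your description of the core mechanism contains missteps that would derail the write-up if executed as stated. First, the exterior tangent ball cannot be taken ``from the slab structure'': since $v=0$ on $\{\langle x-x_0,e\rangle\ge\sigma r\}\cap B(x_0,r)$, every free boundary point lies at height at most $\sigma r$, so a ball contained in that region (such as your $B(x_0+2\sigma r e+\rho e,\rho)$) touches no point of $\partial\{v>0\}$ at all. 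The contact point must be manufactured: after normalizing, one takes the maximal vertical perturbation $D=\{x\in B(0,1): x_n<2\sigma-s\,\eta(\overline x)\}$ of the slab by a fixed bump $\eta$, and maximality of $s$ produces a touching point $z\in\partial D\cap\partial\{v>0\}$ carrying a large tangent ball inside $\{v=0\}$; it is at this $z$ that both Lemma \ref{apb-lem0} and the barrier estimate are anchored.

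Second, \eqref{apb-eqn501} is purely infinitesimal information at that single point and cannot be ``integrated'' to a linear lower bound. In the actual argument it is played off against an \emph{upper} barrier $V\ge v$ (harmonic in $D$, equal to $(1+\tau)(2\sigma-x_n)$ on $\partial D\setminus B(0,1)$ and to $0$ on $\partial D\cap B(0,1)$ --- note the barrier dominates $v$, not the reverse as in your sketch), whose normal derivative at $z$ is at most $1+C\sigma$. If $v\le V+K\sigma x_n$ on some interior ball $B(\xi,1/16)$ in the lower half, the auxiliary harmonic function $\omega_\xi$ and the Hopf boundary lemma force $\liminf v(x)/d(x,B)\le 1+C\sigma-c(n)K\sigma$ at $z$, contradicting \eqref{apb-eqn501} once $K$ is large since $\tau\le\sigma$; hence $v$ comes within $C\sigma$ of $V$ at some $x_\xi$, Harnack applied to the nonnegative harmonic function $V-v$ (after checking $v>0$ on $B(x_\xi,1/8)$) spreads this to the lower cap of $\partial B(0,3/4)$, and the Lipschitz bound \eqref{a-flatness-2a} transports it vertically to all of $B(x_0,r/2)$, yielding $\sigma_-=C\sigma$. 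This Hopf--Harnack--Lipschitz chain is the heart of the lemma and is precisely what your phrase ``the growth estimate, integrated'' glosses over. Third, the non-degeneracy \eqref{eqn-a.2} plays no role in this lemma, and the claim that it forces $\partial\{v>0\}$ to ``reach down to near the bottom of the slab'' is neither needed nor relevant: the conclusion is a lower bound on $v$ itself, not a statement about the location of the free boundary.
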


\ms
\begin{lemma}\label{apb-lem2}
Suppose $v$ is a weak solution in $B(x_0,4r)$.
Given $\theta\in(0,1)$ there exist $\sigma_{n,\theta}>0$ and
$\eta = \eta_{n,\theta} \in(0,1)$ so that if $\sigma\le \sigma_{n,\theta}$, 
$\tau\sigma^{-2}\le \sigma_{n,\theta}$, $x_0 \in\partial\{v>0\}$, and $v\in F(\sigma,\sigma;\tau)$ 
in $B(x_0,r)$ in the direction $e_{x_0,r}$, then $v\in F(\theta\sigma,1;\tau)$ in $B(x_0; \eta r)$
in some direction $e_{x_0,\eta r}$ such that
\begin{equation}\label{apb-eqn7}
|e_{x_0,r}-e_{x_0,\eta r}|\le C\sigma.
\end{equation}
\end{lemma}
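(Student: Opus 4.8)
The plan is to run the flatness-improvement argument of Alt and Caffarelli (\cite{AC}, Lemma 7.9) in the form used by Kenig and Toro (\cite{KT2}, Lemma 0.5), while keeping track of the error produced by the fact that in Definition \ref{a-flat-point} the size of $\nabla v$ on $\partial^\ast\{v>0\}$ is only controlled up to the multiplicative factor $1\pm\tau$ through \eqref{a-flatness-2a}--\eqref{a-flatness-2}, rather than being exactly $q_+(x_0)$. After a translation we may assume $x_0=0$; since the notion of weak solution, the classes $F(\cdot,\cdot;\tau)$, the conclusion, and the inequality $|e_{0,r}-e_{0,\eta r}|\le C\sigma$ are all invariant under the rescaling $v\mapsto r^{-1}v(r\,\cdot\,)$, we may assume $r=1$, and after dividing $v$ by $q_+(0)$ we may assume $q_+(0)=1$; finally we choose coordinates so that $e_{0,1}=e_n$. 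The hypothesis $v\in F(\sigma,\sigma;\tau)$ in $B(0,1)$ then says, for $x\in B(0,1)$, that $v\equiv0$ where $x_n\ge\sigma$, that $v(x)\ge-(x_n+\sigma)$ where $x_n\le-\sigma$, that $|\nabla v|\le1+\tau$, and that $\partial^+v/\partial\nu\ge1-\tau$ on $\partial^\ast\{v>0\}$; in particular, arguing as in the proof of Lemma \ref{reifenberg}, $\partial\{v>0\}\cap B(0,\tfrac12)$ lies in the slab $\{|x_n|\le\sigma\}$.

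The core of the argument is a linear-approximation estimate for the ``defect'' $D=v+x_n$. Let $V^+=\{v>0\}\cap B(0,\tfrac14)$, on which $v$ is harmonic, so that $D$ is harmonic on $V^+$ with $D(0)=0$ (because $0\in\partial\{v>0\}$). The two-sided flatness, together with the bound $|\nabla v|\le1+\tau$ (which makes $v$ a $(1+\tau)$-Lipschitz function on $B(0,1)$ with $v(x)\le(1+\tau)\,\dist(x,\partial\{v>0\})$), gives $|D|\le C(\sigma+\tau)\le C\sigma$ on $\partial V^+$: on the free-boundary part $D=x_n$ and $|x_n|\le C\sigma$, while on $V^+\cap\partial B(0,\tfrac14)$ the upper Lipschitz bound gives $D\le C\sigma$ and the one-sided lower bound gives $D\ge-C\sigma$; hence $\|D\|_{L^\infty(V^+)}\le C\sigma$ by the maximum principle. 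The step I expect to be the main obstacle is to upgrade this to: there exist $a\in\R^n$ with $|a|\le C\sigma$ and an exponent $\gamma_0=\gamma_0(n)\in(0,1)$ such that
\[
|D(x)-\langle a,x\rangle|\le C\big(\sigma\,|x|^{1+\gamma_0}+\tau\big)\qquad\text{for }x\in V^+\cap B(0,\tfrac18).
\]
This is the estimate obtained in \cite{AC} (Lemmata 7.4--7.8) and in \cite{KT2}: writing the free boundary near $0$ as a graph $x_n=f(x')$ with $|f|\le C\sigma$, so that $D=f$ there, one compares $D$ from above and from below with affine barriers, using the non-degeneracy \eqref{eqn-a.2}, the tangent-ball lower bound of Lemma \ref{apb-lem0}, the boundary Harnack inequality on the essentially flat domain $V^+$, and the representation \eqref{eqn-a.1} of $\Delta v$. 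The one new point relative to \cite{AC} is that the barriers must now be compared against \eqref{a-flatness-2a}--\eqref{a-flatness-2} rather than against an exact identity $|\nabla v|=1$ on the free boundary; this is where the additive term $C\tau$ above appears, and the hypothesis $\tau\le\sigma_{n,\theta}\sigma^2$ is exactly what makes that term negligible against $\sigma\,|x|^{1+\gamma_0}$ in the range of scales used below.

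Granting the displayed estimate, set $e'=(e_n-a)/|e_n-a|$, so that $|e_n-e'|\le C|a|\le C\sigma$, which is \eqref{apb-eqn7}. For $|x|\le\eta$ (where $\eta=\eta_{n,\theta}\in(0,\tfrac14)$ will be fixed in a moment) we have $v(x)=-x_n+D(x)=-|e_n-a|\,\langle x,e'\rangle+O\big(C(\sigma\eta^{1+\gamma_0}+\tau)\big)$, and comparing \eqref{a-flatness-2a} with \eqref{a-flatness-2} near $0$ forces $\big|\,|e_n-a|-1\,\big|\le C(\tau\eta^{-1}+\sigma\eta^{\gamma_0})$; hence $|v(x)+\langle x,e'\rangle|\le C(\sigma\eta^{1+\gamma_0}+\tau)$ on $B(0,\eta)$. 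Now fix $\eta$ so small that $C\eta^{\gamma_0}\le\tfrac12\theta$, and then $\sigma_{n,\theta}>0$ so small that whenever $\sigma\le\sigma_{n,\theta}$ and $\tau\le\sigma_{n,\theta}\sigma^2$ one has $C(\sigma\eta^{1+\gamma_0}+\tau)\le\theta\sigma\eta$. This yields both halves of \eqref{a-flatness-1} at scale $\eta$ in the direction $e'$ with $\sigma_+=\theta\sigma$ and $\sigma_-=1$: the first because $v\ge0$ and $v(x)\le-\langle x,e'\rangle+\theta\sigma\eta\le0$ once $\langle x,e'\rangle\ge\theta\sigma\eta$, and the second because $v(x)\ge-\langle x,e'\rangle-\theta\sigma\eta\ge-(\langle x,e'\rangle+\eta)$ (using $\theta\sigma\le1$). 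The remaining requirements of ``$v\in F(\theta\sigma,1;\tau)$ in $B(0,\eta)$'' in Definition \ref{a-flat-point} hold automatically: $v$ is still a weak solution in $B(0,4\eta)\subset B(0,1)$ since conditions (1)--(3) of Definition \ref{weak-ap-sol} are local and scale invariant and $0\in\partial\{v>0\}$, while \eqref{a-flatness-2a} and \eqref{a-flatness-2} already hold throughout $B(0,1)$, hence in $B(0,\eta)$, with the same base-point normalization $q_+(0)=1$. Undoing the normalizations completes the proof.
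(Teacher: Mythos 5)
Your reduction and the final bookkeeping are fine, but there is a genuine gap at exactly the step you flag as ``the main obstacle'': the two-sided linear approximation $|D(x)-\langle a,x\rangle|\le C\big(\sigma|x|^{1+\gamma_0}+\tau\big)$ for the defect $D=v+x_n$, with $|a|\le C\sigma$, is asserted with a citation to Lemmata 7.4--7.8 of \cite{AC} and to \cite{KT2}, but those lemmata do not contain (or imply) such a fixed-$\sigma$ quantitative estimate. They are statements about the limit $f$ of the \emph{non-homogeneous blow-ups} $f_j^{\pm}$ attached to a sequence of solutions with $\sigma_j\to0$ and $\tau_j\sigma_j^{-2}\to0$ inside a proof by contradiction; no rate in $\sigma$ and no additive error in $\tau$ is produced there. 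At a fixed flatness $\sigma$ nothing of the sort follows from soft arguments: $D$ is a harmonic function of size $C\sigma$ on a domain about whose boundary you only know that it lies in a slab of width comparable to $\sigma$ (it is not even known to be a graph at this stage --- this is precisely why one works with the two height functions $f_j^+$ and $f_j^-$), and a pointwise first-order expansion of $D$ at the boundary point $0$ with error $o(\sigma)|x|$ is essentially equivalent to the quantitative flatness improvement you are trying to prove, so the argument as written is circular in spirit. The free boundary conditions \eqref{a-flatness-2a}--\eqref{a-flatness-2} must be exploited in a serious way to get the linear bound, and the known ways of doing this are either the compactness scheme of \cite{AC}/\cite{KT2} or a De Silva-type Harnack iteration for viscosity solutions; the latter would be a legitimately different route, but it would have to be carried out and adapted to the present perturbed setting, not cited.

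For comparison, the paper's proof is the compactness scheme adapted to the $\tau$-perturbed gradient bounds: assume the conclusion fails along sequences with $\sigma_j\to0$, $\tau_j\sigma_j^{-2}\to0$; form the rescaled height functions $f_j^\pm$ of Lemma \ref{apb-lem3}; show the common limit $f$ is continuous and subharmonic (this is where $|\nabla v_j|\le1+\tau_j$, $\partial^+v_j/\partial\nu\ge1-\tau_j$ and the hypothesis $\tau_j\sigma_j^{-2}\to0$ enter, through the perimeter comparison of Claims \ref{apb-claim1}--\ref{apb-claim3}); establish the averaged growth bound of Lemma \ref{apb-lem5} via Green's function estimates that again use \eqref{a-flatness-2}; deduce that $f$ is Lipschitz and bounded above by an affine function $\langle l,y\rangle+\tfrac{\theta}{2}\eta$ (Lemmata \ref{apb-lem6} and \ref{apb-lem7}); and transfer this upper bound back to $f_j^+$ for large $j$ to get $v_j\in F(2\theta\sigma_j,1;\tau_j)$ in $B(0,\eta)$ in a direction within $C\sigma_j$ of $e_n$, contradicting the assumed failure. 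Note also that only a one-sided (upper) affine bound on $f$ is needed, consistent with the conclusion $F(\theta\sigma,1;\tau)$; your claimed two-sided estimate is stronger than what the argument requires or what the compactness method delivers at this stage. If you wish to keep a direct, non-compactness proof, you must actually prove your displayed estimate for weak solutions satisfying \eqref{a-flatness-2a}--\eqref{a-flatness-2}; as it stands, the central analytic content of the lemma has been assumed rather than proved.
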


In both lemmata the constants $\sigma_n$ and $C$ depend only on $n$,
$c_{min}$, $C_{max}$, $|| q_+ ||_\infty$, and $c_0 > 0$ such that $q_+ \geq c_0$. 
Probably there are strong relations between these constants, but we decided not to investigate.
In Lemma \ref{apb-lem2}, $\sigma_{n,\theta}$ and $\eta_{n,\theta}$ depend on these constants, 
plus $\theta$.

Here is a consequence of Lemma \ref{apb-lem1} and Lemma \ref{apb-lem2}, which we shall establish 
before we discuss the proof of the lemmata. 

\begin{corollary}\label{main-cor}
Given $\theta\in (0,1)$  there exist $\sigma_{n,\theta}>0$ and
$\eta = \eta_{n,\theta}\in(0,1)$ so that if $\sigma\le \sigma_{n,\theta}$, 
$\tau\sigma^{-2}\le \sigma_{n,\theta}$, $x_0\in\partial\{v>0\}$, and
$v\in F(\sigma,\sigma;\tau)$ in $B(x_0,r)$ in the direction $e_{x_0,r}$, 
then $v\in F(\theta\sigma,\theta\sigma;\tau)$ in $B(x_0; \eta r)$ in some 
direction $e_{x_0,\eta r}$ such that 
\begin{equation}\label{apb-eqn7A}
|e_{x_0,r}-e_{x_0,\eta r}|\le C\sigma.
\end{equation}
\end{corollary}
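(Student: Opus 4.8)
\textbf{Proof proposal for Corollary \ref{main-cor}.}

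The plan is to obtain Corollary \ref{main-cor} by composing the two flatness-improvement lemmata, Lemma \ref{apb-lem1} and Lemma \ref{apb-lem2}, in the right order so that the one-sided improvement of Lemma \ref{apb-lem2} (which degrades the ``negative side'' flatness constant back to $1$) is cancelled by a subsequent application of Lemma \ref{apb-lem1} (which, starting from $F(\sigma,1;\tau)$, produces $F(2\sigma,C\sigma;\tau)$). First I would fix $\theta\in(0,1)$ and choose an auxiliary parameter $\theta'$, to be specified at the end, with $\theta'$ a small multiple of $\theta$ (something like $\theta' = \theta/(2C)$ where $C$ is the constant from Lemma \ref{apb-lem1}). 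Apply Lemma \ref{apb-lem2} with $\theta'$ in place of $\theta$: this gives constants $\sigma_{n,\theta'}>0$ and $\eta_{n,\theta'}\in(0,1)$ such that whenever $\sigma \le \sigma_{n,\theta'}$, $\tau\sigma^{-2}\le\sigma_{n,\theta'}$, $x_0\in\partial\{v>0\}$, and $v\in F(\sigma,\sigma;\tau)$ in $B(x_0,r)$ in direction $e_{x_0,r}$, we get $v\in F(\theta'\sigma,1;\tau)$ in $B(x_0,\eta_{n,\theta'} r)$ in some direction $e'$ with $|e_{x_0,r}-e'|\le C\sigma$.

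Next I would apply Lemma \ref{apb-lem1} to the ball $B(x_0,\eta_{n,\theta'}r)$, which requires checking its hypotheses: $v$ is still a weak solution on $B(x_0,4\eta_{n,\theta'}r)\subset B(x_0,4r)$; we need the smallness $\theta'\sigma\le\sigma_n$ and $\tau\le\theta'\sigma$, both of which hold after shrinking $\sigma_{n,\theta}$ further (note $\tau\le\sigma^2\sigma_{n,\theta'}\le\sigma$ already, and we can demand $\sigma^2\sigma_{n,\theta'}\le\theta'\sigma$, i.e. $\sigma\le\theta'$, by shrinking $\sigma_{n,\theta}$). Lemma \ref{apb-lem1} then yields $v\in F(2\theta'\sigma, C\theta'\sigma;\tau)$ in $B(x_0,\tfrac12\eta_{n,\theta'}r)$ in the \emph{same} direction $e'$. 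Setting $\eta = \eta_{n,\theta} := \tfrac12\eta_{n,\theta'}$ and $e_{x_0,\eta r}:=e'$, and choosing $\theta'$ so that $\max(2\theta', C\theta')=\theta$ — i.e. $\theta' = \theta/\max(2,C)$ — we conclude $v\in F(\theta\sigma,\theta\sigma;\tau)$ in $B(x_0,\eta r)$ in direction $e_{x_0,\eta r}$, with $|e_{x_0,r}-e_{x_0,\eta r}| = |e_{x_0,r}-e'|\le C\sigma$, which is \eqref{apb-eqn7A}. Finally I would set $\sigma_{n,\theta}$ to be the minimum of $\sigma_{n,\theta'}$, $\sigma_n/\theta$ (so that $\theta'\sigma\le\sigma_n$), and $\theta'$ (so that $\tau\le\theta'\sigma$), and likewise pass the constraint $\tau\sigma^{-2}\le\sigma_{n,\theta}$ through unchanged since Lemma \ref{apb-lem2} is the binding constraint there.

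The only genuinely delicate point is bookkeeping: one must verify that the smallness hypotheses of Lemma \ref{apb-lem1} are implied, after the first step, by the (possibly smaller) smallness hypotheses we are free to impose in the corollary, and that the constant $C$ appearing in \eqref{apb-eqn7A} can be taken to be the same $C$ as in Lemma \ref{apb-lem2} (the second application of Lemma \ref{apb-lem1} does not rotate the direction, so it contributes nothing to the tilt estimate). Since all the constants $\sigma_n$, $C$, $\sigma_{n,\theta}$, $\eta_{n,\theta}$ depend only on $n$, $c_{\min}$, $C_{\max}$, $\|q_+\|_\infty$, and $c_0$ (plus $\theta$ in the last two cases), as stated after the lemmata, the composition stays within the same class of constants. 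I do not expect any analytic obstacle here — this is purely a matter of organizing the iteration cleanly — so the ``hard part'' is really contained in Lemmata \ref{apb-lem1} and \ref{apb-lem2} themselves, whose proofs are indicated to follow the schemes of \cite{AC} and \cite{KT2}.
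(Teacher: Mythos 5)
Your proposal is correct and follows essentially the same route as the paper: apply Lemma \ref{apb-lem2} with a rescaled parameter $\theta'$ comparable to $\theta/C$, then apply Lemma \ref{apb-lem1} on the smaller ball and take $\eta$ to be half the radius factor from Lemma \ref{apb-lem2}, with the tilt estimate coming only from the first step. Your extra bookkeeping (shrinking $\sigma_{n,\theta}$ so that $\theta'\sigma\le\sigma_n$ and $\tau\le\theta'\sigma$) is exactly the implicit verification the paper leaves to the reader.
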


\begin{proof}[Proof of Corollary \ref{main-cor}.]  
Apply Lemma \ref{apb-lem2} to $\theta'=\theta/C$, where $C\ge 2$ is an in 
Lemma~\ref{apb-lem1}. 
Then there exist $\sigma_{n,\theta'}>0$ and $\eta' = \eta_{n,\theta'}\in(0,1)$ so that if
$\sigma\le \sigma_{n,\theta'}$, $\tau\sigma^{-2}\le \sigma_{n,\theta'}$ and
$v\in F(\sigma,\sigma;\tau)$ in $B(x_0,r)$ in the direction $e_{x_0,r}$ then by Lemma \ref{apb-lem2} $v\in F(\theta'\sigma,1;\tau)$ in $B(x_0; \eta' r)$.
By Lemma~\ref{apb-lem1}, $v \in F(2\theta'\sigma, C\theta'\sigma;\tau)$ in 
$B(x_0, \frac{\eta' r}{2})$. Letting $\eta=\frac{\eta'}{2}$ we have
$v\in F(\theta\sigma,\theta\sigma;\tau)$ in $B(x_0; \eta r)$, and \eqref{apb-eqn7A} holds.  
\end{proof}

\ms
As mentioned earlier the proofs of Lemmas \ref{apb-lem1} and \ref{apb-lem2}
are very similar to those presented in \cite{AC} (Section 7) (see also \cite{KT2}),
so we will insist on differences and sometimes skip details.  
\medskip

\begin{proof}[Proof of Lemma \ref{apb-lem1}.] 
Without loss of generality we
may assume that $x_0=0\in\partial\{v>0\}$, $q_+(x_0)=1$, $r=1$ and $e=e_{n}$. By hypothesis
$v\in F(\sigma, 1;\tau)$ in $B_1=B(0,1)$ in the direction $e_{n}$,
so $\mathop{\sup}\limits_{B_1}|\nabla v|\le 1 +\tau$, and $k(q)\ge 1-\tau$ for $\H^{n-1}$ a.e.
$q\in\partial^\ast\{v>0\}$; this 
in particular implies that for $\varphi\in C^\infty_c(\R^n)$ such that $\varphi\ge 0$,
\begin{equation}\label{apb-eqn26}
- \int \nabla v \cdot \nabla \varphi 
\ge (1-\tau)\int_{\partial^\ast\{v>0\}} \varphi d\H^{n-1}. 
\end{equation}
Let $\eta(y)=\exp\big(\frac{-9|y|^2}{1-9|y|^2}\big)$ for
$|y|<\frac{1}{3}$ and $\eta(y)=0$ otherwise. Choose $s_0>0$ to be the
maximum $s$ so that
\begin{equation}\label{apb-eqn27}
B_1\cap\{v>0\}\subset D=\{x\in B_1:x_{n}<2\sigma-s\eta(\overline x)\},
\end{equation}
where $x=(\overline x, x_{n})$, with $\overline x\in\R^{n-1}\times\{0\}$.
Since $0 = x_0 \in\partial\{v>0\}$ and $\eta(0) = 1$,
then $0 \leq 2\sigma-s_0$ and $s_0 \leq 2\sigma$.
Since $\sigma \leq \sigma_n$ that can be chosen as small as we want,
both $\sigma$ and $s_0$ are very small.

By the maximality of $s_0$, we can find $z\in\p D\cap \partial\{v>0\}\cap B_1$. Furthermore, 
 $z_n \leq \sigma$ (because $v\in F(\sigma,1;\tau)$ in $B_1$),
which implies that that $\eta(\overline z) \neq 0$ and hence, $z\in B(0,1/3)$.

Recall that $s_0 \leq 2\sigma \leq 2\sigma_n$, which we can take small; thus 
$\d D \cap B_1$ is quite smooth and almost horizontal, and we can find a ball
$B \subset D^c$, tangent to $\d D$ at $z$, and with a radius at least $C_n/\sigma_n$
(which is as large as we want).

Consider the function $V$ defined by 
\[
\left\{
\begin{array}{ll}
\Delta V=0 &\mbox{in }D \\
V=0    &\mbox{on }\p D\cap B_1 \\
V=(1+\tau)(2\s-x_{n}) &\mbox{on }\p D\bs B_1.
\end{array}\right.
\]

For the following computations, we refer to \cite{AC} or \cite{KT2}
for some of the details.
By the maximum principle $V>0$ in $D$ and 
\begin{equation}\label{apb-eqn28}
v\le V\mbox{ in }D,
\end{equation}
in fact $v\le V$ on $\p D$ (by \eqref{a-flatness-1} and since $v\in F(\s, 1;\tau)$ in $B_1$) 
and $v$ is subharmonic. From (\ref{apb-eqn28}) we deduce that 
\begin{equation}\label{apb-eqn29}
\limsup_{\mathop{x\to z}\limits_{\scriptstyle{x\in\{v>0\}}}} \frac{v(x)}{|x-z|}
\le \limsup_{\mathop{x\to z}\limits_{\scriptstyle{x\in\{v>0\}}}}
\frac{v(x)}{d(x,B)} \le \frac{\p V}{\p n}(z),
\end{equation}
where  $\frac{\p V}{\p n}=\langle \nabla V,\vecn\rangle$ and 
$\vecn$ denotes the outward unit normal vector to $\p D$. 

For $x\in D$ define $F(x)=(1+\tau)(2\s-x_{n})-V(x)$;
then $F$ is a harmonic function on $D$, $F(x) = (1+\tau)(2\s-x_{n})$
on $\d D \cap B_1$, and $F = 0$ on $\d D \sm B_1$.

Recall that $\d D \cap B_1 = \big\{ (\overline x,x_n) \in B_1 \, ; \, 
x_n = 2\sigma - s_0 \eta(\overline x) \big\}$. Thus if we set
$G(\overline x,x_n) = (1+\tau) s_0\eta(\overline x)$,
we see that $F(\overline x,x_n) = (1+\tau)(2\s-[2\sigma - s_0 \eta(\overline x)])
= G(\overline x,x_n)$ on $\d D \cap B_1$.
In fact, $F = G$ on the whole $\d D$, because on $\d D \sm B_1$,
$\eta(\overline x) = 0$ (as $|\overline x| \geq 1/3$). Thus $F-G$
vanishes on $\d D$, and on $D$ its Laplacian is
$\Delta(F-G) = - \Delta G = - (1+\tau) s_0 \Delta[\eta(\overline x)]$,
which is smooth. By \cite[Lemma 6.5]{GT} (with possibly a minor adaptation 
because $D$ has corners far from $z$), 
\begin{equation} \label{a8.25}
|\nabla (F-G)| \leq C (1+\tau) s_0 ||\Delta[\eta(\overline x)]||_\infty
+ C ||F-G||_\infty
\end{equation}
in, say, $B(z,10^{-1})$. Now $s_0 \leq 2 \sigma$, so 
$||\Delta[\eta(\overline x)]||_\infty +  ||G||_\infty\leq C\sigma$,
and $||F||_\infty \leq C \sigma$ too, by the maximum principle and
because $F = G$ on $\d D$. Finally $||\nabla G||_\infty \leq C s_0 \leq C \sigma$
too, and \eqref{a8.25} implies that $|\nabla F(x)| \leq C \sigma$ near $z$.
Therefore, since $V(x) = (1+\tau)(2\s-x_{n}) - F(x)$ and $\tau\le \sigma$,
\begin{equation}\label{apb-eqn30}
-\frac{\p V}{\p x_{n}}(z)=1+\tau+\frac{\p F}{\p x_{n}}(z)\le 1+C\s
\end{equation}
and, by (\ref{apb-eqn30}) 
\begin{eqnarray}\label{apb-eqn31}
-\frac{\p V}{\p n}(z) & = & -\langle\nabla V(z),\vecn\rangle = -\langle \nabla
V(z), \vecn-e_{n}\rangle-\frac{\p V}{\p x_{n}} \\
& \le & 1+C\s + |\nabla V(z)|\,|\vecn(z)-e_{n}| \nonumber \\
& \le & 1+C\s + (1+C\s)|\vecn(z)-e_{n}|. \nonumber
\end{eqnarray}
Recall that $\vecn(z)=\left(\frac{-sD\eta(\overline
z)}{\sqrt{1+s^2|D\eta(\overline z)|^2}},
\frac{1}{\sqrt{1+s^2|D\eta(\overline z)|^2}}\right)$, and so $|\vecn(z)-e_{n}|
\le C\s$. Combining (\ref{apb-eqn29}) and (\ref{apb-eqn31}) we obtain that
\begin{equation}\label{apb-eqn32}
l :=\limsup_{\mathop{x\to z}\limits_{\scriptstyle{x\in\{v>0\}}}}
\frac{v(x)}{d(x,B)}\le 1+C\s.
\end{equation}
Lemma \ref{apb-lem0} ensures that
\begin{equation}\label{apb-eqn33}
1-{\s}\le1-\tau\le l \le 1+C\s.
\end{equation}

Our goal now is to estimate $v$ from below by the linear function, $-x_{n}$, with an error 
on the order of $\s$. Let $\xi\in\p B\left(0,\frac{3}{4}\right)
\cap\left\{x_{n}<-\frac{1}{2}\right\}$. Consider the solution $\omega_\xi$ of 
\begin{equation}\label{apb-eqn34}
\left\{
\begin{array}{rcll}
\Delta\omega_\xi & = & 0 & \mbox{in }D\bs B\left(\xi,\frac{1}{16}\right) \\
\omega_\xi & = & 0 & \mbox{on }\p D \\
\omega_\xi & = & -x_{n} & \mbox{on }\p B\left(\xi,\frac{1}{16}\right).
\end{array}\right.
\end{equation}
The Hopf boundary point lemma ensures that
\begin{equation}\label{apb-eqn4.26}
-\frac{\p\omega_\xi}{\p n}(z)\ge c(n)>0. 
\end{equation}
Let $K > 0$ be large (to be chosen later) and assume that for every 
$x\in\overline B\left(\xi, \frac{1}{16}\right)$
\begin{equation}\label{apb-eqn4.27}
v(x)\le V(x)+ K\s x_{n}.
\end{equation}
The maximum principle would then imply that
\begin{equation}\label{apb-eqn4.28}
v(x)\le V(x)-K\s\omega_\xi(x)\quad\mbox{in}\quad D\bs
B\left(\xi,\frac{1}{16}\right).
\end{equation}
Thus combining (\ref{apb-eqn30}), (\ref{apb-eqn33}), (\ref{apb-eqn4.26}),
(\ref{apb-eqn4.27}) and (\ref{apb-eqn4.28}) we would conclude that
\begin{eqnarray} \label{apb-eqn4.29}
1-\sigma &\leq& l 
= \limsup_{\mathop{x\to z}\limits_{\scriptstyle{x\in\{v>0\}}}} \frac{v(x)}{d(x,B)}
\leq \limsup_{\mathop{x\to z}\limits_{\scriptstyle{x\in\{v>0\}}}} 
\frac{V(x)-K\s\omega_\xi(x)}{d(x,B)}
\nonumber\\
&\leq& \frac{\p V}{\p n}(z)-K\s\frac{\p\omega_\xi}{\p n}(z)
\le 1+C\s-c(n) K\s 
\end{eqnarray}
which is a contradiction for $K > \frac{C+1}{c(n)}$.

Thus we can find $x_\xi\in B\left(\xi,\frac{1}{16}\right)$ such that
\begin{equation}\label{apb-eqn4.30}
v(x_\xi)\ge V(x_\xi)+K\s (x_{\xi})_n
\end{equation}
for some large, fixed, $K$.

We want to show that $v > 0$ on $B\big(x_\xi,\frac{1}{8}\big) \subset B(0,1)$.
Let $x\in B\big(x_\xi,\frac{1}{8}\big)$ be given. 
By definitions and the maximum principle,
\begin{equation} \label{a8.34}
V(x)\ge -x_{n} \ \text{ for } x\in D.
\end{equation}

Then we can estimate, for $\s_n$ small enough, 
\begin{eqnarray}\label{apb-eqn4.31}
v(x) & \ge & v(x_\xi)- |x-x_\xi| \sup_{B\left(x_\xi,\frac{1}{8}\right)} |\nabla v|
\geq v(x_\xi)- \frac{1}{8} \, (1+\tau)
 \nonumber \\
& \ge & V(x_\xi)+K\s(x_\xi)_{n}- \frac{1}{8} \, (1+\tau)
\geq -(x_\xi)_{n}+K\s(x_\xi)_{n}- \frac{1}{8} \, (1+\sigma)
\nonumber \\
& \ge & \frac{7}{16}-\frac{13}{16}K\s- \frac{1}{8} \, (1+\sigma) >0,
\end{eqnarray}
where the inequalities follow by the mean value theorem, the definition of flatness, \eqref{apb-eqn4.30}, \eqref{a8.34}, $\tau \leq \sigma$ and $x_\xi \in B(\xi, 1/8)$ so $ -7/16 >(x_\xi)_n > -13/16$, respectively. 

Since $v(x)>0$ for $x\in\overline B\left(x_\xi, \frac{1}{8}\right)$, 
$v$ is harmonic on $B\left(x_\xi, \frac{1}{8}\right)$ and so is $V-v$. 
Moreover $V-v\ge 0$ on $B\left(x_\xi, \frac{1}{8}\right)\supset B\left(\xi,\frac{1}{16}\right)$
because these sets lie well inside $D$ and by \eqref{apb-eqn28}.
Therefore Harnack's inequality combined with (\ref{apb-eqn4.30}) yields
\begin{equation}\label{apb-eqn4.32}
(V-v)(\xi)  \le  C(n)(V-v)(x_\xi) 
 \le  -CK\s(x_\xi)_{n} 
 \le  C\s, \quad \forall \xi \in \partial B(0, 3/4)\cap \{\xi_n < -1/2\}
\end{equation}
and
\begin{equation}\label{apb-eqn4.33}
v(\xi)  \ge  V(\xi)-C\s \ge  -\xi_{n}-C\s,\quad \forall \xi \in \partial B(0,3/4)\cap \{\xi_n <-1/2\}.
\end{equation}

For $x\in D\cap B\big(0,\frac{1}{2}\big)$, let $\xi_x \in \d B\big(0, \frac34\big) \cap 
\left\{\xi_{n}<-\frac{1}{2}\right\}$ be such that $\overline\xi_x = \overline x$, and
write $x=\xi_x+te_{n}$; then
\begin{equation} \label{apb-eqn4.34}
v(x) =  v(\xi+te_{n})\ge v(\xi_x)-(1+\tau)t \geq -(\xi_{n}+t)-C\s
\end{equation}
by \eqref{a-flatness-2a} and \eqref{apb-eqn4.33}, and since $\tau\le \sigma$.
Since $v\in F(\s,1;\tau)$ in $B_1$ in the direction $e_{n}$, 
\eqref{apb-eqn4.34} ensures that $v\in F(2\s; C\s;\tau)$ in 
$B\big(0,\frac{1}{2}\big)$ in the direction $e_{n}$.

\end{proof}

\ms
\begin{proof}[Proof of Lemma \ref{apb-lem2}.]
We will proceed by contradiction, using a non homogeneous blow-up. 
This argument follows closely the argument in \cite{AC} and \cite{KT2};  
we only include the proofs which are somewhat different than those that already appear in the literature.

It is enough to prove the lemma for $x_0 = 0$ and $r=1$, with varying functions 
$q_+$, although with uniform bounds on $||q_+||_\infty$ and $c_0>0$ such that
$q_+ \geq c_0$. In addition, notice that when we multiply $v$ and $q_+$
by a same positive number, $\lambda v$ is still a weak solution, with $c_{min}$
and $C_{max}$ multiplied by $\lambda$, and $\lambda v \in F(\sigma_+,\sigma_-,\tau)$
implies that $\lambda v \in F(\sigma_+,\sigma_-,\tau)$, in the same
direction, but with $\lambda q_+$. Because of this we just need to prove the
lemma when $q_+(0) = 1$. Notice that $q_+$ only shows up in the statement through
$q_+(0)$, so after this remark (applied with $\lambda = q_+(0)^{-1}$, which
does not upset too much our uniform bounds for $c_{min}$
and $C_{max}$), we will be able to forget about $q_+$ altogether.

Assume that Lemma \ref{apb-lem2} does not hold. 
There exists a $\theta_0\in (0,1)$ such that for any $\eta>0$ (later we specify one), 
there exist non-negative decreasing sequences $\{\s_j\}_j$ and $\{\tau_j\}_j$, 
with $\sigma_j\to 0$ and $\sigma_j^{-2}\tau_j\to 0$, weak solutions $v_j$
in $B(0,4)$, and unit vectors $\nu_j$, so that
\begin{equation}\label{apb-eqn35}
v_j\in F(\s_j,\s_j;\tau_j)\mbox{ in }B(0,1)\mbox{ in the direction } \nu_j
\end{equation}
but we cannot find $\wt \nu_j$ such that \eqref{apb-eqn7} holds (with a constant
$C$ that will be chosen later, but that is independent of $j$) and 
\begin{equation}\label{apb-eqn36}
v_j \in F(\theta_0\s_j, 1;\tau_j)\mbox{ in }B(0,\eta)
\ \text{ in the direction } \wt \nu_j.  
\end{equation}

By rotation invariance of the lemma, we may assume that all the $\nu_j$ are equal
to the last coordinate unit vector $e_n$. 
Let us record some of our assumptions. First, $\Delta v_j=0$ in $\{v_j>0\} \cap B(0,4)$
and (by \eqref{eqn-a.1})
\begin{equation}\label{apb-eqn4.37}
-\int\nabla v_j\cdot \nabla\phi\, dx=\int_{\partial^\ast\{v_j>0\}}\phi k_jd\H^{n-1}
\end{equation}
for $\phi \in C^\infty_c(B(0,1))$ and where $k_j$ is our normal derivative for $v_j$
on $\d \{ v_j > 0 \}$. Also,  \eqref{apb-eqn35} holds with $\nu_j = e_n$ and 
$q_{j,+}(0)=1$ and in particular 
\begin{equation}\label{apb-eqn37}
\sup_{B(0,1)}|\nabla v_j|\le (1+\tau_j) 
\quad\mbox{and}\quad 
k_j\ge (1-\tau_j) 
\quad  \H^{n-1} \mbox{ a.e. in }\partial^\ast\{v_j>0\}.
\end{equation}
We also have that $0 \in \d \{ v > 0 \}$ and \eqref{a-flatness-1}, which says that 
for $x\in B(0,1)$, 
\begin{eqnarray}\label{a8.44}
\left\{ \begin{array}{ll}
v_j(x)=0 &\hbox{ if }  x_n \ge \sigma_j\\[2mm]
v_j(x)\ge - x_n - \sigma_j
&\hbox{ if }  x_n \le -\sigma_j.
\end{array} \right.
\end{eqnarray}
Recall also that $\s_j\to 0$ and $\tau_j\sigma_j^{-2}\to 0$ as $j\to\infty$,
and that we are assuming that \eqref{apb-eqn36} fails for $\wt \nu_j$ close to 
$e_n$ (as in \eqref{apb-eqn7}), and this is what we want to contradict for $j$ large.
The idea is to define sequences of scaled height functions (in the direction
$e_{n}$) corresponding to $\partial\{v_j>0\}$, prove that this sequence converges to a
subharmonic Lipschitz function, and use this information to prove \eqref{apb-eqn36}
for $j$ large. 

Set $B = B(0,1/2)$ and $B' = B\cap [\R^{n-1}\times\{0\}]$.
Define, for $y\in B'$,
\begin{equation}\label{apb-eqn38}
f^+_j(y)=\sup\{h:(y,\s_jh)\in\p\{v_j>0\}\}\le 1, 
\end{equation}
where the last inequality is by $v_j \in F(\sigma_j ,\sigma_j, \tau_j)$, and
\begin{equation}\label{apb-eqn39}
f^-_j(y)=\inf\{h; (y, \s_jh)\in\p\{v_j>0\}\}\ge -1,
\end{equation}
where again we are $\geq -1$ by the assumed flatness. 

This non-homogeneous (so called because the $e_n$ direction is weighted differently) blow-up is the key ingredient of the proof of Alt and Caffarelli's result. From now on the statement of the results, and a good part of the proofs, are  
almost identical to those appearing in \cite{AC} and \cite{KT2}; this will allow us to be a little
more sketchy at times.

\begin{lemma}[Non homogeneous blow up (Lemma 7.3 \cite{AC} or Lemma 0.6 \cite{KT2})]
\label{apb-lem3}
There exists a strictly increasing subsequence $\{ j_k \}$ such that for $y\in B'$,  
\begin{equation}\label{apb-eqn40}
f(y)=\limsup_{\mathop{k\to\infty}\limits_{\scriptstyle{z\to y}}}
f^+_{j_k}(z) = \liminf_{\mathop{k\to\infty}\limits_{\scriptstyle{z\to y}}}
f^-_{j_k}(z).
\end{equation}
\end{lemma}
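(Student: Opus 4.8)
\textbf{Proof proposal for Lemma \ref{apb-lem3}.}

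The plan is to establish the convergence of the graphs in the uniform (Hausdorff) sense, with the key input being the flatness assumption \eqref{a8.44} together with the non-degeneracy estimate \eqref{eqn-a.2} and the nondegeneracy/Lipschitz bounds \eqref{apb-eqn37}, which control the thickness of the strip $\{v_j = 0\} \setminus \{v_j > 0\}^{\circ}$. First I would observe that, because $v_j \in F(\sigma_j,\sigma_j;\tau_j)$ with $\sigma_j \to 0$, the free boundary $\partial\{v_j > 0\}$ is trapped in the slab $|x_n| \le \sigma_j$ over $B'$; hence $|f_j^{\pm}(y)| \le 1$ for all $y \in B'$, so the functions $f_j^{\pm}$ are uniformly bounded. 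Next, I would note the obvious pointwise inequality $f_j^-(y) \le f_j^+(y)$, and — this is where the weak-solution structure enters — I claim that $f_j^+ - f_j^-$ tends to $0$ locally uniformly. Indeed, if this difference stayed bounded below by some $c > 0$ near a point $y_0$ after a scaling by $\sigma_j^{-1}$, then $\{v_j = 0\}$ would contain a ball of radius $\sim c\,\sigma_j$ centered near $(y_0, \sigma_j h)$; the nondegeneracy of $v_j$ at the free boundary (Theorem 10.2 in \cite{DT}, reflected here in \eqref{eqn-a.2}) forces $v_j$ to grow linearly away from such a ball, which after rescaling by $\sigma_j$ contradicts the linear growth rate $\le (1+\tau_j)$ coming from \eqref{a8.44} and the Lipschitz bound \eqref{apb-eqn37}, since $\tau_j \sigma_j^{-2} \to 0$ kills the error. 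This is exactly the mechanism of Lemma 7.3 in \cite{AC} and Lemma 0.6 in \cite{KT2}.

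Having reduced matters to a single ``profile'', I would then argue that the upper graphs $\{f_{j}^+\}$ are, up to passing to a subsequence, convergent in the appropriate sense. The standard device (following \cite{AC}, \cite{KT2}) is to pass to a diagonal subsequence $\{j_k\}$ so that the numbers $\sup_{z \to y} f^+_{j_k}(z)$ stabilize; more precisely, one takes the upper graph function to be the upper regularization
\begin{equation*}
f(y) = \limsup_{k \to \infty, \, z \to y} f_{j_k}^+(z),
\end{equation*}
which is automatically upper semicontinuous, and then uses the squeeze $f_{j_k}^- \le f_{j_k}^+$ together with the vanishing of the gap to identify the lower regularization $\liminf_{k,\, z\to y} f_{j_k}^-(z)$ with the same function $f$. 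The existence of the subsequence realizing both a limsup and a matching liminf is a soft compactness statement: one exploits that the $f_j^{\pm}$ take values in the fixed compact interval $[-1,1]$, applies a Helly-type/diagonal extraction over a countable dense subset of $B'$, and uses the uniform slab trapping to upgrade pointwise control to the semicontinuous-envelope statement \eqref{apb-eqn40}. The subharmonicity and Lipschitz regularity of the limit $f$ asserted in \cite{AC} are not needed for \eqref{apb-eqn40} itself and would be proven in the subsequent lemmas, so here I would stop once \eqref{apb-eqn40} is in hand.

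The main obstacle, and the only place where real work is required, is the quantitative control of the gap $f_j^+ - f_j^-$ after the $\sigma_j^{-1}$ rescaling: one must show that a would-be void ball inside $\{v_j = 0\}$ of rescaled radius bounded below cannot coexist with the linear bounds on $v_j$ and its normal derivative — this is precisely where the hypothesis $\tau_j \sigma_j^{-2} \to 0$ (rather than merely $\tau_j \to 0$) is used, because the error terms in the flatness bounds \eqref{a8.44} and in \eqref{apb-eqn4.37}–\eqref{apb-eqn37} scale like $\tau_j$ while the relevant length scale is $\sigma_j$, so the comparison only closes when $\tau_j/\sigma_j^2$ is small. Everything else — the uniform bound on $f_j^\pm$, the diagonal extraction, and the identification of the limsup and liminf envelopes — is routine compactness, essentially identical to the corresponding steps in \cite{AC} and \cite{KT2}, and I would simply cite those references for the details.
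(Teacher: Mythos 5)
There is a genuine gap, and it sits exactly where you locate ``the only place where real work is required.'' First, note that the paper itself gives no argument for this lemma: it is quoted from \cite{AC} (Lemma 7.3) and \cite{KT2} (Lemma 0.6), so the comparison is with those proofs, in which the identification of the two envelopes is obtained from non-degeneracy \eqref{eqn-a.2} together with Lipschitz bounds and comparison/maximum-principle arguments of the same type as in Lemma \ref{apb-lem1}, not from a single ball estimate. Your proposed mechanism does not work as stated. A gap between the upper and lower envelopes does \emph{not} produce a ball of radius $\sim c\sigma_j$ inside $\{v_j=0\}$: the enemy configurations are thin vertical spikes or slivers (a thin finger of $\{v_j>0\}$ reaching up, or a thin tongue of $\{v_j=0\}$ reaching down into the positivity set, over base points converging to $y_0$), and these contain no ball of the ``wrong'' phase of radius comparable to $\sigma_j$. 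Moreover, even granting such a zero ball, no contradiction follows the way you claim: a ball of radius $c\sigma_j$ inside $\{v_j=0\}$ located in the slab $|x_n|\lesssim\sigma_j$ is perfectly compatible with \eqref{a8.44}, \eqref{apb-eqn37} and \eqref{eqn-a.2} (the zero set genuinely contains such balls just above a flat free boundary), and ``nondegeneracy forces linear growth away from the ball'' is not an available statement here -- \eqref{eqn-a.2} is an average bound on spheres centered at free boundary points, and linear growth at rate $\le 1+\tau_j$ is exactly what the Lipschitz bound permits, so there is nothing to contradict. Also, Theorem 10.2 of \cite{DT} cannot be invoked at this stage: in Section \ref{appendix} the function $v_j$ is only a weak solution in the sense of Definition \ref{weak-ap-sol}, and the only non-degeneracy at your disposal is \eqref{eqn-a.2}.

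Second, the reduction of \eqref{apb-eqn40} to ``gap $f_j^+-f_j^-\to 0$ plus routine Helly/diagonal compactness'' is also not correct. Even if $f_j^+-f_j^-\equiv 0$, the equality of the $\limsup$ envelope of $f^+_{j_k}$ with the $\liminf$ envelope of $f^-_{j_k}$ at \emph{every} $y$ is a no-jump (equicontinuity) statement about the rescaled free boundaries -- it forces the limit $f$ to be continuous (Corollary \ref{apb-cor1}) -- and a sequence of uniformly bounded, even identical, discontinuous graphs shows this cannot follow from pointwise extraction alone. What is actually needed, and what \cite{AC} and \cite{KT2} prove, is a uniform statement of the form: near any free boundary point of $v_j$, within a fixed horizontal radius $\delta$, the rescaled boundary height cannot deviate by more than $\epsilon(\delta)$ for $j$ large, obtained by comparison arguments combining \eqref{eqn-a.2}, the bound $|\nabla v_j|\le 1+\tau_j$, and the flatness \eqref{a8.44}. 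Finally, the hypothesis $\tau_j\sigma_j^{-2}\to 0$ is not the engine of this lemma (for which $\sigma_j\to0$ and $\tau_j\to0$ with the non-degeneracy suffice); it is needed later, in the area-comparison step of Lemma \ref{apb-lem4}, where the contradiction $1\le C\sigma_j^{-2}\tau_j$ in \eqref{apb-eqn5.35} is exactly the place where the quadratic gain in $\sigma_j$ must beat the $\tau_j$-slack in the normal derivative.
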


See \cite{AC} or \cite{KT2} for the proof of this lemma and the next one. 
Also, from now on we assume, without
loss of generality, that we actually started with the subsequence, and write $f_j$
instead of $f_{j_k}$.
In what follows we establish that $f$ is a subharmonic Lipschitz function
bounded above by an affine function. From this we eventually deduce a contradiction 
with the definition of the $v_j$.

\begin{lemma}
{\bf(Corollary 7.4 \cite{AC} or Corollary 0.7) \cite{KT2}} \label{apb-cor1}
The function $f$ that appears in (\ref{apb-eqn40}) is a continuous function
in $B'$, $f(0)=0$; and  $f^+_{j}$ and $f^-_{j}$ 
converge uniformly to $f$ on compact sets of $B'$.
\end{lemma}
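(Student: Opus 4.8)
\textbf{Proof proposal for Lemma \ref{apb-cor1}.}
The plan is to mimic the argument of Corollary 7.4 in \cite{AC} (see also Corollary 0.7 in \cite{KT2}), using the key input that each $v_j$ is a weak solution in $B(0,4)$ with the uniform flatness and non-degeneracy estimates recorded in \eqref{apb-eqn37}, \eqref{a8.44}, and \eqref{eqn-a.2}. The first step is to promote \eqref{apb-eqn40} into a genuine two-sided control of $f^\pm_j$: one shows that the upper and lower limits in \eqref{apb-eqn40} actually agree because of the non-degeneracy from below \eqref{a-flatness-2} and the Lipschitz bound from above \eqref{a-flatness-2a}, which together force the free boundary $\partial\{v_j>0\}$ to be trapped in an ever-thinner slab (of height $o(\sigma_j)$ at each fixed point) once rescaled by $\sigma_j^{-1}$ in the $e_n$-direction. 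This is exactly where the condition $\sigma_j^{-2}\tau_j\to 0$ enters, as in \cite{AC}: the Lipschitz constant $1+\tau_j$ degrades only at rate $\tau_j$, and this is negligible compared to $\sigma_j$, so the oscillation of the rescaled free boundary is controlled.

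Second, I would establish equicontinuity of the $f^\pm_j$ on compact subsets of $B'$. The mechanism is the Lemma \ref{apb-lem1}-type argument applied at each free boundary point: because $v_j\in F(\sigma_j,\sigma_j;\tau_j)$, Lemma \ref{apb-lem1} (applied after the rescaling/renormalization which is legitimate since $v_j$ is a weak solution with $q_{j,+}(0)=1$) gives improved flatness $F(2\sigma_j,C\sigma_j;\tau_j)$ at a smaller scale around \emph{every} boundary point of $B(0,1/2)$, which translates into a uniform modulus of continuity for $f^\pm_j$ that does not depend on $j$. Combined with the uniform bounds $|f^\pm_j|\le 1$ from \eqref{apb-eqn38}--\eqref{apb-eqn39}, Arzel\`a--Ascoli then yields a uniformly convergent subsequence; but since \eqref{apb-eqn40} already pins down the limit as $f$ uniquely (the $\limsup$ and $\liminf$ coincide), the full sequences $f^+_j$ and $f^-_j$ converge uniformly on compact subsets of $B'$ to the same continuous function $f$. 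The normalization $f(0)=0$ follows from $0\in\partial\{v_j>0\}$ for every $j$, which was recorded below \eqref{apb-eqn35} (it is part of $v_j\in F(\sigma_j,\sigma_j;\tau_j)$), so $(0,\sigma_j\cdot 0)\in\partial\{v_j>0\}$ and hence $f^-_j(0)\le 0\le f^+_j(0)$, forcing $f(0)=0$ in the limit.

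The main obstacle is the equicontinuity step: one must be careful that Lemma \ref{apb-lem1} produces a modulus of continuity uniform in $j$, which requires checking that the constants $\sigma_n$ and $C$ there depend only on $n$, $c_{min}$, $C_{max}$, $\|q_+\|_\infty$, and $c_0$, all of which are uniformly bounded along our sequence by hypothesis. One also has to handle the interplay between the two scalings (the geometric shrinking of balls in Lemma \ref{apb-lem1} versus the anisotropic $\sigma_j^{-1}$ rescaling defining $f^\pm_j$) so that the estimates compose correctly; this bookkeeping is identical to \cite{AC} and \cite{KT2}, so I would simply cite those references for the details and only indicate how the extra error term $\tau_j$ (which is absent in \cite{KT2}, where $\tau=0$) is absorbed, namely by the condition $\sigma_j^{-2}\tau_j\to 0$ exactly as in the corresponding step of \cite{AC}. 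The remaining assertions—continuity of $f$ and uniform convergence on compacta—are then formal consequences of the uniform modulus of continuity plus \eqref{apb-eqn40}.
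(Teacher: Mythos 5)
Your argument has a genuine gap at the equicontinuity step, which is also where it departs from the route the paper relies on (the paper simply cites \cite{AC}, Corollary 7.4, and \cite{KT2}, Corollary 0.7, whose proof is a soft semicontinuity argument needing only Lemma \ref{apb-lem3}). Lemma \ref{apb-lem1} does not yield a modulus of continuity for $f^{\pm}_j$ uniform in $j$: it upgrades $F(\sigma,1;\tau)$ in $B(x_0,r)$ to $F(2\sigma,C\sigma;\tau)$ in $B(x_0,r/2)$ in the \emph{same} direction, and if you try to iterate it down to smaller scales the flatness parameter doubles at each step, so at scale $2^{-k}$ you only know that $\partial\{v_j>0\}$ lies in a slab of width comparable to the original $\sigma_j$. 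After the anisotropic rescaling by $\sigma_j^{-1}$ this gives $|f^{\pm}_j(y)-f^{\pm}_j(z)|\le C$ with no decay as $|y-z|\to 0$, uniformly in $j$ --- no equicontinuity, so the Arzel\`a--Ascoli step has no foundation. The genuinely iterable improvement (with rotating directions) is Lemma \ref{apb-lem2} / Corollary \ref{main-cor}, but that is exactly what this blow-up section is designed to prove, so invoking it here would be circular. Your first step is also problematic: the coincidence of the two relaxed limits in \eqref{apb-eqn40} is the content of Lemma \ref{apb-lem3}, which is already available (and is the hard part, proved in \cite{AC}/\cite{KT2}); the heuristic that the Lipschitz bound and nondegeneracy trap the free boundary in a slab of height $o(\sigma_j)$ is not correct --- flatness only traps it at height $\sim\sigma_j$, and the condition $\sigma_j^{-2}\tau_j\to 0$ is not what makes the limits agree (it is used later, e.g.\ in Lemma \ref{apb-lem4}).

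The statement follows from Lemma \ref{apb-lem3} alone, with no compactness of the $f^{\pm}_j$ needed. As a relaxed limsup of the $f^+_j$, $f$ is automatically upper semicontinuous; by \eqref{apb-eqn40} it is also the relaxed liminf of the $f^-_j$, hence lower semicontinuous, hence continuous. Since $0\in\partial\{v_j>0\}$ for every $j$, one has $f^-_j(0)\le 0\le f^+_j(0)$, and taking the limsup along $z=0$ and the liminf along $z=0$ in \eqref{apb-eqn40} gives $f(0)\ge 0$ and $f(0)\le 0$, so $f(0)=0$. For uniform convergence on a compact $K\subset B'$, argue by contradiction: if $f^+_{j_m}(z_m)\ge f(z_m)+\varepsilon$ with $z_m\in K$, pass to $z_m\to y$; continuity of $f$ gives $f(z_m)\to f(y)$, contradicting that $f(y)$ is the relaxed limsup of the $f^+_j$ at $y$; if instead $f^+_{j_m}(z_m)\le f(z_m)-\varepsilon$, then $f^-_{j_m}(z_m)\le f^+_{j_m}(z_m)\le f(z_m)-\varepsilon$, contradicting the liminf representation of $f(y)$; the same two cases handle $f^-_j$. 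This is the argument of \cite{AC} and \cite{KT2}, and it is the one you should give.
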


\begin{lemma}{\bf(Lemma 7.5 \cite{AC} or Lemma 0.8 \cite{KT2})}\label{apb-lem4}
The function $f$ introduced in Lemma \ref{apb-lem3} is subharmonic in $B'$.
\end{lemma}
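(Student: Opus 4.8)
\textbf{Proof plan for Lemma \ref{apb-lem4} (subharmonicity of $f$).}

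The plan is to show that $f$ satisfies the sub-mean-value property weakly, by testing against nonnegative $\phi \in C^\infty_c(B')$ and proving $\int_{B'} f \Delta' \phi \ge 0$, where $\Delta'$ is the Laplacian in the $(n-1)$ variables. The mechanism is exactly the one in \cite{AC}: use the equation \eqref{apb-eqn4.37} for each $v_j$, integrated against a test function built from $\phi$ pulled back to $\R^n$, and pass to the limit $j \to \infty$, exploiting that $\sigma_j \to 0$ and $\tau_j \sigma_j^{-2} \to 0$.

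First I would fix a nonnegative $\phi \in C^\infty_c(B')$ and, for small $\varepsilon>0$, introduce the function $\psi_j(x) = \psi_j(\overline x, x_n)$ on $\R^n$ which equals $\phi(\overline x)$ times a cutoff in the $x_n$-variable supported in a slab of width comparable to $\sigma_j$ straddling the graph $x_n = \sigma_j f_j^{\pm}(\overline x)$; more precisely one takes $\phi(\overline x)\zeta\big(\tfrac{x_n}{\sigma_j}\big)$ for a fixed profile $\zeta$ with $\zeta \equiv 1$ near the relevant heights. Plugging $\psi_j$ into \eqref{apb-eqn4.37} and using that $\nabla v_j = -e_n(1+O(\tau_j))$ in the non-tangential direction near $\partial\{v_j>0\}$ (this is where \eqref{apb-eqn37}, i.e. the two-sided gradient control $|\nabla v_j|\le 1+\tau_j$ together with $k_j \ge 1-\tau_j$, is essential — it forces $\nabla v_j$ to be nearly $-e_n$ on the reduced boundary), one converts the boundary integral $\int_{\partial^*\{v_j>0\}} \psi_j k_j\, d\H^{n-1}$ into $\int_{B'}\phi(\overline x)\, d\H^{n-1}(\overline x) + o(1)$ after projecting the reduced boundary onto $B'$ and using Lemma \ref{apb-cor1} (uniform convergence of $f_j^\pm$ to $f$). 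The bulk term $\int \nabla v_j \cdot \nabla \psi_j$ splits into an $x_n$-derivative piece, which is controlled because $v_j$ is nearly the linear function $-(x_n - \sigma_j f_j)$ below the free boundary (again by \eqref{apb-eqn37} and \eqref{a8.44}), and a tangential piece $\int v_j\, \Delta' \phi$-type term which, after rescaling heights by $\sigma_j$ and dividing through by $\sigma_j$, converges to $\int_{B'} f\, \Delta' \phi$. Collecting terms and letting $j \to \infty$ gives $\int_{B'} f\, \Delta'\phi \ge 0$.

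The main obstacle is the careful bookkeeping of error terms of size $\tau_j$ versus the genuine contributions of size $\sigma_j$: one divides the whole identity by $\sigma_j$ before taking the limit, so every error must be shown to be $o(\sigma_j)$, and the gradient errors are only $O(\tau_j)$ per unit area over a boundary piece of $\H^{n-1}$-measure $O(1)$, which is acceptable precisely because $\tau_j/\sigma_j \to 0$ (indeed $\tau_j \sigma_j^{-2}\to 0$ gives even more room). A secondary technical point is justifying the projection of $\partial^*\{v_j>0\}$ onto $B'$ as a Lipschitz graph with small constant: this follows from the flatness \eqref{a8.44} combined with the non-degeneracy \eqref{eqn-a.2} and the interior gradient bound, exactly as in \cite{AC}, Lemma 7.5, so I would simply invoke that argument rather than reproduce it. Since Lemmas \ref{apb-lem3} and \ref{apb-cor1} are already in place and the structure mirrors \cite{AC} and \cite{KT2}, the proof reduces to checking that the one new feature — the merely one-sided, non-exact normal derivative condition \eqref{a-flatness-2} — still permits the limit passage, and it does because only the lower bound $k_j \ge 1-\tau_j$ together with the upper bound $|\nabla v_j| \le 1+\tau_j$ is used, pinning $k_j$ and $\nabla v_j$ to their limiting values up to $o(1)$.
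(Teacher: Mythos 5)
There is a genuine gap, and it sits exactly at the step you describe as converting the tangential bulk term into $\int_{B'} f\,\Delta'\phi$. After extending $v_j$ by zero and integrating by parts horizontally, that term is $-\sigma_j^{-1}\int \big(v_j-(-x_n)^+\big)\,\zeta\,\Delta'\phi\,dx$ up to harmless errors; so to pass to the limit you need to identify $\lim_j \sigma_j^{-1}\big(v_j-(-x_n)^+\big)$ and, crucially, to relate that limit to $f$. But $f$ is defined in Lemma \ref{apb-lem3} purely from the \emph{heights} of $\partial\{v_j>0\}$ (via $f_j^\pm$), while this term sees the \emph{interior values} of $v_j$; none of the available hypotheses --- \eqref{a8.44}, $|\nabla v_j|\le 1+\tau_j$ and $k_j\ge 1-\tau_j$ from \eqref{apb-eqn37}, the non-degeneracy \eqref{eqn-a.2}, or Lemma \ref{apb-cor1} --- links the two. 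That link is the content of Harnack/boundary-Harnack compactness arguments in the viscosity approach (De Silva), which is heavy machinery not present here and certainly not ``bookkeeping.'' Relatedly, your appeal to ``the projection of $\partial^\ast\{v_j>0\}$ onto $B'$ as a Lipschitz graph with small constant, exactly as in \cite{AC}, Lemma 7.5'' invokes a statement that is not in \cite{AC} and is not available: flatness only traps the free boundary in a slab of width $2\sigma_j$, and at this stage it is not known to be a graph (that is essentially what the whole improvement-of-flatness iteration eventually delivers). What \emph{can} be controlled is the total surface measure of $\partial^\ast\{v_j>0\}$ over $B'$, up to a factor $\frac{1+\tau_j}{1-\tau_j}$, and only via the divergence-theorem flux argument --- which is precisely the paper's Claim \ref{apb-claim1} (see \eqref{apb-eqn66}--\eqref{apb-eqn68}), not an off-the-shelf fact.

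There is also a structural warning sign in your error accounting. The one-sided hypotheses $k_j\ge 1-\tau_j$, $|\nabla v_j|\le 1+\tau_j$ are what restrict the conclusion to \emph{sub}harmonicity; in your first-order accounting both collapse to $1+o(\sigma_j)$, so if the scheme closed it would give $\int f\,\Delta'\phi=0$ for all $\phi\ge 0$, i.e.\ harmonicity of $f$ --- more than the available estimates can deliver, and more than the subsequent Lemmata \ref{apb-lem5}--\ref{apb-lem7} are designed to recover. The identities genuinely accessible from the boundary data are flux/area comparisons, and there the signal appears at order $\sigma_j^{2}$ (the excess of $\H^{n-1}(\partial^\ast\{v_j>0\})$ over the projected measure, resp.\ over $\H^{n-1}(Z^0(\sigma_j g))$), while the $\tau_j$-errors enter multiplicatively; this is exactly why the hypothesis is $\tau_j\sigma_j^{-2}\to 0$ and why the paper argues by contradiction: assuming the sub-mean-value inequality fails at some $y_0$, it builds the harmonic competitor $g$ in \eqref{apb-eqn5.26}, compares $\H^{n-1}(Z\cap\partial^\ast E_j)$ with $\H^{n-1}(Z^0(\sigma_j g))$ through Claims \ref{apb-claim1}--\ref{apb-claim3} (inequalities \eqref{apb-eqn5.32}--\eqref{apb-eqn5.34}), and reaches the contradiction $1\le C\sigma_j^{-2}\tau_j$ in \eqref{apb-eqn5.35}. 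If you want to repair your approach, you would either have to supply the interior-to-boundary linearization link (a Harnack-type step), or abandon the direct weak formulation and run the measure-comparison argument, which is the paper's route.
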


\begin{proof}
We proceed by contradiction, i.e. assume that $f$ is not subharmonic in $B'$.
Then there exists $y_0\in B'$ and $\rho>0$ so that $B'(y_0,\rho)\subset B'$
and
\begin{equation}\label{apb-eqnb5.27}
f(y_0) > \fint_{\lower7pt\hbox{$\scriptstyle{\p B'(y_0, \rho)}$}}f(x)dx.
\end{equation}
Set $\delta = f(y_0) - \fint_{\p B'(y_0, \rho)}f(x)dx > 0$ and
pick $\varepsilon_0$ so that $\frac{\delta}{3} \leq \varepsilon_0 \leq \frac{2\delta}{3}$.
Then let $g$ be the solution to the Dirichlet problem
\begin{equation}\label{apb-eqn5.26}
\left\{\begin{array}{rcll}
\Delta g & = & 0 & \hbox{in }B'(y_0, \rho) \\
g & = & f+\epsilon_0 & \hbox{on }\partial B'(y_0,\rho).
\end{array}\right.
\end{equation}
Note that
\begin{equation}\label{apb-eqn5.27}
f<g\hbox{ on }\partial B'(y_0,\rho), 
\end{equation}
and
\begin{equation} \label{apb-eqn5.28}
g(y_0) = \fint_{\partial B'(y_0, \rho)} g = \varepsilon_0 + \fint_{\partial B'(y_0, \rho)} f 
< \delta + \fint_{\partial B'(y_0, \rho)} f = f(y_0)
\end{equation}
The main idea of the proof is to compare the $(n-1)$-dimensional Hausdorff
measure of $\partial\{v_{k_j}>0\}$ inside 
the cylinder $B'(y_0,\rho)\times(-1,1)$ to that of the graph of $\sigma_{k_j}g$ inside 
the same cylinder to obtain a contradiction with 
an estimate on the size of the area enclosed by these 2 surfaces. 
We introduce some new definitions.

Let $Z=B'(y_0,\rho)\times\R$ be the infinite cylinder. For $\phi$ defined
on $\R^{n-1}$ define
\begin{eqnarray}\label{apb-eqn5.30}
Z^+(\phi) & = & \{(y,h)\in Z: h>\phi(y)\} \\
Z^-(\phi) & = & \{(y,h)\in Z: h<\phi(y)\} \nonumber \\
Z^0(\phi) & = & \{(y,h)\in Z: h=\phi(y)\}. \nonumber
\end{eqnarray}
We left some room in the choice of $\varepsilon_0$ above, and this way we can assume
that 
\begin{equation}\label{apb-eqn5.31}
\H^{n-1}(Z^0(\sigma_j g)\cap \partial\{v_j >0\})=0.
\end{equation}
because the set of values of $\varepsilon_0$ for which this fails is at most countable.

Let us make three claims, then show how to combine them to get the desired contradiction,
then discuss their proofs.

\begin{Claim}\label{apb-claim1}
\begin{equation}\label{apb-eqn5.32}
\H^{n-1}(Z^+(\sigma_j g)\cap\partial\{v_j >0\})\le  \frac{1 +\tau_j}{1-\tau_j}\ \H^{n-1}
(Z^0(\sigma_j g)\cap\{v_j >0\}).
\end{equation}
\end{Claim}

\begin{Claim}\label{apb-claim2}
Let $E_j =\{v_j >0\}\cup Z^-(\sigma_j g)$. Then $E_j$ is a set of locally finite
perimeter and
\begin{equation}\label{apb-eqn5.33}
\H^{n-1}(Z\cap\partial^\ast E_j)
\le \H^{n-1}(\partial\{v_j >0\}\cap Z^+(\sigma_j g))+\H^{n-1}(\{v_j =0\}\cap Z^0(\sigma_j g)).
\end{equation}
Here $\partial^\ast E_j$ denotes the reduced boundary of $E_j$.
\end{Claim}

\begin{Claim}\label{apb-claim3}
\begin{equation}\label{apb-eqn5.34}
\H^{n-1}(Z\cap\partial^\ast E_j ) \ge \H^{n-1}(Z^0(\sigma_j g))+C\sigma^2_j \rho^{n-1}
\end{equation}
where $C>0$ is a constant independent of $j$. 
\end{Claim}

Before addressing the claims we can combine them to get the desired contradiction.
We use (\ref{apb-eqn5.34}), (\ref{apb-eqn5.33}), (\ref{apb-eqn5.32}), 
and the harmonicity of $g$ to prove that for $j$ large, 
since $\|\nabla g\|_{L^2(B)}$ is bounded and $\sigma_j\to 0$,
\begin{eqnarray}\label{apb-eqn5.35}
\H^{n-1}(Z^0(\sigma_j g))+C\sigma^2_j \rho^{n-1} 
& \le & \H^{n-1}(Z\cap\partial^\ast
E_j)\nonumber \\ 
& \le & \H^{n-1}(\partial\{v_j >0\}\cap Z^+(\sigma_j g)) 
+ \H^{n-1}(\{v_j =0\}\cap Z^0(\sigma_j g)) \nonumber \\ 
& \le &  \frac{1 +\tau_j}{1-\tau_j}\ \H^{n-1}(Z^0(\sigma_j g)\cap \{v_j >0\}) +
\H^{n-1}(\{v_j =0\}\cap Z^0(\sigma_j g))  
\nonumber\\
& \le & \frac{1 +\tau_j}{1-\tau_j}\  \H^{n-1}(Z^0(\s_j g))  
\le  (1+ 4\tau_j) \H^{n-1}(Z^0(\s_j g))  
\\
& \le &  \H^{n-1}(Z^0(\s_j g)) + C\tau_j \rho^{n-1}.
\nonumber
\end{eqnarray}
Note that \eqref{apb-eqn5.35} yields $1\le C\sigma_j ^{-2}\tau_j$, 
which is a contradiction since we are assuming $\sigma_j ^{-2}\tau_j \to 0$ and $j \to 0$.

Claim 2 is straightforward and anyway does not use normal derivatives. 
The proof of Claim~3 here is identical to the corresponding one in \cite{AC} or \cite{KT2}.
To verify Claim 1, notice that (\ref{apb-eqn37}) and then (\ref{apb-eqn4.37}) imply that for 
 $\phi\in C^\infty_c(B(0,1))$ such that $\phi\ge 0$,
\begin{equation}\label{apb-eqn66}
(1-\tau_j)  \int_{\partial^\ast\{v_j>0\}}\phi d\H^{n-1}
\le \int_{\partial^\ast\{v_j>0\}} \phi(x) k_j(x) d\H^{n-1}(x)
= - \int_{\{v_j>0\}}\langle\nabla v_j, \nabla\phi\rangle.
\end{equation}

Take an increasing sequence of mappings $\phi_k \in C^\infty_c(B(0,1))$
that converges to $\1_{Z^+(\s_jg) \cap B(0,1)}$; then 
\begin{equation} \label{a8.59}
\lim_{k \to +\infty}\int_{\partial^\ast\{v_j>0\}}\phi_k d\H^{n-1} 
= \int_{\partial^\ast\{v_j>0\}\cap Z^+(\s_jg)}d\H^{n-1}
= \H^{n-1}(\p^\ast\{v_j>0\} \cap Z^+(\s_jg)),
\end{equation}
for instance by Beppo-Levi and because $\partial^\ast\{v_j>0\} \cap Z$ 
does not get high enough to meet $Z \cap Z^+(\s_jg) \sm B(0,1)$.
Next $Z^+(\s_jg)$ is a an open set with finite perimeter, whose boundary is composed of a vertical
piece of $\d Z$, plus two roughly horizontal smooth pieces (a piece of $\d B(0,1)$
above and a piece of the graph of $\sigma_j g$ below).
Denote by $\nu$ the outward unit normal for this domain; we want to show that
\begin{equation} \label{a8.60}
\lim_{k \to +\infty} \int_{\{v_j>0\}}\langle\nabla v_j, \nabla\phi_k\rangle
= - \int_{\p Z^+(\s_j g) \cap \{v_j>0\}} \langle\nabla v_j, \nu\rangle d\H^{n-1}
\end{equation}
By \eqref{apb-eqn5.27}, $f < g$ on $\d B'(y_0,z)$, so for $j$
large $Z^+(\s_jg)$ lies strictly above $\overline{\{v_j >0\}}$ in a neighborhood
of $\d Z$. This neighborhood does not contribute to either side of \eqref{a8.60},
so we only consider the rest of $Z$, where all the contributions come from
a small region on and slightly above the graph of $\sigma_j g$.

In this region, $\nabla\phi_k(x) dx$ converges weakly to 
$- \nu \H^{n-1}_{\vert \d Z^+(\s_jg)}$, with no need to disturb sets 
of finite perimeters here because $Z^+(\s_jg)$ is the region above a smooth 
graph. But maybe $\nabla v_j$ varies a little bit too wildly for this weak convergence,
so we'll cut the region in two.

Recall  from \eqref{apb-eqn5.31} that 
$\H^{n-1}(Z^0(\sigma_j g)\cap \partial\{v_j >0\})=0$. 
Let $\varepsilon > 0$ be given; by regularity of the restriction of $\H^{n-1}$
to $Z^0(\sigma_j g)$ we can choose $\delta >0$ so that if 
$H_\delta$ denotes the $\delta$-neighborhood of $\partial\{v_j >0\}$,
$\H^{n-1}(Z^0(\sigma_j g)\cap H_{2\delta}) < \varepsilon$.
Recall from \eqref{apb-eqn37} that $\nabla v_j$ is bounded; then with a small covering
of $Z^0(\sigma_j g)\cap H_{\delta}$ by balls of radius $\delta/10$, we can see that
for $k$ large the contribution of $H_{\delta}$ to both sides of \eqref{a8.60}
are less than $C\varepsilon$. 

In the remaining region $\{ v_j > 0 \} \sm H_\delta$, $\nabla v_j$ is smooth
(because $v_j$ is harmonic in $\{ v_j > 0 \}$), we can use the weak convergence
of $\nabla\phi_k(x) dx$ to $- \nu \H^{n-1}_{\vert \d Z^+(\s_jg)}$ to construct a region
$X_\delta$, that contains $\{ v_j > 0 \} \sm H_\delta$, and where the analogue of
\eqref{a8.60} holds. Then \eqref{a8.60} itself follows by letting $\delta$
tend to $0$.

We may now return to \eqref{apb-eqn66}, take a limit, and we get that
\begin{eqnarray}\label{apb-eqn68}
(1-\tau_j) \H^{n-1}(\p^\ast\{v_j>0\} \cap Z^+(\s_jg))  
&\leq& - \int_{\p Z^+(\s_j g) \cap \{v_j>0\}} \langle\nabla v_j, \nu\rangle d\H^{n-1}
\nonumber \\
&\,& \hskip-4cm
\leq \int_{Z^0 (\s_jg) \cap \{ v_j>0 \}}|\nabla v_j| d\H^{n-1}
\leq (1 +\tau_j)\ \H^{n-1}(Z^0(\s_jg)\cap \{v_j>0\})
\end{eqnarray}
by \eqref{a8.59}, \eqref{a8.60}, the fact that $\{ v_j>0 \}$ does not meet
$\d Z \cap \d Z^+(\s_j g)$, and \eqref{apb-eqn37}.
This concludes the proof of \eqref{apb-eqn5.32}; \eqref{apb-eqn5.35}
and Lemma \ref{apb-lem4} follow.

\end{proof}

\ms
The proof of the fact that $f$ is Lipschitz will rely  on the following lemma, 
which claims that on average, the averages of $f$ converge to $f$ faster
than linearly. We denote these averages by
\begin{equation}\label{eqn5.62}
f_{y,r}=\fint_{\lower7pt\hbox{$\scriptstyle{\partial B'(y,r)}$}}fd\H^{n-1},
\ \text{ with } \d B'(y,r) = \d B(y,r) \cap [\R^{n-1}\times\{0\}].
\end{equation}

\begin{lemma}{\bf(Lemma 7.6 \cite{AC} or Lemma 0.9 \cite{KT2})}\label{apb-lem5}
There is a constant $C=C(n)>0$ such that for 
$y\in B'_{1/4} = B(0,\frac{1}{4})\cap\R^{n-1}\times\{0\}$
\begin{equation}\label{eqn5.61}
0 \le \int^{\frac{1}{8}}_0 (f_{y,r}-f(y)) \, \frac{dr}{r^2} \le C.
\end{equation}
\end{lemma}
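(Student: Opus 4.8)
The non-negativity in \eqref{eqn5.61} is immediate from the subharmonicity of $f$ (Lemma \ref{apb-lem4}): for each fixed $y$ the function $r \mapsto f_{y,r}$ is non-decreasing and $f_{y,r} \geq f(y)$, so the integrand is $\geq 0$. The content of the lemma is the upper bound, and the plan is to follow the argument of Lemma 7.6 in \cite{AC} (equivalently Lemma 0.9 in \cite{KT2}), keeping track of the places where our weaker hypothesis on the normal derivative of $v_j$ (an inequality $k_j \geq q_+(x_0)(1-\tau_j)$ together with an $L^\infty$ bound, rather than an exact identity) enters. The main step is to exploit the fact that, by the divergence-theorem computations already carried out in the proof of Lemma \ref{apb-lem4} (Claims \ref{apb-claim1}--\ref{apb-claim3}), there is a quantitative lower bound on the ``excess area'' $\H^{n-1}(Z \cap \partial^\ast E_j) - \H^{n-1}(Z^0(\sigma_j g))$ in terms of $\sigma_j^2 \|\nabla g\|^2_{L^2}$, which when combined with the area comparison forces the oscillation of $f$ over concentric spheres to be controlled.

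First I would fix $y \in B'_{1/4}$ and, for a small radius $\rho \leq 1/8$, apply the machinery of Claims \ref{apb-claim1}, \ref{apb-claim2}, \ref{apb-claim3} on the cylinder $Z = B'(y,\rho) \times \R$ with $g$ now taken to be the harmonic extension to $B'(y,\rho)$ of the boundary data $f|_{\partial B'(y,\rho)}$ (no added $\varepsilon_0$ needed here since we want a two-sided estimate, not a contradiction). Running the same chain of inequalities \eqref{apb-eqn5.35} but without discarding the $\|\nabla g\|_{L^2}^2$ term gives, for $j$ large,
\begin{equation}\label{plan:excess}
C \sigma_j^2 \int_{B'(y,\rho)} |\nabla g|^2 \, d\H^{n-1} \leq \H^{n-1}(Z \cap \partial^\ast E_j) - \H^{n-1}(Z^0(\sigma_j g)) \leq C \tau_j \rho^{n-1},
\end{equation}
where the left inequality is the refined version of Claim \ref{apb-claim3} (a Taylor expansion of the area functional of the graph of $\sigma_j g$ retaining the quadratic term) and the right inequality comes from Claims \ref{apb-claim1}--\ref{apb-claim2} exactly as in \eqref{apb-eqn5.35}. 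Dividing by $\sigma_j^2$ and letting $j \to \infty$ (using $\tau_j \sigma_j^{-2} \to 0$), I would conclude that $\int_{B'(y,\rho)} |\nabla g|^2 = 0$ is \emph{not} quite what happens; rather, the correct bookkeeping (as in \cite{AC}) yields a bound of the form $\int_{B'(y,\rho)} |\nabla g_\rho|^2 \leq$ (something summable in $\rho$), where $g_\rho$ is the harmonic extension of $f|_{\partial B'(y,\rho)}$. Since for a harmonic function $\int_{B'(y,\rho)} |\nabla g_\rho|^2$ dominates $\rho^{n-3}(f_{y,\rho} - f(y))$ up to dimensional constants (this is the standard identity relating the Dirichlet energy of the harmonic extension to the difference between a boundary average and the value at the center, valid because $g_\rho(y) = f_{y,\rho}$ and $f \leq g_\rho$), summing the resulting dyadic estimates over $\rho = 2^{-k}/8$ produces exactly \eqref{eqn5.61}.

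\textbf{The main obstacle.} The delicate point is the refined lower bound for the excess area in \eqref{plan:excess}: one must show that the reduced-boundary surface $\partial^\ast E_j$ inside the cylinder has area at least that of the graph of $\sigma_j g$ \emph{plus} a definite multiple of $\sigma_j^2 \|\nabla g\|_{L^2}^2$, and this is where a genuine argument is needed rather than a citation, because in \cite{AC}/\cite{KT2} the surface measure on $\partial\{v_j>0\}$ is tied to $\Delta v_j$ through an identity, whereas here \eqref{a-flatness-2}/\eqref{a-flatness-2a} only give two-sided inequalities with a loss of order $\tau_j$. The resolution is that the loss is of order $\tau_j \rho^{n-1}$, which is $o(\sigma_j^2 \rho^{n-1})$, so it is absorbed; concretely, I would split $\partial^\ast E_j \cap Z$ into the part lying on $Z^0(\sigma_j g)$ (contributing essentially $\H^{n-1}(Z^0(\sigma_j g))$ minus a lower-order correction, via Claim \ref{apb-claim1}) and the part lying on $\partial\{v_j>0\} \cap Z^+(\sigma_j g)$, and use the non-homogeneous blow-up convergence $f_j^{\pm} \to f$ (Lemma \ref{apb-cor1}) together with the lower semicontinuity of perimeter to transfer the graph-area Taylor expansion of $\sigma_j g$ versus the rescaled free boundary into the $\sigma_j^2$-order gain. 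Everything else in the argument—the passage from the energy bound to \eqref{eqn5.61}, the dyadic summation, and the use of subharmonicity for the lower bound—is routine and parallels \cite{AC} and \cite{KT2} verbatim.
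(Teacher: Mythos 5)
The non-negativity half of \eqref{eqn5.61} via subharmonicity is fine, but the upper bound, which is the whole content of the lemma, rests on a step that is both unproven and, as stated, false. Your key displayed two-sided bound asks that the excess $\H^{n-1}(Z\cap\partial^\ast E_j)-\H^{n-1}(Z^0(\sigma_j g))$ be bounded \emph{below} by $C\sigma_j^2\int_{B'(y,\rho)}|\nabla g|^2$ when $g$ is the harmonic extension of $f|_{\partial B'(y,\rho)}$ with no $\varepsilon_0$-shift. There is no geometric mechanism producing such a gain: in the proof of Lemma \ref{apb-lem4}, the quadratic gain of Claim \ref{apb-claim3} exists precisely because the boundary data were lifted by $\varepsilon_0$, so that $g(y_0)<f(y_0)$ while $g>f$ on $\partial B'(y_0,\rho)$ and the free boundary is forced to bulge strictly above the graph of $\sigma_j g$; without the shift, $\partial^\ast E_j$ can essentially ride along the graph of $\sigma_j g$ and the excess can be $o(\sigma_j^2)\rho^{n-1}$ even though $\nabla g\not\equiv0$. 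Indeed, you observe yourself that your two inequalities combined with $\tau_j\sigma_j^{-2}\to0$ would give $\nabla g_\rho\equiv0$ for every $y$ and $\rho$, i.e.\ $f$ constant, which is absurd; at that point you defer to ``the correct bookkeeping (as in \cite{AC})'', but no such bookkeeping exists there: Lemma 7.6 of \cite{AC} (and Lemma 0.9 of \cite{KT2}) is not proved by the area-comparison device at all, so the crucial inequality of your plan has neither a proof nor a citation to fall back on.

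The final conversion step is also not correct: the ``standard identity'' you invoke, that $\int_{B'(y,\rho)}|\nabla g_\rho|^2$ dominates $\rho^{n-3}(f_{y,\rho}-f(y))$, fails because $f(y)$ is not determined by the boundary data --- a subharmonic $f$ with constant values on $\partial B'(y,\rho)$ and a strict dip at $y$ has $\nabla g_\rho\equiv0$ while $f_{y,\rho}-f(y)>0$; the quantity in \eqref{eqn5.61} is a Riesz-measure (potential of $\Delta f$) type quantity, not an energy of harmonic extensions. The paper (following \cite{AC} and \cite{KT2}) proves the lemma instead by working directly with the approximants $v_j$: one takes the Green function $G_h$ of $B(0,\tfrac14)\cap\{x_n<0\}$ with pole at $-he_n$, extends it by odd reflection and translates by $\sigma_je_n$, applies Green's identities to $v_j$ and to the linear function $x_n$ on $B^j\cap\{v_j>0\}$, and uses \eqref{a-flatness-2a}--\eqref{a-flatness-2} only through sign information and additive $O(\tau_j)$ errors (e.g.\ $\frac{1}{1-\tau_j}k_j+\langle e_n,\nu_j\rangle\ge0$ in \eqref{apb-eqn74}); this yields \eqref{apb-eqn83}, hence \eqref{apb-eqn85} after the blow-up, and the Poisson-kernel lower bound \eqref{apb-eqn88} with $h\to0$ gives \eqref{eqn5.61}. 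In that route the $\tau_j$ losses never have to compete with $\sigma_j^2$, which is exactly the difficulty your scheme cannot resolve.
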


\begin{proof}
Let $y\in B'_{1/4}$ be given; since $v_j\in F(\s_j, \s_j;\tau_j)$ in $B(0,1)$,
we also get that $v_j\in F(8\s_j,8\s_j;\tau_j)$ in 
$B\left(\overline y_j, \frac{1}{2}\right)$, where $\overline y = (y, \s_jf^+_j(y))$.
Notice that we choose the last coordonate of $\overline y$ so that
$\overline y_j$ lies in $\d \{ v_j > 0 \}$.
We shall prove the lemma in the special case when $y=0$, so that we can refer
to the fact that $v_j\in F(\s_j, \s_j;\tau_j)$ directly, but with the observation
above, the proof would also work for general points $y$ (with slightly worse constants).
We also have the additional advantage that since $f(0) = 0$, we do not have 
to subtract the limit $f(y)$ of the $f^+_j(y))$.
With this reduction it is enough to prove that
\begin{equation}\label{apb-eqn69}
0\le\int^{\frac{1}{8}}_0  
\frac{1}{r^2} \fint_{\lower7pt\hbox{$\scriptstyle{\p B'_r}$}} fd\H^{n-1}\le C,
\end{equation}
where $B'_r=B'(0,r)$ and $C$ only depends on $n$.
By Lemma \ref{apb-lem4}, $f$ is subharmonic in $B'$. 
Thus for $r\in \left(0, \frac{1}{2}\right)$, $f(0)\le \fint_{\p B'_r}fd\H^{n-1}$, 
which proves the first inequality in (\ref{apb-eqn69}).

Let $h > 0$ be small, and restrict to $j$ large, so that $2 \s_j < h$. 
Set $B = B\left(0; \frac{1}{4}\right)$, and let $G_h$ denote the Green function of $B \cap\{x_{n}<0\}$ with pole $-he_{n}$.
Using a reflection argument we know that $G_h$ can be extended to be a
smooth function on $B\bs\{\pm he_{n}\}$,
with $G_h(x, x_{n})=-G_h(x, -x_{n})$ for $x_{n}>0$.

For $j$ large let $G^j_h(x)=G_h(x+\s_je_{n})$, which is defined on
$B^j = B - \s_je_{n}$, minus the two poles $-\s_je_{n} \pm he_{n}$.
In the definition of $B^j$, we may always replace the radius $1/4$
with something slightly different (and the estimates would be the same).
So, avoiding an at most countable set of radii, we may assume that for $j$ large, 
\begin{equation} \label{a8.65}
\H^{n-1}(\p B^j \cap\p^*\{v_j>0\})=0.
\end{equation}
We claim that by Green's formula 
(applied on the domain $B^j \cap\{v_j>0\}$, minus a tiny ball 
centered at the pole $-(h+\s_j) e_{n}$),
\begin{eqnarray}\label{apb-eqn70}
-\int_{B^j \cap\{v_j>0\}} \langle\nabla v_j, \nabla G^j_h \rangle 
& = & \int_{\p^\ast [B^j \cap\{v_j>0\}]}
v_j\p_\nu G^j_h d\H^{n-1} - v_j(-(h+\s_j)e_{n})
\nonumber \\
&=&  \int_{\p B^j \cap \{v_j>0\}} v_j\p_\nu G^j_h d\H^{n-1}
- v_j(-(h+\s_j) e_{n}),
\end{eqnarray}
where $\p_\nu G^{j}_h=\langle\nabla G^{j}_h,\nu\rangle$, and $\nu$
denotes the inward pointing unit normal.  
For the first line, the overanxious reader may be worried about the joint regularity 
of the boundary and $v_j$, but the part of boundary where $\nabla v_j$ may be wild 
is near $B_j \cap \d^\ast \{v_j>0\}$, where $G_h^j$ is smooth and $v_j$ is Lipschitz
(by \eqref{apb-eqn37}); we may need a small limiting argument here, but an argument a little similar to the rapid justification of \eqref{a8.60}, where you integrate 
against a smoothed out version of $v_j$ and go to the limit, will do the job.
Notice that $G_h^j$ is smooth away from the pole, so does not create trouble, and
also the contribution of $B_j \cap \d^\ast \{v_j>0\}$ to the right-hand side of the
first line disappears, because $v_j$ (is Lipschitz and) vanishes on that part of the
boundary. A different Green-type computation yields
\begin{equation} \label{apb-eqn70aa}
-\int_{B^j \cap\{v_j>0\}} \langle\nabla v_j, \nabla G^j_h \rangle = 
\int_{\p\{v_j>0\}\cap B^j} G^j_h k_j d\H^{n-1};
\end{equation}
if $G_h^j$ were a smooth, compactly supported function in $B^j$, this would 
be \eqref{apb-eqn4.37}. Now $G_h^j$ has a singularity at the pole $-(h+\s_j) e_{n}$,
but where $\nabla G_h^j$ is locally integrable, and since $\nabla v_j$ is smooth
near the pole, a small approximation allows one to get rid of the singularity.
Similarly, $G_h^j$ vanishes nicely on $\d B^j$, and we can approximate it by
smooth compactly supported functions (because on $\d B^j \sm \p\{v_j>0\}$,
$\nabla v_j$ is smooth, and by \eqref{a8.65} the contribution near 
$\d B^j \cap \p\{v_j>0\}$ can be estimated as near \eqref{a8.60}).

Now \eqref{apb-eqn70} and \eqref{apb-eqn70aa} yield
\begin{equation}\label{apb-eqn70A}
\int_{\p B^j \cap\{v_j>0\}} v_j\p_\nu G^j_h-v_j(-(h+\s_j)e_{n})
- \int_{\p\{v_j>0\} \cap B^j} k_j G^j_h d\H^{n-1}=0
\end{equation}
A new application of Green's formula, as in the first line of \eqref{apb-eqn70}
but with $v_j$ replaced by $x_n$, yields
\begin{equation}\label{apb-eqn70a}
-\int_{B^j \cap\{v_j>0\}} \langle \nabla x_n , \nabla G^j_h \rangle 
= \int_{\p^\ast [B^j \cap\{v_j>0\}]} x_n\p_\nu G^j_h d\H^{n-1} + (h+\s_j)
\end{equation}
But $\langle \nabla x_n , \nabla G^j_h \rangle = \Div(x_n G^j_h)$, so by Green again
\begin{eqnarray}\label{apb-eqn70b}
-\int_{B^j \cap\{v_j>0\}} \langle \nabla x_n , \nabla G^j_h \rangle 
&=& \int_{\p^\ast [B^j \cap\{v_j>0\}]} G^j_h \langle e_n, \nu \rangle d\H^{n-1}
\nonumber\\
&=& \int_{B^j \cap \p^\ast \{v_j>0\}} G^j_h \langle e_n, \nu \rangle d\H^{n-1}
\end{eqnarray}
where $\nu$ denotes the inward pointing normal, and because $G^j_h$
vanishes on $\d B^j$.
We cut the boundary in \eqref{apb-eqn70a} into two pieces, compare with 
\eqref{apb-eqn70b}, and get that
\begin{equation}\label{apb-eqn72}
\int_{B^j\cap\p^\ast\{v_j>0\}}\langle G_h^j e_{n}-x_{n} \nabla G^j_h, \nu\rangle 
\, d\H^{n-1}
=  (\s_j+h) + \int_{\p B^j\cap\{v_j>0\}} x_{n}\p_\nu G^j_h\, d\H^{n-1}.
\end{equation}
Let us even write $\nu_j$ for $\nu$, to stress the dependence on $j$. Thus
\begin{equation}\label{apb-eqn72A}
\int_{B^j \cap\p^\ast\{v_j>0\} }x_{n} \p_{\nu_j} G^j_hd\H^{n-1} 
= \int_{B^j \cap\p^\ast\{v_j>0\}}G_h\langle e_{n},\nu_j\rangle d\H^{n-1} 
- (\s_j+h) - \int_{\p B^j \cap\{v_j>0\}}x_{n}\p_\nu G^j_h d\H^{n-1}.
\end{equation}
Dividing (\ref{apb-eqn70A}) by $1-\tau_j$ and subtracting it from 
(\ref{apb-eqn72A}) we obtain
\begin{eqnarray}\label{apb-eqn73}
\int_{B^j \cap \p^\ast\{v_j>0\}}x_{n}\p_{\nu_j}G^j_hd\H^{n-1}
& = & \int_{B^j\cap\p^\ast\{v_j>0\}}\left(\frac{1}{1-\tau_j} k_j
+\langle e_{n}, \nu_j\rangle\right)G^j_h d\H^{n-1}
 \nonumber\\
&\,& \hskip-3cm
- \int_{\p B^j \cap\{v_j>0\}}(x_{n}+\frac{v_j}{1-\tau_j})\p_\nu G^j_h d\H^{n-1}
+ \frac{1}{1-\tau_j}v_j(-(h+\s_j)e_{n}) - (\s_j+h).
\end{eqnarray}
We estimate each term separately. 
Recall that $h>2\s_j$ and $v_j\in F(\s_j, \s_j;\tau_j)$ in $B(0,1)$ in the direction
$e_{n}$. Then $G^j_h \le 0$ on $\p\{v_j>0\}\cap B^j_{\frac{1}{2}}$; this is the
reason why we lowered $B$ to get $B^j$

Moreover, since $k_j\ge 1-\tau_j \ $ $\H^{n-1}$ a.e. on $\p^\ast\{v_j>0\}$ 
(see (\ref{apb-eqn37})), then
$\frac{1}{1-\tau_j}k_j+\langle e_{n},\nu_j\rangle\ge 0$ and 
\begin{equation}\label{apb-eqn74}
\int_{B \cap\p^\ast\{v_j>0\}} \left(\frac{1}{1-\tau_j}k_j+\langle e_{n},
\nu_j\rangle\right)G^j_h  d\H^{n-1}\le 0. 
\end{equation}
Furthermore since $v_j\ge 0$ and $v_j(0)=0$, (\ref{apb-eqn37})
ensures that
\begin{eqnarray}\label{apb-eqn75}
|v_j(-(h+\s_j)e_{n})| & = & |v_j(-(h+\s_j)e_{n})-v_j(0)| \\ 
& \le & \sup_{\{v_j>0\}}|\nabla v_j|(h+\s_j)\le (1+\tau_j)(h+\s_j). \nonumber
\end{eqnarray}
Hence \eqref{apb-eqn75} yields
\begin{equation}\label{apb-eqn76}
\frac{1}{1-\tau_j}v_j(-(h+\s_j)e_{n})-(h+\s_j)\le \frac{2\tau_j}{1-\tau_j}(h+\s_j)
\end{equation}

Recall that $\{v_j>0\}\subset\{x_{n}<\s_j\}$. 
For $x_{n}\le\s_j$ and by (\ref{apb-eqn37}),
\begin{equation}\label{apb-eqn78}
v_j(x,x_{n}) \leq 
|v_j(x,x_{n})-v_j(x,\s_j)| \le (\s_j-x_{n})\sup |\nabla v_j| 
\le (1+\tau_j)
(\s_j-x_{n}),
\end{equation}
which yields for $x_{n}\in[0,\s_j]$
\begin{equation}\label{apb-eqn79}
0\le \frac{v_j(x, x_{n})}{1-\tau_j} +x_{n}
\le \frac{1+\tau_j}{1-\tau_j}(\s_j-x_{n}) +x_n
\le \frac{1+\tau_j}{1-\tau_j}\s_j - \frac{2\tau_j}{1-\tau_j}x_n 
\le \frac{1+\tau_j}{1-\tau_j}\s_j, 
\end{equation}
and for $x_{n}\in[-\s_j,0]$
\begin{equation}\label{apb-eqn80}
-\s_j\le \frac{v_j(x, x_{n})}{1-\tau_j}+x_{n}
\le \frac{1+\tau_j}{1-\tau_j}\s_j - \frac{2\tau_j}{1-\tau_j}x_n 
\le \frac{1+3\tau_j}{1-\tau_j}\s_j. 
 \end{equation}
Since $v_j\in F(\s_j,\s_j;\tau_j)$ in $B(0,1)$ in the direction $e_{n}$
\begin{equation}\label{apb-eqn81}
v_j(x,x_{n})\ge-x_n-\s_j\qquad\mbox{for}\qquad x_{n}\le-\s_j
\end{equation}
(and even for $x_n \leq 0$) and so 
\begin{equation}\label{apb-eqn81A}
 \frac{v_j(x, x_{n})}{1-\tau_j}+x_{n}\ge -\frac{\s_j+x_n}{1-\tau_j} +x_n\ge -\frac{\s_j}{1-\tau_j}-\frac{\tau_j}{1-\tau_j}x_n\ge  -\frac{\s_j}{1-\tau_j}.
\end{equation}
We combine the fact that $\p_\nu G^j_h>0$ on $\d B^j$
(by the Hopf boundary lemma) and (\ref{apb-eqn79}), and (\ref{apb-eqn81A})
to estimate 
\begin{eqnarray}\label{apb-eqn82}
\int_{\p B^j\cap\{v_j>0\}}(x_{n}+\frac{v_j}{1-\tau_j})\p_\nu G^j_h d\H^{n-1}
&=&
\int_{\p B^j \cap\{v_j>0\}\cap\{x_{n}\le 0\}}
(x_{n}+\frac{v_j}{1-\tau_j})\p_\nu G^j_h d\H^{n-1}
\nonumber\\
&\,&+ \int_{\p B^j \cap\{v_j>0\}\cap \{0<x_{n}\le\s_j\}} 
(x_{n}+\frac{v_j}{1-\tau_j})\p_\nu G^j_h d\H^{n-1}
\nonumber \\
&\ge &-\frac{\s_j}{1-\tau_j} \int_{\p B^j\cap\{v_j>0\}\cap\{x_{n}\le 0\}}
\p_\nu G^j_h. 
\end{eqnarray}
Combining (\ref{apb-eqn73}), \eqref{apb-eqn74}, \eqref{apb-eqn76} and (\ref{apb-eqn82}) we obtain
\begin{eqnarray}\label{apb-eqn83}
\limsup_{j\to\infty}\frac{1}{\s_j}
\int_{B^j \cap\p\{v_j>0\}} x_{n}\p_{\nu_j}G^j_hd\H^{n-1} && \\
&&\kern-1in \le   \limsup_{j\to\infty}\frac{1}{1-\tau_j}
\int_{\p B^j \cap\{v_j>0\}\cap\{x_{n}\le 0\}}\p_\nu G^j_h d\H^{n-1} 
\nonumber \\
&&\kern-1in \le \int_{\p B \cap\{x_{n}\le 0\}}\p_\nu G_h d\H^{n-1}\le Ch.
\nonumber
\end{eqnarray}
The last inequality was obtained by applying the comparison principle for
non-negative harmonic function in the domain
$D=B_{\frac{1}{2}}\cap\{x_{n}\le 0\}$ to the harmonic measure of $D$ and
the function $s(x, x_{n})=-x_{n}$ at the point $-he_{n}$ (see
\cite[Lemma 4.10]{JK}).

The rest of the proof is exactly as the one presented in \cite{KT2}, so we just
describe the scheme. 

 Notice that since $v_j\in F(\s_j,\s_j;\tau_j)$ then
$\chi_{\{v_j>0\}}\mathop{\longrightarrow}\limits_{j\to\infty}
\chi_{\{x_{n}\le 0\}}$ in $L^1(B(0,1))$ and
$\partial\{v_j>0\}\to\{x_{n}=0\}$ in the Hausdorff distance sense
uniformly on compact subsets. Moreover since $f^+_j$ and $f^-_j$ converge
uniformly to $f$ on compact sets and $\nabla G^j_h$ converges to $\nabla
G_h$ smoothly away from $\pm he_{n}$ we have that
\begin{equation}\label{apb-eqn84}
\sup_{(x, x_{n})\in \p^\ast\{v_j>0\}\cap B^j}
\left|\frac{x_{n}}{\s_j} \nabla G^j_h (x,x_{n}) - f(x)\nabla G_h(x,0)
\right| \mathop{\longrightarrow}\limits_{j\to\infty} 0.
\end{equation}
Thus combining (\ref{apb-eqn83}) and (\ref{apb-eqn84}) we obtain that
\begin{equation}\label{apb-eqn85}
\frac{1}{h} \int_{B'}f(x)\nabla_{-e_{n}}G_h(x,0)dx\le C.
\end{equation}
Note that $\nabla_{-e_{n}}G_h \big|_{x_{n}=0} = -\frac{\p G_h}{\p
x_{n}}\big|_{x_{n}=0}$ is radially symmetric on $B'$. 
Let $g_h(r)=g_h(|x|)=-\frac{\p G_h}{\p x_{n}}(x,0)$ for $x=r\theta$ and 
$\theta\in \SS^{n-1}$. With this notation (\ref{apb-eqn85}) becomes
\begin{eqnarray}\label{apb-eqn86}
\frac{1}{h}\int_{B'} f(x) g_h(|x|)dx & = & \frac{1}{h}
\int^{\frac{1}{2}}_0 r^{n-1}g_h(r)\int_{\SS^{n-1}} f(r\theta)d\theta dr \\
& = & \frac{\s_{n-1}}{h} \int^{\frac{1}{2}}_0 r^{n-1} g_h(r) \fint_{\lower7pt\hbox{$\scriptstyle{\p B'_r}$}}
fd\H^{n-1}dr\le C.\nonumber
\end{eqnarray}
Comparing $g_h(r)$ with the Poisson kernel of $\R^n$ with pole at $-he_{n}$,
$P_h(r)$ (see \cite[Lemma~4.3]{KT3}), and using once more the comparison
principle for non-negative harmonic functions on $B^-$
(\cite[Lemma 4.10]{JK}) we obtain
\begin{equation}\label{apb-eqn87}
\frac{g_h(r)}{P_h(r)} = \lim_{x\to (r\theta,0)} \frac{G_h(x)}{G^\infty_h(x)}\ge
C_n \frac{G_h(A_h)}{G^\infty_h(A_h)}, 
\end{equation}
here $G^\infty_h$ denotes the Green's function of $\R^n$ with pole at
$-he_{n}$; and $A_h=-\frac{h}{64} e_{n}$. Since $G^\infty_h(A_h)\le
\frac{C_n}{h^{n-1}}$ and $G_h(A_h)\ge \frac{C_n}{h^{n-1}}$, (\ref{apb-eqn87})
yields
\begin{equation}\label{apb-eqn88}\
g_h(r)\ge \frac{C_n h}{(r^2+h^2)^{\frac{(n+1)}{2}}}.
\end{equation}
Combining (\ref{apb-eqn86}) and (\ref{apb-eqn88}) we obtain
\begin{equation}\label{apb-eqn89}
\int^{\frac{1}{2}}_0
\frac{r^{n-1}}{(r^2+h^2)^{\frac{n}{2}}}\left(\fint_{\lower7pt\hbox{$\scriptstyle{\p B'_r}$}} f(x)dx\right)
dr\le C,
\end{equation}
here $C$ only depends on $n$.
Letting $h$ tend to $0$ we conclude that (\ref{apb-eqn69}) holds.
\end{proof}

\begin{lemma}{\bf(Lemma 7.7 \cite{AC} or Lemma 0.10 \cite{KT2})}\label{apb-lem6}
The function $f$ introduced in Lemma \ref{apb-lem3} is Lipschitz on
$\overline B'_{\frac{1}{16}}$
with a Lipschitz constant that only depends on $n$.
\end{lemma}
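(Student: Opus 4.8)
\textbf{Proof plan for Lemma \ref{apb-lem6}.}

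The plan is to follow the scheme of Lemma 7.7 in \cite{AC} (or Lemma 0.10 in \cite{KT2}), deriving the Lipschitz bound for $f$ from the pointwise estimate \eqref{eqn5.61} of the previous lemma together with the subharmonicity of $f$ established in Lemma \ref{apb-lem4}. The starting observation is that \eqref{eqn5.61} controls, for each $y\in B'_{1/4}$, the weighted average $\int_0^{1/8}(f_{y,r}-f(y))\,r^{-2}dr$ by a dimensional constant $C$, and that by Lemma \ref{apb-lem4} the integrand is nonnegative (subharmonicity gives $f_{y,r}\ge f(y)$). First I would use this to bound the Dirichlet-type oscillation of $f$: for a subharmonic function, $f_{y,r}-f(y)$ controls $r^{2-n}\int_{B'(y,r)}|\nabla f|^2$ (or more precisely, integrating the mean-value defect in $r$ controls a Dirichlet integral of $f-f(y)$ on an annulus). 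Combining this with \eqref{eqn5.61} gives a Morrey-type bound $r^{2-n}\int_{B'(y,r)}|\nabla f|^2\le C$ uniformly for $y\in B'_{1/4}$ and $r$ small, which places $f$ in a Morrey space $L^{2,n}$ locally, hence (by Morrey's embedding, since the exponent is critical) in $C^{0,1}_{\mathrm{loc}}$, i.e.\ $f$ is Lipschitz on $\overline{B'_{1/16}}$ with a constant depending only on $n$.

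More concretely, I would carry this out in the following steps. Step 1: record that $f$ is subharmonic (Lemma \ref{apb-lem4}) and bounded (by $|f|\le 1$ on $B'$, from \eqref{apb-eqn38}--\eqref{apb-eqn39}). Step 2: for $y\in B'_{1/8}$ and $0<\rho<1/16$, use the identity relating the mean-value defect to the Dirichlet energy, namely (for $w=f-f(y)$ subharmonic with $w(y)\ge$ negative small, and using that $\Delta f\ge 0$ as a measure)
\[
\int_{B'(y,\rho)}|\nabla f|^2 \;\le\; C\,\rho^{n-1}\big(f_{y,2\rho}-f(y)\big) \;+\; C\,\rho^{n-1}\!\!\int_{\rho}^{2\rho}\!\!\frac{f_{y,s}-f(y)}{s}\,ds,
\]
a standard computation (integration by parts against a radial cutoff, using that the measure $\Delta f$ is nonnegative so its cross-term has a favorable sign). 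Step 3: feed \eqref{eqn5.61} into the right-hand side. Since $\int_0^{1/8}(f_{y,s}-f(y))s^{-2}ds\le C$ and each term $f_{y,s}-f(y)\ge 0$, one gets $\rho^{1-n}\int_{B'(y,\rho)}|\nabla f|^2\le C$ with $C=C(n)$, uniformly over $y\in B'_{1/8}$ and small $\rho$. Step 4: invoke Morrey's lemma (or Campanato's characterization): a function whose gradient lies in the Morrey space $L^{2,n}_{\mathrm{loc}}$ is locally Lipschitz, with the Lipschitz norm controlled by the Morrey norm. This yields the claim on $\overline{B'_{1/16}}$ with a dimensional constant.

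The main obstacle is Step 2: getting the sign of the cross-term right and making the integration by parts rigorous given that $\Delta f$ is only a nonnegative Radon measure (not a function), so one must work with smoothings $f_\epsilon = f * \eta_\epsilon$, which remain subharmonic, derive the inequality for $f_\epsilon$, and pass to the limit using the uniform bound $|f|\le 1$ and the convergence $f_\epsilon\to f$ in $L^2_{\mathrm{loc}}$ together with lower semicontinuity of the Dirichlet integral. One also has to be slightly careful that \eqref{eqn5.61} is stated for $y\in B'_{1/4}$ while we want Lipschitz control on $B'_{1/16}$, so all the annuli $B'(y,\rho)$ with $y\in B'_{1/8}$ and $\rho<1/16$ stay inside $B'_{1/4}$ where the estimate is available; this is just bookkeeping. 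Since, as the excerpt notes, the argument is essentially identical to \cite{AC} and \cite{KT2} and the condition on the normal derivative (the one genuinely new feature of Definition \ref{a-flat-point}) played its role already in Lemmata \ref{apb-lem4} and \ref{apb-lem5}, I would state the lemma and refer to \cite{AC}, \cite{KT2} for the details of this now-routine Morrey argument.
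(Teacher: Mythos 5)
Your closing instinct — state the lemma and cite \cite{AC}, \cite{KT2} — is exactly what the paper does (it only writes out the proofs where the normal-derivative condition enters), so deferring to the references is fine. But the sketch you propose in Steps 2--3 has a genuine gap, and it should not be included as written. The Caccioppoli-type inequality of Step 2 is false for general bounded continuous subharmonic functions: in the integration by parts the measure term is $-\int \varphi^2 (f-f(y))\,d\mu$ with $\mu=\Delta f\ge 0$, and $f-f(y)$ has no sign, so to control it you need a \emph{lower} bound on $f$ in $B'(y,2\rho)$, i.e. control of the full oscillation at scale $\rho$ --- which is precisely what the lemma is to prove. A concrete counterexample (in the relevant dimension $n-1$, already for $n-1=2$): let $f(x)=\frac{1}{2\pi}\int_{S}\log|x-z|\,d\H^{1}(z)$ with $S$ a line segment, so $f$ is bounded, continuous, subharmonic with $\Delta f=\H^1\res S$. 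At an endpoint $y$ of $S$ one computes $f_{y,s}-f(y)\simeq s$ for all small $s$, so the right-hand side of your Step 2 inequality is $\simeq \rho^{n-1}\cdot\rho$; but $|\nabla f(x)|\simeq \log(1/|x-y|)$ near $y$, so $\int_{B'(y,\rho)}|\nabla f|^2\simeq \rho^{n-1}\log^2(1/\rho)$, and the inequality fails. (The same construction, using surface measure on a half-hyperplane, works in every dimension.)

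The deeper issue is that your scheme only ever uses, at each scale, the linearized consequence $f_{y,s}-f(y)\le Cs$ (which follows from \eqref{eqn5.61} and the monotonicity of $r\mapsto f_{y,r}$ for subharmonic $f$), and that weakened hypothesis provably does \emph{not} imply Lipschitz: the example above satisfies it at every center and is only log-Lipschitz at the endpoint. Any correct argument must exploit the full integrated (Dini/Carleson-type) bound \eqref{eqn5.61}. Via the Riesz representation $f_{y,r}-f(y)=c_n\int_0^r t^{2-n}\mu(B'(y,t))\,dt$ and Fubini, \eqref{eqn5.61} is equivalent to the uniform Riesz-potential bound $\int_{B'(y,1/8)}|y-z|^{2-n}\,d\mu(z)\le C$ for $y\in B'_{1/4}$; one then writes $f$ on $B'_{1/8}$ as (a bounded harmonic function, hence Lipschitz on $B'_{1/16}$ with dimensional constant) plus (the Newtonian potential of the restriction of $\mu$, whose Lipschitz seminorm is bounded by that Riesz potential). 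Equivalently one can do harmonic replacement at dyadic scales and use \eqref{eqn5.61} to make the per-scale errors $2^k\bigl(f_{y,2^{-k}}-f(y)\bigr)$ summable, which is what kills the logarithm. Either way the mechanism is a pointwise estimate using all scales simultaneously, not a single-scale $L^2$ gradient (Morrey) bound; note also that in your example the endpoint violates \eqref{eqn5.61}, which is exactly why the lemma survives while your Step 2 does not.
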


\begin{lemma}{\bf(Lemma 7.8 \cite{AC} or Lemma 0.11 \cite{KT2})}\label{apb-lem7}
Let $f$ be the function introduced in Lemma \ref{apb-lem3}. There exists a
large constant $C=C(n)>0$ such that for any given $\theta\in(0,1)$ there
exist $\eta=\eta(\theta)>0$ and $l\in\R^n\times\{0\}$ with $|l|\le C$ so
that
\begin{equation}\label{apb-eqn97}
f(y)\le \langle l, y\rangle + \frac{\theta}{2}\eta\quad\mbox{for}\quad y\in
B'_\eta.
\end{equation}
\end{lemma}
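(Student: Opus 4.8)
\textbf{Proof plan for Lemma \ref{apb-lem7}.}

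The plan is to follow the scheme of Lemma 7.8 in \cite{AC} (equivalently Lemma 0.11 in \cite{KT2}), extracting the affine bound on $f$ directly from the superlinear-convergence estimate of Lemma \ref{apb-lem5} together with the Lipschitz bound of Lemma \ref{apb-lem6}. First I would fix the constant $L = L(n)$ which is the Lipschitz constant of $f$ on $\overline{B'_{1/16}}$ given by Lemma \ref{apb-lem6}, and recall that $f(0) = 0$ (Lemma \ref{apb-cor1}). The natural candidate for $l$ is (a regularized version of) $\nabla f(0)$, or more precisely the vector obtained by averaging: since $f$ is subharmonic (Lemma \ref{apb-lem4}) and Lipschitz, it is differentiable at $0$ in an averaged sense, and one sets $l$ to be a limit of difference quotients along a sequence of small radii; the bound $|l| \le C(n)$ follows immediately from the Lipschitz bound, $|f(y)| \le L|y|$.

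The heart of the argument is then to show that the function $w(y) = f(y) - \langle l, y\rangle$, which is still subharmonic, Lipschitz with constant $\le L + C$, and vanishes to first order at $0$, satisfies $w(y) \le \frac{\theta}{2}\eta$ on $B'_\eta$ for $\eta$ small. I would argue by contradiction: if not, then there is a sequence $\eta_k \to 0$ and points $y_k \in B'_{\eta_k}$ with $w(y_k) > \frac{\theta}{2}\eta_k$. Rescale, setting $w_k(z) = \eta_k^{-1} w(\eta_k z)$ on $B'_{1/(16\eta_k)}$; these are uniformly Lipschitz, subharmonic, vanish at $0$, and (after passing to a subsequence) converge locally uniformly to a globally defined subharmonic, Lipschitz function $w_\infty$ with $w_\infty(0) = 0$ and $w_\infty(z_0) \ge \frac{\theta}{2}$ for some $|z_0| \le 1$. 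The key point is that Lemma \ref{apb-lem5} forces $w_\infty$ to be harmonic and to have $\nabla w_\infty(0) = 0$: indeed the integral $\int_0^{1/8}(f_{y,r} - f(y))\frac{dr}{r^2} \le C$ survives the rescaling (it is scale invariant in the appropriate way, being the Riesz-type potential controlling the spherical means), and in the limit it gives $\int_0^\infty (\,\overline{w_\infty}(0,r) - w_\infty(0)\,)\frac{dr}{r^2} < \infty$ where $\overline{w_\infty}(0,r)$ is the spherical mean; combined with subharmonicity (so the integrand is $\ge 0$ and nondecreasing in a suitable normalization) this forces the spherical means of $w_\infty$ at $0$ to equal $w_\infty(0) = 0$, hence $w_\infty$ is harmonic near $0$ with vanishing gradient there. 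But a harmonic function that is bounded by a linear function at infinity (from the uniform Lipschitz bound $|w_\infty(z)| \le (L+C)|z|$) is itself affine, and vanishing gradient at $0$ together with $w_\infty(0)=0$ forces $w_\infty \equiv 0$, contradicting $w_\infty(z_0) \ge \theta/2$.

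I expect the main obstacle to be the careful bookkeeping in the rescaling step: one must verify that the one-sided estimate of Lemma \ref{apb-lem5} (which as stated is local, on $B'_{1/4}$, and involves the \emph{original} $f$, not $w$) rescales correctly and passes to the limit, and that subtracting the linear part $\langle l, y\rangle$ does not destroy it — this is fine because a linear function has spherical means at $0$ exactly equal to its value $0$, so $f_{y,r} - f(y)$ and $w_{y,r} - w(y)$ differ only by the linear correction evaluated on the sphere, which integrates to zero. The other delicate point is justifying that $l$ can indeed be chosen as the (averaged) gradient of $f$ at $0$ with the stated bound $|l| \le C(n)$; here one uses that the difference quotients $\eta^{-1} f(\eta \cdot)$ are precompact (uniform Lipschitz bound) and any subsequential limit is affine by the same harmonicity-plus-linear-growth argument, so the gradient is well defined at $0$ and bounded by the Lipschitz constant. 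Once these two points are in place, the contradiction argument closes and \eqref{apb-eqn97} follows, which (as in \cite{AC}, \cite{KT2}) is exactly the ingredient needed to run the iteration establishing the improvement of flatness in Lemma \ref{apb-lem2}.
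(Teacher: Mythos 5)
Your compactness scheme does not close as written, for two separate reasons. First, the only thing that survives the rescaling of \eqref{eqn5.61} in the form you state it is finiteness, $\int_0^\infty\big(\overline{w_\infty}(0,r)-w_\infty(0)\big)r^{-2}dr\le C$, and by monotonicity of spherical means of a subharmonic function this gives merely $\overline{w_\infty}(0,r)-w_\infty(0)\le Cr$, not the vanishing of the means; finiteness is genuinely insufficient for the conclusion you draw. Concretely, in $\R^{n-1}$ (say $n-1\ge 3$) the function $w(z)=\int_{B(0,1)}\big(|y|^{3-n}-|z-y|^{3-n}\big)dy$ is globally subharmonic (its Laplacian is a multiple of $\1_{B(0,1)}$), Lipschitz and bounded, has $w(0)=0$ and $\nabla w(0)=0$, and its mean-value excess at $0$ is comparable to $\min(r^2,1)$, so the integral against $dr/r^2$ is finite; yet $w$ is neither harmonic nor affine and is strictly positive on $|z|=1$. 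So ``subharmonic $+$ Lipschitz $+$ vanishing to first order at $0$ $+$ finite integral at $0$'' does not force $w_\infty\equiv 0$; moreover your Liouville step needs harmonicity on all of $\R^{n-1}$, not ``near $0$''. What is true, and what you would need to argue, is that the \emph{tail} of the convergent integral \eqref{eqn5.61} tends to $0$, so after rescaling by $\eta_k$ the bound passes to the limit with constant $0$: $\overline{w_\infty}(0,r)=w_\infty(0)$ for every $r>0$ (note the linear part drops out of means on spheres centered at $0$, as you say), and then the harmonic replacement (or the Riesz measure identity) upgrades this to harmonicity of $w_\infty$ on every ball $B(0,R)$, whence affineness by Liouville and the Lipschitz bound.

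Second, and this is the step that fails even after that repair: you fix $l$ in advance as a subsequential ``averaged gradient'' of $f$ at $0$, but the fact that every blow-up limit of $f$ at $0$ is affine does not make them all equal, so no well-defined gradient (and no $\nabla w_\infty(0)=0$) is available; along the sequence $\eta_k$ produced by your contradiction hypothesis the blow-up of $f$ may have a different slope $b$, and all your argument yields is $w_\infty(z)=\langle b-l,z\rangle$, which is perfectly compatible with $w_\infty(z_0)\ge\theta/2$. Uniqueness of the blow-up (differentiability of $f$ at $0$) is neither established nor needed, because the lemma only asks for \emph{one} scale $\eta(\theta)$: the correct way to run your argument is to let $l$ depend on the scale — take any $\eta_k\to0$, extract a locally uniformly convergent subsequence of $\eta_k^{-1}f(\eta_k\cdot)$, show as above that the limit is $\langle a,z\rangle$ with $|a|\le C(n)$ (the Lipschitz bound of Lemma \ref{apb-lem6}), and read \eqref{apb-eqn97} off from uniform convergence on $\overline{B'_1}$ at one large $k$, with $\eta=\eta_k$ and $l=a$. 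Alternatively, the cited proofs (Lemma 7.8 in \cite{AC}, Lemma 0.11 in \cite{KT2}) avoid compactness: since $\int_0^{1/8}f_{0,r}\,r^{-2}dr\le C$ and $f_{0,r}\ge 0$, for any $\delta>0$ there are arbitrarily small radii $r$ with $f_{0,r}\le\delta r$; taking the harmonic majorant $h$ of $f$ in $B'_r$, one has $f\le h$, $h(0)=f_{0,r}\le\delta r$, $|\nabla h(0)|\le C(n)$, and the interior second-derivative estimate gives $f(y)\le\langle\nabla h(0),y\rangle+\delta r+C|y|^2/r$ on $B'_{r/2}$, which is \eqref{apb-eqn97} with $\eta=c(\theta)r$ and $\delta=\delta(\theta)$. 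In both repairs the quantitative input is the smallness extracted from the convergent integral at well-chosen scales, not its mere finiteness.
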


{\bf Contradiction in the proof of Lemma \ref{apb-lem2}}: 
Recall that by assuming that the statement in Lemma \ref{apb-lem2} was false, 
we were able to construct sequences of functions $\{v_j\}$ and $\{k_j\}$ 
satisfying (\ref{apb-eqn35})- \eqref{a8.44}.
Using the functions $\{v_j\}$ we constructed 
sequences of functions $\{f^+_j\}$ and $\{f^-_j\}$ defined in $B'$ (see 
(\ref{apb-eqn38}) and (\ref{apb-eqn39})). The function $f$ introduced in
Lemma~\ref{apb-lem3}, and defined in $B'$ is a limit of subsequences of
$\{f^+_j\}$ and $\{f^-_j\}$ (which we relabeled). In Corollary \ref{apb-cor1}, and Lemmas \ref{apb-lem4}, \ref{apb-lem5},
\ref{apb-lem6} and \ref{apb-lem7} we studied the properties of $f$. We now
combine all this information about $f$ to produce a contradiction. By
Corollary \ref{apb-cor1}, $f^+_j\mathop{\longrightarrow}\limits_{j\to\infty}
f$ uniformly on compact subsets of $B'$. Therefore Lemma \ref{apb-lem7}
yields that for every $\theta\in(0,1)$ there exists $\eta>0$ so that for $j$
large enough
\begin{equation}\label{apb-eqn104}
f^+_j(y)\le \langle l,y\rangle+\theta\eta\quad\mbox{for}\quad y\in B'_\eta.
\end{equation}
This is how we define $\eta = \eta(\theta)$, independently of the sequence itself,
as promised.
Hence by the definition (\ref{apb-eqn38}) 
\begin{equation}\label{apb-eqn105}
v_j(x)=0\mbox{ for }x=(x, x_{n})\in B(0,\eta)\mbox{ with }x_{n}>\s_j\langle l,x\rangle + \theta\eta\s_j.
\end{equation}
Let $\wt\nu=(1+\s^2_j|l|^2)^{-\frac{1}{2}}(-\s_jl, 1)$,
and notice that $\wt\nu$ satisfies \eqref{apb-eqn7}; in addition,
(\ref{apb-eqn105}) implies that
\begin{equation}\label{apb-eqn106}
v_j(x)=0\mbox{ for }x\in B(0,\eta)\mbox{ and } 
\langle x, \wt\nu\rangle 
\ge \frac{\theta\eta\s_j}{(1+\s^2_j|l|^2)^{\frac{1}{2}}}
\ge 2\theta\eta\s_j,
\end{equation}
for $j$ large enough. Note that (\ref{apb-eqn106}) says that
$v_j\in F(2\theta\s_j,1;\tau_j)$; this contradicts 
our contradiction assumption that \eqref{apb-eqn36} fails for
all $\wt\nu$ that satisfies \eqref{apb-eqn7};
Lemma \ref{apb-lem2} follows.
\end{proof}

\end{document}